\documentclass[notitlepage,11pt,reqno]{amsart}  
\usepackage[foot]{amsaddr}
\usepackage[numbers,sort]{natbib}
\usepackage{amssymb,nicefrac,bm,upgreek,mathtools,verbatim}
\usepackage[final]{hyperref}
\usepackage[mathscr]{eucal}
\usepackage{dsfont} 
\usepackage{graphicx}
\usepackage{yfonts}
\usepackage[margin=10pt,font=small,labelfont=bf]{caption}
				
\usepackage[margin=1in]{geometry} 
\usepackage{babel}
\allowdisplaybreaks  
\usepackage{color}  
\definecolor{dmagenta}{rgb}{.4,.1,.5}       
\definecolor{dblue}{rgb}{.0,.0,.5}     
\definecolor{mblue}{rgb}{.0,.0,.8}     
\definecolor{ddblue}{rgb}{.0,.0,.4}            
\definecolor{dred}{rgb}{.6,.0,.0}   
\definecolor{dgreen}{rgb}{.0,.5,.0}  
\definecolor{Eeom}{rgb}{.0,.0,.5}

\newcommand{\frK}{\mathfrak{K}}
\newcommand{\srZ}{\mathscr{Z}}

\usepackage[normalem]{ulem}
\newcommand{\commentout}[1]{}

\newtheorem{lemma}{Lemma}[section]
\newtheorem{theorem}{Theorem}[section]
\newtheorem{proposition}{Proposition}[section]
\newtheorem{corollary}{Corollary}[section]

\theoremstyle{definition}
\newtheorem{definition}{Definition}[section]

\theoremstyle{remark}
\newtheorem{remark}{Remark}[section]
\numberwithin{equation}{section}
\newcommand{\eps}{\epsilon}
\hypersetup{
  colorlinks=true,
  citecolor=dblue,
  linkcolor=dblue,
  frenchlinks=false,
  pdfborder={0 0 0},
  naturalnames=false, 
  hypertexnames=false,
  breaklinks}
\usepackage[capitalize,nameinlink]{cleveref}

\crefname{section}{Section}{Sections}
\crefname{subsection}{Section}{Sections}
\crefname{condition}{Condition}{Conditions}
\crefname{hypothesis}{Hypothesis}{Conditions}
\crefname{assumption}{Assumption}{Assumptions} 
\crefname{lemma}{Lemma}{Lemmas} 
  
\Crefname{figure}{Figure}{Figures}

\crefformat{equation}{\textup{#2(#1)#3}}
\crefrangeformat{equation}{\textup{#3(#1)#4--#5(#2)#6}}
\crefmultiformat{equation}{\textup{#2(#1)#3}}{ and \textup{#2(#1)#3}}
{, \textup{#2(#1)#3}}{, and \textup{#2(#1)#3}}
\crefrangemultiformat{equation}{\textup{#3(#1)#4--#5(#2)#6}}%
{ and \textup{#3(#1)#4--#5(#2)#6}}{, \textup{#3(#1)#4--#5(#2)#6}}%
{, and \textup{#3(#1)#4--#5(#2)#6}}

\Crefformat{equation}{#2Equation~\textup{(#1)}#3}
\Crefrangeformat{equation}{Equations~\textup{#3(#1)#4--#5(#2)#6}}
\Crefmultiformat{equation}{Equations~\textup{#2(#1)#3}}{ and \textup{#2(#1)#3}}
{, \textup{#2(#1)#3}}{, and \textup{#2(#1)#3}}
\Crefrangemultiformat{equation}{Equations~\textup{#3(#1)#4--#5(#2)#6}}%
{ and \textup{#3(#1)#4--#5(#2)#6}}{, \textup{#3(#1)#4--#5(#2)#6}}%
{, and \textup{#3(#1)#4--#5(#2)#6}}

\crefdefaultlabelformat{#2\textup{#1}#3}
%

\newcommand{\upu}{\Upupsilon}
\newcommand{\fre}{\mathfrak{e}}
\newcommand{\rQ}{\ring Q}

\newcommand{\cE}{{\mathcal{E}}}  

\newcommand{\cG}{{\mathcal{G}}}  
\newcommand{\Lg}{\mathcal{L}}    

\newcommand{\cP}{{\mathcal{P}}}  
\newcommand{\cX}{{\mathcal{X}}}  

\newcommand{\veps}{\varepsilon}

\newcommand{\frC}{\mathfrak{C}}
\newcommand{\frD}{\mathfrak{D}}
\newcommand{\frL}{\mathfrak{L}}
\newcommand{\diff}{\mathfrak{d}}

\newcommand{\beql}[1]{\begin{equation}\label{#1}}

\newcommand{\beq}{\begin{displaymath}}
\newcommand{\eeqno}{\end{displaymath}}
\newcommand{\eeq}{\end{equation}}

\newcommand{\E}{\mathbb{E}}
\newcommand{\PP}{\mathbb{P}}

\newcommand{\RR}{\mathds{R}}
\newcommand{\NN}{\mathds{N}}
\newcommand{\ZZ}{\mathds{Z}}

\newcommand{\Rd}{\mathds{R}^{d}}

\newcommand{\Act}{{\mathbb{U}}}
\newcommand{\Uadm}{\mathfrak{U}}
\newcommand{\Wadm}{\mathfrak{W}}
\newcommand{\Usm}{\mathfrak{U}_{\mathrm{SM}}}
\newcommand{\Wsm}{\mathfrak{W}_{\mathrm{SM}}}
\newcommand{\Ussm}{\mathfrak{U}_{\mathrm{SSM}}}

\newcommand{\Ind}{\mathds{1}}   
\newcommand{\Cc}{\mathcal{C}}   

\newcommand{\transp}{^{\mathsf{T}}}

\DeclareMathOperator*{\diag}{diag}

\newcommand{\grad}{\nabla}

\newcommand{\calP}{\mathcal{P}}

\newcommand{\cC}{\mathcal{C}}
\newcommand{\cV}{\mathcal{V}}

\newcommand{\cF}{\mathcal{F}}

\newcommand{\cZ}{\mathcal{Z}}
\newcommand{\calY}{\mathcal{Y}}

\makeatletter
\DeclareRobustCommand\widecheck[1]{{\mathpalette\@widecheck{#1}}}
\def\@widecheck#1#2{%
    \setbox\z@\hbox{\m@th$#1#2$}%
    \setbox\tw@\hbox{\m@th$#1%
       \widehat{%
          \vrule\@width\z@\@height\ht\z@
          \vrule\@height\z@\@width\wd\z@}$}%
    \dp\tw@-\ht\z@
    \@tempdima\ht\z@ \advance\@tempdima2\ht\tw@ \divide\@tempdima\thr@@
    \setbox\tw@\hbox{%
       \raise\@tempdima\hbox{\scalebox{1}[-1]{\lower\@tempdima\box
\tw@}}}%
    {\ooalign{\box\tw@ \cr \box\z@}}}
\makeatother

\usepackage{accents}
\newlength{\dhatheight}

\newcommand{\cA}{\mathcal{A}}
\newcommand{\bU}{\mathbb{U}}


\setcounter{tocdepth}{2}
\let\oldtocsection=\tocsection
\let\oldtocsubsection=\tocsubsection
\let\oldtocsubsubsection=\tocsubsubsection
\renewcommand{\tocsection}[2]{\hspace{0em}\oldtocsection{#1}{#2}}
\renewcommand{\tocsubsection}[2]{\hspace{1em}\oldtocsubsection{#1}{#2}}
\renewcommand{\tocsubsubsection}[2]{\hspace{2em}\oldtocsubsubsection{#1}{#2}}

\newcommand{\ttl}{\Large Ergodic Risk Sensitive Control of Markovian Multiclass    \\[5pt]    Many-Server Queues with Abandonment}

\newcommand{\ttls}{Ergodic Risk Sensitive Control of  Multiclass $M/M/n+M$ Queues}

\begin{document}

\title[\ttls]{\ttl}

\author{Sumith Reddy Anugu}
\author{Guodong Pang}
\address{$^\ddag$Department of Computational Applied Mathematics and Operations Research,
George R. Brown School of Engineering,
Rice University,
Houston, TX 77005}
\email{anugu.reddy, gdpang@rice.edu}

\begin{abstract} 

We study the optimal scheduling problem for a Markovian multiclass queueing network with abandonment in the Halfin--Whitt regime, under the long run average (ergodic) risk sensitive cost criterion.  
The objective is to prove asymptotic optimality for the optimal control arising from the corresponding
ergodic risk sensitive control  (ERSC) problem for the limiting diffusion. In particular, we show that the optimal ERSC value 
associated with the diffusion-scaled queueing process converges to that of the limiting diffusion in the asymptotic regime. 
The challenge that ERSC poses is that one cannot express the ERSC cost as an expectation over the mean empirical measure associated with the queueing process, unlike in the usual case of  a long run average (ergodic) cost. 
We develop a novel approach by exploiting the variational representations of the limiting diffusion and the Poisson-driven queueing dynamics, which both involve certain auxiliary controls. 
The ERSC costs for both the diffusion-scaled queueing process and the limiting diffusion can be represented as the integrals of an extended running cost over a mean empirical measure associated with the corresponding extended processes using these auxiliary controls. 
For the lower bound proof, we exploit the connections of the ERSC problem for the limiting diffusion with a two-person zero-sum stochastic differential game. We also make use of the mean empirical measures associated with the extended limiting diffusion and diffusion-scaled processes with the auxiliary controls. 
One major technical challenge in both lower and upper bound proofs, is to establish the tightness of the aforementioned mean empirical measures for the extended processes.
 We identify nearly optimal controls appropriately in both cases so that the  existing ergodicity properties of the limiting diffusion and diffusion-scaled queueing processes can be used. 
\end{abstract}

\keywords{Ergodic risk sensitive control, multiclass many-server queues, Halfin--Whitt regime, variational formulations of diffusion and Poisson-driven stochastic equations, asymptotic optimality}

\date{\today} 

\maketitle

\allowdisplaybreaks

\section{Introduction} \label{sec-intro}

We study the dynamic scheduling problem for a Markovian multiclass queueing network with abandonment (in particular, the `V' network) in the Halfin--Whitt regime, where the objective is to minimize a long-run average (ergodic) risk sensitive cost.  
Specifically, if $ \hat Q^n$  is the diffusion-scaled queue-length process for different classes of customers (with scaling parameter $n$), the ergodic risk sensitive control (ERSC) problem is to minimize the cost 
\begin{align*}
\limsup_{T\to\infty}\frac{1}{T}\log \E\Big[e^{\int_0^T \widetilde r (\hat Q^n_t)dt}\Big]
\end{align*}
for a running cost function $\widetilde r$.  The optimization is over all work-conserving scheduling policies, allowing preemption, that is, the allocation of service capacity to different classes at each time (no server will be idling if there is a job waiting in queue). 
 In the Halfin--Whitt regime, when the arrival rates, service rates and abandonment rates (all class-dependent) and the number of servers are scaled properly (with growing number of servers and arrival rates), the queueing dynamics can be approximated by a limiting diffusion (see  \cite{AMR04}). 
 The goal of this paper is to show the asymptotic optimality of the solution to the ERSC problem for the limiting diffusion, in particular, the optimal value for the ERSC problem for diffusion-scaled processes converges to that of the ERSC problem for the limiting diffusion.

The optimal scheduling problem for the `V' network model has been studied under the infinite-horizon discounted cost in \cite{AMR04} and ergodic cost in \cite{ABP15}. These cost criteria do not take into account risk sensitivity in the decision making, in particular, the exponential cost criterion concerns all the moments   of the cumulative cost process (which is the integral of the running cost), and particularly stresses the penalty when the queue lengths are large.  As far as we know, our work is the first to study infinite-horizon ergodic risk sensitive control problems for stochastic networks. Due to the ergodic and risk sensitive nature of our problem, the techniques used in this paper differ drastically from those in \cite{AMR04, ABP15}.

ERSC problems for Markov processes have been extensively studied, see, e.g.,  recent surveys \cite{biswas2022survey,bauerle2023markov}. 
The works on diffusions in  \cite{FM95,runolfsson90,runolfsson94,biswas2009small,biswas-eigenvalue,biswas2010risk,ari2018strict} under various conditions on the stability of controlled diffusions and the running cost function are most relevant. 
 In~\cite{AB18}, the ERSC problem for diffusions is studied under the near-monotone cost condition in addition to assuming that dynamics is recurrent, without any condition on the stability (like positive recurrence or exponential ergodicity) of the dynamics. 
 In \cite{ari2018strict}, it is further studied under certain stronger versions of exponential ergodicity and certain growth conditions on the running cost function. 
 For our limiting diffusion,  uniform exponential ergodicity  under any stationary Markov control  is proved in \cite{AHP21} (see Proposition~\ref{prop-lyap-with-abandonment}) and the running cost function is chosen to satisfy appropriate growth condition.  Therefore, by \cite[Theorem 4.1]{ari2018strict}, we obtain the existence and characterization for the optimal ERSC problem of our limiting diffusion (see Theorem~\ref{thm-diffusion}). 
 
The techniques we use to prove asymptotic optimality differ from those in~\cite{ABP15,AP2019}, where the classical ergodic control (CEC) problems for the `V' network and multiclass multi-pool network were studied, respectively.  
It is worth stressing that this difference is a necessity, rather than simply a technicality. 
 Under positive recurrence of the diffusion-scaled queueing process, it is clear that the CEC cost can be written as an integral of running cost function with respect to the mean empirical (occupation) measure of queueing process and the scheduling control policy
 (say $\mu_{n}$ with $n$ denoting the scaling parameter) corresponding to the diffusion-scaled queueing process. Therefore, proving that $\{\mu_{n}\}_{n\in \NN}$ is tight, can ensure that along a subsequence, the integral of the running cost function with respect to $\mu_{n}$ converges to the integral of the running cost function with respect to $\mu^*$ (with $\mu^*$ being the ergodic occupation measure associated with the limiting diffusion). 
In contrast, even under the positive recurrence (or even exponential ergodicity) of the diffusion-scaled queueing  process, the ERSC cost cannot be expressed as the integral with respect to the mean empirical measure $\mu_n$. Therefore, simply proving tightness of $\{\mu_{n}\}_{n\in \NN}$ and using the weak convergence of $\{\mu_{n}\}_{n\in \NN}$ (along a subsequence  to a limit) do not ensure the convergence of the sequence of ERSC cost for diffusion-scaled process to that of the limiting diffusion. 

To overcome this difficulty, we make extensive use of certain variational representations (see Theorems~\ref{thm-var-rep-BM-gen} and~\ref{thm-var-rep-poisson-gen}) of exponential functionals of Brownian motion and Poisson process. We refer interested readers to \cite{budhiraja2019analysis} which presents many relevant techniques and results using these representations.  Roughly, these representations take the following form in the  one-dimensional case: Fix $T>0$ and, let  $W$ and $N$ be one-dimensional standard Brownian motion and   Poisson process with rate $\lambda$, respectively. Then, for appropriate non-negative functions $G_1$ and $ G_2$,
\begin{align}\label{var-rep-W-intro}
	\frac{1}{T}\log \E[e^{TG_1( W)}]= \sup_{w} E\bigg[ G_1\bigg( W_{\cdot}+\int_0^{\cdot}w_sds\bigg) - \frac{1}{2T}\int_0^T\|w_t\|^2dt\bigg]
		\end{align}
		\begin{align}
		\label{var-rep-N-intro}
\frac{1}{T}\log \E\Big[ e^{T G_2(N)}\Big]= \sup_{\phi} \E\bigg[ G_2( N^\phi_{\cdot})-\frac{\lambda}{T}\int_0^T \varkappa (\phi_t)dt\bigg],
	\end{align}
	where the suprema in the above displays are over appropriate classes of functions (see Theorems~\ref{thm-var-rep-BM-gen} and~\ref{thm-var-rep-poisson-gen} for their precise definitions). Also, $N^\phi$ is an inhomogeneous Poisson process with the instantaneous rate given by $\phi$ and $\varkappa (r)= r\ln r -r+1$.

We first provide variational representations for the Brownian-driven limiting diffusion $X$ and the Poisson-driven diffusion-scaled queueing processes $\hat Q^n$. This is done by considering $T^{-1}\int_0^T \widetilde r\big((e\cdot X_t) U_t\big)dt$ and $T^{-1}\int_0^T \widetilde r\big(\hat Q^n_t\big)dt$ as functionals of the underlying Brownian motion $W$ (which is now multi-dimensional) and $N$ (which is now a vector of independent Poisson processes), respectively. Then, using multi-dimensional versions of~\eqref{var-rep-W-intro} and~\eqref{var-rep-N-intro}, we  show that  the ERSC cost associated with the limiting diffusion and the diffusion scaled queueing process as the maximization problem (over certain auxiliary controls which are, respectively,  multi-dimensional analogs of $w$ and $\phi$ above; see Corollaries~\ref{cor-var-rep-risk-B} and~\ref{cor-var-rep-unbounded-poisson}). However, these maximization problems are in terms of an extended running cost which is the difference of the original running cost and an appropriate relative entropy term, and in terms of processes (referred to as the ``extended processes"; see equations \eqref{X-control} and~\eqref{eq-prelimit-controlled}) that have the same evolution equations as $X$ and $\hat Q^n$, except that these equations are driven by $W_{\cdot}+\int_0^\cdot w_sds$ and $ N^\phi$, respectively, instead of just $W$ and $N$.

The advantage is that these representations are linear in the original running cost, and more importantly, the ERSC cost functions (associated with both the limiting diffusion, for a given admissible control and the diffusion scaled queueing process, for a given scheduling control policy) can then be regarded as  CEC problems for respective extended processes under their respective auxiliary controls.  As a consequence, one can rewrite the ERSC cost functions as
 the integrals of an extended running cost over the mean empirical 
 measures associated with the corresponding extended processes using these auxiliary controls. Therefore, we can treat these ERSC problems as the corresponding CEC problems for the extended processes with the auxiliary controls by exploiting some of the existing techniques and results in the CEC theory (together with additional techniques).

We note that  variational formulations have been used for risk sensitive control problems in the literature.  
 In the case of a finite-horizon risk sensitive control problem, such a variational formulation was derived using the theory of large deviations in \cite{whittle91}. One of the first works studying the variational formulation of ERSC problem for diffusions is in \cite{FM95,runolfsson90,runolfsson94}, which was then followed by \cite{biswas2010risk,AB20,ABBK20,ari2018strict}. In \cite{runolfsson94}, the variational formulation was derived under restricted conditions of Markov controls that are continuous in their arguments 
 and  inf-compactness of the running cost function, whereas, in \cite{runolfsson90} the case of a linear-quadratic control  problem is studied.  
However, all of these are in the context of diffusions under certain restrictive conditions, and not amenable to prove asymptotic optimality for our model. 

We now explain the methodology of the proof of asymptotic optimality.  
One critical component in both lower and upper bound proofs concerns the ergodicity properties of the extended processes with the auxiliary controls arising from the variational representations. 
This cannot be deduced in a straightforward manner from the existing ergodicity properties of the limiting diffusion and the diffusion-scaled queueing processes. In the proof of the lower bound, 
we choose a nearly optimal scheduling policy and a careful choice of auxiliary control for the diffusion-scaled processes, under which the positive recurrence property of the extended diffusion-scaled process can be established. In the proof of the upper bound, we choose a nearly optimal control for the limiting diffusion control problem, and then construct a sequence of scheduling policies together with the associated nearly optimal auxiliary controls. Then in order to prove tightness of the mean empirical measures associated with the extended diffusion-scaled processes, we prove tightness of a carefully chosen function of these nearly optimal controls. 

For the proof of the lower bound, we make use of the connection of ERSC problems for diffusions with stochastic differential games. Such connections are recently studied in~\cite{ABBK20} (under near-monotonicity for running cost functions) and in~\cite{AB20} (under blanket/uniform stability for controlled diffusions). For earlier results, we refer to the references in~\cite{ABBK20,AB20}.  
 For our purpose, we first show that the optimal value of ERSC problem for diffusions is equal to the value (where supremum and infimum operations are interchangeable) of a TP-ZS SDG, where the optimizing criterion is a long time average of a certain extended running cost.  (See Theorem~\ref{thm-sup-inf}. The closest work in terms of similarity of conditions on the dynamics and running cost is~\cite[Theorem 2.13] {AB20}, which is however given in terms of a Collatz--Wielandt formula.) 
Moreover, we show that there are compactly supported Markov strategies for the maximizing player that are nearly optimal. This uses a variant of spatial truncation technique (which was originally first introduced in~\cite[Section 4.1]{ABP15} in the context of  CEC problems).  
In this technique, we show that the aforementioned TP-ZS SDG is a limit of a family of TP-ZS SDGs where the allowed maximizing strategies are compactly supported. Using these results and choosing appropriately nearly optimal controls, on large enough compact sets, we make the aforementioned careful choice of auxiliary controls. This also helps us to prove the required positive recurrence of extended processes from the positive recurrence of the original process. This is because their respective infinitesimal generators coincide outside a large compact set. 
From this positive recurrence and the aforementioned interchangeability of supremum and infimum operations, we bound  the optimal value of ERSC problem for the limiting diffusion  from above by  the integral of the aforementioned running cost  with respect to  any ergodic occupation measure corresponding to the nearly optimal compactly supported Markov strategies for the maximizing player. 
This leads to the lower bound for asymptotic optimality.

For the proof of the upper bound, as a consequence of the variational representation for the diffusion-scaled queueing process, the ERSC cost can be written in terms of the  integral of a certain extended running cost function over the mean empirical (occupation) measure 
of the extended process with auxiliary controls. 
This extended running cost function is a difference of the original running cost function and an extra term that plays the role of a certain Radon-Nikodym derivative. 
Recall that the upper bound proof for the CEC problem of the same multiclass queueing model also employs such a strategy while the mean empirical measure only concerns the diffusion-scaled controlled queueing process itself (see Section 5.2 in \cite{ABP15}), for which tightness of these mean empirical measures as a result of uniform stability of the diffusion-scaled queueing process plays a crucial role. 
For the ERSC problem, despite the advantage of the representation using the mean empirical (occupation) measure mentioned above, we face the additional challenge to establish its tightness property, in particular, concerning the auxiliary controls. One difficulty comes from the fact that the nearly optimal auxiliary controls take values in a non-compact space and are parametrized by both a finite time and the scaling parameter $n$. 
We introduce a suitable topology in Section~\ref{sec-top} that is appropriately weak to establish compactness of the set of auxiliary controls (see Lemma~\ref{lem-comp},~\ref{lem-compact} and Corollary~\ref{cor-control-compact}). To be more elaborate, under this topology, the extra term in the extended cost behaves like an inf-compact function. In particular, bounding this term implies compactness under this topology. 
Using a Lyapunov function, we show in Lemma~\ref{lem-tightness-empirical} that the family of the extended processes  associated with any compact set of auxiliary controls (under this topology) is stable uniformly in $n$.   
Moreover, to prove the upper bound, we use appropriate truncations on both terms in the extended cost function in order to invoke compactness arguments and then take corresponding limits with these truncations in the appropriate topologies. Because of the exponential functional in the ERSC objective, the techniques used in taking the truncation to the limit resemble closely the techniques in the theory of large deviations \cite{dembo2009large} (see Lemmas~\ref{lem-tail-est} and~\ref{lem-fin}, and Corollary~\ref{cor-trunc-limit-L}).    In contrast to this, the CEC problem does not require such a truncation of running cost function as all the relevant controls are usually compact space valued.  
These results give us the desired tightness of the mean empirical measures and connections between the ERSC objective function and that associated with the variational representation, and therefore complete the proof of the upper bound.

Prior to this work, 
 risk sensitive control problems of stochastic networks have been studied to a limited extent 
 where they are related to deterministic differential games (in the conventional heavy traffic regime) under two kinds of scalings:
 
\noindent {\it Large deviation scaling:} Under this scaling, Atar et. al in \cite{atar2003escape}  have studied the risk sensitive escape time criterion for a multiclass multiserver Markovian queueing network;  in  \cite{AGS2013},  Atar et al. have studied the finite-horizon risk sensitive control problem for a Markovian multiclass parallel server model
and in \cite{atar2013risksensitive},  Atar et al. have shown that a certain priority policy  is asymptotically optimal for a risk sensitive control problem on the finite horizon, for a multiclass M/M/1 queueing model where the running cost is a weighted total queue length.  A similar problem is investigated by Atar and Mendelson \cite{atar2016nonmarkovian} in the case of a non-Markovian multiclass single server  queue with renewal arrivals and i.i.d service times (with general distribution). 
In these works, it is shown that the finite-horizon  risk sensitive control problem (the problem of risk-sensitive escape criterion in \cite{atar2003escape}, respectively)  can be represented (asymptotically) as a two-person zero-sum  deterministic differential game (TP-ZS DDG). 
The pay-off criterion in the game is expressed in terms of the  finite time integral of the original running cost  and the underlying large deviation rate functions associated with class-dependent  inter-arrival and  service times.

\noindent{\it  Moderate deviation scaling:}   Under this scaling, Atar and Saha \cite{AS2017} have studied the optimality of the generalized $c\mu$ rule for a non-Markovian multiclass single-server  network  under the finite-horizon risk-sensitive cost criterion.  In \cite{AC2016}, Atar and Cohen have studied a differential game arising from a finite-horizon risk sensitive control problem for a non-Markovian  multiclass single-server queueing  model. Atar and Cohen \cite{atar2017asymptotically}, Atar and Biswas \cite{atar2014control}, and Biswas \cite{biswas2014risk}  have studied a problem similar to that in \cite{AC2016} for the non-Markovian network model with multiple servers,  where the  optimal control problem is  also related to a TP-ZS DDG. 
The finite-horizon  risk sensitive control problem of various stochastic networks under this scaling  is shown to be asymptotically represented by a TP-ZS DDG. However, the main difference is that the associated pay-off criterion is the difference of the  finite-time integral of the original cost function and the moderate deviation rate function for the underlying class-dependent inter-arrival and service times. Since the moderate deviation rate function depends only on the variances of the underlying  general inter-arrival and service times, 
this is in contrast to the large deviation scaling, where the rate function depends on their entire distributions
 in the non-Markovian setting. 

In comparison to the works mentioned above, the results in this paper differ drastically. In particular,  we show that the ERSC problem for the  multiclass `V' network model approaches a TP-ZS SDG  instead of a TP-ZS DDG.  
 We also mention that a discounted cost version of the aforementioned TP-ZS SDG (arising from  minimizing the discounted  running cost under model uncertainty) is investigated in the context of multiclass M/M/1 queues in  \cite{cohen2019brownian} for the Brownian control problem (a similar study in the context of single-stage queue can be found in \cite{jain2010optimality}), whereas, the problems of asymptotic optimality and asymptotically optimal controls are investigated in \cite{cohen2021asymptotic,cohen2019asymptotic}. The model uncertainty in these works is accounted for \emph{via.} an addition of a relative entropy term to the cost function that penalizes the discrepancy in the model specification.

\subsection{Organization of the paper} In the rest of this section, we introduce the notation used in the paper. Section~\ref{sec-main} presents the model description, ERSC problem formulation, and the main result. Section \ref{sec-queue} describes the network model in detail, and presents the ERSC formulation for the diffusion-scaled processes. Section \ref{sec-ERSC-diffusion} presents the ERSC problem for the limiting diffusion, and the characterization of the solution to the ERSC problem. 
Section \ref{sec-AO} gives the main result on asymptotic optimality, and provides an overview of the main ideas for its proof.  Sections \ref{sec-sub-var-BM} and \ref{sec-sub-var-PP} present the variational formulations for the limiting diffusion and the Poisson-driven diffusion-scaled queueing dynamics, respectively. Some important properties associated with these variational formulations are proved in these sections and in Sections \ref{sec-top} and \ref{sec-aux-diffusion} in order to be used in the proofs for asymptotic optimality. 
Sections~\ref{sec-low-bound} and~\ref{sec-upp-bound}  prove the lower and upper bounds, respectively, for asymptotic optimality. Finally, we collect some auxiliary results and their proofs in the Appendix.

\subsection{Notation} \label{sec-notation}We use $(\Omega, \cF, \PP)$ to denote the underlying abstract probability space with $\E$ as the associated expectation. $\E_x$ denotes the expectation when the underlying process starts at $x$. The standard Euclidean norm in $\RR^d$ is denoted by $\| \cdot \|$, $ x\cdot y $  denotes the inner
product of $x,y\in \RR^d$, and $x\transp$ denotes the transpose of $x \in \RR^d$. The set of nonnegative real numbers (integers) is denoted by $\RR_+$ ($\ZZ_+$), $\NN$ stands for the set of natural numbers, and $\Ind_{\{\cdot\in A\}}$ denotes the indicator function corresponding to set $A$. The minimum (maximum) of two real numbers $a$ and $b$ is denoted by $a \wedge b$ ($a \vee b$), respectively, and
$a^{\pm} \doteq  (\pm a) \vee 0$. The closure, boundary, and complement of a set $A \subset \RR^d$ are denoted by $\bar  A$, $\partial A$ and $A^c$, respectively.

The term domain in $\RR^d$ refers to a nonempty, connected open subset of $\RR^d$. For a domain $D \subset \RR^d$, the space $\cC^k (D)$ ($\cC^\infty (D)$, respectively), $k \geq 0$, refers to the class of all real-valued
functions on $D$ whose partial derivatives up to order $k$ (any order, respectively) exist and are continuous. 
The space $L^p (D)$, $p \in [1, \infty)$, stands for
the Banach space of (equivalence classes of) measurable functions $f$ satisfying $\int_D |f (x)|^p dx < \infty$,
and $L^\infty (D)$ is the Banach space of functions that are essentially bounded in $D$. The standard
Sobolev space of functions on $D$ whose generalized derivatives up to order $k$ are in $L^p (D)$, equipped
with its natural norm, is denoted by $W^{k,p} (D)$, $k \geq 0, p \geq 1$. In general, if $\cX$ is a space of real-valued
functions on a set $Q$, $\cX_\text{loc}$ consists of all functions $f$ such that $f \phi \in \cX$  for every $\phi $ that is compactly supported smooth function on $Q$.  Here, $f\phi$ is simply the scalar multiplication of the functions $f$ and $\phi$.

For $k \in  \NN$, we let $\frD^k \doteq  D(\RR_+ , \RR^k )$ ($\frC^k\doteq C(\RR_+,\RR^k)$, respectively) denote the space of $\RR^k$--valued c\`adl\`ag functions equipped with Skorohod topology (continuous functions equipped with locally uniform topology, respectively) on $\RR_+$ ($\frD^1$ and $\frC^1$ are simply written as $\frD$ and $\frC$, respectively).  Whenever the domain is $[0,T]$, we write $\frD^k_T$ or $\frC^k_T$ ($\frD_T$ or $\frC_T$, for $k=1$). For a Polish space $\cX$, $\cP(\cX)$ is the set of Borel probability measures on $\cX$ equipped with the topology of weak convergence.  We say that a function $V:\cX\rightarrow \RR^+$ is inf-compact, if the set $\{x\in \cX: V(x)\leq l\}$ is either compact in $\cX$ or empty, for every $l\geq 0$. For a function $f:\RR^d\rightarrow \RR$, 
\begin{equation}\label{eq-diff-f}
 \diff f(x;y)\doteq f(x+y)-f(y).
 \end{equation}
The identity function on the real line by $\fre$.

\section{Model and Results} \label{sec-main}

\subsection{ERSC for Multiclass $M/M/n+M$ queues}\label{sec-queue}
We study a multiclass Markovian queueing model with $d$ classes of jobs/customers and one pool of $n$ parallel servers. Each class has an independent Poisson arrival process of rate $\lambda^n_i>0$, $i=1,\dots, d$. 
The service times for class $i$ jobs are i.i.d. exponential with rate $\mu^n_i>0$. Jobs of each class form their own queue and are served in the first-come first-served (FCFS) discipline. Jobs can abandon while waiting in the queue, and class $i$ jobs have i.i.d. exponential patience times with rate $\gamma^n_i>0$.
 Let $r_i^n = \lambda^n_i/\mu_i^n$ be the mean offered load of class $i$, and then the traffic intensity is given by $\rho^n= n^{-1} \sum_{i=1}^d r^n_i$.  
We assume that the system is operating in the Halfin--Whitt regime, in which the parameters are assumed to satisfy the following conditions: as $n\to \infty$,
\begin{align} \label{eqn-HW-parameter}
\frac{\lambda^n_i}{n} \to \lambda_i>0, \quad \frac{\lambda^n_i - n \lambda_i}{\sqrt{n}} \to \hat\lambda_i\in \RR,  \nonumber\\
 \mu^n_i \to \mu_i>0,  \quad \sqrt{n}(\mu^n_i-\mu_i) \to \hat\mu_i, \quad \gamma^n_i \to \gamma_i > 0,  
\end{align}
and
\begin{align} \label{eqn-HT-condition}
\tilde \rho^n\doteq \sqrt{n} (1-\rho^n) \to \hat{\rho}= \sum_{i=1}^d \frac{\rho_i \hat\mu_i - \hat\lambda_i}{\mu_i} \in \RR\,, 
\end{align}
where  $ \rho_i = \frac{\lambda_i}{\mu_i}<1$ satisfies $\sum_{i=1}^d \rho_i=1$. It is clear that $n^{-1} r_i^n\to \rho_i$ for each $i$. In addition, for each $i=1,\dots,d$, as $n\to \infty$,
\[
\ell^n_i = \frac{\lambda^n_i - n \lambda_i}{\sqrt{n}} - \rho_i \sqrt{n}(\mu^n_i-\mu_i) \to \ell_i = \frac{\hat\lambda_i - \rho_i \hat\mu_i}{\mu_i}\,. 
\]
Denote $\ell = (\ell_1,\dots, \ell_d)\transp$.  

 Let $\{X^n_{i,t}\}_{t\ge 0}$ be the number of class $i$ jobs in the system, $ \{Q^n_{i,t}\}_{t\ge 0}$ be the number of class $i$ jobs in the queues and $\{Z^n_{i,t}\}_{t\ge 0}$ be the number of class $i$ jobs in service at each time.  Write  $X^n = (X^n_{1}, \dots, X^n_{d})\transp$ as the $d$-dimensional processes, and similarly, for $Q^n, Z^n$ and so on. 
The processes $\{Z^n_{i,t}\}_{t\ge 0}$ also represent the server allocation at each time, and hence they are regarded as ``scheduling control policies" (SCPs).   We will only consider work-conserving policies that are  non-anticipative (see Definition~\ref{def-adm} below) and allow preemption (that is, service of a customer can be interrupted to serve some other customer of a different class and resumed at a later time). These policies satisfy the following condition:
\begin{align} \label{eqn-WC-condition}
e\cdot Z^n_t = (e\cdot X^n_t) \wedge n, \quad t \ge 0.
\end{align}
The action set $\mathbb{A}^n(x)$ is given by
\[
\mathbb{A}^n(x) = \{z \in \ZZ^d_+: z \le x \,\, \text{and}\,\, e\cdot z = (e\cdot x) \wedge n\}
\]
and the  balance equation is given by
\[
X^n_{i,t} = Q^n_{i,t} + Z^n_{i,t}, \quad t \ge 0, \quad i=1,\dots,d\,.
\]
From hereon, we assume that $X^n_{i.0}$, $i=1,\ldots,d$, are  deterministic.

We now introduce the notion of admissibility of an SCP. Let $A^n_i, S^n_i, R^n_i$, $i=1,\dots,d$, be mutually independent standard Poisson processes. Also, let 
\begin{align*}
\mathscr{F}^n_t &\doteq \sigma \Big\{ \widetilde A^n_i(s),\widetilde S^n_i(s), \widetilde R^n_i(s): s\leq t\Big\}\vee\mathcal{N}\,,\\
\mathscr{G}^n_t &\doteq \sigma \Big\{ \delta\widetilde A^n_i(t,s),\delta \widetilde S^n_i(t,s), \widetilde R^n_i(t,s): s\geq 0\Big\}\,,
\end{align*}
where $\mathcal N$ is the collection of all $\PP$--null sets, and
\begin{align*}
\widetilde A^n_i(t)&\doteq A^n_i(\lambda^n_i t) & \delta \widetilde A^n_i(t,s)\,&\doteq \widetilde A^n_i(t+s)-\widetilde A^n_i(t)\,,\\
\widetilde S^n_i(t)&\doteq  S^n_i\big(\mu^n_i\int_0^t Z^n_i(r)dr\big)\,,& \delta\widetilde S^n_i(t,s)&\doteq S^n_i\big(\mu^n_i\int_0^t Z^n_i(r)dr+ \mu^n_i s\big)-\widetilde S^n_i(t)\,,\\
\widetilde R^n_i(t)&\doteq  R^n_i\big(\gamma^n_i\int_0^t Q^n_i(r)dr\big)\,,& \delta\widetilde R^n_i(t,s)&\doteq R^n_i\big(\gamma^n_i\int_0^t Q^n_i(r)dr+ \gamma^n_i s\big)-\widetilde R^n_i(t)\,.
\end{align*} 
\begin{definition}\label{def-adm} An SCP $Z^n$ is said to be admissible if for every $t\geq 0$,
\begin{enumerate}
\item $Z^n(t)\in \mathbb{A}^n\big(X^n(t)\big)$ a.s. ;
\item $Z^n(t)$ is $\mathscr{F}^n_t$--measurable;
\item (Non-anticipativity) $\mathscr{F}^n_t$ is independent of  $\mathscr{G}^n_t$;
\item For every $i=1,\ldots,d$, the processes $\delta \widetilde S^n_i(t,\cdot)$ and $\delta \widetilde R^n_i(t,\cdot)$ agree in law with $S^n_i(\mu^n_i\cdot)$ and $R^n_i(\gamma^n_i\cdot)$, respectively.
\end{enumerate}
\end{definition}
Let $\Act^n$ be the set of all admissible control policies $Z^n \in \mathbb{A}^n(x)$ for any given $x\in \ZZ^d_+$.  
Under a work-conserving control policy $Z^n$ satisfying \eqref{eqn-WC-condition}, 
the process $X^n$ can then be described by the following equation:
\begin{equation}\label{eq-rep-Xn}
X^n_{i,t} = X^n_{i,0} + A^n_i(\lambda^n_i t) - S^n_i\left(\mu^n_i \int_0^t Z^n_{i,s} ds\right) - R^n_i \left(\gamma^n_i \int_0^t Q^n_{i,s}ds \right)\,.
\end{equation}
Let $\hat{X}^n$, $\hat{Q}^n$ and $\hat{Z}^n $  be the diffusion-scaled processes defined by 
\[
\hat{X}^n_{i,t} = \frac{1}{\sqrt{n}} (X^n_{i,t} -n \rho_i  )\,, \quad \hat{Q}^n_{i,t} = \frac{1}{\sqrt{n}} Q^n_{i,t}\,, \quad 
\hat{Z}^n_{i,t} = \frac{1}{\sqrt{n}} (Z^n_{i,t} -n \rho_i  )\,. 
\]
We now define a new process $U^n$ as follows: for $t\geq 0$, 
\begin{align}\label{def-control-n}
U^n_t\doteq \begin{cases}\frac{\hat X^n_t-\hat Z^n_t}{(e\cdot \hat X^n_t)^+}, & \text{whenever $e\cdot  X^n_t>0$},\\
\, e_d, & \text{ otherwise.}
\end{cases}
\end{align} 
Here, $e_d=(0,\ldots, 1)\in \RR^d.$ Observe that 
 $U^n \in\Act \doteq  \{u \in\RR^d_+: e \cdot u=1\} $. Also, the choice of $e_d$ is made only for convenience and to ensure that $e\cdot U^n=1$; it can be replaced by any $v\in \RR^d_+$ that satisfies $e\cdot v=1$. With slight abuse to notation, we denote the set of all $U^n$ that correspond to admissible SCPs also by $\Act^n$.
Evidently, for the diffusion-scaled processes, in terms of  $U^n$, we have
\begin{equation}\label{eqn-QZ-hat}
\hat{Q}^n_t = (e\cdot \hat{X}^n_t)^+ U^n_t, \quad \hat{Z}^n_t = \hat{X}^n_t -  (e\cdot \hat{X}^n_t)^+ U^n_t\,. 
\end{equation}
It is clear that at any given time $t$ and for a fixed realization of $\hat X^n_t$, the values of important random variables of the $n$th system such as $\hat Q^n_t$ and $\hat Z^n_t$  will depend on the choice of $U^n_t$. 
For this reason, we interpret $U^n$ as the control process.

Given the initial state $\hat{X}^n_0$ and a work-conserving control policy $\hat Z^n \in \Act^n$ (equivalently, $U^n\in \Act^n$, which we use instead), 
the ergodic risk sensitive cost function for the diffusion-scaled state process $\hat{X}^n$ is given by 
\begin{equation} \label{eqn-RS-cost-n}
J^n(\hat{X}^n_0, U^n) \doteq  \limsup_{T\to\infty} \frac{1}{T} \log \E\left[ e^{\int_0^T   \kappa\cdot [(e\cdot \hat{X}^n_t)^+U^n_t]  dt}\right],
\end{equation}
for some constant $\kappa\in \RR^d_+$.  Here the objective function 
penalizes the queueing process in the diffusion scaling (represented by the diffusion-scaled queueing process $\hat{Q}^n$). Recall \eqref{eqn-QZ-hat}.
For notational convenience, we write $r(\hat{X}^n_t, U^n_t)=\kappa\cdot[(e\cdot \hat{X}^n_t)^+U_t]$, or $r(x,u)= \kappa\cdot[(e\cdot x)^+u]$. (See also Remark \ref{rem-runningcost} for the choice of this running cost function.) Whenever there is no confusion in the scheduling policy, we write $r(\hat{X}^n_t, U^n_t)= \widetilde r(\hat Q^n_t) \doteq \kappa\cdot \hat Q^n_t$. 

The associated cost minimization problem is
\begin{equation} \label{eq-ERSC-problem}
\hat{\Lambda}^n(\hat{X}^n_0) \doteq  \inf_{U^n\in \Act^n}  J^n(\hat{X}^n_0, U^n)\,.
\end{equation}

We refer to $\hat{\Lambda}^n(\hat{X}^n_0) $ as the diffusion-scaled ergodic risk-sensitive value given the initial state $\hat{X}^n_0$.  Before we proceed further, we show that $\hat{\Lambda}^n(\hat{X}^n_0)$ is finite. Define 
 \begin{align}\label{eq-hatx} \hat x^n(x)\doteq \Big( \frac{x_1-n\rho_1}{\sqrt{n}}, \frac{x_2-n\rho_2}{\sqrt{n}},\ldots,\frac{x_d-n\rho_d}{\sqrt{n}}\Big). \end{align}
 Whenever there is no confusion, we simply write $\hat x^n.$  
  The generator of the diffusion-scaled queueing process $\hat{X}^n$ under a constant SCP $u\in \bU^n$ is given by
\begin{equation} \label{eq-Lg-hatXn}
\Lg^{n,u}f(\hat x^n)\doteq  \sum_{i=1}^d\Big(\lambda^n_i \diff f(x;e_i) + \big(\mu^n_i z_i +\gamma^n_i q_i(x,z)\big) \diff f(x,-e_i)\Big)
 \end{equation}
with $q_i(x,z)=x_i-z_i$ and the notation $\diff f$ defined in \eqref{eq-diff-f}. The following exponential ergodicity result helps us in showing that the ERSC problem in \eqref{eq-ERSC-problem} is well-defined.  The result is proved in \cite[Theorem 3.4]{AHP21}, where it is stated with only the two terms $\hat C_0, \hat C_1\|\hat x^n\|$ on the right hand side in \eqref{eq-lyap-foster-prelimit}. A careful tracking of the $n$ dependence in the proof therein gives us the expression below with the additional term $\frac{\hat C_2}{\sqrt n}$ for each $n$. We omit the derivation of this for brevity.

\begin{proposition}\label{prop-lyap-with-abandonment-prelimit} For every $\varrho>0$,  there exist an inf-compact $\cC^2$ function  $\cV_\varrho^n:\RR^d\rightarrow \RR_+$, and  positive constants $\hat C_i$, for $i=1,2,3$ such that 
	\begin{align}\label{eq-lyap-foster-prelimit}
	\Lg^{n,u} \cV_\varrho^n(\hat x^n)\leq \varrho\Big(\hat C_0- \hat C_1\|\hat x^n\| + \frac{\hat C_2}{\sqrt n}\|\hat x^n\|\Big)\cV^n_\varrho(\hat x^n), \text{ for every $n$}\,.
	\end{align}
\end{proposition}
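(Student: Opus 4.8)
The plan is to reuse the Lyapunov function from \cite[Theorem~3.4]{AHP21} and to redo the associated generator computation, this time keeping explicit track of the powers of $n^{-1/2}$ so as to separate the limiting drift from an $\order(n^{-1/2})$ remainder. First I would take $\cV^n_\varrho$ to be the exponential-type Lyapunov function $\cV^n_\varrho=e^{\varrho\Phi}$ of that reference --- which one may take to coincide with the Lyapunov function of the limiting diffusion from Proposition~\ref{prop-lyap-with-abandonment} --- where $\Phi\colon\Rd\to\RR_+$ is $\cC^{\infty}$, inf-compact, asymptotically linear, and has $\nabla\Phi$, $\nabla^2\Phi$, $\nabla^3\Phi$ bounded on $\Rd$, and is built so that the generator $\Lg^{u}$ of the limiting diffusion satisfies $\Lg^{u}\cV^n_\varrho(x)\le\varrho(\hat C_0-\hat C_1\|x\|)\cV^n_\varrho(x)$ uniformly over $u$ in the control simplex $\Act$; this uniform bound is exactly the content of Proposition~\ref{prop-lyap-with-abandonment}. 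Inf-compactness and the $\cC^2$ regularity required of $\cV^n_\varrho$ are inherited from $\Phi$, and the non-smoothness of $x\mapsto(e\cdot x)^{+}$ along $\{e\cdot x=0\}$ causes no difficulty: $\Lg^{n,u}$ sees $\cV^n_\varrho$ only through its increments along $\pm e_i$, and $\Lg^{u}$ sees $(e\cdot x)^{+}$ only through the drift.

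The core step is to compare $\Lg^{n,u}\cV^n_\varrho$ with $\Lg^{u}\cV^n_\varrho$. Each increment in \eqref{eq-Lg-hatXn} can be written as $\cV^n_\varrho(\hat x^n\pm\tfrac1{\sqrt n}e_i)-\cV^n_\varrho(\hat x^n)=\cV^n_\varrho(\hat x^n)\bigl(e^{\varrho a^{n,\pm}_i(\hat x^n)}-1\bigr)$ with $a^{n,\pm}_i(\hat x^n):=\Phi(\hat x^n\pm\tfrac1{\sqrt n}e_i)-\Phi(\hat x^n)$, and $a^{n,\pm}_i=\order(n^{-1/2})$ uniformly in $\hat x^n$ since $\nabla\Phi$ is bounded. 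I would Taylor-expand $a^{n,\pm}_i$ to second order with a third-order remainder (bounded because $\nabla^2\Phi,\nabla^3\Phi$ are), expand the exponential to second order, multiply by the jump rates in \eqref{eq-Lg-hatXn}, and collect contributions by order in $n^{-1/2}$. Three things should happen: (i) the contributions of order $\sqrt n$, coming from the leading sizes $\lambda^n_i\approx n\lambda_i$ and $\mu^n_iz_i\approx n\lambda_i$ of the rates, cancel between up- and down-jumps by the Halfin--Whitt balance $\mu_i\rho_i=\lambda_i$, $\sum_i\rho_i=1$ and $e\cdot z=(e\cdot x)\wedge n$; (ii) the contributions of order $1$ reassemble exactly into $\Lg^{u}\cV^n_\varrho(\hat x^n)$, using \eqref{eqn-HW-parameter} and the definitions of $\ell^n_i,\ell_i$; (iii) everything that remains is bounded, uniformly in $u\in\Act$ and $n\ge1$, by $\tfrac{\hat c}{\sqrt n}(1+\|\hat x^n\|)\cV^n_\varrho(\hat x^n)$. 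For (iii) the point is that the down-jump rate $\mu^n_iz_i+\gamma^n_iq_i$ has size $n+\sqrt n\,\order(\|\hat x^n\|)$ while the first-order increments of $\Phi$ have size $n^{-1/2}$ and the second-order ones size $n^{-1}$; the products are controlled once one uses that the rates deviate from their limits by $\order(n^{-1/2})$ (as in \eqref{eqn-HW-parameter}), that $\nabla\Phi,\nabla^2\Phi,\nabla^3\Phi$ are bounded, and that $|\hat z^n_i|+\hat q^n_i\le c(1+\|\hat x^n\|)$ on the admissible set, so that the linearly growing part of the down-jump rate is only ever multiplied by the $\order(n^{-1})$ increments and produces the announced $\tfrac{\hat C_2}{\sqrt n}\|\hat x^n\|$-type term and nothing worse.

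Combining (i)--(iii) gives $\Lg^{n,u}\cV^n_\varrho(\hat x^n)\le\Lg^{u}\cV^n_\varrho(\hat x^n)+\tfrac{\hat c}{\sqrt n}(1+\|\hat x^n\|)\cV^n_\varrho(\hat x^n)$; then the uniform limiting-drift estimate from Proposition~\ref{prop-lyap-with-abandonment}, the bound $\tfrac{\hat c}{\sqrt n}\le\hat c$ for $n\ge1$, and an enlargement of $\hat C_0$ to absorb the constant $\hat c$ yield \eqref{eq-lyap-foster-prelimit} with $\hat C_2:=\hat c/\varrho$ and $\hat C_0,\hat C_1$ the (possibly enlarged) constants of Proposition~\ref{prop-lyap-with-abandonment}. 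The step I expect to be the main obstacle is (iii): one must show that the discrepancy $\Lg^{n,u}\cV^n_\varrho-\Lg^{u}\cV^n_\varrho$ is $\order(n^{-1/2}(1+\|\hat x^n\|))\cV^n_\varrho$ \emph{uniformly} in the control $u\in\Act$ and in $n$, the delicate feature being that the down-jump rate is of order $n$ and grows linearly in $\|\hat x^n\|$ at the same time as $\cV^n_\varrho=e^{\varrho\Phi}$ grows exponentially in $\hat x^n$. This is exactly why one has to work with an exponent $\Phi$ whose derivatives are uniformly bounded --- so that the increments $\Phi(\hat x^n\pm\tfrac1{\sqrt n}e_i)-\Phi(\hat x^n)$, hence $e^{\varrho a^{n,\pm}_i}-1$, remain $\order(n^{-1/2})$ uniformly in $\hat x^n$ --- and to do the bookkeeping carefully enough to confine the linear-in-$\|\hat x^n\|$ part of the error to the single term $\tfrac{\hat C_2}{\sqrt n}\|\hat x^n\|$.
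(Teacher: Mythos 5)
Your overall plan — take $\cV^n_\varrho=e^{\varrho\Phi}$ with $\Phi$ asymptotically linear and $\nabla\Phi,\nabla^2\Phi,\nabla^3\Phi$ bounded, Taylor-expand the increments $\cV^n_\varrho(\hat x^n\pm n^{-1/2}e_i)$, observe the Halfin--Whitt cancellation at order $\sqrt n$, reassemble the $\order(1)$ terms into $\Lg^u\cV^n_\varrho$, and show the remainder is $\order\bigl(n^{-1/2}(1+\|\hat x^n\|)\bigr)\cV^n_\varrho$ — is precisely the computation the paper alludes to when it says the statement follows from ``a careful tracking of the $n$-dependence in the proof'' of \cite[Theorem~3.4]{AHP21}, and that computation does yield the inequality.

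There is, however, one genuine error. You claim $\Phi$ ``may be taken to coincide with the Lyapunov function of the limiting diffusion from Proposition~\ref{prop-lyap-with-abandonment}'' and that the bound $\Lg^u\cV^n_\varrho\le\varrho(\hat C_0-\hat C_1\|x\|)\cV^n_\varrho$ is ``exactly the content'' of that proposition. Neither is correct: Proposition~\ref{prop-lyap-with-abandonment} supplies a $\cV$ with $\Lg^u\cV\le(C_0-C_1\|x\|^2)\cV$, a \emph{quadratic} rate, and for an elliptic generator whose drift is affine in $x$ such a bound forces $\log\cV$ to grow quadratically (otherwise $b\cdot\nabla(\log\cV)$ cannot beat $-\|x\|^2$), so that $\cV$ is of order $e^{\theta\|x\|^2}$ and cannot be written as $e^{\varrho\Phi}$ with $\nabla\Phi$ bounded. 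If you plugged that $\cV$ into your expansion, the increments $a^{n,\pm}_i$ would be of order $\|\hat x^n\|/\sqrt n$ rather than $\order(n^{-1/2})$ uniformly, the cancellation in step (i) would leave an uncontrolled $\order(\sqrt n\,\|\hat x^n\|)$ term, and the expansion of $e^{\varrho a^{n,\pm}_i}-1$ to second order would no longer dominate its tail — steps (ii)--(iii) collapse. What you actually need, and what \cite[Theorem~3.4]{AHP21} builds, is a \emph{separate, weaker} Lyapunov function for the limiting diffusion of exactly the shape you describe in the rest of the paragraph: $e^{\varrho\Phi}$ with $\Phi$ linearly growing and with bounded derivatives, delivering only a $-\hat C_1\|x\|$ rate (compare the linearly growing $\srZ$ built from $\eta$ in the proof of Lemma~\ref{lem-tightness-empirical}, which is the additive counterpart of the $\Phi$ you want). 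With $\Phi$ taken from that construction rather than from Proposition~\ref{prop-lyap-with-abandonment}, your steps (i)--(iii) go through as written.
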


An immediate consequence of the above proposition is that $\hat \Lambda^n (\hat X^n_0)<\infty$. In other words, there is at least one $U^n\in \Act^n$ such that $J^n(\hat X^n_0,U^n)<\infty.$ Moreover, we have the following result.

\begin{lemma}\label{lem-markov-finite-cost}  For $n\geq \big(\frac{2\hat C_2}{\hat C_1}\big)^2$ and  for every $U^n\in \Act^n$, $
	J^n(\hat X^n_0,U^n)<\infty$.
\end{lemma}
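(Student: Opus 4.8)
The plan is to combine a linear growth bound on the running cost with the exponential‑type Lyapunov inequality of Proposition~\ref{prop-lyap-with-abandonment-prelimit}. First I would record that, since $U^n_t\in\Act=\{u\in\RR^d_+:e\cdot u=1\}$, one has $\kappa\cdot U^n_t\le\max_i\kappa_i$, so that the running cost appearing in the exponent of \eqref{eqn-RS-cost-n} satisfies, uniformly over $U^n\in\Act^n$,
\begin{equation*}
r(\hat X^n_t,U^n_t)=(e\cdot\hat X^n_t)^+(\kappa\cdot U^n_t)\le (\max_i\kappa_i)\,(e\cdot\hat X^n_t)^+\le c_0\,\|\hat X^n_t\|,\qquad c_0\df\sqrt d\,\max_i\kappa_i .
\end{equation*}
Hence it suffices to produce a finite constant $K=K(n)$, independent of $U^n$ and of $T$, and a finite $C_n$ with $\E\big[e^{\int_0^T r(\hat X^n_t,U^n_t)\,dt}\big]\le C_n\,e^{KT}$; dividing by $T$, taking logarithms, and letting $T\to\infty$ then gives $J^n(\hat X^n_0,U^n)\le K<\infty$.

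To obtain such a $K$, fix $n\ge(2\hat C_2/\hat C_1)^2$, so that $\hat C_2/\sqrt n\le\hat C_1/2$, and apply Proposition~\ref{prop-lyap-with-abandonment-prelimit} with the specific choice $\varrho\df 2c_0/\hat C_1$; write $\cV\df\cV^n_\varrho$. Combining \eqref{eq-lyap-foster-prelimit} with the linear bound on $r$, for every $u\in\bU^n$ and every $x$,
\begin{equation*}
\Lg^{n,u}\cV(\hat x^n)+r(x,u)\,\cV(\hat x^n)\ \le\ \Big[\varrho\hat C_0-\Big(\varrho\hat C_1-\frac{\varrho\hat C_2}{\sqrt n}-c_0\Big)\|\hat x^n\|\Big]\,\cV(\hat x^n)\ \le\ \varrho\hat C_0\,\cV(\hat x^n),
\end{equation*}
because the calibration of $\varrho$ against the threshold on $n$ makes $\varrho\hat C_1-\varrho\hat C_2/\sqrt n-c_0\ge\varrho\hat C_1/2-c_0=0$. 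Since \eqref{eq-lyap-foster-prelimit} holds for every constant control value, it holds along the trajectory of $\hat X^n$ under the admissible policy $U^n$, whose instantaneous generator is of the form $\Lg^{n,\cdot}$ with control value determined by $(\hat X^n_t,U^n_t)$. Set $K\df\varrho\hat C_0$.

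The final step is a routine supermartingale argument. I would consider $Y_t\df e^{-Kt}\cV(\hat X^n_t)\exp\!\big(\int_0^t r(\hat X^n_s,U^n_s)\,ds\big)$; by Dynkin's formula for the pure‑jump process $\hat X^n$ together with the product rule, the displayed drift inequality shows that $Y$ is a local supermartingale. Localizing at $\tau_m\df\inf\{t\ge0:\|\hat X^n_t\|\ge m\}$ gives $\E[Y_{t\wedge\tau_m}]\le Y_0=\cV(\hat X^n_0)$; since $\hat X^n$ is non‑explosive (from \eqref{eq-rep-Xn}, $X^n_{i,t}\le X^n_{i,0}+A^n_i(\lambda^n_i t)<\infty$ a.s.), we have $\tau_m\uparrow\infty$ a.s., and Fatou's lemma yields $\E[Y_t]\le\cV(\hat X^n_0)$, i.e.
\begin{equation*}
\E\Big[\cV(\hat X^n_t)\,e^{\int_0^t r(\hat X^n_s,U^n_s)\,ds}\Big]\ \le\ e^{Kt}\,\cV(\hat X^n_0).
\end{equation*}
As the Lyapunov function of \cite[Theorem~3.4]{AHP21} is of exponential type and in particular bounded below by a positive constant $c_*$, dividing through gives $\E\big[e^{\int_0^T r\,ds}\big]\le c_*^{-1}e^{KT}\cV(\hat X^n_0)$, hence $J^n(\hat X^n_0,U^n)\le K=\varrho\hat C_0<\infty$, which is the claim (with $K$ independent of $U^n$).

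The only delicate point I foresee is the localization in the last step: a priori the product $\cV(\hat X^n_t)e^{\int_0^t r\,ds}$ need not be integrable, so one genuinely needs the stopping times $\tau_m$, nonnegativity of $Y$, Fatou, and the non‑explosiveness of $\hat X^n$; one should also make sure that the Lyapunov function supplied by Proposition~\ref{prop-lyap-with-abandonment-prelimit} may be taken bounded below by a positive constant (which is the case for the construction in \cite{AHP21}). Everything else is elementary once $\varrho$ has been chosen relative to the threshold $n\ge(2\hat C_2/\hat C_1)^2$.
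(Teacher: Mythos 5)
Your proposal is correct and follows essentially the same route as the paper: use the exponential Foster--Lyapunov inequality of Proposition~\ref{prop-lyap-with-abandonment-prelimit} with $\varrho$ calibrated so that, together with $n\ge(2\hat C_2/\hat C_1)^2$ and the linear bound $r(x,u)\le c_0\|\hat x^n\|$, the drift dominates the running cost; then run the It\^o/supermartingale argument and divide by $\inf\cV^n_\varrho>0$. The paper phrases this as first bounding $J^n_l\le\hat C_0$ for the Lyapunov running cost $l$ and then comparing $r$ to $l$ via $r\le l+\hat C_3$, whereas you fold $r$ directly into the supermartingale, but this is only a cosmetic repackaging of the identical argument.
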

\begin{proof}
	Recall that $r(x,u)= \kappa \cdot [(e\cdot x)^+u]$. 
	Using Proposition~\ref{prop-lyap-with-abandonment-prelimit} with $\varrho=\frac{2\kappa}{\hat C_1}$, $\Lg^{n,u} \cV_\varrho(\hat x^n)\leq \varrho (\hat C_0 -l(\hat x^n))\cV_\varrho^n(\hat x^n)$ and applying It{\^o}'s formula to $e^{\int_0^t l(\hat X^n_s)ds}\cV_\varrho^n(\hat X^n_t)$ and using the fact that  $\inf_{x\in \RR^d}\cV_\varrho^n(x)> 0$,  we have $$ J^n_l\doteq \limsup_{T\to\infty}\frac{1}{T}\log \E\Big[e^{\int_0^T l(\hat X^n_t)dt}\Big]\leq \hat C_0.$$ 
	
	For any $U^n\in \Act^n$, using the inf-compactness of $l_r$, we have
	\begin{align*}
	e^{\int_0^t r(\hat X^n_t,U^n_t)dt}\leq e^{\int_0^t l(\hat X^n_t)dt + \hat C_3t},
	\end{align*}
	for some large enough $\hat C_3>0$.
 	 This implies that $\hat \Lambda^n(\hat X^n_0)\leq J^n(\hat X^n_0,U^n) \leq J^n_l+\hat C_3\leq \hat C_0+\hat C_3$. This proves the result.
\end{proof}
The following corollary of Proposition~\ref{prop-lyap-with-abandonment-prelimit} is required in the proof of Lemma~\ref{lem-AO-upper} (in particular, in the proof of Lemma~\ref{lem-tail-est}).
\begin{corollary}\label{cor-finite-cost-prelimit}
	For any $\hat C_3>0$ and under any admissible control $U^n\in \Act^n$,
	 $$ \limsup_{n\to\infty}\limsup_{T\to \infty}\frac{1}{T}\log\E\Big[e^{\hat C_3\int_0^T\|\hat X^n_t\|dt }\Big]<\infty.$$
\end{corollary}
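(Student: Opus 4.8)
The plan is to reuse the Lyapunov/Dynkin argument from the proof of Lemma~\ref{lem-markov-finite-cost}, but now choosing $\varrho$ in Proposition~\ref{prop-lyap-with-abandonment-prelimit} as a function of $\hat C_3$ and keeping track of the fact that the resulting exponential rate is independent of $n$.

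First I would fix $\hat C_3>0$ and set $\varrho\df 2\hat C_3/\hat C_1$. For $n\ge(2\hat C_2/\hat C_1)^2$ one has $\hat C_2/\sqrt n\le\hat C_1/2$, so Proposition~\ref{prop-lyap-with-abandonment-prelimit} gives, for every constant control $u$ and every $x$,
\[
\Lg^{n,u}\cV^n_\varrho(\hat x^n)\le\varrho\Big(\hat C_0-\big(\hat C_1-\tfrac{\hat C_2}{\sqrt n}\big)\|\hat x^n\|\Big)\cV^n_\varrho(\hat x^n)\le\varrho\hat C_0\,\cV^n_\varrho(\hat x^n)-\hat C_3\,\|\hat x^n\|\,\cV^n_\varrho(\hat x^n),
\]
where the last step uses $\varrho(\hat C_1-\hat C_2/\sqrt n)\ge\varrho\hat C_1/2=\hat C_3$; here I rely on the constants $\hat C_0,\hat C_1,\hat C_2$ being independent of $\varrho$, exactly as they are used in Lemma~\ref{lem-markov-finite-cost}. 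Then I would apply It\^o's formula to $Y^n_t\df\exp\!\big(\hat C_3\int_0^t\|\hat X^n_s\|\,ds\big)\cV^n_\varrho(\hat X^n_t)$ along the stopping times $\tau_k\df\inf\{t\ge0:\|\hat X^n_t\|\ge k\}$. Differentiating the exponential prefactor produces the term $\hat C_3\|\hat X^n_t\|\cV^n_\varrho(\hat X^n_t)$, which cancels the negative term in the generator bound; after taking expectations (the local-martingale remainder is a genuine martingale on $[0,t\wedge\tau_k]$, where everything is bounded) one is left with $\E[Y^n_{t\wedge\tau_k}]\le\cV^n_\varrho(\hat X^n_0)+\varrho\hat C_0\int_0^t\E[Y^n_{s\wedge\tau_k}]\,ds$. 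Gr\"onwall's inequality gives $\E[Y^n_{t\wedge\tau_k}]\le\cV^n_\varrho(\hat X^n_0)e^{\varrho\hat C_0 t}$, and since $\hat X^n$ is non-explosive (its total jump intensity is locally bounded, the arrival rates being constant) we have $\tau_k\uparrow\infty$ a.s., so Fatou's lemma yields $\E[Y^n_t]\le\cV^n_\varrho(\hat X^n_0)e^{\varrho\hat C_0 t}$.

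Finally, since $\cV^n_\varrho$ is positive and inf-compact it attains a strictly positive minimum $c^n>0$ (as already used in Lemma~\ref{lem-markov-finite-cost}), whence $\E\big[e^{\hat C_3\int_0^T\|\hat X^n_t\|dt}\big]\le(c^n)^{-1}\cV^n_\varrho(\hat X^n_0)\,e^{\varrho\hat C_0 T}$; as $(c^n)^{-1}\cV^n_\varrho(\hat X^n_0)$ does not depend on $T$,
\[
\limsup_{T\to\infty}\frac1T\log\E\big[e^{\hat C_3\int_0^T\|\hat X^n_t\|dt}\big]\le\varrho\hat C_0=\frac{2\hat C_3\hat C_0}{\hat C_1}\qquad\text{for all }n\ge(2\hat C_2/\hat C_1)^2 .
\]
Because the right-hand side is independent of $n$, taking $\limsup_{n\to\infty}$ completes the proof. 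I expect the only mildly delicate point to be the localization/Fatou step that rigorously justifies Dynkin's formula for the unbounded functional $Y^n$ (and the non-explosion of $\hat X^n$); the rest is the computation of Lemma~\ref{lem-markov-finite-cost} with $\varrho$ tuned so the cancellation occurs and with the $n$-dependence made explicit.
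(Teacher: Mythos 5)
Your proof is correct and follows exactly the route the paper intends: the paper's one-line proof simply defers to the argument of Lemma~\ref{lem-markov-finite-cost}, applied with the inf-compact function $\hat C_3\|x\|$ in place of the running cost. Your explicit tuning $\varrho=2\hat C_3/\hat C_1$ (so that $\varrho(\hat C_1-\hat C_2/\sqrt n)\ge \hat C_3$ once $n\ge(2\hat C_2/\hat C_1)^2$), together with the observation that the resulting rate bound $\varrho\hat C_0$ is $n$-independent, is exactly what makes the double $\limsup$ finite, and the localization/Gr\"onwall/Fatou steps you spell out are the standard ones underlying the paper's tacit appeal to It\^o's formula.
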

\begin{proof}
	 From the inf-compactness of  $\hat C_3\|x\|$ and  following the arguments in the proof of Lemma~\ref{lem-markov-finite-cost}, we get the result. 
\end{proof}

\subsection{ERSC for the limiting diffusion}  \label{sec-ERSC-diffusion}
It is shown in \cite{AMR04} that under the conditions in \eqref{eqn-HW-parameter}--\eqref{eqn-HT-condition} and under the work-conserving control policies, if
 \begin{align}\label{eq-init-conv}n^{-\frac{1}{2}}\ZZ^d\ni\hat{X}^n_0\to X_0\in \RR^d\end{align}as $n\to \infty$,  $\hat{X}^n\Rightarrow{X}$ in $\frD^d$ as $n\to\infty$, where ${X}$ is a $d$-dimensional controlled diffusion given as a solution to 
\begin{equation} \label{eqn-hatX}
{X}_t = {X}_0 + \int_0^t b({X}_s, U_s)ds + \Sigma W_t,
\end{equation}
where 
\begin{equation} \label{eqn-b}
b(x,u) = \ell - R(x- (e\cdot x)^+u) - (e\cdot x)^+ \Gamma u
\end{equation}
with 
\[
R=\diag(\mu_1, \dots, \mu_d), \quad \Gamma =\diag(\gamma_1,\dots,\gamma_d), \quad \Sigma\Sigma\transp=\diag(2\lambda_1,\dots,2\lambda_d). 
\]
In the rest of the paper, we assume that~\eqref{eq-init-conv} holds with $X_0=x$.
The process $U$ (referred to as control) is assumed to take values in $\Act =  \{u \in\RR^d_+: e \cdot u=1\}$. 
\begin{definition} \label{def-admissible} 
A $\bU$--valued process $U$ is called admissible if it satisfies the following: if $U_t=U_t(\omega)$ is jointly measurable in $(t,\omega)\in \RR^+\times \Omega$ and  for every $0\leq s< t$, $W_t-W_s$ is independent of the completed filtration (with respect to $(\cF,\PP)$) generated by $\{X_0,U_r, W_r: r\leq s\} $. The set of all such controls is denoted by $\Uadm$.
\end{definition}

Let $\Usm$ denote the set of stationary Markov controls. In order to study the convergence of stationary Markov controls or existence of optimal stationary Markov controls, it is useful to consider a weaker notion of a stationary Markov control, \emph{viz.,} relaxed control - the control is defined in the sense of distribution. To be more precise, a control $v$ is said to be a relaxed Markov control if $v=v(\cdot)$ is a Borel measurable map from $\RR^d$ to $\cP(\bU)$. In this case, we write $v(du|x)$ to distinguish the relaxed Markov control $v$ from other Markov controls which are referred to as precise Markov controls. Clearly, the set of relaxed Markov controls contains $\Usm$. But with slight abuse of notation, we represent the set of relaxed Markov controls also by $\Usm.$  
Under $v\in\Usm$, the controlled diffusion ${X}$ in \eqref{eqn-hatX} has a unique solution \cite[Theorem 2.2.4]{ari2011}.

We denote the generator $\Lg^{u}:\Cc^{2}(\RR^{d})\mapsto\Cc(\RR^{d})$ of the controlled diffusion ${X}$ as 
\begin{align*}
\Lg^{u} f(x) & =  \sum_{i=1}^d b_i(x,u) \frac{\partial}{\partial x_i} f(x) + \sum_{i=1}^d  \lambda_i  \frac{\partial^2}{\partial x_i^2} f(x) \,. 
\end{align*}
We write the generator $\Lg^{u}$ as $\Lg^{v} $ under $v \in\Usm$.
$\Lg^{v} $ is the generator of a strongly-continuous
semigroup on $\Cc_{b}(\RR^{d})$, which is strong Feller.
We denote by $\Ussm$ the subset of $\Usm$ that consists
of \emph{stable stationary Markov controls} (\emph{i.e.,} stationary Markov controls
under which $X$ is positive recurrent)
and by $\mu_v$ the invariant probability measure of the process
under the control $v\in\Ussm$.
For our model, $\Ussm = \Usm$ since the controlled diffusion ${X}$ is uniformly exponentially ergodic under all stationary Markov controls (see Proposition~\ref{prop-lyap-with-abandonment}).

For the limiting diffusion ${X}$ in \eqref{eqn-hatX} with $X_0=x$ and control $U\in \Uadm$, the ergodic risk sensitive cost function is given by 
\begin{equation} \label{eqn-RS-cost-hat}
J(x, U) \doteq  \limsup_{T\to\infty} \frac{1}{T} \log \E\left[ e^{\int_0^T r({X}_t, U_t )dt }\right].
\end{equation}
The associated cost minimization problem  is
\begin{equation}
{\Lambda}({x}) \doteq  \inf_{U\in \Uadm}  J({x}, U)\, \quad \text{ and } \quad \Lambda\doteq \inf_{x\in \RR^d} \Lambda(x).
\end{equation}
This is the optimal value for the ERSC problem for limiting diffusion given the initial state ${x}$. 
In addition, let 
\begin{equation}
{\Lambda}_{\text{SM}}({x}) \doteq \inf_{v\in \Usm}  J({x}, v(\cdot))\,,
\end{equation}
be the the optimal value over all stationary Markov controls given the initial state $x$. 
We will see that the optimal value is independent of the initial value. Denote
\[
{\Lambda}_{\text{SM}}  = \inf_{x\in \RR^d}{\Lambda}_{\text{SM}}(x) \quad \text{ and } \quad {\Lambda}  = \inf_{x\in \RR^d}{\Lambda}(x) \,. 
\]

For notational convenience, let 
\[
{\Lambda}_v(x) \doteq  J(x, v(\cdot)) \quad \text{for} \quad \, v\in \Usm. 
\]

We state the uniform stability result that is proved in~\cite{AHP21} as it is vital to invoke the existing results on ERSC problems for diffusions (see \cite{ari2018strict,AB18}). To that end, we have the following result \cite[Theorem 2.1]{AHP21} for the limiting diffusion. 

\begin{proposition}\label{prop-lyap-with-abandonment}
There exist a $\cC^2(\RR^d)$ inf-compact function $\cV$  with $\cV\geq 1$ and constants $C_0,C_1>0$ such that for every $u\in \bU$, 
	\begin{align}\label{eq-lyap-with-abandonment} \Lg^u \cV(x)\leq \big(C_0-C_1\|x\|^2\big)\cV(x) \quad\text{ for $x\in \RR^d$.}\end{align}
\end{proposition}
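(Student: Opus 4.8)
The plan is to produce the Lyapunov function in the exponential form $\cV=e^{\theta V_0}$, which reduces the claim to a first-order (drift) Lyapunov inequality for a quadratic-type $V_0$. Suppose $V_0\in\cC^2(\RR^d)$ satisfies $V_0\ge 0$, $V_0$ is inf-compact with $V_0(x)\asymp\|x\|^2$, $\|\nabla V_0(x)\|\le K(1+\|x\|)$, and
\[
\sup_{u\in\bU}\,\langle\nabla V_0(x),b(x,u)\rangle\;\le\;\widetilde C_0-\widetilde C_1\|x\|^2,\qquad x\in\RR^d,
\]
for some $\widetilde C_0,\widetilde C_1>0$. Then for $\cV=e^{\theta V_0}$,
\[
\frac{\Lg^u\cV(x)}{\cV(x)}=\theta\Bigl(\langle\nabla V_0(x),b(x,u)\rangle+\sum_{i=1}^d\lambda_i\,\partial^2_{ii}V_0(x)\Bigr)+\theta^2\sum_{i=1}^d\lambda_i\bigl(\partial_i V_0(x)\bigr)^2 ,
\]
and since $\sum_i\lambda_i\partial^2_{ii}V_0$ is bounded and $\sum_i\lambda_i(\partial_i V_0)^2\le C(1+\|x\|^2)$, taking $\theta$ small enough makes the $\|x\|^2$-coefficient strictly negative, giving $\Lg^u\cV\le(C_0-C_1\|x\|^2)\cV$ uniformly in $u\in\bU$; and $\cV\ge 1$ after adding a constant to $V_0$. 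So everything comes down to constructing $V_0$.

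For $V_0$, the natural route is to use the drift decomposition $b(x,u)=\ell-Rz-\Gamma q$ with $q=q(x,u)=(e\cdot x)^+u\ge 0$ and $z=z(x,u)=x-q$ (so $e\cdot q=(e\cdot x)^+$, $e\cdot z=-(e\cdot x)^-$), and a case split on the sign of $e\cdot x$. On the underloaded cone $\{e\cdot x\le 0\}$ one has $q=0$, $z=x$, so $b(x,u)=\ell-Rx$ is $u$-independent and strictly mean reverting. On the overloaded cone $\{e\cdot x>0\}$ the drift is the affine field $b(x,u)=\ell-A(u)x$ with $A(u)=R-(R-\Gamma)ue\transp$; the presence of abandonment enters through the facts that $A(u)u=\Gamma u$ (abandonment stabilizes the ``queue direction'' $u$ at rate $\Gamma$ --- without it $A(u)$ has a zero eigenvalue), and that the vertex matrices $A(e_k)=R-(\mu_k-\gamma_k)e_k e\transp$ are rank-one perturbations of $R$ with explicit eigenvalues $\{\mu_i:i\neq k\}\cup\{\gamma_k\}$, all strictly positive. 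I would then look for a single symmetric $P\succ 0$ solving the structured LMI system
\[
PR+RP\succeq c\,I,\qquad PA(e_k)+A(e_k)\transp P\succeq c\,I\quad(k=1,\dots,d),
\]
whence, since $A(u)$ is affine in $u$ and $\bU$ is the simplex, $PA(u)+A(u)\transp P\succeq c\,I$ for every $u\in\bU$. With such a $P$, take $V_0(x)=\langle x,Px\rangle$ (plus a constant): on the overloaded cone $\langle\nabla V_0,b(x,u)\rangle=2\langle Px,\ell\rangle-x\transp(PA(u)+A(u)\transp P)x\le C-c\|x\|^2$, and likewise with $A(u)$ replaced by $R$ on the underloaded cone, so the required drift bound holds globally, and $V_0$ is manifestly $\cC^2$, inf-compact, and of linearly growing gradient. (If the common-quadratic LMI should fail for some parameter ranges, one falls back to gluing a quadratic $P_+$ for the overloaded cone to $P_-=P_++\tfrac12(ec\transp+ce\transp)$ for the underloaded cone via a smooth cutoff $\chi(e\cdot x)$, the agreement of the two quadratic forms on $\{e\cdot x=0\}$ being exactly what keeps $\nabla V_0$ of linear growth across the unbounded transition strip.)

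The main obstacle is the overloaded-cone step --- equivalently, feasibility of the LMI above, i.e.\ a common quadratic Lyapunov function for the structured family $\{-R,-A(e_k)\}_{k=1}^d$. This must genuinely exploit the rank-one form of $A(u)$ and the joint stabilizing effect of service and abandonment; a rotation-invariant or naively weighted choice such as $V_0(x)=\|x\|^2$ does \emph{not} work, since there are states with $e\cdot x$ moderate but $\|x\|$ large at which, for a suitable control $u$, $\langle x,b(x,u)\rangle$ is \emph{positive} of order $\|x\|^2$ --- the diffusion is still positive recurrent there, the drift merely rotating the state back toward the stable cone rather than pushing it to infinity, which a spherically symmetric $V_0$ cannot see. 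Establishing this LMI feasibility (equivalently, producing $P$), together with the attendant regularity checks needed for the exponentiation step, is where the real work lies; this is the content of \cite[Theorem~2.1]{AHP21}.
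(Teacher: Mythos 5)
Your exponentiation step is correct: if $V_0$ is $\cC^2$, inf-compact of order $\|x\|^2$, has bounded second derivatives and $\|\nabla V_0(x)\|\le K(1+\|x\|)$, and satisfies $\sup_{u\in\bU}\langle\nabla V_0(x),b(x,u)\rangle\le\widetilde C_0-\widetilde C_1\|x\|^2$, then $\cV=e^{\theta V_0}$ with $\theta$ small enough yields \eqref{eq-lyap-with-abandonment}, and adding a constant to $V_0$ makes $\cV\ge 1$. Your structural analysis on the overloaded cone is also right: $A(u)=R-(R-\Gamma)ue\transp=\sum_k u_kA(e_k)$ for $u$ in the simplex $\bU$, the eigenvalues of $A(e_k)$ are $\{\mu_i:i\ne k\}\cup\{\gamma_k\}$, all positive, and checking the common-quadratic inequality at the vertices gives it on all of $\bU$ by convexity. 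The observation that the rotation-invariant $V_0(x)=\|x\|^2$ can fail when some $\mu_i-\gamma_i$ is large is also correct.

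However, the step that carries the actual content of the proposition --- producing $P$ (or the glued piecewise quadratic) for which $\sup_u\langle\nabla V_0,b\rangle\le\widetilde C_0-\widetilde C_1\|x\|^2$ holds --- is not carried out. Feasibility of the LMI $PR+RP\succeq cI$, $PA(e_k)+A(e_k)\transp P\succeq cI$, $P\succ 0$ is not automatic: a family of matrices with eigenvalues in the open right half-plane need not admit a common quadratic Lyapunov function, so the rank-one structure of $A(e_k)-R$ and the abandonment rates $\gamma_i>0$ must be exploited in a nontrivial way. You defer this to \cite[Theorem 2.1]{AHP21}; but that citation is also the \emph{entire} content of the paper's own ``proof'' of the proposition, so there is no internal argument for your outline to match or diverge from. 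Your proposal is therefore best read as a plausible reconstruction of what the cited argument likely does (reduction to a first-order drift bound for a piecewise-quadratic $V_0$), not a self-contained proof, and the one unproven step is exactly what the proposition asserts. A further minor gap: in the gluing fallback, the vector $c$ in $P_-=P_++\tfrac12(ec\transp+ce\transp)$ is never specified, and the drift inequality in the transition strip where $\chi'(e\cdot x)$ and $\chi''(e\cdot x)$ are active would also need to be verified.
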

 The above proposition ensures that the ERSC cost is finite for every admissible control - in particular, $\Lambda<\infty$ which we now prove in the following corollary. 

\begin{corollary}\label{cor-well-defined-limit}For every $U\in \Uadm$, $J(x,U)<\infty$. In particular, $\Lambda<\infty$.
\end{corollary}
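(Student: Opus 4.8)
The plan is to follow the template of the proof of Lemma~\ref{lem-markov-finite-cost}, now using the \emph{uniform-in-$u$} Lyapunov bound of Proposition~\ref{prop-lyap-with-abandonment} together with a Gronwall/supermartingale argument in continuous time. Set $l(x)\df C_1\|x\|^2$, so that Proposition~\ref{prop-lyap-with-abandonment} reads $\Lg^u\cV(x)\le (C_0-l(x))\cV(x)$ for every $u\in\bU$ and $x\in\RR^d$. Fix an arbitrary $U\in\Uadm$ and apply It{\^o}'s formula to $Y_t\df e^{\int_0^t l(X_s)ds}\cV(X_t)$; using the pointwise inequality $\Lg^{U_t}\cV(X_t)\le (C_0-l(X_t))\cV(X_t)$ --- valid precisely because the estimate in Proposition~\ref{prop-lyap-with-abandonment} is uniform over the action set $\bU$ --- one gets $dY_t\le C_0\,Y_t\,dt+dM_t$ for a local martingale $M$. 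Localizing along $\tau_R\df\inf\{t\ge0\colon\|X_t\|\ge R\}$ so that the stochastic integral is a true martingale on $[0,t\wedge\tau_R]$, taking expectations, applying Gronwall's inequality to $t\mapsto\E_x[Y_{t\wedge\tau_R}]$, and letting $R\to\infty$ via Fatou's lemma (using that $X$ is non-explosive, which follows from the linear growth of $b$), I obtain $\E_x\big[e^{\int_0^T l(X_t)dt}\cV(X_T)\big]\le e^{C_0T}\cV(x)$. Since $\cV\ge1$, this gives
\[
J_l(x)\df\limsup_{T\to\infty}\frac1T\log\E_x\Big[e^{\int_0^T l(X_t)dt}\Big]\le C_0 .
\]

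Next I would dominate the running cost by $l$ up to an additive constant. Since $u\in\bU$ satisfies $e\cdot u=1$ and $u\ge0$, we have $r(x,u)=\kappa\cdot[(e\cdot x)^+u]=(e\cdot x)^+(\kappa\cdot u)\le (\max_i\kappa_i)(e\cdot x)^+\le c\|x\|$ with $c\df\sqrt d\,\max_i\kappa_i$; as the quadratic $l$ dominates this linear bound up to a constant, there is $\hat C_3>0$ with $r(x,u)\le l(x)+\hat C_3$ for all $(x,u)\in\RR^d\times\bU$. Hence $\int_0^T r(X_t,U_t)dt\le \int_0^T l(X_t)dt+\hat C_3 T$, and therefore
\[
J(x,U)\le \limsup_{T\to\infty}\frac1T\log\Big(e^{\hat C_3 T}\,\E_x\big[e^{\int_0^T l(X_t)dt}\big]\Big)\le C_0+\hat C_3<\infty .
\]
Taking the infimum over $U\in\Uadm$ and then over $x\in\RR^d$ yields $\Lambda\le C_0+\hat C_3<\infty$, which is the second assertion.

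The only genuinely delicate point is the localization in the It{\^o}/supermartingale step: because $\cV$ may grow rapidly, one cannot take expectations directly in the inequality $dY_t\le C_0Y_t\,dt+dM_t$. I expect this to be handled exactly as in standard Foster--Lyapunov arguments for exponential functionals --- stop at $\tau_R$, use that $\nabla\cV$ is bounded on $\{\|x\|\le R\}$ (so the stopped stochastic integral, whose integrand also carries the bounded-on-$[0,t\wedge\tau_R]$ factor $e^{\int_0^\cdot l(X_s)ds}$, is a genuine martingale), apply Gronwall to the finite-valued function $t\mapsto\E_x[Y_{t\wedge\tau_R}]$, and pass to the limit $R\to\infty$ using Fatou together with $\tau_R\to\infty$ a.s. All remaining steps are routine, and one also notes $J(x,U)\ge0$ since $r\ge0$, so in fact $J(x,U)\in[0,C_0+\hat C_3]$.
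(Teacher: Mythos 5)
Your proposal is correct and follows essentially the same route as the paper's proof: apply Proposition~\ref{prop-lyap-with-abandonment} to get the drift inequality for $\cV$, use It\^o plus localization and Fatou to obtain $J_l\le C_0$, then dominate $r$ by $l$ up to an additive constant. The only cosmetic difference is that the paper absorbs $C_0$ into the exponent (so the stopped process is directly a supermartingale) whereas you invoke Gronwall, which is equivalent.
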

\begin{proof}Recall that $r(x,u)= \kappa \cdot [(e\cdot x)^+u]$. The function  $l_r(x)\doteq l(x)-\max_{u\in \mathbb{U}}r(x,u)$ is inf-compact with $l(x)\doteq C_1\|x\|^2$.
	
	Using Proposition~\ref{prop-lyap-with-abandonment}, $\Lg^{u} \cV( x)\leq (C_0 -l(x))\cV(x)$ and applying It{\^o}'s formula to $e^{\int_0^t (l(X_s)-C_0)ds}\cV(X_t)$ up to the stopping time $T\wedge \tau_R$ (where $\tau_R$ is the first exit time of the $R$-radius ball around origin)  gives us 
	\begin{align*}
	&\E\Big[e^{\int_0^{T\wedge \tau_R} (l(X_t)-C_0)dt} \cV(X_{T\wedge\tau_R})\Big]\\
	&= \cV(x)+ \E\Big[\int_0^{T\wedge \tau_R} e^{\int_0^{t} (l(X_s)-C_0)ds}\Big( \Lg^{U_t} \cV(X_t) + \big(l(X_t) -C_0\big) \cV(X_t)\Big) dt\Big]\\
		&\leq \cV(x)\,.
	\end{align*}
	To get the  inequality, we use the Lyapunov drift inequality which is~\eqref{eq-lyap-with-abandonment}. From here, using the fact that $\cV\geq 1$ and then the  Fatou's Lemma, we immediately get
	\begin{align*}
	\E\Big[e^{\int_0^{T} (l(X_t)-C_0)dt} \Big]\leq \liminf_{R\to\infty} \E\Big[e^{\int_0^{T\wedge \tau_R} (l(X_t)-C_0)dt} \Big]\leq \cV(x)\,.
	\end{align*}
	This implies 
	\begin{align*}
J_l\doteq 	\limsup_{T\to\infty}\frac{1}{T} \log \E\Big[e^{\int_0^{T} l(X_t)dt} \Big]\leq C_0\,.
	\end{align*}
	Using the  above display and the inf-compactness of $l_r$, we have
	\begin{align*}
	e^{\int_0^t r(X_t,U_t)dt}\leq e^{\int_0^t l(X_t)dt +  C_3t},
	\end{align*}
	for some large enough $C_3>0$.
 	 This implies that $ J(x,U) \leq J_l+C_3\leq C_0+C_3$. This proves the corollary.
\end{proof}

	To keep the notation concise, whenever $v\in \Usm$, we write $r(x,v(x))$ as $r^v(x)$. Whenever there is no confusion, we also write $r(x,u)$ as $r^u(x)$ for $u\in \bU.$
The following theorem gives the well-posedness and characterization of optimal stationary Markov controls, which follows directly from  \cite[Theorem 4.1]{ari2018strict}.
\begin{theorem} \label{thm-diffusion}  There exists a pair $(V, \mathsf{\Lambda})\in \cC(\RR^d)\times \RR_+ $ such that $\inf_{x\in \RR^d} V(x)>0$,  satisfying 
\begin{equation} \label{eqn-HJB}
		\min_{u \in \Act} \bigl[\Lg^{u} V(x) + r^u(x)\,V(x)\bigr] \;=\; { \mathsf{\Lambda}}\,V(x)
		\qquad\forall\,x\in\Rd\,.
		\end{equation}
Moreover, 
	\begin{enumerate}
		\item[(i)] ${ \mathsf{\Lambda}}= \Lambda_{\text{SM}}=\Lambda$ and the function $V$ is unique up to a multiplicative constant.
		\item[(ii)] Any $v\in\Usm$ that satisfies 
		\begin{equation}\label{eqn-optimality1}
		\Lg^v V(x) + r^v(x)\,V(x)\;=\;
		\min_{u\in\Act}\; \bigl[\Lg^{u} V(x) + r^u(x)\,V(x)\bigr]
		\quad \text{for a.e.\ }x\in\Rd
		\end{equation}
		is stable, and is optimal in the class $\Usm$, i.e., $\Lambda_v(y)= \mathsf{\Lambda}$
		for all $y\in\Rd$.
		\item [(iii)] Every optimal stationary Markov control satisfies~\eqref{eqn-optimality1}. 
	\end{enumerate}
\end{theorem}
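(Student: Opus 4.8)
The plan is to reduce the statement entirely to \cite[Theorem~4.1]{ari2018strict} by checking that our limiting diffusion \eqref{eqn-hatX}--\eqref{eqn-b} and running cost $r^u(x)=r(x,u)=\kappa\cdot[(e\cdot x)^+u]$ meet the structural hypotheses of that theorem. Those hypotheses split into three groups: (a) joint continuity, local Lipschitz continuity in $x$ and linear growth of the drift, uniform nondegeneracy of the diffusion matrix, and compactness of the action set (which also secures well-posedness of the controlled SDE under admissible and stationary Markov controls, cf.\ \cite[Theorem~2.2.4]{ari2011} quoted after Definition~\ref{def-admissible}); (b) a multiplicative (exponential) ergodicity estimate in Foster--Lyapunov form, uniform over all stationary Markov controls; and (c) a growth condition forcing $r$ to be \emph{strictly subcritical} with respect to the Lyapunov exponent in (b). We emphasize that the alternative, near-monotone route of \cite{AB18} is \emph{not} available here: $r^u(x)=(e\cdot x)^+(\kappa\cdot u)$ vanishes identically on the unbounded half-space $\{e\cdot x\le 0\}$ and hence is not near-monotone, which is precisely why the uniform exponential ergodicity of Proposition~\ref{prop-lyap-with-abandonment} is the indispensable ingredient.

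For (a): the drift $b(x,u)=\ell-R\bigl(x-(e\cdot x)^+u\bigr)-(e\cdot x)^+\Gamma u$ is jointly continuous on $\RR^d\times\bU$; it is globally Lipschitz in $x$ uniformly over $u\in\bU$, since it is affine in $x$ on each of the half-spaces $\{e\cdot x\ge 0\}$ and $\{e\cdot x\le 0\}$ and $x\mapsto(e\cdot x)^+$ is $1$-Lipschitz; it has at most linear growth; the action set $\bU=\{u\in\RR^d_+:e\cdot u=1\}$ is compact and convex; and $\Sigma\Sigma\transp=\diag(2\lambda_1,\dots,2\lambda_d)$ is constant and positive definite, so $\Lg^u$ is a uniformly elliptic operator with locally bounded, measurable first-order coefficients — in particular its semigroup is strong Feller (as already noted after Definition~\ref{def-admissible}) and the associated HJB equation admits solutions in the natural Sobolev class $W^{2,p}_{\mathrm{loc}}$, hence $\cC^{1,\alpha}_{\mathrm{loc}}$, by standard interior elliptic estimates. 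The mild non-smoothness of $b$ across the hyperplane $\{e\cdot x=0\}$ is harmless, since only measurability and local boundedness of the drift coefficients are needed for this regularity theory.

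For (b): Proposition~\ref{prop-lyap-with-abandonment} is exactly the required input — an inf-compact $\cV\in\cC^2(\RR^d)$ with $\cV\ge 1$ and constants $C_0,C_1>0$ such that
\[
\Lg^u\cV(x)\le\bigl(C_0-C_1\|x\|^2\bigr)\cV(x)\qquad\text{for all }x\in\RR^d,\ u\in\bU,
\]
which is the Foster--Lyapunov/multiplicative ergodicity hypothesis of \cite[Theorem~4.1]{ari2018strict} with Lyapunov exponent $h(x)=C_1\|x\|^2$. For (c): we have $0\le r^u(x)=(e\cdot x)^+(\kappa\cdot u)\le c\|x\|$ with $c$ depending only on $\kappa$ and $d$, so $r$ is continuous, bounded below, and of at most linear growth; consequently $\limsup_{\|x\|\to\infty}\sup_{u\in\bU}\tfrac{r^u(x)}{h(x)}=0$, i.e.\ $r$ is strictly subcritical relative to $h$. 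Equivalently, for every $\delta>0$ the perturbed drift inequality $\Lg^u\cV(x)+\delta\,r^u(x)\,\cV(x)\le\bigl(C_0'-\tfrac{C_1}{2}\|x\|^2\bigr)\cV(x)$ holds outside a compact set, which is the form in which the hypothesis is often stated. (The finiteness $\Lambda<\infty$ established in Corollary~\ref{cor-well-defined-limit} is a consistency check on this.)

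With (a)--(c) verified, \cite[Theorem~4.1]{ari2018strict} applies directly and yields all four conclusions: the pair $(V,\mathsf\Lambda)\in\cC(\RR^d)\times\RR_+$ with $\inf_x V(x)>0$ solving the HJB equation \eqref{eqn-HJB}; the identity $\mathsf\Lambda=\Lambda_{\mathrm{SM}}=\Lambda$ together with uniqueness of $V$ up to a positive multiplicative constant; the stability and optimality in $\Usm$ of any measurable selector realizing \eqref{eqn-optimality1}; and the fact that every optimal stationary Markov control satisfies \eqref{eqn-optimality1}. There is no deep obstacle: the only real work is the bookkeeping of matching the abstract hypotheses to our setting — chiefly confirming that the merely-Lipschitz (not $\cC^1$) drift and the unbounded, linearly growing running cost are admissible in the framework of \cite{ari2018strict}. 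The crux worth stressing is the strict inequality in (c): the quadratic Lyapunov exponent $C_1\|x\|^2$ dominates the linear running cost with room to spare, and it is exactly this slack that makes the ``strict'' hypothesis hold and thereby delivers the full characterization of Theorem~\ref{thm-diffusion} rather than a bare existence statement.
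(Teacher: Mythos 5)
Your proposal is correct and follows essentially the same route as the paper: the paper also obtains Theorem~\ref{thm-diffusion} as a direct application of \cite[Theorem 4.1]{ari2018strict}, with Proposition~\ref{prop-lyap-with-abandonment} supplying the uniform exponential ergodicity and the inf-compactness of $C_1\|x\|^2-\max_{u\in\bU}r(x,u)$ (cf.\ Remarks~\ref{rem-runningcost} and~\ref{rem-choice-parameter}) playing the role of your subcriticality condition (c). Your explicit verification of the structural hypotheses (a)--(c) is a more detailed write-up of what the paper leaves implicit, but it is the same argument.
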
 
\begin{remark}  \label{rem-runningcost} 
We have chosen $r(x,u)$ to be in the form $\kappa\cdot [(e\cdot x)^+u]$.  The reason for not choosing a more general $r(x,u)$ is as follows. The main contribution of this work is proving the asymptotic optimality, where we show that the ERSC problem for the diffusion-scaled queueing process can be approximated by that of the limiting diffusion as $n\to\infty$  (in an appropriate sense). To address this, we have to show that ERSC problem for the diffusion-scaled process is well-defined for the chosen running cost function for large enough $n$. 

From Proposition~\ref{prop-lyap-with-abandonment-prelimit} and the arguments in the proof of Lemma~\ref{lem-markov-finite-cost}, it is clear that the ERSC problem for the diffusion-scaled queueing process is well-defined as long as \begin{align}\label{eq-inf-comp-cost-prelim}\big(\varrho\hat C_1-\frac{\varrho\hat C_2}{\sqrt n}\big)\|x\|-\max_{u\in \mathbb{U}^n(x)} r(x,u) \text{ is inf-compact.}\end{align}
On the other hand, to invoke \cite[Theorem 4.1]{ari2018strict} (and thereby proving that the ERSC problem for the limiting diffusion is well-defined), the conditions on $r(x,u)$ can be relaxed as long as \begin{align}\label{eq-inf-comp-cost-lim}C_1\|x\|^2-\max_{u\in \bU} r(x,u) \text{ is inf-compact.}\end{align} Here, $C_1$ is the constant from Proposition~\ref{prop-lyap-with-abandonment}.  We therefore have chosen $r(x,u)$ to be in the form $\kappa\cdot [(e\cdot x)^+u]$. 
\end{remark}

\begin{remark} \label{rem-choice-parameter}

We have focused on  the `V' network with abandonment for the following reason. 
For the ERSC problem of the limiting diffusion, the Foster-Lyapunov inequality in Proposition~\ref{prop-lyap-with-abandonment} is the crucial existing result (which can be found in \cite{AHP21}). In particular, we use the fact that for any $C_1>0$ from Proposition~\ref{prop-lyap-with-abandonment}, inf-compactness in~\eqref{eq-inf-comp-cost-lim} holds and show that the ERSC cost for any admissible control is finite (in Corollary~\ref{cor-well-defined-limit}). In addition to the finiteness of the ERSC cost, inf-compactness in~\eqref{eq-inf-comp-cost-lim} also  helps us in characterizing the  optimal stationary Markov controls (associated with the ERSC problem for the limiting diffusion) \emph{via.} the existing result from~\cite{ari2018strict} as stated in Theorem~\ref{thm-diffusion}. 
For the  `V' network  without abandonment, under the safety staffing condition requiring that $\tilde \rho$ in~\eqref{eqn-HT-condition} is strictly positive, the Foster-Lyapunov inequality was shown to take the following form:  
 there exist a $\cC^2(\RR^d)$ inf-compact function $\widetilde \cV$ such that $\widetilde \cV\geq 1$, a small $\veps>0$ and, positive constants $\widetilde C_0$ and $\widetilde C_1$ such that 
\begin{align}\label{eq-wo-abandon-limit}
\Lg^u\widetilde  \cV(x)\leq \widetilde C_0-\widetilde C_1\big( \frac{\tilde \rho}{2d} + \veps \|x^-\|\big)\widetilde \cV(x), \text{ for $x\in \RR^d$}\,.
\end{align}
Here, $x^-\doteq (x_1^-, x^-_2,\ldots, x^-_d)\,.$ 
Using this property, we are unable to establish if $J(x,U)<\infty$, for any $U\in \Uadm$. 
The methodology undertaken in this paper cannot be easily applied or extended to study the ERSC problems of the network models without abandonment, which we leave as future work.

\end{remark}

\subsection{Asymptotic optimality} \label{sec-AO} 
We are now in a position to state the main result of this paper. 
 \begin{theorem} \label{thm-AO} The following holds:
	\begin{align*}
	\lim_{n\to\infty} \hat{\Lambda}^n(\hat{X}^n_0) =  \Lambda. 
	\end{align*}	
\end{theorem}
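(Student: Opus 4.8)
The plan is to prove the two one-sided bounds $\liminf_{n\to\infty}\hat\Lambda^n(\hat X^n_0)\ge \Lambda$ (lower bound) and $\limsup_{n\to\infty}\hat\Lambda^n(\hat X^n_0)\le \Lambda$ (upper bound) separately. Both rest on recasting the exponential functionals in \eqref{eqn-RS-cost-n} and \eqref{eqn-RS-cost-hat} via the variational representations of Theorems~\ref{thm-var-rep-BM-gen} and~\ref{thm-var-rep-poisson-gen}: one views $T^{-1}\int_0^T\widetilde r(\hat Q^n_t)\,dt$ as a functional of the driving Poisson processes and $T^{-1}\int_0^T r(X_t,U_t)\,dt$ as a functional of the driving Brownian motion $W$, and then the multidimensional analogs of \eqref{var-rep-W-intro}--\eqref{var-rep-N-intro} (Corollaries~\ref{cor-var-rep-risk-B} and~\ref{cor-var-rep-unbounded-poisson}) turn each ERSC value into a \emph{supremum} over auxiliary controls ($w$ for the diffusion, $\phi$ for the queue) of a long-run average of an \emph{extended running cost} — the original running cost minus a relative-entropy penalty — evaluated along the extended processes \eqref{X-control} and \eqref{eq-prelimit-controlled}. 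Under a fixed policy the latter is an ordinary ergodic-control functional, hence expressible as $\int(\text{extended cost})\,d\mu$ against the mean empirical measure $\mu$ of the extended process paired with its auxiliary control; it is these measures that are pushed to the limit.

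\smallskip
\noindent\textbf{Lower bound.} Here I would first invoke Theorem~\ref{thm-sup-inf} to identify $\Lambda$ with the value of a two-person zero-sum stochastic differential game whose payoff is the long-run average of the extended running cost, the scheduling control playing the role of the minimizer and the auxiliary drift $w$ that of the maximizer, and in which $\sup$ and $\inf$ commute. A spatial-truncation argument — in the spirit of \cite[Section~4.1]{ABP15}, adapted to the game — then shows that the maximizer may be restricted to compactly supported stationary Markov strategies at arbitrarily small loss. Given $\veps>0$, fix such a nearly optimal compactly supported maximizing strategy and, on the relevant large compact set $K$, a nearly optimal minimizing control; transplant both to the $n$th system by choosing an SCP $U^n$ and an auxiliary control $\phi^n$ that reproduce them on $K$. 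Since the extended generator then agrees with the un-extended one outside $K$, and the latter is positive recurrent by Proposition~\ref{prop-lyap-with-abandonment-prelimit}, the extended diffusion-scaled process is positive recurrent, so $J^n$ equals the integral of the extended cost against an ergodic occupation measure of that process. Letting $n\to\infty$ and then $\veps\to0$, and using the interchangeability of $\sup$ and $\inf$, gives $\liminf_n\hat\Lambda^n(\hat X^n_0)\ge\Lambda$.

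\smallskip
\noindent\textbf{Upper bound.} By Theorem~\ref{thm-diffusion}, fix a (nearly) optimal stationary Markov control $v$ for the limiting diffusion and an auxiliary drift $w$ (nearly) attaining the supremum in the variational representation of $J(x,v(\cdot))$. I would construct admissible SCPs $U^n$ and auxiliary controls $\phi^n$ for the $n$th queue that track $(v,w)$, and write $J^n(\hat X^n_0,U^n)=\int(\text{extended cost})\,d\mu_n$ over the mean empirical measure $\mu_n$ of the extended process with $\phi^n$. The crux is tightness of $\{\mu_n\}$: this is delicate because the $\phi^n$ take values in a non-compact space and are parametrized by both the time horizon and $n$. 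The remedy is the weak topology of Section~\ref{sec-top}, under which the entropy term $\varkappa$ acts as an inf-compact function (Lemmas~\ref{lem-comp},~\ref{lem-compact}, Corollary~\ref{cor-control-compact}), combined with the uniform-in-$n$ stability of the extended processes over any set of auxiliary controls compact in that topology (Lemma~\ref{lem-tightness-empirical}), which yields tightness of the state marginals. To pass the exponential functional to the limit I would truncate both the running-cost term and the entropy term, take $n\to\infty$ along subsequences using the compactness just described, and remove the truncations by large-deviation-type tail estimates (Lemmas~\ref{lem-tail-est},~\ref{lem-fin}, Corollary~\ref{cor-trunc-limit-L}, together with Corollary~\ref{cor-finite-cost-prelimit}); this identifies $\lim_n J^n(\hat X^n_0,U^n)$ with $J(x,v(\cdot))$, whence $\limsup_n\hat\Lambda^n(\hat X^n_0)\le\Lambda$.

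\smallskip
\noindent\textbf{Main obstacle.} The single hardest step is the tightness of the mean empirical measures $\{\mu_n\}$ for the extended processes in the upper bound. In contrast with the classical ergodic-cost setting — where uniform stability of $\hat Q^n$ alone suffices — one must here simultaneously control the genuinely non-compact-valued auxiliary controls and reconcile the finite-horizon/large-$n$ double limit with the removal of the cost truncations; this forces the combination of the tailored weak topology, the uniform Lyapunov estimate, and the large-deviation-flavored limiting arguments.
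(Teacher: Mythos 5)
Your overall architecture matches the paper's: split into lower/upper bounds, pass both ERSC costs through the variational representations of Corollaries~\ref{cor-var-rep-risk-B} and~\ref{cor-var-rep-unbounded-poisson}, exploit the TP-ZS SDG value $\Lambda=\sup_w\inf_v J^*_{v,w}$ and compactly supported near-optimizers for the lower bound, and use the weak topology of Section~\ref{sec-top}, the uniform Lyapunov bound of Lemma~\ref{lem-tightness-empirical}, and the tail/truncation estimates of Lemmas~\ref{lem-tail-est}--\ref{lem-fin} and Corollary~\ref{cor-trunc-limit-L} for the upper bound. However, there is a genuine logical gap in the lower bound.

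\textbf{Lower bound: you may not transplant the minimizer.} You propose to transplant \emph{both} the near-optimal minimizing strategy $v$ and the near-optimal maximizing strategy $w^*$ from the diffusion game to the $n$th system, constructing a specific SCP $U^n$ and auxiliary control $\phi^n$ from them, and then you assert that $J^n$ ``equals'' the integral of the extended cost against the ergodic occupation measure. Neither step is sound. Since $J^n(\hat X^n_0,U^n)$ is a supremum over $\uppsi\in\cE^n$ (Corollary~\ref{cor-var-rep-unbounded-poisson}), plugging in a specific $\phi^n$ that tracks $w^*$ yields only $J^n(\hat X^n_0,U^n)\ge(\text{integral})$, not equality. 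More seriously, since $\hat\Lambda^n(\hat X^n_0)$ is an \emph{infimum} over SCPs, a specific $U^n$ constructed from $v$ gives $\hat\Lambda^n(\hat X^n_0)\le J^n(\hat X^n_0,U^n)$. Combining these two you obtain $\hat\Lambda^n\le J^n\ge\text{(integral)}\ge\Lambda-\delta$, a chain running in \emph{opposite} directions that says nothing about whether $\hat\Lambda^n$ dominates $\Lambda-\delta$. The fix — exactly what Lemma~\ref{lem-AO-lower} does — is to take $U^n$ to be a nearly optimal SCP for $\hat\Lambda^n$ itself (so $J^n(\hat X^n_0,U^n)\le\hat\Lambda^n+\delta$), transplant \emph{only} the compactly supported maximizer $w^*$ into $\uppsi^n$, and then identify the $n\to\infty$ limit of the mean empirical measures with an ergodic occupation measure $\widetilde\mu^*_{v,w^*}$ for \emph{some} $v\in\Usm$ (determined by the limit of the arbitrary $U^n$); the sup-inf form of the game then bounds the limit below by $\inf_v J^*_{v,w^*}\ge\Lambda-\delta$. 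The compactness of the support of $w^*$ is still what makes the extended prelimit process positive recurrent — your use of it is correct — but the recurrence must hold for a policy you do not get to choose, which is why you can only pick the maximizing side.

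\textbf{Upper bound: same confusion, smaller consequence.} You fix a $w$ nearly attaining the supremum for the diffusion and then ``construct auxiliary controls $\phi^n$ that track $(v,w)$,'' asserting $J^n(\hat X^n_0,U^n)=\int(\text{ext.\ cost})\,d\mu_n$. Again, for a fixed tracked $\phi^n$ you only get $J^n\ge\int$, which is the wrong direction for an upper bound. What the paper (Lemma~\ref{lem-fin}) actually does is keep the SCP $U^n$ constructed from a continuous near-optimal $v^\delta$, but take the auxiliary control $\uppsi^n=\uppsi^n(T)$ to be the data-dependent \emph{near-maximizer} in the variational representation — guaranteed to lie in $\cE^n_M$ by Proposition~\ref{prop-var-rep-poisson} — then modify it by a stopping-time truncation to secure stability, and deduce from Theorem~\ref{thm-diff}, the $L^{2,*}_\infty$ compactness, and Lemma~\ref{lem-trivial-bound} that the limiting quantity is $\le\Lambda_{v^\delta}+\delta$; there is no need (and no argument) that the limit of $\uppsi^n$ coincides with the diffusion's optimal $w$. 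Your tightness mechanism (entropy as an inf-compact penalty in the $L^{2,*}_\infty$ topology plus the uniform Lyapunov bound) and the truncation-removal via tail estimates are the correct ingredients; the missing idea is that the auxiliary control must be the maximizer of the $n$th problem, not a transplant of the diffusion's.
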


The proof is given in the later sections.
 In the following, we give a brief overview of the proof which is divided into two parts: In the first part, we show the lower bound 
 \begin{align}\label{eq-overview-lb}  \liminf_{n\to\infty} \hat \Lambda^n(\hat X^n_0) \ge \Lambda
 \end{align} in Section~\ref{sec-low-bound} and in the second part, we show the upper bound \begin{align}\label{eq-overview-ub}  \limsup_{n\to\infty} \hat \Lambda^n(\hat X^n_0) \le \Lambda \end{align} in Section~\ref{sec-upp-bound}. 
We now illustrate the ideas to prove these two parts below.  
We do not explicitly give definitions of the processes and sets of controls here and refer the reader to the later sections. 
We begin by viewing the ERSC problem (for the limiting diffusion) from a different perspective, using 
the following variational representation: for $v\in \Usm$, 
\begin{align}\label{illus-BM-1}
\limsup_{T\to\infty} \frac{1}{T}\log \E\Big[e^{\int_0^T r^v(X_t)dt}\Big]=\limsup_{T\to\infty}\sup_{w\in \cA}\E\Bigg[  \frac{1}{T}\int_0^T\Big(  r^v(X^{*,v,w}_t)\ -\frac{1}{2}\|w_t\|^2\Big)dt\Bigg].
\end{align}
 Here, the set $\cA$ consists of an appropriate class of square integrable processes. This implies 
\begin{align}\label{illus-BM-2}
\Lambda= \inf_{v\in \Usm}\limsup_{T\to\infty}\sup_{w\in \cA}\E\Bigg[  \frac{1}{T}\int_0^T\Big(  r^v(X^{*,v,w}_t)\ -\frac{1}{2}\|w_t\|^2\Big)dt\Bigg].
\end{align}
Here, $w$ is a new auxiliary control,  and $X^{*,v,w}$ is an extended process (see equation \eqref{X-control}) associated with the limiting diffusion $X$ and that auxiliary control. When $w\equiv 0$, we have $X=X^{*,v,w}$.  In Section~\ref{sec-sub-var-BM}, we show that 
\begin{align}\label{illus-BM-3} \Lambda= \sup_{w\in \Wsm} \inf_{v\in \Usm} \limsup_{T\to\infty}\E\Bigg[  \frac{1}{T}\int_0^T\Big(  r^v(X^{*,v,w}_t)\ -\frac{1}{2}\| w_t\|^2\Big)dt\Bigg].\end{align}
Here, $\Wsm\subset \cA$ is an appropriate set of stationary Markov controls. Note the switch between $\sup_{w\in \Wsm}$ and $\inf_{v\in \Usm}$.

Next, we re-express the ERSC cost for diffusion-scaled queueing process in the following way. For a given SCP $Z^n$ (with $\hat Q^n_t$ depending on $Z^n$), we show the following representation to hold in Section~\ref{sec-sub-var-PP}: 
\begin{align}\label{illus-PP}
\limsup_{T\to\infty} \frac{1}{T}\log \E\Big[e^{\int_0^T \widetilde r(\hat Q^n_t)dt}\Big] =\limsup_{T\to\infty}&\sup_{{\uppsi \in \cE^n}}\E\Bigg[\frac{1}{T} \int_0^T \big(\widetilde r(\hat Q^{n,\uppsi}_t) -\lambda^n\varkappa(\phi_{t})-n\mu^n\varkappa(\psi_{t})-n\gamma^n\varkappa(\varphi_{t})\big)dt\Bigg]\,.
\end{align}
Here, $\uppsi=(\phi,\psi,\varphi)$ and $\hat Q^{n,\uppsi}$ are a new auxiliary control and an extended process of $\hat Q^{n}$ associated with that control, respectively, and $\varkappa(r)= r\ln r-r+1$ which is the relative entropy of the Poisson distribution (with expectation $r$) with respect to the standard Poisson distribution. 
Also, the set $\cE^n$ is the collection of all progressively measurable (with respect to the natural filtration of the underlying Poisson processes) positive real-valued functions $\uppsi$ such that 
$$ \int_0^T\big(\lambda^n \varkappa(\phi_{t})-n\mu^n\varkappa(\psi_{t})-n\gamma^n\varkappa(\varphi_{t})\big)dt <\infty, \text{ for every $T$}\,.$$

When $\uppsi\equiv ( e,e,e)$, we have $\hat Q^n\equiv \hat Q^{n,\uppsi}$. When $\uppsi = \uppsi^n$, 
we write $\hat Q^{n,\uppsi^n}$. The process  $\hat X^{n,\uppsi}$ is defined similarly with $\hat X^{n,\uppsi}\equiv\hat X^n$, whenever $\uppsi\equiv (e,e,e)$. See \eqref{eq-prelimit-controlled}. Recall that $e=(1,1,\ldots,1)\transp\in \RR^d$.

These variational representations are crucial in the proofs of the lower and upper bounds. We illustrate how they are used below. 

\noindent{\bf Sketch proof of the lower bound in \eqref{eq-overview-lb} (Lemma~\ref{lem-AO-lower}):}\\  For every $n$, we choose a nearly optimal SCP $Z^n$ for $\hat \Lambda^n(\hat X^n_0)$ and for such an SCP, we apply~\eqref{illus-PP}.  We then make a particular choice of $\uppsi^n=(\phi^n,\psi^n,\varphi^n)$ which is a priori sub-optimal (with respect to the supremum in~\eqref{illus-PP}). This choice (which is motivated from Theorem~\ref{thm-fclt-poisson}) is 
$$ \uppsi^n_t= \Bigg(e-\frac{w^*(\hat X^{n,\uppsi}_t)}{\sqrt n}, e-\frac{w^*(\hat X^{n,\uppsi}_t)}{\sqrt n},e\Bigg)$$
with $w^*$ being an appropriate nearly optimal (maximizing) $w$ in~\eqref{illus-BM-3}
such that 
\begin{align}\label{illus-BM-5}
\Lambda \leq \int_{\RR^d} \big( r^v(x)\ -\frac{1}{2}\| w^* (x)\|^2\big)\widetilde{\mu}^{*}_{v,w} (dx) +\delta,
\end{align}
for  the ergodic occupation measure $\widetilde{\mu}^{*}_{v,w}$ of $X^{*,v,w^*}$, for any $v\in \Usm$ and for small $\delta>0$. 
The main hurdle in proving the above display is to prove positive recurrence of $X^{*,v,w^*}$ for the ergodic occupation measure to be well-defined. We achieve this by showing that there are nearly optimal controls $w^*$ that vanish outside a large enough ball. This ensures that stability properties of $X$ can be borrowed by $X^{*,v,w^*}$ as their respective infinitesimal generators coincide outside a large ball.
Finally, we show that 
\begin{align*} \limsup_{T\to\infty}&\sup_{{\uppsi \in \cE^n}}\E\Bigg[\frac{1}{T} \int_0^T \big(\widetilde r(\hat Q^{n,\uppsi^n}_t) -\lambda^n\varkappa(\phi^n_{t})-n\mu^n\varkappa(\psi^n_{t})-n\gamma^n\varkappa(\varphi^n_{t})\big)dt\Bigg]
\end{align*}
	converges as $n\to\infty$ (or at least along a subsequence) to the right hand side of \eqref{illus-BM-5}. 
From~\eqref{illus-BM-5}, this is bounded from below by $\Lambda-\delta$. 
 This proves~\eqref{eq-overview-lb}.

\noindent{\bf Sketch proof of the upper bound in \eqref{eq-overview-ub} (Lemma~\ref{lem-AO-upper}):}\\ We begin by choosing a nearly optimal control (for the limiting diffusion) that is stationary, Markov and continuous (Lemma~\ref{lem-cont-control} guarantees that such a control exists). Using this control we explicitly construct an SCP (which is a priori sub-optimal for the ERSC problem for the diffusion-scaled queueing process) using the construction in \cite{ABP15}. For this constructed SCP,  we apply~\eqref{illus-PP} and choose $\uppsi^n=(\phi^n,\psi^n,\varphi^n)$ that is nearly optimal (with respect to the supremum in~\eqref{illus-PP}). This will then give us
\begin{align*}
\hat \Lambda(\hat X^n_0)\leq \limsup_{T\to\infty}\E\Bigg[\frac{1}{T} \int_0^T \big(\widetilde r(\hat Q^{n,\uppsi^n}_t) -\lambda^n\varkappa(\phi^n_{t})-n\mu^n\varkappa(\psi^n_{t})-n\gamma^n\varkappa(\varphi^n_{t})\big)dt\Bigg]+\delta,
\end{align*}
for small $\delta>0$. 
It then remains to identify the limit as $T\to\infty$ and let $n\to\infty$. To identify the  limit as $T\to\infty$, it is necessary to show that the family of the mean empirical measures of $\hat Q^{n,\uppsi}$ is tight. This is not at all obvious as this cannot be inferred directly from the stability of the process $\hat Q^n$. We achieve this by first showing the tightness of the mean empirical measures of the joint processes $\big(\hat Q^{n,\uppsi^n}, h^n(\uppsi^n)\big)$ in $T,n$ with $h^n(\uppsi^n)= \big(\sqrt{n}(e-\phi^n),\sqrt{n}(e-\psi^n),\sqrt{n}(e-\psi^n)\big)$. 
We introduce a suitable topology to prove  the tightness of the mean empirical measures of $h^n(\uppsi^n)$, and then use the tightness of $h^n(\uppsi^n)$ along with a Lyapunov function (motivated from \cite{AHP21}) to show that the family of the mean empirical measures of $\hat Q^{n,\uppsi^n}$ is tight (in fact, both in $n$ and $T$). 
Finally, from this tightness, we can show that the right hand of the above display is bounded from above by  $\Lambda+\delta$.
This proves~\eqref{eq-overview-ub}.

Before proceeding to prove these bounds, we introduce the variational representations and present some preliminary results in the next section.

\section{Variational formulations of ERSC problems}\label{sec-var}

\subsection{Variational formulation for the limiting diffusion}\label{sec-sub-var-BM}

We develop a variational formulation of ERSC problem for the limiting diffusion. Moreover, in Theorem~\ref{thm-sup-inf}, we also show that the optimal value of the ERSC problem for the limiting diffusion can be represented as the optimal value of a TP-ZS SDG with an extended running cost. This is the main result of this subsection.
The fundamental result we use is the following variational representation of exponential functionals of Brownian motion (\cite[Theorem 5.1]{boue1998}). 

\begin{theorem}\label{thm-var-rep-BM-gen} For $T>0$, suppose $G:\frC^d_T\rightarrow \RR$ is a non-negative Borel measurable function. Then the following holds: 
	\begin{align}\label{eq-var-rep}
	\frac{1}{T}\log \E[e^{TG(W)}]= \sup_{w\in\cA} \E\bigg[ G\bigg(W_{(\cdot)}+\int_0^{\cdot}w_sds\bigg) - \frac{1}{2T}\int_0^T\|w_s\|^2ds\bigg],
	\end{align}
	where $\cA$ is the set of all $\cG_t$--progressively measurable functions $w:\RR_+\rightarrow \RR^d$ such that 
\begin{align}\label{def-A} \frac{1}{T}\E\Big[\int_0^T\|w_s\|^2ds\Big]<\infty, \quad \text{ for every $T>0$}.\end{align}
 Here $\cG_t$ is the filtration generated by $\{W_s:0\leq s\leq t\}$ such that $\cG_0$ includes all the $\PP$--null sets. 
\end{theorem}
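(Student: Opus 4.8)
The plan is to prove the identity (which is essentially \cite[Theorem 5.1]{boue1998}) by establishing the two inequalities separately, after reducing to bounded $G$. For the reduction put $G_M\df G\wedge M$: the left-hand side for $G_M$ increases to $\tfrac1T\log\E[e^{TG(W)}]$ by monotone convergence (possibly $+\infty$), while $\sup_{w\in\cA}\E[G_M(W_{(\cdot)}+\int_0^{\cdot}w_s\,ds)-\tfrac1{2T}\int_0^T\|w_s\|^2\,ds]$ is nondecreasing in $M$ and bounded above by the right-hand side for $G$. So it suffices to prove equality for bounded $G$ and then let $M\to\infty$ for the inequality ``$\le$'', the reverse inequality being proved directly below for every nonnegative measurable $G$.

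\emph{Direction $\tfrac1T\log\E[e^{TG(W)}]\ge$ RHS (Girsanov plus Jensen).} Fix $w\in\cA$; by localization it is enough to treat bounded $w$ and bounded $G$. Let $\cE_t\df\exp(\int_0^t w_s\cdot dW_s-\tfrac12\int_0^t\|w_s\|^2\,ds)$, a true martingale, and $\cQ\df\cE_T\,\PP$. By Girsanov's theorem $\widehat W_t\df W_t-\int_0^t w_s\,ds$ is a Brownian motion under $\cQ$, and $(\widehat W,w)$ under $\cQ$ has the same joint law as $(W,w)$ under $\PP$. Using $W=\widehat W+\int_0^{\cdot}w_s\,ds$ and $\cE_T^{-1}=\exp(-\int_0^T w_s\cdot d\widehat W_s-\tfrac12\int_0^T\|w_s\|^2\,ds)$, the change of measure gives
\[
\E[e^{TG(W)}]=\E_{\cQ}\!\Big[\exp\!\big(TG(\widehat W+{\textstyle\int_0^{\cdot}}w_s\,ds)-{\textstyle\int_0^T} w_s\cdot d\widehat W_s-\tfrac12{\textstyle\int_0^T}\|w_s\|^2\,ds\big)\Big].
\]
Since $\cQ$ is a probability measure, Jensen's inequality together with $\E_{\cQ}[\int_0^T w_s\cdot d\widehat W_s]=0$ yields $\E[e^{TG(W)}]\ge\exp\!\big(T\,\E[G(W+\int_0^{\cdot}w_s\,ds)]-\tfrac12\E[\int_0^T\|w_s\|^2\,ds]\big)$. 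Taking logarithms, dividing by $T$, removing boundedness of $w$ and $G$ by truncation with monotone/dominated convergence, and taking the supremum over $w\in\cA$ completes this direction (as an inequality in $[0,\infty]$).

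\emph{Direction $\tfrac1T\log\E[e^{TG(W)}]\le$ RHS for bounded $G$ (construction of a near-optimal drift).} Consider the exponentially tilted measure $\cQ^{*}\df\big(e^{TG(W)}/\E[e^{TG(W)}]\big)\,\PP\sim\PP$; its density martingale $M_t\df\E[\tfrac{d\cQ^{*}}{d\PP}\mid\cG_t]$ is strictly positive, so the martingale representation theorem yields an adapted $\RR^d$-valued $u$ with $dM_t=M_t\,u_t\cdot dW_t$, i.e.\ $M=\exp(\int_0^{\cdot}u_s\cdot dW_s-\tfrac12\int_0^{\cdot}\|u_s\|^2\,ds)$, and by Girsanov $B_t\df W_t-\int_0^t u_s\,ds$ is a Brownian motion under $\cQ^{*}$. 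Boundedness of $G$ makes the relative entropy of $\cQ^{*}$ with respect to $\PP$ finite, and (after localization) it equals $\tfrac12\E_{\cQ^{*}}[\int_0^T\|u_s\|^2\,ds]$; the Donsker--Varadhan identity at the tilted measure then gives
\[
\log\E[e^{TG(W)}]=T\,\E_{\cQ^{*}}[G(W)]-\tfrac12\E_{\cQ^{*}}\!\big[{\textstyle\int_0^T}\|u_s\|^2\,ds\big]=\E_{\cQ^{*}}\!\Big[TG\big(B+{\textstyle\int_0^{\cdot}}u_s\,ds\big)-\tfrac12{\textstyle\int_0^T}\|u_s\|^2\,ds\Big].
\]
Realizing $B$ as the driving Brownian motion and $u$ as a progressively measurable functional of it produces a $w\in\cA$ for which the last expression equals $T\,\E[G(W+\int_0^{\cdot}w_s\,ds)-\tfrac1{2T}\int_0^T\|w_s\|^2\,ds]$, whence $\tfrac1T\log\E[e^{TG(W)}]\le\sup_{w\in\cA}(\cdots)$. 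Letting $M\to\infty$ in $G\wedge M$ extends this to all nonnegative $G$, so equality holds in $[0,\infty]$.

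The main obstacle is the last step of the upper bound: the optimal drift $u$ is a priori defined on the path space carrying $\cQ^{*}$, where $B$ (not $W$) is the Brownian motion, and one must identify it with a genuine element of $\cA$, i.e.\ a process progressively measurable for the filtration $\cG_{\cdot}$ of a $\PP$-Brownian motion. On canonical Wiener space $B$ generates a sub-filtration of $\cG_{\cdot}$ and the functional form of $u$ transfers directly; in general one needs the weak-convergence approximation argument of \cite{boue1998}. Closely tied to this is the integrability $\E_{\cQ^{*}}[\int_0^T\|u_s\|^2\,ds]<\infty$, needed both to identify the relative entropy with the energy term and to justify $\E_{\cQ^{*}}[\int_0^T u_s\cdot dB_s]=0$; this is exactly where the boundedness of $G$ (equivalently $\E[e^{TG(W)}]<\infty$) enters, the unbounded case being recovered only through the truncation limit.
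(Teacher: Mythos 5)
The paper does not prove this theorem; it is imported verbatim from \cite[Theorem 5.1]{boue1998}, with only the heuristic sketch in Remark~\ref{rem-var-rep-proof-BM}. Your proposal tries to fill this in along the same general lines (entropy formula plus Girsanov), but there is a genuine gap in your ``Girsanov plus Jensen'' direction. After the change of measure and Jensen's inequality you have, correctly, $\tfrac1T\log\E[e^{TG(W)}]\ge\E_{\cQ}\big[G(\widehat W+\int_0^{\cdot}w)\big]-\tfrac1{2T}\E_{\cQ}\big[\int_0^T\|w_s\|^2\,ds\big]$. To replace $\E_{\cQ}$ by $\E_\PP$ you assert that ``$(\widehat W,w)$ under $\cQ$ has the same joint law as $(W,w)$ under $\PP$.'' That assertion is false for non-deterministic $w$. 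If $w=g(W)$ with $g$ progressively measurable, then under $\cQ$ the same process $w$ satisfies the fixed-point relation $w=g(\widehat W+\int_0^{\cdot}w_s\,ds)$, which makes $w$ a \emph{different} functional of the $\cQ$-Brownian motion $\widehat W$ than $g$ itself. A two-step piecewise-constant $w$ already breaks the claimed law equality: with $d=1$, $T=1$, $w_t=\sign(W_{1/4})$ on $(1/4,1/2]$ and $w_t=\sign(W_{1/2})$ on $(1/2,1]$, the test moment $\E[\Ind_{\{B_{1/4}>0\}}B_{1/2}w_{3/4}]$ evaluates to $\E[\Ind_{\{B_{1/4}>0\}}|B_{1/2}|]$ under $(W,\PP)$ but to $\E[\Ind_{\{B_{1/4}>0\}}B_{1/2}\sign(B_{1/2}+\tfrac14)]$ under $(\widehat W,\cQ)$, and these differ. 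Thus your Jensen bound controls $\E_{\cQ}[G(W)]-\tfrac1{2T}\E_{\cQ}[\int\|w\|^2]$, which need not dominate the term $\E_\PP[G(W+\int w)-\tfrac1{2T}\int\|w\|^2]$ appearing in the supremum, and the inequality you are after is not established.

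The standard remedy for this direction is to avoid Girsanov on $\PP$ and instead apply the Donsker--Varadhan inequality \eqref{eq-entropy} directly to Wiener measure $\gamma$ with the specific choice $\nu_w=\text{law}(W+\int_0^{\cdot}w_s\,ds)$, giving $\log\E[e^{TG(W)}]\ge T\,\E_\PP[G(W+\int w)]-R(\nu_w\Vert\gamma)$, and then to prove the relative-entropy estimate $R(\nu_w\Vert\gamma)\le\tfrac12\E_\PP[\int_0^T\|w_s\|^2\,ds]$ as a separate lemma. Girsanov and Jensen do enter there, but through the innovations projection: with $Y\df W+\int w$ and $\widehat w_t\df\E[w_t\mid\cF^Y_t]$, the Girsanov density of $\nu_w$ with respect to $\gamma$ yields $R(\nu_w\Vert\gamma)=\tfrac12\E[\int\|\widehat w_t\|^2\,dt]$, and conditional Jensen gives $\|\widehat w_t\|^2\le\E[\|w_t\|^2\mid\cF^Y_t]$. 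Your upper-bound sketch (exponential tilting, martingale representation, Girsanov under $\cQ^*$) is the right skeleton and you correctly flag the real obstruction there --- realizing the tilted drift as a $\cG_t$-progressively measurable element of $\cA$ and securing its square integrability --- which is precisely the ``crucial part'' alluded to in Remark~\ref{rem-var-rep-proof-BM}; as written, the proposal acknowledges but does not supply that argument.
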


\begin{remark}\label{rem-var-rep-proof-BM} 
We give an idea of the proof (see \cite{boue1998} for the details). 
The starting point  is the following well-known entropy formula (\cite[Proposition 2.2]{budhiraja2019analysis}): For some measurable space $(\mathcal{X},\mathfrak{X})$ and  bounded measurable function $f:\mathcal{X}\rightarrow \RR$ and measure $\mu$ on $(\mathcal{X},\mathfrak{X})$, we have
\begin{align}\label{eq-entropy} \log\int_\mathcal{X} e^{f(x)}\mu(dx)= \sup_{\nu\in \calP(\mathcal{X})}\Big(\int_{\mathcal{X}} f(x)\nu(dx) - R(\nu||\mu)\Big). \end{align}
Here, $R(\nu||\mu)$ is the relative entropy defined as $\int_{\mathcal{X}} \log\frac{d\nu}{d\mu}(x) \nu(dx)$ whenever $\nu$ is absolutely continuous (with respect to $\mu$ with $\frac{d\nu}{d\mu}$ being the associated Radon-Nikodym derivative) and as $\infty$ otherwise.  We apply the above formula to the Wiener measure. Using Girsanov's theorem, we can construct a certain family of absolutely continuous measures, whose corresponding Radon-Nikodym derivatives can be easily computed. The crucial part is to show that one can replace the supremum over $\calP(\mathcal{X})$ in~\eqref{eq-entropy} with the supremum over the family of the aforementioned absolutely continuous measures given by Girsanov's theorem. This gives us~\eqref{eq-var-rep}.
\end{remark}
\begin{remark}
	The representation in Theorem~\ref{thm-var-rep-BM-gen} was first used in \cite{boue2001} in the context of ERSC where the authors studied the problem of maximizing the escape times of diffusion under the control (which is associated with a ERSC cost function). The authors also studied the small noise asymptotics of the maximizing control (see \cite[Lemma 4.5 and Theorem 4.6]{boue2001}).
\end{remark}

For $v\in \Usm$, it is easy to see that Theorem~\ref{thm-var-rep-BM-gen} can be applied to the case of ERSC cost for the limiting diffusion $X $ in \eqref{eqn-hatX}, by choosing $G(W)= \frac{1}{T}\int_0^T r(X_t,v(X_t))dt$, with $X_t$ being regarded as a functional of the driving Brownian motion $W$ (noting that we write $X$ to indicate the dependence on the control $v$ explicitly).
In the rest of this sub-section, we fix $X_0=x$.

For $w\in \cA$, define  ${X}^{*,v,w}$ as the solution to 
\begin{align}\label{X-control} d{X}^{*,v,w}_t=  b({X}^{*,v,w}_t, v(X^{*,v,w}_t))dt +\Sigma w_tdt + \Sigma dW_t\,, \quad  \text{ with }\,\,  {X}^{*,v,w}_0=x.\end{align}
Just as $X_0$, we also fix $X^{*,v,w}_0=x$, for the rest of this sub-section.
\begin{remark}
	The reader will notice that all the results in the rest of the section hold if one replaces $\Sigma$ with a general function $\Sigma:\RR^d\to\RR^{d\times d}$ which is Lipschitz continuous and such that there exists a $\sigma>0$ with the following property: $\Sigma(x)\Sigma(x)\transp \geq \sigma \|x\|^2$, $x\in \RR^d.$ 
\end{remark}

As an immediate corollary to Theorem~\ref{thm-var-rep-BM-gen}, we have the following result. 
\begin{corollary}\label{cor-var-rep-risk-B}For $v\in \Usm$,
	\begin{align}\label{eqn-erg-cont-var-rep}
	\Lambda_v= \limsup_{T\to\infty}\sup_{w\in\cA} \E\Bigg[ \frac{1}{T}\int_0^T \Big( r(X^{*,v,w}_t, v(X^{*,v,w}_t)) - \frac{1}{2}\|w_t\|^2\Big)dt\Bigg].
	\end{align}

\end{corollary}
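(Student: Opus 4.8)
The plan is to read off~\eqref{eqn-erg-cont-var-rep} from the Bou\'e--Dupuis representation (Theorem~\ref{thm-var-rep-BM-gen}), applied for each fixed $T>0$ with the functional $G=G_T$ chosen to be the time-averaged running cost along the controlled limiting diffusion. Fix $v\in\Usm$ and write $r^v(x)\df r(x,v(x))$. The first thing I would establish is that, under $v$, the SDE~\eqref{eqn-hatX} with $X_0=x$ has a unique strong solution, so that $X$ is a measurable functional of the driving Brownian motion: there is a Borel map $\Phi\colon\frC^d_T\to\frC^d_T$ with $X=\Phi(W)$ a.s. Since $v$ may be merely measurable, the drift $x\mapsto b(x,v(x))$ is only Borel and of at most linear growth, so this is a Veretennikov-type statement relying on the nondegeneracy built into $\Sigma\Sigma\transp=\diag(2\lambda_1,\dots,2\lambda_d)$ rather than on classical Lipschitz theory. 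I would then set $G_T(\omega)\df\frac1T\int_0^T r\bigl(\Phi(\omega)_t,v(\Phi(\omega)_t)\bigr)\,dt$ and verify the hypotheses of Theorem~\ref{thm-var-rep-BM-gen}: $G_T$ is Borel measurable (the composition of $\Phi$ with the continuous function $r$, followed by integration) and $G_T\ge0$, since $\kappa\in\RR^d_+$ and $v(x)\in\bU\subset\RR^d_+$ force $r^v\ge0$; moreover $TG_T(W)=\int_0^T r^v(X_t)\,dt$, and the exponential-moment estimate used in the proof of Corollary~\ref{cor-well-defined-limit} shows $\E[e^{TG_T(W)}]<\infty$, so both sides of the representation are finite.

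The key step is the shift identity $\Phi\bigl(W_{\cdot}+\int_0^{\cdot}w_s\,ds\bigr)=X^{*,v,w}$ a.s.\ for every $w\in\cA$. Writing $\widetilde W_t\df W_t+\int_0^t w_s\,ds$, the process $Y\df\Phi(\widetilde W)$ satisfies
\[
Y_t=x+\int_0^t b(Y_s,v(Y_s))\,ds+\Sigma\widetilde W_t=x+\int_0^t b(Y_s,v(Y_s))\,ds+\Sigma\int_0^t w_s\,ds+\Sigma W_t,
\]
which is exactly equation~\eqref{X-control}; pathwise uniqueness for~\eqref{X-control}, once more from the nondegeneracy of $\Sigma$ (after a localization of the adapted drift $\Sigma w$, $w\in\cA$), identifies $Y$ with $X^{*,v,w}$. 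Hence $G_T\bigl(W_{\cdot}+\int_0^{\cdot}w_s\,ds\bigr)=\frac1T\int_0^T r\bigl(X^{*,v,w}_t,v(X^{*,v,w}_t)\bigr)\,dt$, and substituting this into~\eqref{eq-var-rep} and pulling the factor $\frac1T$ through the quadratic term gives, for every $T>0$,
\[
\frac1T\log\E\Bigl[e^{\int_0^T r^v(X_t)\,dt}\Bigr]=\sup_{w\in\cA}\frac1T\,\E\Bigl[\int_0^T\bigl(r(X^{*,v,w}_t,v(X^{*,v,w}_t))-\tfrac12\|w_t\|^2\bigr)\,dt\Bigr].
\]
Taking $\limsup_{T\to\infty}$ on both sides, the left-hand side equals $\Lambda_v$ by the definition~\eqref{eqn-RS-cost-hat} (recall $X_0=x$ is fixed throughout this subsection) and the right-hand side is the right-hand side of~\eqref{eqn-erg-cont-var-rep}, which completes the proof.

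The only genuinely non-routine ingredient is the measurable-functional-of-$W$ representation of the solution together with the matching shift identity: because the Markov control $v$ is only required to be measurable, one cannot appeal to Lipschitz theory and must instead lean on the nondegeneracy of $\Sigma$. Everything else is bookkeeping around Theorem~\ref{thm-var-rep-BM-gen} and facts already in hand, namely $r^v\ge0$ and the finiteness of the ERSC cost established in Corollary~\ref{cor-well-defined-limit}.
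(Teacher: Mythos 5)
Your proposal is correct and follows essentially the same route as the paper, which simply declares the corollary an immediate consequence of Theorem~\ref{thm-var-rep-BM-gen} by "choosing $G(W)=\frac1T\int_0^T r(X_t,v(X_t))\,dt$ with $X_t$ regarded as a functional of the driving Brownian motion"; you have merely filled in the details (strong solvability under a measurable Markov control via nondegeneracy of $\Sigma$, nonnegativity and measurability of $G_T$, and the shift identity identifying $\Phi(W+\int_0^\cdot w_s\,ds)$ with $X^{*,v,w}$) that the paper leaves implicit.
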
 
Note that in~\eqref{eqn-erg-cont-var-rep}, we are optimizing over $\cA$, which is a subset of all $\cG_t$--progressively measurable processes. In other words, $w_t= g(W_{[0,t]},t),$ for some Borel measurable $g: \frC^d\times \RR_+\rightarrow \RR^d.$ 
\begin{remark}
The analysis that follows is applicable even when $v$ is a relaxed Markov control. It is straightforward to show that the infimum of the ERSC cost over the set of all relaxed controls is greater than or equal to the optimal ERSC cost. We reserve the same notation $X^{*,v,w}$, even when $v$ is a relaxed Markov control. In this case, $b(x,v(x))$ and $r (x,v(x))$ will be replaced by $\int_\bU b(x,u)v(du|x)$ and $ \int_\bU r(x,u) dv(du|x)$, respectively. Also, $\Lg^v $ is defined with $\int_\bU b(x,u)v(du|x)$ instead of $b(x,v(x))$.  Therefore, we restrict ourselves with only precise Markov controls in this section. 
\end{remark}

We now  write $\Lambda_v$ as the optimal value of a  CEC problem for an extended diffusion over an auxiliary control.  To that end, let $\Wadm$ be the set of admissible $\RR^d$--valued controls (here we use admissibility as in  Definition~\ref{def-admissible} with $\bU$ replaced by $\RR^d$)  $ w$. Since every $ w\in\Wadm$ is of the form $w_t=  w(W_{[0,t]},t)$, we have $\Wadm\subset \cA$.  
 
Now, define the following ergodic control problem
\begin{align}\label{eqn-erg-cont-aug}
\widetilde \Lambda_v\doteq \sup_{ w\in \Wadm}\limsup_{T\to\infty}\E\Bigg[  \frac{1}{T}\int_0^T\Big( r^v(X^{*, v,w}_t)\ -\frac{1}{2}\| w_t\|^2\Big)dt\Bigg].
\end{align}

 We next show that $\widetilde \Lambda_v=\Lambda_v$, \emph{i.e.,} 
$\Lambda_v$ is equal to the optimal value of the CEC problem of $w$ in~\eqref{eqn-erg-cont-aug}. To begin with, we denote $\Wsm\subset \Wadm$ as the set of stationary Markov controls (including relaxed Markov controls) and then first study the following CEC problem: For $l>0$, let $\Wsm(l)$ be the set of all stationary Markov controls (which is a subset of $\Wadm$) that are of the form $w=w(\cdot )$ on $B_l$ and $w=0$ on $B_l^c$,   and satisfy $\sup_{x\in \RR^d}\|w(x)\|\leq l$. Also, let $\chi_l:\RR^d\rightarrow \RR$  be a continuous function that satisfies  	$ \chi_l(x)=0$, whenever $x\in B_{ l}^c$ and $\chi_{ l}(x)=1$, whenever $x\in B_{\frac{ l}{2}}$.  Define 
\begin{align}\label{def-game-cost}
f_{l}(x,u,w)\doteq
r(x,u)\wedge l-\frac{1}{2}\|\chi_l(x) w\|^2  \quad \text{ and } \quad \Delta_l(x,w)\doteq \chi_l(x)
\Sigma w\,.
\end{align}
For $v\in \Usm$, we set $f_l^v(x,w)= f_l(x,v(x),w)$. Finally, define
$$ \Lambda_v(l)\doteq \sup_{w\in \Wsm(l)} \limsup_{T\to\infty} \frac{1}{T}\E\Big[\int_0^T \Big(r^v(X^{*,v,w}_t)\wedge l-\frac{1}{2}\|w(X^{*,v,w}_t)\|^2\Big)dt\Big]\,.$$
We then consider the limit as $l\to\infty$. There are multiple reasons for using such an approach:

\begin{enumerate}
	\item[(i)] For every $l>0$, the process $X^{*,v,w}$ is exponentially ergodic, as outside $B_l$, the generator of $X^{*,v,w}$ is the same as that of $X$ (under control $v\in \Usm$) which is exponentially ergodic.
	\item [(ii)] We can use a {variant of}  the method of spatial truncation (introduced in \cite[Section 4]{ABP15}) where a CEC problem can be shown to be approximated arbitrarily well, by a class of CEC problems with an appropriate truncated version of the original drifts and running costs.  
	\item [(iii)] This approach also tells us that we can find nearly optimal stationary Markov controls which are supported only on a large ball (this property is used in proof of Lemma~\ref{lem-AO-lower}).
\end{enumerate}

We state and prove a modified version of \cite[Theorem 4.1]{ABP15}.
\begin{proposition} For $v\in \Usm$
	and  every $l>0$, there exists a pair  $(\upu^v_l,\rho^v_l)\in W^{2,p}_{\text{loc}}(\RR^d)\times \RR$, for any $p>d$, with $\upu^v_l(0)=1$, satisfying the equation
	\begin{align}\label{eq-trunc}
	 \Lg^v \upu^v_l + \max_{w\in \RR^d}\big\{ f^v_l(x,w) + \Delta_l(x,w)\cdot\grad \upu^v_l\big\}= \rho^v_l.
	\end{align}
	Moreover, $\rho^v_l=\Lambda_v(l)$ and $\rho^v_l$ is non-decreasing in $l$.

\end{proposition}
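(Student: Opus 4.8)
The plan is to treat \eqref{eq-trunc} as a standard ergodic Hamilton--Jacobi--Bellman (HJB) equation for a controlled diffusion living on $\RR^d$ whose drift and running payoff have been ``spatially truncated'' outside $B_l$, and to adapt the argument of \cite[Theorem 4.1]{ABP15} almost verbatim. The key structural features that make this work are: (a) for fixed $l$, the truncation coefficient $\chi_l$ vanishes on $B_l^c$, so the extended drift $\Delta_l(x,w)=\chi_l(x)\Sigma w$ is identically zero there, meaning that outside $B_l$ the generator of $X^{*,v,w}$ coincides with $\Lg^v$, which by Proposition~\ref{prop-lyap-with-abandonment} satisfies the Foster--Lyapunov inequality $\Lg^v\cV\le (C_0-C_1\|x\|^2)\cV$ uniformly, hence $X^{*,v,w}$ is exponentially ergodic for \emph{every} admissible $w$; (b) the truncated running payoff $f_l^v(x,w)=r^v(x)\wedge l-\tfrac12\|\chi_l(x)w\|^2$ is bounded above by $l$ and is concave (in fact strictly concave, coercive, hence admitting a unique maximizer) in $w$ on the compact-support region, and the inner maximization $\max_{w\in\RR^d}\{f_l^v(x,w)+\Delta_l(x,w)\cdot p\}$ is an explicit finite quantity (for $x\in B_l$, optimizing the quadratic in $w$; for $x\in B_l^c$ it is just $r^v(x)\wedge l$, a bounded function).

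First I would record that the Hamiltonian $H^v_l(x,p)\df\max_{w\in\RR^d}\{f^v_l(x,w)+\chi_l(x)\Sigma w\cdot p\}$ is, for each fixed $l$, a locally Lipschitz function of $(x,p)$, bounded in $x$ uniformly in $p$ on compact $p$-sets, with at most linear growth in $p$ (the maximizer is $w^\star=\chi_l(x)\Sigma\transp p$ on $B_l$, giving a quadratic-in-$p$ Hamiltonian that is still locally bounded since $\chi_l$ is compactly supported). Then I would invoke the existence theory for ergodic HJB equations with near-monotone / Lyapunov-stable structure: one solves the $\alpha$-discounted problem $\Lg^v\upu_\alpha + H^v_l(x,\grad\upu_\alpha)=\alpha\upu_\alpha$ using elliptic regularity to get $\upu_\alpha\in W^{2,p}_{\mathrm{loc}}$, $p>d$; then passes to the vanishing-discount limit $\alpha\downarrow 0$. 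Here the exponential ergodicity from (a) provides the equicontinuity / Harnack estimates needed to extract a limit $\upu^v_l$ (after normalizing $\upu_\alpha(0)\to 1$, or subtracting $\upu_\alpha(0)$ in the logarithmic/additive form — I would follow whichever normalization \cite{ABP15} uses, namely $\upu^v_l(0)=1$) and a constant $\rho^v_l=\lim \alpha\upu_\alpha(0)$ solving \eqref{eq-trunc}. The identification $\rho^v_l=\Lambda_v(l)$ is then a verification argument: applying Itô's formula to $\upu^v_l(X^{*,v,w}_t)$ along any $w\in\Wsm(l)$ (using that the relevant stochastic integral is a martingale, justified via the Lyapunov bound and the compact support of the truncation) gives $\rho^v_l\ge$ the long-run average payoff, with equality when $w$ is the maximizing selector $w^\star(\cdot)=\chi_l(\cdot)\Sigma\transp\grad\upu^v_l(\cdot)$, which lies in $\Wsm(l)$ for possibly enlarged truncation radius — one should check here that the selector is bounded, which follows from interior gradient estimates on the compact set $\bar B_l$.

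Finally, monotonicity of $\rho^v_l$ in $l$: this is essentially definitional from the formula for $\Lambda_v(l)$. Increasing $l$ enlarges the admissible class $\Wsm(l)$ (the sup is over a larger set), \emph{and} increases the integrand pointwise since $r^v(x)\wedge l$ is nondecreasing in $l$ while the penalty $-\tfrac12\|w(X_t)\|^2$ is unchanged for a fixed control; hence $\Lambda_v(l)$ is nondecreasing, so $\rho^v_l$ is too. The main obstacle I anticipate is the vanishing-discount analysis producing a \emph{finite} limiting constant with a genuine $W^{2,p}_{\mathrm{loc}}$ solution rather than a degenerate one — concretely, obtaining the uniform-in-$\alpha$ oscillation bound $\osc_{B_R}\upu_\alpha \le C_R$. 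This is exactly where the uniform exponential ergodicity of $X^{*,v,w}$ (valid because the truncation leaves the generator untouched outside a compact set) does the heavy lifting, via the associated resolvent/Harnack estimates, so the argument of \cite{ABP15} transfers; the only genuinely new bookkeeping is checking that all constants can be taken independent of the particular admissible $w$ entering through $\Delta_l$, which is immediate since $\|\Delta_l(x,w)\|=0$ on $B_l^c$ and the whole $w$-dependence has been absorbed into the explicit Hamiltonian $H^v_l$.
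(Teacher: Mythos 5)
Your proposal is correct and follows essentially the same route as the paper: the paper's entire proof consists of observing that for every $w\in\Wadm$ with the truncation the generator of $X^{*,v,w}$ agrees with $\Lg^v$ outside a compact set, hence $X^{*,v,w}$ is exponentially ergodic, and then citing \cite[Theorem 4.1]{ABP15} directly for existence of $(\upu^v_l,\rho^v_l)$, the identification $\rho^v_l=\Lambda_v(l)$, and the monotonicity. What you have written out (vanishing discount, Harnack/oscillation bounds, the Itô verification with the explicit maximizer $w^\star=\chi_l\Sigma\transp\grad\upu^v_l$, and the definitional monotonicity argument) is precisely the content of that cited theorem rather than a different argument.
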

\begin{proof}It is evident that for every $l>0$ and $w\in \Wsm(l)$, $X^{*,v,w}$ is exponentially ergodic and therefore satisfies the hypothesis of \cite[Theorem 4.1]{ABP15} and this proves the result.  
\end{proof}

The following lemma studies how $\{\upu^v_l\}_{l>0}$ and $\{\rho^v_l\}_{l>0}$ behave as $l\uparrow \infty$. From \cite[Lemma 2.4]{ari2018strict}, for  $v\in \Usm$, there is a unique pair $(\Psi^v,\Lambda^v)$ such that $\Psi^v>0$, $\Psi^v\in W^{2,d}_{\text{loc}}(\RR^d)$ and
\begin{align}\label{eq-groundstate}
\Lg^v\Psi^v(x)+r^v(x)\Psi^v(x)=\Lambda^v\Psi^v(x).
\end{align} 
\begin{lemma}\label{lem-limit-truc} For every $v\in \Usm$, 
	there exists a unique pair $(\Phi_v,\widetilde \Lambda_v)$ such that as $l\to\infty$, 
	$\rho^v_l\to \widetilde \Lambda_v$ and $\upu^v_l \to \Phi^v\doteq \log \Psi^v$ in $W^{2,p}_{\text{loc}}(\RR^d)$, for any $p>d$. Moreover, $\widetilde \Lambda_v=\Lambda_v$.
\end{lemma}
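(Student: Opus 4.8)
The statement to prove is Lemma~\ref{lem-limit-truc}: for $v\in\Usm$ fixed, the pairs $(\upu^v_l,\rho^v_l)$ solving the truncated equation \eqref{eq-trunc} converge, as $l\to\infty$, to $(\Phi^v,\widetilde\Lambda_v)$ where $\Phi^v=\log\Psi^v$ with $(\Psi^v,\Lambda^v)$ the ground-state pair from \eqref{eq-groundstate}, and moreover $\widetilde\Lambda_v=\Lambda_v$. I would organize the proof into three stages: (a) pass to the limit in \eqref{eq-trunc} to produce \emph{some} limiting pair and identify the limiting PDE; (b) match that PDE, after the exponential change of variables, with the ground-state equation \eqref{eq-groundstate}, invoke its uniqueness, and conclude $\upu^v_l\to\log\Psi^v$; (c) identify the limiting constant with $\Lambda_v$ via Corollary~\ref{cor-var-rep-risk-B} and the definition \eqref{eqn-erg-cont-aug} of $\widetilde\Lambda_v$.

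\textbf{Stage (a): compactness and the limiting equation.} First I would note that $\rho^v_l=\Lambda_v(l)$ is nondecreasing in $l$ (already stated) and bounded above: indeed $\Lambda_v(l)\le\widetilde\Lambda_v\le\Lambda_v<\infty$ since truncating the running cost and localizing $w$ only decreases the supremum, and $\Lambda_v<\infty$ by Corollary~\ref{cor-well-defined-limit}. Hence $\rho^v_l\uparrow\rho^v_\infty$ for some finite $\rho^v_\infty$. For the functions, I would perform the exponential substitution $\Psi^v_l\df e^{\upu^v_l}$; since the truncated Hamiltonian $\max_{w}\{f^v_l(x,w)+\Delta_l(x,w)\cdot p\}$ is, on $B_{l/2}$ where $\chi_l\equiv 1$, exactly the ``completion-of-the-square'' Hamiltonian $r^v(x)\wedge l+\tfrac12\|\Sigma\transp p\|^2$ (recall $\Sigma\Sigma\transp=\diag(2\lambda_i)$, so $\tfrac12\|\Sigma\transp p\|^2=\sum_i\lambda_i p_i^2$), equation \eqref{eq-trunc} on $B_{l/2}$ becomes, after dividing by $\Psi^v_l$, the linear equation $\Lg^v\Psi^v_l+(r^v\wedge l)\Psi^v_l=\rho^v_l\Psi^v_l$. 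Using interior elliptic $W^{2,p}$ estimates on a fixed ball $B_R$ (valid for $l>2R$, with $L^p$ bounds on the coefficients uniform in $l$ once $\rho^v_l$ is bounded), together with a Harnack inequality to control the normalization $\Psi^v_l(0)=e^{\upu^v_l(0)}=1$ away from degeneracy, I would extract a subsequence along which $\Psi^v_l\to\Psi$ in $W^{2,p}_{\mathrm{loc}}$ for every $p>d$, with $\Psi>0$, $\Psi(0)=1$, solving $\Lg^v\Psi+r^v\Psi=\rho^v_\infty\Psi$ on all of $\RR^d$ (the truncation $r^v\wedge l\uparrow r^v$ is handled by the locally uniform convergence and the fact that $r^v$ is locally bounded). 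The a priori bound keeping $\Psi^v_l$ from blowing up or vanishing is the delicate point — I expect to need the lower Harnack bound together with an a priori upper bound on $\rho^v_l$, and possibly a supersolution/Lyapunov argument (using $\cV$ from Proposition~\ref{prop-lyap-with-abandonment}) to get a locally uniform \emph{upper} bound on $\Psi^v_l$.

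\textbf{Stages (b) and (c): identification.} By \cite[Lemma 2.4]{ari2018strict} the pair $(\Psi^v,\Lambda^v)$ solving \eqref{eq-groundstate} with $\Psi^v>0$, $\Psi^v\in W^{2,d}_{\mathrm{loc}}$ is unique up to scalar multiple; normalizing $\Psi^v(0)=1$ pins it down. Since $(\Psi,\rho^v_\infty)$ is such a pair, uniqueness forces $\Psi=\Psi^v$ and $\rho^v_\infty=\Lambda^v$. Because the limit is the \emph{same} regardless of subsequence, the full nets converge: $\upu^v_l=\log\Psi^v_l\to\log\Psi^v=\Phi^v$ in $W^{2,p}_{\mathrm{loc}}$ and $\rho^v_l\to\Lambda^v\eqqcolon\widetilde\Lambda_v$. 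It remains to show $\widetilde\Lambda_v=\Lambda_v$. One inequality, $\widetilde\Lambda_v\le\Lambda_v$, follows from Corollary~\ref{cor-var-rep-risk-B}: $\widetilde\Lambda_v=\sup_{w\in\Wadm}\limsup_T\tfrac1T\E[\cdots]\le\sup_{w\in\cA}\limsup_T(\cdots)$, and since we may interchange $\limsup_T$ and $\sup_{w\in\cA}$ upward, $\le\limsup_T\sup_{w\in\cA}\tfrac1T\E[\cdots]=\Lambda_v$. Actually the cleaner route for $\widetilde\Lambda_v\le\Lambda_v$ is: $\Lambda_v(l)$ is a supremum over the \emph{smaller} class $\Wsm(l)\subset\Wadm$ of a \emph{smaller} integrand (truncated cost), hence $\Lambda_v(l)=\rho^v_l\le\widetilde\Lambda_v$ trivially and also $\rho^v_l\le\Lambda_v$, giving $\widetilde\Lambda_v=\lim_l\rho^v_l\le\Lambda_v$. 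For the reverse $\widetilde\Lambda_v\ge\Lambda_v$: the limiting equation $\Lg^v\Phi^v+r^v-\tfrac12\|\Sigma\transp\grad\Phi^v\|^2=\widetilde\Lambda_v$ (obtained by taking $\log$ in \eqref{eq-groundstate}) is a stationary HJB equation whose verification argument — apply It\^o to $\Phi^v(X^{*,v,w}_t)$ along the extended dynamics \eqref{X-control}, use that $w\mapsto \Sigma w\cdot\grad\Phi^v-\tfrac12\|w\|^2$ is maximized at $w=\Sigma\transp\grad\Phi^v$, and divide by $T$ — shows $\widetilde\Lambda_v$ dominates the long-run average cost for every $w\in\Wsm$ and is attained at the maximizing $w$, hence $\widetilde\Lambda_v\ge$ the sup in \eqref{eqn-erg-cont-aug} $\ge\Lambda_v$ (the last step using that the maximizing $w^*=\Sigma\transp\grad\Phi^v$ yields a positive recurrent $X^{*,v,w^*}$, which needs a Lyapunov/ergodicity check — this dovetails with the machinery invoked around Theorem~\ref{thm-diffusion} and Proposition~\ref{prop-lyap-with-abandonment}).

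\textbf{Main obstacle.} The crux is Stage (a): obtaining the uniform-in-$l$ two-sided bounds on $\Psi^v_l=e^{\upu^v_l}$ on compact sets that make the elliptic estimates and the subsequential limit work, and simultaneously ruling out that the limit is degenerate. The normalization $\upu^v_l(0)=1$ (i.e.\ $\Psi^v_l(0)=e$, not $1$ — I'd keep track of this harmless constant) only controls one point; propagating it to a ball requires Harnack, which in turn needs local $L^p$ control of the zeroth-order coefficient $r^v\wedge l-\rho^v_l$, i.e.\ an a priori upper bound on $\rho^v_l$ — circular unless one first establishes $\rho^v_l\le\Lambda_v<\infty$ directly from the variational definition, which fortunately we can. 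After that, the matching with \eqref{eq-groundstate} and the identification $\widetilde\Lambda_v=\Lambda_v$ are comparatively routine, modulo the positive-recurrence verification for the optimal extended control, for which the Lyapunov function of Proposition~\ref{prop-lyap-with-abandonment} (whose generator agrees with that of $X^{*,v,w}$ outside a compact set when $w$ is compactly supported, per reason (i) above) is the key tool.
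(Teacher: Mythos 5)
Your proposal follows the same overall three-stage plan as the paper: (i) establish an a priori bound on $\rho^v_l$ from the variational characterization and extract a $W^{2,p}_{\mathrm{loc}}$ limit via elliptic regularity; (ii) pass to the linear eigenvalue equation by the exponential substitution and invoke the uniqueness of the ground state pair from \cite{ari2018strict}; (iii) identify the limiting constant with $\Lambda_v$ using $\Lambda_v\ge\widetilde\Lambda_v$ (the $\sup$--$\limsup$ interchange you give is exactly the paper's argument) together with the reverse inequality. The substantive difference is in how you close stage (iii). The paper simply observes that the limit equation, after setting $\widetilde\upu^v=e^{\upu^v}$, is the linear eigenvalue equation $\Lg^v\widetilde\upu^v+r^v\widetilde\upu^v=\rho^{*,v}\widetilde\upu^v$, and then cites \cite[Corollary 2.1]{ari2018strict} to deduce $\rho^{*,v}\ge\Lambda_v$; this immediately squeezes all three quantities together. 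You instead sketch a verification argument (It\^o applied to $\Phi^v(X^{*,v,w}_t)$), but be careful: that argument shows that the HJB constant $\rho^{*,v}$ dominates the extended long-run average cost for every $w$, i.e.\ $\rho^{*,v}\ge\widetilde\Lambda_v$, with equality at $w^*=\Sigma\transp\grad\Phi^v$ — it does not by itself produce the comparison $\rho^{*,v}\ge\Lambda_v$. That comparison requires a separate It\^o argument on the exponentiated eigenfunction $e^{\int_0^t r^v\,ds}\Psi(X_t)$ (or equivalently, citing the stochastic-representation / ground-state result that the paper invokes). You gesture at this ("dovetails with the machinery invoked around Theorem~\ref{thm-diffusion}"), so I'd read your proposal as correct in spirit but leaning on the same cited machinery the paper does; if you wanted to make the verification route self-contained you would need to spell out that second It\^o argument, including the issue of whether $\Psi$ is bounded below (which is exactly what the cited \cite[Corollary 2.1]{ari2018strict} handles). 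One further small slip: you wrote the limiting HJB with a minus sign on $\tfrac12\|\Sigma\transp\grad\Phi^v\|^2$; after dividing the eigenvalue equation by $\Psi^v$ the sign is plus, as in the paper's \eqref{eq-trunc-limit}; this is just a typo since your linear-equation derivation in stage (a) is correct. Finally, your "cleaner route" for $\widetilde\Lambda_v\le\Lambda_v$ only yields $\lim_l\rho^v_l\le\Lambda_v$ and presupposes $\lim_l\rho^v_l=\widetilde\Lambda_v$, which is part of what is being proved; your first argument (the $\limsup$--$\sup$ interchange) is the one to keep.
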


\begin{remark}
	The content of \cite[Theorem 4.1]{ABP15} also addresses the aforementioned convergence. The reason for giving a separate lemma is that in our case, as $l\to\infty$, the space $\Wsm$ (considered as an appropriate limit of $\Wsm(l)$ as $l\uparrow \infty$) is no longer compact and the topology of Markov controls (as done in \cite[Section 2.4]{ari2011}) may not be compact and/or metrizable. But this will not be an issue for us as will be evident from the proof. 
\end{remark}
\begin{remark}
	A result similar to this is showed in \cite[Section 2.4]{ari2018strict}, but under the following condition on the cost function $r$: $r(x,u)\to 0$ as $\|x\|\to\infty$. This is too restrictive for our setting of multiclass queueing networks. 
	
\end{remark}

\begin{proof}  We first prove that $\Lambda_v\geq \widetilde \Lambda_v$. To do this, we fix $\delta>0$ and choose $ \bar w^*\in \Wadm$ such that 
	$$\widetilde \Lambda_v\leq  \limsup_{T\to\infty}\E\Bigg[  \frac{1}{T}\int_0^T\Big( r^v(X^{*,v,\tilde w^*}_t)\ -\frac{1}{2}\|\tilde w^*_t\|^2\Big)dt\Bigg]+\delta.$$
	Since $\bar w^*\in \cA$, we have
	\begin{align*}
	\limsup_{T\to\infty}\frac{1}{T}\E\Bigg[ \int_0^T \Big( r^v(X^{*,v,\bar w^*}_t) - \frac{1}{2}\|\bar w^*_t\|^2\Big)dt\Bigg]&\leq 	\limsup_{T\to\infty}\frac{1}{T}\sup_{w\in\cA} \E\Bigg[ \int_0^T \Big( r^v(X^{*,v,w}_t) - \frac{1}{2}\|w_t\|^2\Big)dt\Bigg]\\
	\implies \widetilde \Lambda_v -\delta&\leq \Lambda_v \,.
	\end{align*}
		This consequently gives us $\Lambda_v\geq \widetilde \Lambda_v$.  

	From here, the finiteness of $\Lambda_v$ implies the finiteness of  $\widetilde \Lambda_v$.  Since $ \rho^v_l\leq \widetilde\Lambda_v$, $\{\rho^v_l\}_{l>0}$ is convergent along a subsequence (with, say, $\rho^{*,v}\leq \widetilde \Lambda_v$ as the limit point). Using the standard elliptic regularity theory (arguments similar to \cite[Theorem 3.5.2 and Lemma 3.5.3]{ari2011}),
	 we can then conclude that $\upu^v_l$ converges to some function $\upu^v$ in $W^{2,p}_{\text{loc}}(\RR^d)$ that satisfies
	\begin{align}\label{eq-trunc-limit}
	\Lg^v \upu^v + r^v(x)+ \max_{w\in \RR^d}\big\{\Sigma w.\grad \upu^v  - \frac{1}{2}\|w\|^2\big\}= \rho^{*,v}.
	\end{align}
	It is clear that~\eqref{eq-trunc-limit} can be rewritten as
	$$ 	\Lg^v \upu^v + r^v+ \frac{1}{2}\|\Sigma\transp\grad \upu^v\|^2= \rho^{*,v}.$$
	By making a substitution $\widetilde \upu^v= e^{\upu^v}$, we have
	$$ 	\Lg^v \widetilde \upu^v + r^v\widetilde\upu^v= \rho^{*,v}\widetilde \upu^v.$$
	Using \cite[Corollary 2.1]{ari2018strict}, we can conclude that $\rho^{*,v}\geq \Lambda_v$. This proves that $\Lambda_v=\widetilde \Lambda_v$ as we already know that $\widetilde \Lambda_v\geq \rho^{*,v}$ and that $\Lambda_v\geq \widetilde\Lambda_v$.
	Since \cite[Lemma 2.4]{ari2018strict} gives us a unique solution to the above equation, $\widetilde \upu^v=\Psi^v$. This completes the proof as $\upu^v=\Phi^v$.	
\end{proof}

We now proceed to prove the main result of this section. In addition to proving that $\Lambda$ is equal to the optimal value of a  TP-ZS SDG, we also show that such a game can be written as the limit of a family of TP-ZS SDGs where the strategies of the maximizing player are  only allowed to be compactly supported (see~\eqref{eq-lim-sup-inf} below). Before we state and prove this result, we state an important existing result in the TP-ZS SDG taken from   \cite[Section 4.2] {borkar1992stochastic},  where both the minimizing and the maximizing strategies take values in compact spaces, and  moreover, the running cost function is bounded.   To that end, we first define a family of TP-ZS SDGs. For $l>0$,  let 
\begin{align*}
J^{*,l}_{v,w}&\doteq \limsup_{T\to\infty} \frac{1}{T}\E\Big[ \int_0^T \big(r^v(X^{*,v,w}_t)\wedge l-\frac{1}{2}\|w(X^{*,v,w}_t)\|^2\big)dt\Big],\\
\overline \rho_l&\doteq \inf_{v\in \Usm} \sup_{w\in  \Wsm(l)} J^{*,l}_{v,w}\,,\\
\underline \rho_l&\doteq  \sup_{w\in  \Wsm(l)} \inf_{v\in \Usm}J^{*,l}_{v,w}\,. 
\end{align*}

We have suppressed the dependence of $J^{*,l}_{v,w}$ on $X^{*,v,w}_0=x$, as it is fixed for this entire sub-section.
Recall that $w\in\Wsm(l)$ is only compactly supported and hence, the process $X^{*,v,w}$ is exponentially ergodic from Proposition~\ref{prop-lyap-with-abandonment}. This allows us to invoke Theorems 4.5 and 4.6 of \cite{borkar1992stochastic} to give us the following.

\begin{theorem}\label{thm-2p-game}
There exists a unique pair  $(\upu_l,\rho_l)\in W^{2,p}_{\text{loc}}(\RR^d)\times \RR$, $2\leq p<\infty$ such that the following hold.  
\begin{enumerate}
\item [(i)] \begin{align}\label{eq-hjb-game}
&\min_{u\in \bU}\max_{w:\|w\|\leq l} \Big\{\Lg^u \upu_l(x) + f_l (x,u,w)+ \Delta_l(x,w)\cdot \nabla \upu_l(x)\Big\} \nonumber\\
&\qquad=\max_{w:\|w\|\leq l} \min_{u\in \bU} \Big\{\Lg^u \upu_l(x)+ f_l(x,u,w)+ \Delta_l(x,w)\cdot \nabla \upu_l(x)\Big\}=\rho_l\,, \text{ for } x\in \RR^d\,. 
\end{align}
\item[(ii)] $\rho _l=\overline \rho_l=\underline \rho_l$. 
\item [(iv)] $v^*\in \Usm $ satisfies $\sup_{w\in \Wsm(l)}J^{*,l}_{v^*,w}= \rho_l$ if and only if for a.e. $x\in \RR^d$,
\begin{align*} \min_{u\in \bU}\max_{w:\|w\|\leq l} \Big\{\Lg^u \upu_l (x)&+ f_l (x,u,w)+ \Delta_l(x,w)\cdot \nabla \upu_l(x)\Big\}\\
&= \max_{w:\|w\|\leq l} \Big\{\Lg^{v^*} \upu_l(x) + f^{v^*}_l (x,w)+ \Delta_l(x,w)\cdot \nabla \upu_l(x)\Big\}\,.\end{align*}
\item [(v)] $w^*\in \Wsm(l)$ satisfies $\inf_{v\in \Usm} J^{*,l}_{v,w^*}=\rho_l$ if and only if for a.e. $x\in \RR^d$,
\begin{align*} \min_{u\in \bU} \Big\{\Lg^u \upu_l(x)&+ f_l(x,u,w^*(x))+ \Delta_l(x,w^*(x))\cdot \nabla \upu_l(x)\Big\}\\
&=\max_{w:\|w\|\leq l} \min_{u\in \bU} \Big\{\Lg^u \upu_l(x)+ f_l(x,u,w)+ \Delta_l(x,w)\cdot \nabla \upu_l(x)\Big\}\,.\end{align*}
Moreover, $w^*$ is bounded and continuous on $\RR^d$.
\end{enumerate}
\end{theorem}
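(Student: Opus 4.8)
The statement is that for each fixed $l > 0$, the truncated two-person zero-sum stochastic differential game with bounded running cost $f_l$ and bounded diffusion perturbation $\Delta_l$ has a value, realized by a solution $(\upu_l, \rho_l)$ of the HJB-Isaacs equation \eqref{eq-hjb-game}, and this value equals both $\overline\rho_l$ and $\underline\rho_l$, with the stated verification conditions for optimal strategies. This is a direct invocation of the machinery in \cite[Section 4.2]{borkar1992stochastic}, so the work is entirely in checking that our setup satisfies those hypotheses — which is why the theorem is stated as a corollary of that reference.

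The plan is as follows. First I would observe that the minimizing player's action space $\bU = \{u \in \RR^d_+ : e\cdot u = 1\}$ is compact, and the maximizing player's action space for the truncated game is $\{w \in \RR^d : \|w\| \le l\}$, which is also compact; moreover $\Delta_l(x,w) = \chi_l(x)\Sigma w$ is bounded (by $l\|\Sigma\|$) since $0 \le \chi_l \le 1$, and vanishes outside $B_l$. The running cost $f_l(x,u,w) = r(x,u)\wedge l - \tfrac12\|\chi_l(x)w\|^2$ is bounded (between $-\tfrac12 l^2$ and $l$) and jointly continuous in $(x,u,w)$; it is affine (hence convex) in $u$ for fixed $(x,w)$ and concave (quadratic with negative leading term) in $w$ for fixed $(x,u)$, so Isaacs' condition (interchange of $\min_u\max_w$ and $\max_w\min_u$ in the Hamiltonian, pointwise in $x$ and in $\nabla\upu_l$) holds by a standard minimax argument (Sion's theorem applied to the expression $\Lg^u\upu_l + f_l + \Delta_l\cdot\nabla\upu_l$, which is affine in $u$ and concave in $w$ once $\upu_l$ is fixed). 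Second, and this is the crucial structural point, I would note that since $\chi_l \equiv 0$ outside $B_l$, the generator of $X^{*,v,w}$ agrees with $\Lg^v$ there for every $w \in \Wsm(l)$; combined with Proposition~\ref{prop-lyap-with-abandonment} (the blanket Foster-Lyapunov inequality $\Lg^u\cV \le (C_0 - C_1\|x\|^2)\cV$ holding uniformly over $u \in \bU$), this gives a single Lyapunov function $\cV$ with $\Lg^u\cV + \Delta_l(x,w)\cdot\nabla\cV \le (C_0' - C_1'\|x\|^2)\cV$ uniformly over $u\in\bU$ and $\|w\|\le l$ — because the extra term $\Delta_l\cdot\nabla\cV$ is supported in the compact set $\bar B_l$ and is bounded there, so it can be absorbed into the constant $C_0'$. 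Thus $X^{*,v,w}$ is uniformly (blanket) exponentially ergodic over the entire game, which is exactly the stability hypothesis needed in \cite[Theorems 4.5, 4.6]{borkar1992stochastic}.

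With uniform geometric ergodicity, boundedness and continuity of $f_l$ and $\Delta_l$, compactness of both action spaces, and the Isaacs condition in hand, the conclusions of \cite[Section 4.2]{borkar1992stochastic} apply verbatim: there is a unique pair $(\upu_l,\rho_l) \in W^{2,p}_{\text{loc}}(\RR^d)\times\RR$ (for any $2 \le p < \infty$) with the normalization $\upu_l(0)=$ (the reference's normalization, which we match to $\upu^v_l(0)=1$ in the related Proposition) solving the HJB-Isaacs PDE \eqref{eq-hjb-game}; the scalar $\rho_l$ equals the upper value $\overline\rho_l$ and the lower value $\underline\rho_l$; and the pointwise a.e. attainment conditions in items (iv) and (v) are precisely the standard verification theorem for the min- and max-players' optimal stationary Markov strategies, proved by applying Itô's formula to $e^{-\rho_l t}$-type functionals — or rather, since $f_l$ is bounded and the process positive recurrent, by the standard ergodic-control verification argument (take any competing strategy, integrate the PDE against the process, divide by $T$, and send $T\to\infty$ using the uniform ergodicity to control the $\upu_l(X_T)/T$ term). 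The boundedness and continuity of the optimal $w^*$ in item (v) follow from the explicit form of the maximizer in the Hamiltonian: differentiating $f_l(x,u,w) + \chi_l(x)\Sigma w\cdot\nabla\upu_l(x)$ in $w$ gives $w^*(x) = \chi_l(x)\Sigma\transp\nabla\upu_l(x)$ projected onto $\bar B_l$, which is continuous because $\upu_l \in W^{2,p}_{\text{loc}} \subset \cC^1$ by Sobolev embedding (as $p > d$) and bounded by $l$ by construction.

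The main obstacle — such as it is — is not any single hard estimate but rather the careful verification that the extended/perturbed diffusion $X^{*,v,w}$ inherits the blanket stability of $X$ uniformly over the (compact but nontrivial) family of truncated controls $w \in \Wsm(l)$, and that this uniformity is strong enough to feed into the ergodic-control/game verification arguments of \cite{borkar1992stochastic}, which are typically stated under a blanket-stability or near-monotone-cost assumption. Once one recognizes that the truncation $\chi_l$ localizes the entire perturbation $\Delta_l$ to the compact ball $\bar B_l$ — so that outside $\bar B_l$ the extended process is literally the original diffusion under $v$ — this reduces to absorbing a uniformly bounded, compactly supported term into the Lyapunov drift constant, which is routine. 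The remaining effort is bookkeeping: matching the normalization conventions, confirming $W^{2,p}_{\text{loc}}$ regularity via interior elliptic estimates (the coefficients $b(\cdot,u)$, $\Delta_l(\cdot,w)$ are bounded on compacts and the diffusion matrix $\Sigma\Sigma\transp$ is constant and nondegenerate), and transcribing the verification conditions into the form stated in items (iv)–(v).
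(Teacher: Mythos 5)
Your proposal is correct and follows essentially the same route as the paper, whose entire argument is the observation that every $w\in\Wsm(l)$ is compactly supported, so the generator of $X^{*,v,w}$ coincides with $\Lg^v$ outside $B_l$ and the blanket exponential ergodicity of Proposition~\ref{prop-lyap-with-abandonment} carries over, after which Theorems 4.5 and 4.6 of \cite{borkar1992stochastic} are invoked verbatim. You simply spell out the hypothesis-checking (compact action spaces, bounded cost, Isaacs condition, absorption of the compactly supported drift perturbation into the Lyapunov constant) in more detail than the paper does; the only blemish is the formula for the maximizer, which should read $w^*(x)=\chi_l(x)^{-1}\Sigma\transp\nabla\upu_l(x)$ (suitably projected) where $\chi_l(x)\neq 0$ rather than $\chi_l(x)\Sigma\transp\nabla\upu_l(x)$, but this does not affect the stated conclusions.
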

Using the above theorem, we have the following result. For $v\in \Usm$ and $w\in \Wsm$, define 
\begin{align*} J^*_{v,w}&\doteq \limsup_{T\to\infty} \frac{1}{T}\E\Big[ \int_0^T \big(r^v(X^{*,v,w}_t)-\frac{1}{2}\|w(X^{*,v,w}_t)\|^2\big)dt\Big]\,.
\end{align*} 
We again suppressed the dependence of $J^*_{v,w}$ on $X^{*,v,w}_0=x$ as it is fixed for this entire sub-section.

 \begin{theorem}\label{thm-sup-inf} The following equalities hold:
\begin{align}\label{eq-inf-sup}
\Lambda&= \inf_{v\in \Usm} \sup_{w\in \Wsm} J^*_{v,w}\\\label{eq-sup-inf}
&=  \sup_{w\in \Wsm}\inf_{v\in \Usm} J^*_{v,w}\\\label{eq-lim-sup-inf}
&=\lim_{l\to\infty}   \sup_{w\in \Wsm(l)}\inf_{v\in \Usm} J^*_{v,w}\,.
\end{align}
\end{theorem}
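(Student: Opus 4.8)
The plan is to show that all three displayed quantities equal $\Lambda$ by combining the variational identities already proved with a passage to the limit $l\to\infty$ in the truncated games of Theorem~\ref{thm-2p-game}. Concretely: (a) prove $\sup_{w\in\Wsm}J^*_{v,w}=\Lambda_v$ for every $v\in\Usm$, which together with $\inf_{v\in\Usm}\Lambda_v=\Lambda_{\text{SM}}=\Lambda$ (Theorem~\ref{thm-diffusion}) gives~\eqref{eq-inf-sup}; (b) record the minimax inequality $\sup_{w}\inf_{v}J^*_{v,w}\le\inf_{v}\sup_{w}J^*_{v,w}$ and the fact that $l\mapsto\sup_{w\in\Wsm(l)}\inf_{v}J^*_{v,w}$ is nondecreasing and $\le\sup_{w\in\Wsm}\inf_{v}J^*_{v,w}$ (because $\Wsm(l)\subset\Wsm(l')\subset\Wsm$ for $l\le l'$), so the quantities in~\eqref{eq-sup-inf} and~\eqref{eq-lim-sup-inf} are both at most $\Lambda$ by (a); (c) prove the matching lower bound $\lim_{l\to\infty}\sup_{w\in\Wsm(l)}\inf_{v}J^*_{v,w}\ge\Lambda$, which then forces equality throughout.

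For (a): by Corollary~\ref{cor-var-rep-risk-B}, Lemma~\ref{lem-limit-truc} and the definition~\eqref{eqn-erg-cont-aug}, $\Lambda_v=\widetilde\Lambda_v=\sup_{w\in\Wadm}\limsup_{T\to\infty}\E\big[\tfrac1T\int_0^T(r^v(X^{*,v,w}_t)-\tfrac12\|w_t\|^2)\,dt\big]$; since $\Wsm\subset\Wadm$ and $J^*_{v,w}$ is exactly the bracketed $\limsup$ for stationary Markov $w$, we get $\sup_{w\in\Wsm}J^*_{v,w}\le\Lambda_v$. Conversely, fix $l>0$ and $w\in\Wsm(l)$; as $w$ is bounded and compactly supported, the extra drift $\Sigma w$ changes the generator of $X$ (under $v$) only on a compact set, so the Foster--Lyapunov inequality of Proposition~\ref{prop-lyap-with-abandonment} transfers to $X^{*,v,w}$, which is therefore positive recurrent with invariant probability measure $\bar\mu_{v,w}$ of finite second moment, and by the ergodic theorem $J^*_{v,w}=\int(r^v-\tfrac12\|w\|^2)\,d\bar\mu_{v,w}\ge\int(r^v\wedge l-\tfrac12\|w\|^2)\,d\bar\mu_{v,w}=J^{*,l}_{v,w}$. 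Hence $\sup_{w\in\Wsm}J^*_{v,w}\ge\sup_{w\in\Wsm(l)}J^{*,l}_{v,w}=\rho^v_l$, and letting $l\uparrow\infty$ with $\rho^v_l\uparrow\widetilde\Lambda_v=\Lambda_v$ (Lemma~\ref{lem-limit-truc}) proves (a). The same inequality $J^*_{v,w}\ge J^{*,l}_{v,w}$ for $w\in\Wsm(l)$ enters (c): $\sup_{w\in\Wsm(l)}\inf_{v}J^*_{v,w}\ge\sup_{w\in\Wsm(l)}\inf_{v}J^{*,l}_{v,w}=\underline\rho_l=\rho_l$ by Theorem~\ref{thm-2p-game}(ii), so it remains to prove $\lim_{l\to\infty}\rho_l=\Lambda$.

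For $\lim_l\rho_l=\Lambda$: first $\rho_l=\overline\rho_l=\inf_{v}\sup_{w\in\Wsm(l)}J^{*,l}_{v,w}=\inf_v\rho^v_l\le\rho^v_l\le\Lambda_v$ for each $v$, so $\rho_l\le\Lambda$, and $\rho_l=\inf_v\rho^v_l$ is nondecreasing in $l$; hence $\rho^*:=\lim_l\rho_l$ exists with $\rho^*\le\Lambda$. To get $\rho^*\ge\Lambda$, pass to the limit $l\to\infty$ in the Isaacs equation~\eqref{eq-hjb-game} for $(\upu_l,\rho_l)$: normalizing $\upu_l(0)=0$, elliptic regularity --- carried out exactly as in the proof of Lemma~\ref{lem-limit-truc} and \cite[Theorem 3.5.2 and Lemma 3.5.3]{ari2011}, the quadratic gradient term being handled through the Cole--Hopf substitution and a Harnack inequality --- shows $\{\upu_l\}$ is bounded in $W^{2,p}_{\text{loc}}(\Rd)$ uniformly in $l$, hence converges along a subsequence in $W^{2,p}_{\text{loc}}$ to some $\upu_*$; since on any fixed ball $\chi_l\equiv1$, $r(\cdot,u)\wedge l=r(\cdot,u)$ and the constraint $\|w\|\le l$ is inactive once $l$ is large, $\upu_*$ solves $\min_{u\in\Act}\big[\Lg^u\upu_*(x)+r^u(x)\big]+\tfrac12\|\Sigma\transp\grad\upu_*(x)\|^2=\rho^*$ on $\Rd$. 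Equivalently $V_*:=e^{\upu_*}>0$, $V_*\in W^{2,p}_{\text{loc}}(\Rd)$, solves $\min_{u\in\Act}[\Lg^uV_*+r^uV_*]=\rho^*V_*$; choosing a measurable selector $\hat v\in\Usm$ of the minimum gives $\Lg^{\hat v}V_*+r^{\hat v}V_*=\rho^*V_*$ with $V_*>0$, so by \cite[Corollary 2.1]{ari2018strict}, $\rho^*\ge\Lambda_{\hat v}\ge\inf_{v\in\Usm}\Lambda_v=\Lambda_{\text{SM}}=\Lambda$. Thus $\rho^*=\Lambda$, which yields~\eqref{eq-lim-sup-inf}; combined with the chain $\rho_l\le\sup_{w\in\Wsm(l)}\inf_{v}J^*_{v,w}\le\sup_{w\in\Wsm}\inf_{v}J^*_{v,w}\le\Lambda$ from (b), this also yields~\eqref{eq-sup-inf}.

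The main obstacle is the uniform-in-$l$ interior estimate on $\upu_l$ needed for the limiting argument above: on $B_{l/2}$ the Isaacs equation~\eqref{eq-hjb-game} reads $\min_{u\in\Act}\Lg^u\upu_l+r^u\wedge l+\tfrac12\|\Sigma\transp\grad\upu_l\|^2=\rho_l$ whenever the constraint $\|w\|\le l$ is slack, so the quadratic-in-gradient nonlinearity rules out a direct $W^{2,p}$ bound; one must use the exponential change of variables $V_l=e^{\upu_l}$, which linearizes the equation on compact sets, apply a Harnack inequality with the normalization $V_l(0)=1$ to obtain locally uniform two-sided bounds on $V_l$, and only then invoke linear elliptic regularity. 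This is precisely where the argument departs from the bounded-cost, compact-control setting of \cite{borkar1992stochastic} and parallels the spatial-truncation analysis in \cite[Section 4]{ABP15} and \cite{ari2018strict}.
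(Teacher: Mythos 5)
Your parts (a) and (b) are correct and track the paper's reasoning for~\eqref{eq-inf-sup} and the easy $\le$ inequalities. Part (c), the crucial lower bound, takes a genuinely different route from the paper. The paper does not pass to the limit in the Isaacs equation; instead, it proves the uniform-in-$v$ estimate~\eqref{eq-claim}, namely $\Lambda_v\le\sup_{w\in\Wsm(l)}J^{*,l}_{v,w}+\delta$ for all $l\ge l_0(\delta)$ and all $v\in\Usm$, by a purely probabilistic argument (Lemmas~\ref{lem-unif-int-0} and~\ref{lem-unif-int}, which rest on the Foster--Lyapunov bound of Proposition~\ref{prop-lyap-with-abandonment} and a stopping-time truncation of nearly optimal $w\in\cA$), and then combines this with Theorem~\ref{thm-2p-game} to deduce $\Lambda\le\liminf_l\rho_l$. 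Your proposal to send $l\to\infty$ in~\eqref{eq-hjb-game} directly is a natural PDE alternative, but as written it contains a circularity.

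The gap is in the uniform $W^{2,p}_{\text{loc}}$ bound for $\upu_l$. You claim it "exactly as in the proof of Lemma~\ref{lem-limit-truc}" via Cole--Hopf plus Harnack, but the two equations are structurally different. In~\eqref{eq-trunc} the inner maximization is over $w\in\RR^d$, so on $\{\chi_l>0\}$ the Hamiltonian is globally $\tfrac12\|\Sigma\transp\grad\upu^v_l\|^2$ and $V_l=e^{\upu^v_l}$ satisfies the genuinely \emph{linear} equation $\Lg^v V_l+(r^v\wedge l)V_l=\rho^v_l V_l$; Harnack with the normalization $V_l(0)=1$ then gives $l$-uniform two-sided bounds. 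In the Isaacs equation~\eqref{eq-hjb-game} the inner maximization is constrained to $\|w\|\le l$, and on $B_{l/2}$ the max equals $H_l(\|\Sigma\transp\grad\upu_l\|)$ where $H_l(s)=\tfrac12 s^2$ for $s\le l$ but $H_l(s)=ls-\tfrac12 l^2$ for $s>l$. Cole--Hopf linearizes~\eqref{eq-hjb-game} only on the set $\{\|\Sigma\transp\grad\upu_l\|\le l\}$; elsewhere it yields $\Lg^{\hat v_l}V_l=\big[\rho_l-r^{\hat v_l}\wedge l+\tfrac12(\|\Sigma\transp\grad\upu_l\|-l)_+^2\big]V_l$, a zeroth-order coefficient with no $l$-uniform bound. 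You invoke the slackness of the constraint "once $l$ is large," but that slackness is precisely the $l$-uniform gradient bound that the Cole--Hopf/Harnack argument was supposed to produce --- the reasoning is circular. A repair is possible (for instance: $\rho_l=\rho^{v^*_l}_l$ by Theorem~\ref{thm-2p-game}(iv), so one can analyse the unconstrained equation~\eqref{eq-trunc} for $\upu^{v^*_l}_l$, which does linearize on $B_l$ and admits uniform Harnack/$W^{2,p}$ estimates, and then control the $l$-dependence of $v^*_l$ via the topology of Markov controls when passing to the limit), but none of this is in your sketch; the remaining steps --- the limiting eigenvalue equation for $V_*>0$ and the appeal to \cite[Corollary 2.1]{ari2018strict} --- are fine once the a priori estimates are secured, but as stated they are not.
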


\begin{remark} The main content of this theorem is establishing the switch of supremum and infimum operations  in the context of TP-ZS SDGs, where the maximizing strategy takes values in non-compact spaces (the Euclidean space) and the associated running cost $\underline r(x,u,w)\doteq r(x,u)-\frac{1}{2}\|w\|^2$ is neither bounded from below nor above. To the best of our knowledge, the switch of supremum and infimum operations in~\eqref{eq-inf-sup} and~\eqref{eq-sup-inf} is novel under the aforementioned setup.
\end{remark}

\begin{proof} 

\eqref{eq-inf-sup} is an immediate consequence of Lemma~\ref{lem-limit-truc}. Since $\inf_{x}\sup_{y} f(x,y)\geq \sup_y\inf_x f(x,y)$, using~\eqref{eq-inf-sup}, we have $$  \Lambda\geq \sup_{w\in \Wsm}\inf_{v\in\Usm}J^*_{v,w}\,.$$
Therefore, to prove~\eqref{eq-sup-inf}, we show that for any $\delta>0$, there exists $w^*\in \Wsm$ such that 
\begin{equation}\label{eq-sup-inf-rep-1}
\Lambda\leq \inf_{v\in\Usm}J^*_{v,w^*}+\delta\,.
\end{equation} 
To do this, for now we assume that  the following holds: for every $\delta>0$, there exists $l_0>0$ such that whenever $l>l_0$, we have
\begin{align}\label{eq-claim} \Lambda_v\leq \sup_{w\in \Wsm(l)} J^{*,l}_{v,w}+\delta,\end{align}
 for every $v\in \Usm$. This claim is proved in Lemma~\ref{lem-unif-int} at the end of this section. Since $\rho_l= \inf_{v\in\Usm} \sup_{w\in \Wsm(l)} J^{*,l}_{v,w}$ from Theorem~\ref{thm-2p-game}, we choose $v^*_l\in \Usm$ such that $\sup_{w\in \Wsm(l)} J^{*,l}_{v^*_l,w}= \rho_l$ to obtain
 \begin{align*}
 \Lambda\leq \Lambda_{v^*_l}\leq \sup_{w\in \Wsm(l)} J^{*,l}_{v^*_l,w}= \rho_l+ \delta\,.
 \end{align*}
This proves that 
$\Lambda\leq \liminf_{l\to\infty} \rho_l$. Now observing that $J^{*,l}_{v,w}\leq J^*_{v,w}$, for $w\in \Wsm(l)$, we get
\begin{align}\label{eq-lim-sup-inf-2} \Lambda\leq \limsup_{l\to\infty} \sup_{w\in \Wsm(l) }\inf_{v\in \Usm} J^{*,l}_{v,w}\leq  \limsup_{l\to\infty} \sup_{w\in \Wsm(l) }\inf_{v\in \Usm} J^*_{v,w}\,.\end{align}
Hence,~\eqref{eq-sup-inf-rep-1} is proved.

Finally, to prove~\eqref{eq-lim-sup-inf}, we make an easy observation from~\eqref{eq-sup-inf} that for every $l>0$, we have
\begin{align*}
\Lambda \geq   \sup_{w\in \Wsm(l)}\inf_{v\in \Usm}J^*_{v,w}\,.
\end{align*}
Combining the above display with~\eqref{eq-lim-sup-inf-2}, we obtain~\eqref{eq-lim-sup-inf}. 
This proves the theorem.
\end{proof}
\begin{remark}\label{rem-markov}In the above analysis, observe that we choose the fixed $v$ to be a relaxed control and not a general admissible control. The reason for doing this is as follows: suppose we choose $v$ to be some non-Markov control. Then the running cost function for the maximization problem in~\eqref{eqn-erg-cont-aug} is a priori dependent on the entire past. This restricts us from using the principle of dynamic programming and the analysis above is not applicable. 
\end{remark}

Therefore, all that remains to show is that~\eqref{eq-claim} holds. In the following, we achieve this  in Lemma~\ref{lem-unif-int} (which is also used in the proof of lower bound) and also prove another simple yet important result (Lemma~\ref{lem-trivial-bound}) which is used in the proof of the upper bound. Both these results involve the notion of  the mean empirical (occupation) measure and the ergodic occupation measure of the extended diffusion process.  For any $\Uadm\ni u_t=u(t, W_{[0,t]})$, let  $X^{*,u,w}$ be a diffusion process as a solution to 
\begin{align}\label{X-control-adm} d{X}^{*,u,w}_t=  b({X}^{*,u,w}_t, u(t,W^w_{[0,t]}))dt +\Sigma w_tdt + \Sigma dW_t\,,  \text{ with } {X}^{*,u,w}_0=x.\end{align}
Here, $W^w_\cdot=W_\cdot+ \int_0^\cdot w_tdt.$ To keep expressions concise, we write $u^w_t=u(t,W^w_{[0,t]})$. In case that $u$ is a Markov control,  we know that $u$ depends on $W$ through the diffusion, that is, we have $u^w_t= u(X^{*,u,w}_t)$. In other words, for any $v\in \Usm$ and $w\in \cA$, $X^{*,v,w}$ is the solution to~\eqref{X-control}. 

The mean empirical (occupation) measure is defined as follows.  For a given $u\in \Uadm$ and $w\in \cA$,
\begin{align}\nonumber \mu^{*,T}_{u,w}(A\times B\times C)&\doteq \frac{1}{T}\int_0^T \Ind_{\{(X^{*,u,w}_t,u^w_t,w_t)\in A\times B\times C\}} dt\,,\\\nonumber
\mu^{*,T,1}_{u,w}(A\times B)&\doteq \frac{1}{T}\int_0^T \Ind_{\{(X^{*,u,w}_t,u^w_t)\in A\times B\}} dt\,,\\
\label{def-mu*} \mu^{*,T,2}_{u,w}(A\times C)&\doteq \frac{1}{T}\int_0^T \Ind_{\{(X^{*,u,w}_t,w_t)\in A\times C\}} dt\,, \\\nonumber
\mu^{*,T,3}_{u,w}(A)&\doteq \frac{1}{T}\int_0^T \Ind_{\{X^{*,u,w}_t \in A\}} dt\,,
\end{align}
for any Borel sets $A,C\subset \RR^d$ and $B\subset \bU$.   For $v\in \Usm$  and $w\in  \Wsm$, we write
$$ \widetilde \mu^{*,T}_{v,w} (A)\doteq \frac{1}{T}\int_0^T \Ind_{\{X^{*,v,w}_t\in A\}}dt\,, \text{ for any Borel set $A\subset \RR^d.$}$$ 
We represent their weak limits (if they exist and the associated subsequence $T_k$ is irrelevant) by $ \mu^{*}_{u,w}, \mu^{*,1}_{u,w}, \mu^{*,2}_{u,w}, \mu^{*,3}_{u,w},\widetilde\mu^{*}_{v,w}$, respectively.

Observe that using the above notation,  we can rewrite $\Lambda_v$ in \eqref{eqn-erg-cont-var-rep}  as
 $$ \Lambda_v= \limsup_{T\to\infty} \sup_{w\in \cA}\int_{\RR^d\times  \RR^d} \big(r^v(x)-\frac{1}{2} \|y\|^2\big) d\mu^{*,T,2}_{v,w}(x,y)\,.$$
 Using Theorem~\ref{thm-sup-inf}, we can further write $\Lambda_v$ as
 $$ \Lambda_v= \sup_{w\in \Wsm}\inf_{v\in \Usm}\int_{\RR^d} \big(r^v(x)-\frac{1}{2} \|w(x)\|^2\big) d\widetilde\mu_{v,w}^{*}(x).$$

Before we proceed to prove that~\eqref{eq-claim} holds, we state and prove a necessary tightness result concerning the mean empirical measures of the pair $(X^{*,v,w^*},w^*)$, whenever $v\in \Usm$ and $w^*\in \cA$ is a nearly optimal auxiliary control.
\begin{lemma}\label{lem-unif-int-0} For every $v\in \Usm$, let $w^*=w^*(\delta,T)\in \cA$ be such that 
$$\sup_{w\in \cA}\E\Bigg[ \frac{1}{T}\int_0^T \Big( r^v(X^{*,v,w}_t) - \frac{1}{2}\|w_t\|^2\Big)dt\Bigg]\leq  \E\Bigg[ \frac{1}{T}\int_0^T \Big( r^v(X^{*,v,w^*}_t) - \frac{1}{2}\|w^*_t\|^2\Big)dt\Bigg]+\delta, $$
for every $T>0$. Then, 
$$\limsup_{T\to\infty}\frac{1}{T} \E\Big[ \int_0^T \|X^{*,v,w^*}_t\|^2dt \Big] \leq M \quad \text{ and } \quad \limsup_{T\to\infty}\frac{1}{T}\E\Big[\int_0^T \|w^*_t\|^2 dt\Big]\leq M\,.$$
Here, $M>0$ is a constant that depends only on $\delta$. In particular, the family of mean empirical measures $\{\mu^{*,T,2}_{v,w^*}\}_{T>0}$ is tight.

\end{lemma}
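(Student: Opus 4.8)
The plan is to exploit the Lyapunov inequality of Proposition~\ref{prop-lyap-with-abandonment} together with the near-optimality of $w^*$ and the finiteness of $\Lambda_v$ (Corollary~\ref{cor-well-defined-limit}) in order to simultaneously control the time-averaged second moments of the state and of the auxiliary control. First I would recall that $\cV\in\cC^2(\RR^d)$ is inf-compact with $\cV\ge 1$ and, crucially, that the drift of $X^{*,v,w}$ differs from that of $X$ under $v$ only by the additional term $\Sigma w_t$. Hence, writing $\Lg^{v,w}$ for the generator of $X^{*,v,w}$, we have $\Lg^{v,w}\cV(x)=\Lg^v\cV(x)+\Sigma w\cdot\grad\cV(x)$. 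The quadratic decay $\Lg^v\cV(x)\le (C_0-C_1\|x\|^2)\cV(x)$ will absorb the cross term $\Sigma w\cdot\grad\cV(x)$: using a bound of the form $|\grad\cV(x)|\le c\,(1+\|x\|)\cV(x)$ (which holds for the explicit Lyapunov function constructed in~\cite{AHP21}, and can be assumed here) together with Young's inequality $\Sigma w\cdot\grad\cV(x)\le \tfrac14 C_1\|x\|^2\cV(x)+c'\|w\|^2\cV(x)+c''\cV(x)$, one obtains
\begin{align}\label{eq-lyap-ext}
\Lg^{v,w}\cV(x)\le \bigl(C_0'-\tfrac12 C_1\|x\|^2+c'\|w\|^2\bigr)\cV(x)\,.
\end{align}
Applying It\^o's formula to $e^{\int_0^t(\tfrac12 C_1\|X^{*,v,w^*}_s\|^2-c'\|w^*_s\|^2-C_0')ds}\cV(X^{*,v,w^*}_t)$, localizing with exit times $\tau_R$, using $\cV\ge1$ and $\inf\cV>0$, and then Fatou, yields $\E\bigl[e^{\int_0^T(\tfrac12 C_1\|X^{*,v,w^*}_t\|^2-c'\|w^*_t\|^2)dt}\bigr]\le C\cV(x)$, and by Jensen's inequality
\begin{align}\label{eq-jensen-ext}
\frac12 C_1\,\E\Bigl[\tfrac1T\!\int_0^T\!\|X^{*,v,w^*}_t\|^2dt\Bigr]-c'\,\E\Bigl[\tfrac1T\!\int_0^T\!\|w^*_t\|^2dt\Bigr]\le \tfrac1T\log\bigl(C\cV(x)\bigr)\,.
\end{align}

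Next I would extract the matching upper bound on $\E[\tfrac1T\int_0^T\|w^*_t\|^2dt]$ from near-optimality. Since $w\equiv0\in\cA$, the near-optimality hypothesis gives
\begin{align}\label{eq-nearopt-zero}
\E\Bigl[\tfrac1T\!\int_0^T r^v(X^{*,v,w^*}_t)dt\Bigr]-\tfrac12\E\Bigl[\tfrac1T\!\int_0^T\|w^*_t\|^2dt\Bigr]\ge \E\Bigl[\tfrac1T\!\int_0^T r^v(X^{*,v,0}_t)dt\Bigr]-\delta\ge -\delta\,,
\end{align}
using $r\ge0$; thus $\tfrac12\E[\tfrac1T\int_0^T\|w^*_t\|^2dt]\le \E[\tfrac1T\int_0^T r^v(X^{*,v,w^*}_t)dt]+\delta$. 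Combining with the upper bound $\Lambda_v<\infty$ (Corollary~\ref{cor-well-defined-limit}) requires first controlling $\E[\tfrac1T\int_0^T r^v(X^{*,v,w^*}_t)dt]$; here I use that $r(x,u)=\kappa\cdot[(e\cdot x)^+u]\le C(1+\|x\|)\le \tfrac{C_1}{4c'}\|x\|^2+C'''$, so this term is dominated by $\tfrac{C_1}{4c'}\E[\tfrac1T\int_0^T\|X^{*,v,w^*}_t\|^2dt]+C'''$. Feeding this into~\eqref{eq-nearopt-zero} and then into~\eqref{eq-jensen-ext} produces, after rearranging, a closed linear system in the two quantities $a_T\df\E[\tfrac1T\int_0^T\|X^{*,v,w^*}_t\|^2dt]$ and $b_T\df\E[\tfrac1T\int_0^T\|w^*_t\|^2dt]$ of the form $\alpha a_T-\beta b_T\le \gamma_T$ and $b_T\le \kappa' a_T+\kappa''$ with $\alpha,\beta,\kappa'>0$ and $\gamma_T\to0$; choosing constants so that $\alpha>\beta\kappa'$ (which amounts to taking $c'$ small enough in the Young step, equivalently tuning the split in~\eqref{eq-lyap-ext}) gives $\limsup_T a_T\le M$ and then $\limsup_T b_T\le M$ for a constant $M=M(\delta)$. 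Finally, tightness of $\{\mu^{*,T,2}_{v,w^*}\}_{T>0}$ in $\cP(\RR^d\times\RR^d)$ follows immediately from the uniform bound $\limsup_T\int(\|x\|^2+\|y\|^2)\,d\mu^{*,T,2}_{v,w^*}(x,y)\le 2M$ via the inf-compactness of $(x,y)\mapsto\|x\|^2+\|y\|^2$ and Prokhorov's theorem.

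The main obstacle I anticipate is the interlocking of the two estimates: the Lyapunov bound controls $\|X^{*,v,w^*}\|^2$ only up to a multiple of $\|w^*\|^2$, while the near-optimality bound controls $\|w^*\|^2$ only up to a multiple of $r^v(X^{*,v,w^*})$ and hence of $\|X^{*,v,w^*}\|^2$; closing this loop requires that the product of the two coupling constants be strictly less than one, which is where the quadratic (rather than linear) decay $-C_1\|x\|^2\cV(x)$ in Proposition~\ref{prop-lyap-with-abandonment} is essential and where the Young-inequality split must be performed with care. A secondary technical point is the localization argument in the It\^o step: one must check that the stopped exponential-supermartingale identity passes to the limit $R\to\infty$, for which the sign structure in~\eqref{eq-lyap-ext} (the term $-\tfrac12 C_1\|x\|^2\cV(x)$ dominates once $\|x\|$ is large, uniformly in the bounded contribution of $w^*$ through the exponent) together with $\cV\ge1$ and Fatou suffices, exactly as in the proof of Corollary~\ref{cor-well-defined-limit}.
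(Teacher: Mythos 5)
Your overall strategy is the same as the paper's: combine a Lyapunov drift estimate for the extended generator with the near-optimality of $w^*$ (tested against $w\equiv 0$, or equivalently against $\Lambda_v\ge 0$) to obtain two linear inequalities linking $a_T=\E[\tfrac1T\int_0^T\|X^{*,v,w^*}_t\|^2dt]$ and $b_T=\E[\tfrac1T\int_0^T\|w^*_t\|^2dt]$, and close the loop using the fact that $r$ grows only linearly in $\|x\|$ while the drift decays quadratically. The near-optimality half and the concluding tightness step are fine. However, your Lyapunov half has a genuine gap: the inequality $\Sigma w\cdot\grad\cV(x)\le \tfrac14 C_1\|x\|^2\cV(x)+c'\|w\|^2\cV(x)+c''\cV(x)$ rests on the pointwise bound $\|\grad\cV(x)\|\le c(1+\|x\|)\cV(x)$, which is \emph{not} part of Proposition~\ref{prop-lyap-with-abandonment} and is simply ``assumed.'' The paper avoids this entirely by setting $\widehat\cV=\log\cV$: substituting $\cV=e^{\widehat\cV}$ into \eqref{eq-lyap-with-abandonment} yields $\Lg^u\widehat\cV+\tfrac12\|\Sigma\transp\grad\widehat\cV\|^2\le C_0-C_1\|x\|^2$, so the drift inequality \emph{itself} supplies the gradient-squared term needed to absorb the Young cross term $\Sigma w\cdot\grad\widehat\cV\le\tfrac12\|w\|^2+\tfrac12\|\Sigma\transp\grad\widehat\cV\|^2$, with the coefficient of $\|w\|^2$ exactly the $\tfrac12$ appearing in the cost and the full constant $C_1$ retained in front of $\|x\|^2$. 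No extra regularity of $\cV$ is needed, and a plain Dynkin/It\^o identity for $\widehat\cV(X^{*}_t)$ (rather than your exponential supermartingale plus Jensen) then gives $C_1 a_T\le \tfrac{\widehat\cV(x)}{T}+C_0+\tfrac12 b_T$.

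A secondary but real problem is your constant bookkeeping. With $\alpha=\tfrac12 C_1$, $\beta=c'$ and the domination $r(x,u)\le\tfrac{C_1}{4c'}\|x\|^2+C'''$ you get $\kappa'=\tfrac{C_1}{2c'}$, hence $\beta\kappa'=\tfrac{C_1}{2}=\alpha$: the product of coupling constants equals one and the final inequality is vacuous. Moreover you cannot ``take $c'$ small'' — absorbing the $(1+\|x\|)$ factor into $\tfrac14 C_1\|x\|^2$ forces $c'$ to be \emph{large}. The repair is instead to shrink the coefficient in the domination of $r$ (e.g.\ $r(x,u)\le\tfrac{C_1}{8c'}\|x\|^2+C''''$, legitimate since $r$ is linear in $\|x\|$), which makes $\beta\kappa'=\tfrac{C_1}{4}<\alpha$. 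In the paper's version this tuning is automatic: the loop closes because $r\le\tfrac{C_1}{2}\|x\|^2+\widetilde M$ against a full $C_1\|x\|^2$ decay and an exact $\tfrac12\|w\|^2$, with no free parameter to mis-set.
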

\begin{proof} Fix $\delta>0$, $v\in \Usm$ and, choose $w^*\in \cA$ such that it satisfies the hypothesis of the lemma (and  set $X^*=X^{*,v,w^*}$  in the rest of the proof). In particular, using Corollary~\ref{cor-var-rep-risk-B} we have 
\begin{align*}
\Lambda_v\leq  \limsup_{T\to\infty}\E\Bigg[ \frac{1}{T}\int_0^T \Big( r^v(X^{*}_t) - \frac{1}{2}\|w^*_t\|^2\Big)dt\Bigg]+\frac{\delta}{2}\,.
\end{align*}
Due to the non-negativity of  $\Lambda_v$, this in turn implies that for large enough $T$ (which is what we restrict ourselves to in the rest of the proof),
\begin{align}\label{eq-1}
\frac{1}{2T}\E\Big[ \int_0^T \|w^*_t\|^2dt\Big]\leq \frac{1}{T} \E\Big[\int_0^T  r^v(X^{*}_t)dt\Big]-\Lambda_v+\delta\leq \frac{1}{T} \E\Big[\int_0^T  r^v(X^{*}_t)dt\Big]+\delta\,.
\end{align}
 Now let $\widehat \cV\doteq \log \cV$, where $\cV$ is the inf-compact function from Proposition~\ref{prop-lyap-with-abandonment}. A straightforward substitution of $\cV=e^{\widehat \cV}$ in~\eqref{eq-lyap-with-abandonment} shows that
\begin{align*}
\Lg^u \widehat \cV(x)+ \frac{1}{2}\|\Sigma \nabla \widehat \cV(x)\|^2\leq  C_0- C_1\|x\|^2, \text{ for every $(x,u)\in \RR^d\times \bU$}\,.
\end{align*}

Since the generator associated with the process $X^{*}$ is \[\widehat \Lg^{u,w} f\doteq \Lg^u f+ \Sigma w\cdot \nabla f,\] a simple application of Young's inequality: for $a,b\geq 0$, $ab\leq \frac{1}{2} a^2+ \frac{1}{2}b^2$ gives us 
$$\widehat \Lg^{u,w} \widehat \cV(x)\leq C_0-C_1\|x\|^2 + \frac{1}{2}\|w\|^2 \,. $$
From here,  an application of It\^o's formula (followed by Fatou's lemma) with $X^{*}_0=x$, $u=v(X^{*}_t)$ and $w= w^*_t$ gives us the following:
\begin{align*} 
\frac{C_1}{T} \E\Big[ \int_0^T \|X^{*}_t\|^2dt \Big] 
&\leq \frac{\widehat \cV(x)}{T} + C_0 +\frac{1}{2T}\E\Big[\int_0^T \|w^*_t\|^2 dt\Big] \\
& \leq  \frac{\widehat \cV(x)}{T} + C_0 +\frac{1}{T} \E\Big[\int_0^T  r^v(X^{*}_t)dt\Big]+\delta\,.
 \end{align*}
We use~\eqref{eq-1} to obtain the second inequality above. Since $r(x,u)= \kappa\cdot\big[(e\cdot x)^+ u\big]$, it is clear that $r(x,u)\leq \frac{C_1}{2}\|x\|^2+ \widetilde M $, for some large enough constant $\widetilde M>0$ (that depends only on $\kappa$ and $C_1$). From this and the above display, we get
\begin{align*} \frac{C_1}{2T} \E\Big[ \int_0^T \|X^{*}_t\|^2dt \Big]&\leq    \frac{\widehat \cV(x)}{T} + C_0+ \widetilde M +\delta\,.
 \end{align*}
  Finally, taking $T\to\infty$, we obtain
\begin{align*}\limsup_{T\to\infty} \frac{1}{T} \E\Big[ \int_0^T \|X^{*}_t\|^2dt \Big]&\leq \frac{2 (C_0+\widetilde M+\delta)}{C_1} \,.
 \end{align*}
 From the above display and~\eqref{eq-1}, setting $M= \max\{2C_1^{-1} (C_0+\widetilde M+\delta), C_0+2\widetilde M+2\delta \}$, we have the result.
\end{proof}
\begin{lemma}\label{lem-unif-int} For every $\delta>0$, there exist $l_0>0$ and $w^*\in \Wsm(l)$ for $l>l_0$ such that for any  $v\in\Usm$,
\begin{equation*}
 \Lambda_v\leq \int_{\RR^d} \big(r^v(x)\wedge l-\frac{1}{2}\|w^*(x)\|^2\big)d\widetilde\mu_{v,w^*}^{*}(x) +\delta.
 \end{equation*} 
 In particular,  we have 
\begin{align}\label{eqn-cor-var-rep-BM1} \Lambda\leq \Lambda_v\leq\sup_{w\in \Wsm(l)} J^{*,l}_{v,w}+\delta\,.\end{align}
\end{lemma}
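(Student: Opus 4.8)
The plan is to show that the convergence $\rho^v_l\to\Lambda_v$ of Lemma~\ref{lem-limit-truc} is \emph{uniform} over $v\in\Usm$; precisely, that for every $\delta>0$ there is an $l_0$, independent of $v$, such that $\rho^v_l\ge\Lambda_v-\delta$ for all $l>l_0$ and all $v\in\Usm$ (recall that $\rho^v_l=\sup_{w\in\Wsm(l)}J^{*,l}_{v,w}=\Lambda_v(l)$ is non-decreasing in $l$ with limit $\Lambda_v$). Granting this, fix $v$, take $l>l_0$, and pick $w^*=w^*_{v,l}\in\Wsm(l)$ with $J^{*,l}_{v,w^*}\ge\rho^v_l-\delta$; since $w^*$ has compact support and $X$ is uniformly exponentially ergodic under $v$ (Proposition~\ref{prop-lyap-with-abandonment}), the extended diffusion $X^{*,v,w^*}$ is exponentially ergodic, so the $\limsup$ defining $J^{*,l}_{v,w^*}$ is a genuine limit and equals $\int_{\RR^d}(r^v\wedge l-\tfrac12\|w^*\|^2)\,d\widetilde\mu^*_{v,w^*}$ (the integrand being bounded); this yields $\Lambda_v\le\int_{\RR^d}(r^v\wedge l-\tfrac12\|w^*\|^2)\,d\widetilde\mu^*_{v,w^*}+2\delta$, and, taking the supremum over $\Wsm(l)$ and using the trivial bound $\Lambda\le\Lambda_v$, also \eqref{eqn-cor-var-rep-BM1}. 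I would run the argument throughout with $\delta/3$ in place of $\delta$ and relabel at the end.

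First I would reduce to a \emph{bounded} running cost. For $\eta>0$ set $\Lambda^{(\eta)}_v\df\limsup_{T\to\infty}\tfrac1T\log\E[e^{\int_0^T(r^v(X_t)\wedge\eta)dt}]$; applying Theorem~\ref{thm-var-rep-BM-gen} to $G(W)=\tfrac1T\int_0^T(r^v(X_t)\wedge\eta)dt$ exactly as in Corollary~\ref{cor-var-rep-risk-B} gives $\Lambda^{(\eta)}_v=\limsup_{T}\sup_{w\in\cA}\E[\tfrac1T\int_0^T(r^v(X^{*,v,w}_t)\wedge\eta-\tfrac12\|w_t\|^2)dt]$. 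Now fix $v$ and, for each large $T$, choose a near-optimizer $w^*=w^*(\delta,T)\in\cA$ of the untruncated level-$T$ functional as in Lemma~\ref{lem-unif-int-0}; by Corollary~\ref{cor-var-rep-risk-B} there is a subsequence $T_k\to\infty$ along which $\E[\tfrac1{T_k}\int_0^{T_k}(r^v(X^{*,v,w^*}_t)-\tfrac12\|w^*_t\|^2)dt]\ge\Lambda_v-\delta$, and Lemma~\ref{lem-unif-int-0} gives the $v$-free bound $\E[\tfrac1{T_k}\int_0^{T_k}\|X^{*,v,w^*}_t\|^2dt]\le 2M$ with $M=M(\delta)$. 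Since $r(x,u)=(e\cdot x)^+(\kappa\cdot u)\le(\max_i\kappa_i)(e\cdot x)^+$, one has $r^v(x)\le C\|x\|$ with $C$ independent of $v$, hence $(r^v(x)-\eta)^+\le C^2\|x\|^2/\eta$; therefore $\E[\tfrac1{T_k}\int_0^{T_k}(r^v(X^{*,v,w^*}_t)\wedge\eta-\tfrac12\|w^*_t\|^2)dt]\ge\Lambda_v-\delta-2MC^2/\eta\ge\Lambda_v-2\delta$ once $\eta\ge\eta_1(\delta)\df2MC^2/\delta$, and taking $\limsup_k$ gives $\Lambda^{(\eta_1)}_v\ge\Lambda_v-2\delta$, uniformly in $v$. (This step involves no weak limits, so the possible discontinuity of $r^v$ is harmless.)

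It remains to prove that, for the fixed bounded cost $r^v\wedge\eta_1$, the spatially truncated values $\rho^v_{\eta_1,l}\df\sup_{w\in\Wsm(l)}\limsup_{T}\E[\tfrac1T\int_0^T(r^v(X^{*,v,w}_t)\wedge\eta_1-\tfrac12\|w(X^{*,v,w}_t)\|^2)dt]$ converge to $\Lambda^{(\eta_1)}_v$ \emph{uniformly in $v$} as $l\to\infty$ — the proposition before Lemma~\ref{lem-limit-truc}, and Lemma~\ref{lem-limit-truc} itself, apply verbatim with $r^v\wedge\eta_1$ in place of $r^v$ and give the pointwise convergence $\rho^v_{\eta_1,l}\uparrow\Lambda^{(\eta_1)}_v$. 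Together with the previous step and the elementary inequality $\rho^v_l\ge\rho^v_{\eta_1,l}$ valid for $l\ge\eta_1$ (because $r^v\wedge l\ge r^v\wedge\eta_1$), this uniform convergence delivers $\rho^v_l\ge\Lambda_v-3\delta$ for all $l>\max(l_0,\eta_1)$, which is what we need. I would obtain the uniform convergence by contradiction: if it failed, there would be $\delta_0>0$, controls $v_n\in\Usm$ and $l_n\to\infty$ with $\rho^{v_n}_{\eta_1,l_n}\le\Lambda^{(\eta_1)}_{v_n}-\delta_0$; after extracting (diagonally over balls) a subsequence along which $v_n\to v^\infty$ locally in the relaxed-control sense, the truncated HJB equations for the logarithmic functions $\upu^{v_n}_{l_n}$ and the ground-state equations for $\Phi^{v_n}=\log\Psi^{v_n}$ have coefficients bounded on compacts uniformly in $n$ (the drift $b(\cdot,v_n(\cdot))$ has at most linear growth uniformly in $v_n$, $\Sigma$ is uniformly elliptic, and $r^{v_n}\wedge\eta_1\le\eta_1$), and the relevant solutions enjoy $v$-uniform a priori bounds on compacts because of the uniform exponential ergodicity of Proposition~\ref{prop-lyap-with-abandonment}; standard elliptic estimates then pass both families of equations to the corresponding equations for $v^\infty$, and the uniqueness of the ground state (\cite[Lemma~2.4]{ari2018strict}) forces both $\rho^{v_n}_{\eta_1,l_n}$ and $\Lambda^{(\eta_1)}_{v_n}$ to converge to $\Lambda^{(\eta_1)}_{v^\infty}$, contradicting $\rho^{v_n}_{\eta_1,l_n}\le\Lambda^{(\eta_1)}_{v_n}-\delta_0$. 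I expect this last step — making the spatial-truncation analysis of \cite[Section~4]{ABP15} underlying Lemma~\ref{lem-limit-truc} uniform over all stationary Markov controls $v$ — to be the main obstacle; it rests on the $v$-uniform elliptic and eigenfunction estimates afforded by the uniform exponential ergodicity, together with the local compactness of the space of relaxed stationary Markov controls.
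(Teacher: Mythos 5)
Your proposal takes a genuinely different route from the paper, and the step you yourself flag as ``the main obstacle'' contains a real gap.

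What the paper actually does: it never tries to make the spatial‐truncation convergence $\rho^v_l\to\Lambda_v$ uniform in $v$. Instead, it fixes $v$, picks a near-optimal $w^*=w^*(\delta/4,T)\in\cA$ as in Lemma~\ref{lem-unif-int-0}, and truncates it \emph{pathwise}: it introduces $\tau^*=\inf\{t:\|w^*_t\|>l_1\text{ or }\|X^{*,v,w^*}_t\|>l_1\}$ and sets $\widetilde w^*_t=w^*_t\Ind_{[0,\tau^*]}(t)$. It then shows the loss of value caused by this truncation is bounded by two tail terms $J_1(T),J_2(T)$, which can be made $<\delta/4$ uniformly in $T$ \emph{and} $v$, because the bound $M$ in Lemma~\ref{lem-unif-int-0} depends only on $\delta$ (this is precisely where the $v$-uniformity of $l_0$ comes from) and because after $\tau^*$ the extended process coincides with $X$, for which Proposition~\ref{prop-lyap-with-abandonment} applies. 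Finally it extracts a subsequential weak limit of the empirical occupation measures of $(\widetilde X^{*},\widetilde w^{*})$ and reads off a $\widehat w\in\Wsm(l_1)$ from the disintegration; lower semicontinuity of $\|w\|^2$ gives the displayed inequality with $l=\max(l_1,l_2)$. No continuity of $v\mapsto\Lambda_v$ or of $v\mapsto\rho^v_l$ in the relaxed-control topology is ever invoked.

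Your step (1) — truncating the running cost to $r^v\wedge\eta_1$ with $\eta_1$ chosen from the $v$-free moment bound of Lemma~\ref{lem-unif-int-0} — is correct, carefully argued, and parallel to the role Lemma~\ref{lem-unif-int-0} plays in the paper's $|J_2(T)|<\delta/4$ estimate. The problem is step (2). You propose to show $\rho^v_{\eta_1,l}\to\Lambda^{(\eta_1)}_v$ uniformly in $v$ by contradiction: take $v_n\to v^\infty$ in the relaxed-Markov-control topology and pass the HJB/ground-state equations to the limit via elliptic estimates and uniqueness. The gap is that the zero-order coefficient $r^{v_n}\wedge\eta_1$ is \emph{not} continuous in $v_n$ for this topology. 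Relaxed-control convergence gives $r^{v_n}\rightharpoonup r^{v^\infty}$ weakly in $L^1_{\mathrm{loc}}$, but $t\mapsto t\wedge\eta_1$ is nonlinear (concave), so $r^{v_n}\wedge\eta_1$ need not converge weakly to $r^{v^\infty}\wedge\eta_1$; its weak limit is generically some $\hat r\neq r^{v^\infty}\wedge\eta_1$ on the annulus where $\frac{\eta_1}{\max_i\kappa_i}<(e\cdot x)^+<\frac{\eta_1}{\min_i\kappa_i}$, which has positive Lebesgue measure. (The averaged truncation $\int_\bU (r(x,u)\wedge\eta_1)\,v(du|x)$ \emph{is} linear in $v$, but it is a \emph{lower} bound for $r^v\wedge\eta_1$ by Jensen, so substituting it would break the comparison with $\Lambda_v$ you established in step (1).) Consequently the limit equations you obtain are for the wrong running cost, and the uniqueness invocation from \cite[Lemma~2.4]{ari2018strict} does not close the loop — it would only show $\rho^{v_n}_{\eta_1,l_n}$ and $\Lambda^{(\eta_1)}_{v_n}$ converge to principal eigenvalues with coefficients $\hat r_1,\hat r_2$ that need not coincide. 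Separately, even if both coefficients did coincide, you would need both the upper and the lower semicontinuity of the principal eigenvalue along $v_n$, which is not delivered by the a priori estimates alone without additional work on the eigenfunction normalisations.

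So the proposal correctly identifies that $v$-uniformity must come from the $v$-uniform moment bound of Lemma~\ref{lem-unif-int-0}, but it detours through a compactness-in-$v$ argument that has an unresolved continuity gap. The paper's path — truncate the adapted control $w^*$ at the exit time $\tau^*$, control $J_1,J_2$, and pull a $\Wsm(l)$ control out of the limiting occupation measure — sidesteps any continuity of $v\mapsto\Lambda_v$ entirely.
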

\begin{proof} Fix $\delta>0$ and $v\in \Usm$. Let $w^*=w^*(\delta,T)\in \cA$ be as in the hypothesis of Lemma~\ref{lem-unif-int-0} with $\delta/4$.  We again set $X^*= X^{*,v,w^*}$. Using $w^*$, we define a new $\widetilde w^*=\widetilde w^*(\delta,T)\in \cA$ as follows: for $l_1>0$, let $$ \tau^*\doteq \inf\{t>0: \|w^*_t\|>l_1 \text { or } \|X^{*}_t\| >l_1\}\, \quad \text{ and } \quad \widetilde w^*_t \doteq w^*_t \Ind_{[0,\tau^*]}(t)\,.$$ 
It is trivial to see that whenever either $\|w^*_t\|>l_1$ or $\|X^{*}_t\|>l_1$, $\widetilde w^*_t=0$ and for $t\leq \tau^*$, $\widetilde w^*_t=w^*_t$. Setting $\widetilde X^*= X^{*,v,\widetilde w^*}$, it is clear that for $t\leq \tau^*$, $X^{*}_t$ and $\widetilde X^*_t$ are identical.  We now show that for large enough $l_1$, $\widetilde w^*$ also satisfies the hypothesis of Lemma~\ref{lem-unif-int-0} with $3\delta/4$. To that end, we have
\begin{align*}
	&\E\Big[ \frac{1}{T}\int_0^T  \Big( r^{v} (\widetilde X^{*}_t) -\frac{1}{2}\|\widetilde w^*_t\|^2\Big)d t\Big]-\E\Big[ \frac{1}{T}\int_0^T \Big(  r^{v} \big(X^{*}_t) -\frac{1}{2}\|w^*_t\|^2\Big)d t\Big]\\
	&\geq \E\Big[ \frac{1}{T}\int_0^T  r^{v} (\widetilde X^{*}_t)d t\Big]-\E\Big[ \frac{1}{T}\int_0^T   r^{v} (X^{*}_t)d t\Big]\\
	&\geq  \frac{1}{T}\int_0^T  \E\Big[ r^{v} (\widetilde X^{*}_t)\Ind_{[\tau^*,\infty)}(t) \Big]d t-  \frac{1}{T}\int_0^T\E\Big[   r^{v} (X^{*}_t)\Ind_{[\tau^*,\infty)}(t)\Big] d t\\
	&\doteq  J_1(T)-J_2(T)\,.
	\end{align*}
	In the above, to get the first inequality, we use the fact that $\|\widetilde w^*_t\|\leq \|w^*_t\|$, for $t\geq 0$ and to get the second inequality, we use the fact that  $X^{*}$  and $\widetilde X^{*}$ are identical for $t\in [0,\tau^*]$. 
		From Lemma~\ref{lem-unif-int-0}, we can choose $l_1$ large enough (uniformly in $T$) such that $|J_2(T)|\leq \delta/4$.  From the fact that the generator associated with $\widetilde X^*$ coincides with the generator of $X$ under $v\in \Usm$ outside the ball of radius $l_1$ around origin (which follows from the definition of $\widetilde w^*$), using Proposition~\ref{prop-lyap-with-abandonment}, we can  again ensure that for $l_1$ large enough (uniformly in $T$),  $|J_1(T)|<\delta/4$. To summarize, we have shown that 
	$$ \Lambda_v\leq \limsup_{T\to\infty}\E\bigg[ \frac{1}{T}\int_0^T  \Big(  r^{v} (\widetilde X^{*}_t) -\frac{1}{2}\|\widetilde w^*_t\|^2\Big)d t\bigg]+\frac{3\delta}{4}\,.$$
	Again from Lemma~\ref{lem-unif-int-0}, we can conclude that $r^v(\cdot)$ is uniformly integrable  with respect to the mean empirical measures  of $(\widetilde X^{*}_{[0,T]},\widetilde w^*_{[0,T]})$. This means that for $l_2>0$ large enough, we can ensure that  
	$$ \Lambda_v\leq \limsup_{T\to\infty}\E\bigg[ \frac{1}{T}\int_0^T  \Big(  r^{v} (\widetilde X^{*}_t)\wedge l_2 -\frac{1}{2}\|\widetilde w^*_t\|^2\Big)d t\bigg]+\delta\,,$$
	 and that there exists an ergodic occupation measure $$\widetilde \mu_{v,\widehat w}^*(d x, d w)= \widetilde  \eta^*_v(d x)\widehat w(d w|x)\in \calP(\RR^d\times \RR^d),$$ for some $\widehat w\in \Wsm(l_1)$ such that  the mean empirical measures  of $(\widetilde X^{*}_{[0,T]},\widetilde w^*_{[0,T]})$ converge weakly to $\widetilde \mu_{v,\widehat w}^*$, along a subsequence $T_k$. This means that 
	$$ \limsup_{T_k\to\infty} \E\bigg[ \frac{1}{T_k}\int_0^{T_k} \Big(  r^{v} (\widetilde X^{*}_t) \wedge l_2- \frac{1}{2}\|\widetilde w^*_t\|^2\Big)d t\bigg]\leq \int_{\RR^d\times \RR^d}\Big( r^{v}(x)\wedge l_2-\frac{1}{2}\|w\|^2\Big) d \widetilde \mu_{v,\widehat w}^*(x,w) \,.$$
	In the above, we use the lower semi-continuity of the function $\|w\|^2$. Since $\widehat w\in \Wsm(l_1)$, from the arbitrariness of $\delta$ and choosing $l=\max\{l_1,l_2\}$, the proof of the lemma is complete.
\end{proof}

\begin{remark} For every $l>0$, the problem of existence of optimal solution to  
$$ \sup_{w\in \Wsm(l)}\inf_{v\in \Usm}  \int_{\RR^d} \big(r^v(x)-\frac{1}{2}\|w^*(x)\|^2\big)d\widetilde \mu_{v,w}^{*}(x)$$
over all allowed ergodic occupation measures $\widetilde \mu^*_{v,w}$ can also be solved using the convex analytic approach introduced by Borkar and Ghosh in \cite{borkar1992stochastic} in the case of  TP-ZS SDG  with long-run average expected cost criterion. This approach was originally introduced in the context of CEC problem for Markov decision process by Borkar in \cite{borkar1988convex}. We also referred the reader to \cite[Chapter 3]{ari2011} for a detailed discussion in the context of diffusions.   
\end{remark}

We now give another immediate consequence of the above analysis, which will be used in the proof of Lemma~\ref{lem-AO-upper}.

\begin{lemma}\label{lem-trivial-bound}  For $L>0$, $v\in \Usm$ and $w\in \cA$,  let $\{\mu_{v,w}^{*,T,3}\}_{K\in\NN}$ be a  tight family of measures in $\calP(\RR^d)$ with $\mu_{v,w}^{*,3}$ being a weak limit point corresponding to a sequence $T_k$. Then,
\begin{equation} \label{eqn-lem-trivial-bound}
\Lambda_v\geq \int_{\RR^d}r^v(x)\wedge Ld\mu_{v,w}^{*,3}(x) -\limsup_{T\to\infty} \frac{1}{2T}\int_0^T \|w_t\|^2 dt\,.
\end{equation} 
\end{lemma}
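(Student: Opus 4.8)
The plan is to read the bound straight off the variational representation of $\Lambda_v$ in Corollary~\ref{cor-var-rep-risk-B}. For the single control $w\in\cA$ in the statement, at each fixed horizon $T$ the supremum over $\cA$ dominates the value at $w$, so taking $\limsup_{T\to\infty}$ on both sides gives
\begin{align*}
\Lambda_v\;&\ge\;\limsup_{T\to\infty}\E\Big[\frac1T\int_0^T\big(r^v(X^{*,v,w}_t)-\tfrac12\|w_t\|^2\big)\,dt\Big]\\
&\ge\;\limsup_{T\to\infty}\E\Big[\frac1T\int_0^T\big(r^v(X^{*,v,w}_t)\wedge L-\tfrac12\|w_t\|^2\big)\,dt\Big],
\end{align*}
the second inequality just because $r^v\ge r^v\wedge L$ pointwise while the $\tfrac12\|w_t\|^2$ term is untouched. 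It then remains only to transfer the truncated running cost onto $\mu^{*,3}_{v,w}$ and to separate off the $\|w\|^2$-penalty.

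For that I would set $a_T\df\E[\tfrac1T\int_0^T r^v(X^{*,v,w}_t)\wedge L\,dt]=\int_{\RR^d}r^v\wedge L\,d\mu^{*,T,3}_{v,w}$ (the mean empirical measures carrying the expectation) and $b_T\df\tfrac1{2T}\E[\int_0^T\|w_t\|^2\,dt]$, and proceed in the \emph{correct order}: first pass from the full $\limsup_{T\to\infty}(a_T-b_T)$ down to the subsequence $(T_k)$ along which $\mu^{*,T_k,3}_{v,w}\Rightarrow\mu^{*,3}_{v,w}$ (a $\limsup$ along a subsequence is $\le$ the $\limsup$ along the full net), and only afterwards split, $\limsup_k(a_{T_k}-b_{T_k})\ge\liminf_k a_{T_k}-\limsup_k b_{T_k}$. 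Since $\limsup_k b_{T_k}\le\limsup_{T\to\infty}b_T$, the second term produces exactly the penalty term in~\eqref{eqn-lem-trivial-bound}. For the first term, $a_T$ is bounded by $L$, and $r^v\wedge L$ is bounded and lower semicontinuous (indeed continuous for the continuous stationary Markov controls $v$ relevant in the application to Lemma~\ref{lem-AO-upper}), so the Portmanteau theorem gives $\liminf_k a_{T_k}=\liminf_k\int r^v\wedge L\,d\mu^{*,T_k,3}_{v,w}\ge\int_{\RR^d}r^v(x)\wedge L\,d\mu^{*,3}_{v,w}(x)$. Chaining these inequalities yields~\eqref{eqn-lem-trivial-bound}.

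The argument is short, and the only genuine pitfall — hence the ``main obstacle'' — is the ordering of the two limit operations: applying $\limsup_{T}(a_T-b_T)\ge\liminf_{T}a_T-\limsup_{T}b_T$ over the full net and then trying to identify $\liminf_{T}a_T$ with $\int r^v\wedge L\,d\mu^{*,3}_{v,w}$ is wrong, because $\mu^{*,3}_{v,w}$ is only a limit point along the particular sequence $(T_k)$ and $\liminf_{T}a_T$ may be strictly below $\lim_k a_{T_k}$; descending to $(T_k)$ before splitting removes this gap. The secondary point — lower semicontinuity of $r^v\wedge L$ so that the weak convergence $\mu^{*,T_k,3}_{v,w}\Rightarrow\mu^{*,3}_{v,w}$ may be invoked — is immediate here. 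No input is needed beyond Corollary~\ref{cor-var-rep-risk-B} and a routine $\limsup$/$\liminf$ manipulation.
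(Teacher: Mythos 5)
Your proof is correct and follows the same route as the paper: discard the supremum over $\cA$ in the variational representation of Corollary~\ref{cor-var-rep-risk-B} in favour of the fixed $w$, truncate $r^v$ by $L$, descend to the subsequence $T_k$ along which $\mu_{v,w}^{*,T_k,3}\Rightarrow\mu_{v,w}^{*,3}$, and only then separate the truncated cost from the $\tfrac12\|w\|^2$ penalty. You have simply made explicit the two steps the paper's terse proof leaves to the reader — the Portmanteau bound $\liminf_k\int r^v\wedge L\,d\mu_{v,w}^{*,T_k,3}\ge\int r^v\wedge L\,d\mu_{v,w}^{*,3}$ for the bounded, lower-semicontinuous integrand (a tacit regularity requirement on $r^v$, satisfied in the one place the lemma is invoked since $v=v^\delta$ is continuous), and the point that one must restrict to $(T_k)$ \emph{before} applying $\limsup(a-b)\ge\liminf a-\limsup b$ because $\mu_{v,w}^{*,3}$ is tied to that particular subsequence — and both remarks are exactly right.
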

\begin{proof} From~\eqref{eqn-erg-cont-var-rep} and the choice of $w$, for $L>0$, we have
\begin{align*}
\Lambda_v&\geq \E\bigg[ \frac{1}{T_k}\int_0^{T_k} \Big( r^v(X^{*,v, w}_t) - \frac{1}{2}\| w_t\|^2\Big)dt\bigg]\\
&\geq  \limsup_{k\to\infty}  \E\bigg[ \frac{1}{T_k}\int_0^{T_k} \Big( r^v(X^{*,v, w}_t)\wedge L - \frac{1}{2}\| w_t\|^2\Big)dt\bigg]\,.
\end{align*}
From the hypothesis on $\mu_{v,w}^{*,T,3}$ and the definition of $\mu_{v,w}^{*,3}$, we conclude that \eqref{eqn-lem-trivial-bound} holds. 
\end{proof}
To summarize, we have shown that the ERSC problem for the limiting diffusion is equivalent to a TP-ZS SDG with the long-run average expected cost criterion for the extended process  $X^{*,v,w}$  associated with the auxiliary control $w$.

\subsection{Variational formulation for Poisson-driven controlled queueing dynamics} \label{sec-sub-var-PP}
This section provides a variational formulation for the ERSC problem of the diffusion-scaled queueing processes 
  in the context of Markovian stochastic networks. To the best of knowledge of the authors, this formulation is novel. 
  The following variational representation of exponential functionals of Poisson process (see \cite[Theorem 3.23]{budhiraja2019analysis}) is crucial in what follows. We state this result for a $1$-dimensional Poisson process $\widetilde N$ with rate $\lambda>0$. The function $\varkappa$ defined as 
  \begin{equation} \label{eqn-varkappa}
  \varkappa(r)\doteq r\ln r -r+1
  \end{equation}
   plays an important role in the analysis. We give some important lemmas regarding the function $\varkappa(\cdot)$ in Appendix~\ref{sec-varkappa}. Let $\cF_t$ be the filtration generated by the $\widetilde N$ such that $\cF_0$ contains all null sets.
\begin{theorem}\label{thm-var-rep-poisson-gen}For $T>0$,
	suppose that $G: \frD_T\rightarrow \RR$ is a bounded Borel measurable function. Then the following holds:
	\begin{align}
\frac{1}{T}\log \E\Big[ e^{TG(\widetilde N)}\Big]= \sup_{\phi\in \widetilde \cE} \E\bigg[ G(\widetilde N^\phi)-\frac{\lambda}{T}\int_0^T \varkappa (\phi_s)ds\bigg].
	\end{align}
		Moreover, for every $\delta>0$, 
			\begin{align}\label{eq-var-rep-poisson-gen-bound}\frac{1}{T}\log \E\Big[ e^{TG(\widetilde N)}\Big]\leq\sup_{\phi \in \widetilde \cE_M} \E\bigg[  G(\widetilde N^\phi)-\frac{\lambda}{T}\int_0^T \varkappa (\phi_s)ds\bigg]+\delta.\end{align}
	Here, $\widetilde \cE$  is the set of all the $\phi$ which are progressively measurable (with respect to $\cF_t$) such that for every $T>0$, 
	$$\frac{\lambda}{T}\int_0^T \varkappa(\phi_{s})ds<\infty,$$
	$ \widetilde \cE_M$ is the set of all $\phi\in \widetilde \cE$ such that for $T>0$,$$\frac{\lambda}{T}\int_0^T \varkappa(\phi_{s})ds \leq M, \text{  with $M$ depending only on $\delta$ and $\|G\|_\infty$} $$
	and  $\widetilde N^\phi$ denotes  a ``controlled" Poisson process which is a solution to the martingale problem below: for $f\in \cC^2(\RR)$,
	\begin{align}\label{eq-tilde-N-defn} f(\widetilde N_t^\phi)-f(0)-  \lambda\int_0^t\phi_s\big[f(\widetilde N^\phi_s+1)-f(\widetilde N^\phi_s)\big]ds \end{align}
	is a martingale with respect to $\cF_t$.
\end{theorem}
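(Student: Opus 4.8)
The identity is the point‑process counterpart of Theorem~\ref{thm-var-rep-BM-gen} and coincides with \cite[Theorem~3.23]{budhiraja2019analysis}; the plan is to recall its derivation and then prove the refinement \eqref{eq-var-rep-poisson-gen-bound}. As in Remark~\ref{rem-var-rep-proof-BM}, the starting point is the entropy formula \eqref{eq-entropy} applied with $\mathcal X=\frD_T$, with $\mu$ the law of the rate‑$\lambda$ Poisson process $\widetilde N$ on $[0,T]$, and with $f=TG$ (bounded, so the formula applies). This gives $\log\E[e^{TG(\widetilde N)}]=\sup_{\nu\in\calP(\frD_T)}\big(\int TG\,d\nu-R(\nu\|\mu)\big)$. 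One then has to (a) compute the relative entropy of the measures obtained from $\mu$ by a predictable change of intensity, and (b) show the supremum is unchanged when $\nu$ is restricted to that subfamily.

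For (a), fix $\phi\in\widetilde\cE$ and let $\PP^\phi$ be the measure on $\cF_T$ under which $\widetilde N$ has intensity $\lambda\phi_s$; by Girsanov's theorem for point processes, $\PP^\phi$ is absolutely continuous with respect to $\PP$ with density the stochastic exponential $\exp\big(\int_0^T\ln\phi_s\,d\widetilde N_s-\lambda\int_0^T(\phi_s-1)\,ds\big)$. Using that the $\PP^\phi$‑compensator of $\widetilde N$ is $\lambda\phi_s\,ds$, one computes
\begin{align*}
R(\PP^\phi\|\PP)=\E^{\PP^\phi}\Big[\ln\tfrac{d\PP^\phi}{d\PP}\Big]=\lambda\,\E^{\PP^\phi}\Big[\int_0^T(\phi_s\ln\phi_s-\phi_s+1)\,ds\Big]=\lambda\,\E^{\PP^\phi}\Big[\int_0^T\varkappa(\phi_s)\,ds\Big],
\end{align*}
with $\varkappa$ as in \eqref{eqn-varkappa}; transporting to the canonical space, where $\widetilde N$ under $\PP^\phi$ realizes the controlled process $\widetilde N^\phi$ of \eqref{eq-tilde-N-defn}, and dividing by $T$, turns $\int TG\,d\nu-R(\nu\|\mu)$ into $T\,\E[G(\widetilde N^\phi)-\frac1T\int_0^T\lambda\varkappa(\phi_s)\,ds]$. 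For (b), the optimizer in \eqref{eq-entropy} is $d\nu^\star=e^{TG}d\mu/\E[e^{TG}]$, and one must show it is of Girsanov type: this uses the martingale representation and predictable‑projection theory for the filtration of $\widetilde N$, together with a truncation‑and‑approximation step to handle integrability, and is precisely the content of \cite[Theorem~3.23]{budhiraja2019analysis}, which I would cite rather than reprove. Step (b) is the main obstacle.

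It remains to prove \eqref{eq-var-rep-poisson-gen-bound}. Fix $\delta>0$ and, by the identity just recalled, choose $\phi\in\widetilde\cE$ with $\E[G(\widetilde N^\phi)-\frac1T\int_0^T\lambda\varkappa(\phi_s)\,ds]\ge\frac1T\log\E[e^{TG(\widetilde N)}]-\frac\delta2$. Since $|G|\le\|G\|_\infty$ and $\frac1T\log\E[e^{TG(\widetilde N)}]\ge-\|G\|_\infty$, this forces $\frac\lambda T\E[\int_0^T\varkappa(\phi_s)\,ds]\le c$ with $c\doteq2\|G\|_\infty+\frac\delta2$. Because $\varkappa\ge0$ and $\phi\in\widetilde\cE$, the map $t\mapsto\lambda\int_0^t\varkappa(\phi_s)\,ds$ is nondecreasing and continuous, so $\tau\doteq\inf\{t\ge0:\lambda\int_0^t\varkappa(\phi_s)\,ds\ge MT\}$ is a stopping time with $\lambda\int_0^{T\wedge\tau}\varkappa(\phi_s)\,ds\le MT$. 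Put $\phi^M_s\doteq\phi_s\Ind_{\{s<\tau\}}+\Ind_{\{s\ge\tau\}}$; then $\phi^M$ is progressively measurable, $\varkappa(\phi^M_s)=0$ for $s\ge\tau$, hence $\frac\lambda T\int_0^T\varkappa(\phi^M_s)\,ds\le M$ and $\phi^M\in\widetilde\cE_M$. Realizing $\widetilde N^{\phi^M}$ and $\widetilde N^{\phi}$ on a common space so that they agree on $[0,\tau]$, we have $G(\widetilde N^{\phi^M})=G(\widetilde N^{\phi})$ off $\{\tau\le T\}$, $|G(\widetilde N^{\phi^M})-G(\widetilde N^{\phi})|\le2\|G\|_\infty$ always, $\varkappa(\phi^M_s)\le\varkappa(\phi_s)$, and by Markov's inequality $\PP(\tau\le T)=\PP(\lambda\int_0^T\varkappa(\phi_s)\,ds\ge MT)\le c/M$. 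Hence
\begin{align*}
\E\Big[G(\widetilde N^{\phi^M})-\tfrac1T\int_0^T\lambda\varkappa(\phi^M_s)\,ds\Big]\ \ge\ \E\Big[G(\widetilde N^{\phi})-\tfrac1T\int_0^T\lambda\varkappa(\phi_s)\,ds\Big]-\frac{2\|G\|_\infty\,c}{M}.
\end{align*}
Choosing $M=M(\delta,\|G\|_\infty)$ so large that $2\|G\|_\infty c/M\le\delta/2$ makes the right‑hand side at least $\frac1T\log\E[e^{TG(\widetilde N)}]-\delta$; since $\phi^M\in\widetilde\cE_M$, this is exactly \eqref{eq-var-rep-poisson-gen-bound}. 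The only point requiring care here is that $\phi^M$ remains progressively measurable with an exact pathwise entropy bound, which is guaranteed by the continuity of $t\mapsto\lambda\int_0^t\varkappa(\phi_s)\,ds$.
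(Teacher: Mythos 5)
Your proposal is correct. The paper itself does not spell out a proof of this theorem: the remark following it merely points to \cite[Theorem 3.23]{budhiraja2019analysis} (and \cite[Theorem 2.1]{budhiraja2011}) for the first identity, via the same entropy-formula-plus-Girsanov route you recap, and the refinement \eqref{eq-var-rep-poisson-gen-bound} is likewise left to the reference. So the genuinely new content of your write-up is the explicit stopping-time truncation argument for \eqref{eq-var-rep-poisson-gen-bound}, and that argument is sound: continuity of $t\mapsto\lambda\int_0^t\varkappa(\phi_s)\,ds$ makes $\tau$ a stopping time and keeps $\phi^M$ progressively measurable, $\varkappa(1)=0$ kills the cost after $\tau$, the coupling on $[0,\tau]$ controls $G(\widetilde N^{\phi^M})-G(\widetilde N^{\phi})$ by $2\|G\|_\infty\Ind_{\{\tau\le T\}}$, and the Markov bound $\PP(\tau\le T)\le c/M$ with $c=2\|G\|_\infty+\delta/2$ (itself forced by $-\|G\|_\infty\le\frac1T\log\E[e^{TG}]$ and $G\le\|G\|_\infty$) gives the required uniformity of $M$ in $\delta,\|G\|_\infty$.

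One caveat, really about the paper's own phrasing rather than your argument: the set $\widetilde\cE_M$ is declared with the bound ``for $T>0$'' (read literally, for every $T'>0$). Your construction only guarantees $\lambda\int_0^{T'}\varkappa(\phi^M_s)\,ds\le MT$ for all $T'$, hence $\frac{\lambda}{T'}\int_0^{T'}\varkappa(\phi^M_s)\,ds\le M$ only for $T'\ge T$ (after extending $\phi^M\equiv 1$ beyond $T$, which is harmless since $G$ depends only on the path on $[0,T]$), not for $T'<T$. This is consistent with how $\cE^n_M$ is actually invoked in the sequel --- near-optimal controls are chosen $T$-dependently and only the entropy bound at that $T$ is used --- so your interpretation is the operative one, but it is worth noting that the literal definition in the statement is slightly stronger than what the construction (and what the later proofs) deliver.
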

\begin{remark} Comments similar to those in Remark~\ref{rem-var-rep-proof-BM} are applicable. In this case, instead of Girsanov's theorem for Brownian motion, Girsanov's theorem for Poisson process is used. See \cite[Theorem 2.1]{budhiraja2011} for the proof. 
\end{remark}
\begin{remark}
	The above theorem can be easily extended to the multi-variate independent Poisson processes. We do not state the multi-variate version in the general setting, however we state the version that is relevant to us in the context of the ERSC problem for the queueing network model (see Proposition~\ref{prop-var-rep-poisson}).
\end{remark} 
Our interest in using the Theorem~\ref{thm-var-rep-poisson-gen} lies in proving Theorem~\ref{thm-AO}.  To do this, we first understand and study how to apply Theorem~\ref{thm-var-rep-poisson-gen} in a simple case of  a diffusion-scaled Poisson process where $G(\widetilde N)$ is replaced by $G(\widetilde N^n)$ with 
$$\widetilde N^n_t\doteq \frac{\widetilde N_{nt}-\lambda nt}{\sqrt{n}}.$$ 
We revisit the weak convergence for Poisson process $\widetilde N_t$ with rate $\lambda$ 
 using the associated variational representation which is given in Theorem~\ref{thm-var-rep-poisson-gen}. 
Such a study is also helpful for the reader to understand the intuition behind the proof of Theorem~\ref{thm-AO}. 
By Theorem~\ref{thm-var-rep-poisson-gen}, we obtain 
\begin{align}\label{eqn-var-rep-scaled-poisson}
\frac{1}{T}\log \E\big[ e^{TG(\widetilde N^n)}\big]= \sup_{\phi \in \widetilde \cE} \E\Bigg[ G\Big(\frac{\widetilde N^{n,\phi}-\lambda nt}{\sqrt{n}}\Big)- \frac{\lambda n}{T}\int_0^T\varkappa(\phi_s)ds\Bigg],
\end{align}
where $\widetilde N^{n,\phi}$ is the unique solution to the martingale problem below: for $f\in \cC^2(\RR)$, 
\begin{align}\label{eq-scaled-cont-poisson}
f\Bigg(\frac{\widetilde N^{n,\phi}_t-nt}{\sqrt{n}}\Bigg)&-f(0)- \int_0^t n\lambda\phi_s\Bigg[f\Bigg(\frac{\widetilde N^{n,\phi}_s+1-ns}{\sqrt{n}}\Bigg)-f\Bigg(\frac{\widetilde N^{n,\phi}_s-ns}{\sqrt{n}}\Bigg)\Bigg]ds \nonumber \\
&- \int_0^t \sqrt{n}\lambda\grad f \Bigg(\frac{\widetilde N^{n,\phi}_s-ns}{\sqrt{n}}\Bigg)ds
\end{align}
is an $\cF_t$--martingale.  
We prove the weak convergence of $\widetilde N^n_t$  using the above variational formulation.

\begin{theorem}\label{thm-fclt-poisson} For $T>0$, let $G:\frD_T\rightarrow \RR$  be a bounded continuous function. Then 
	\begin{align}\label{eq-fclt} 
	\lim_{n\to\infty}\frac{1}{T} \log\E[e^{TG(\widetilde N^n)}]= \sup_{w\in \cA}\E\Big[ G\Big(W+\int_0^\cdot w_t dt\Big) -\frac{\lambda}{2T}\int_0^T|w_t|^2 dt\Big]= \frac{1}{T}\log \E[e^{TG(W)}].
	\end{align}
	Here, $W$ is a  one-dimensional Brownian motion and $\cA$ is as defined in \eqref{def-A}, but for $\RR$ instead of $\RR^d$.
\end{theorem}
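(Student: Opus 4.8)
The plan is to prove the chain in \eqref{eq-fclt} from the middle outwards. The inner equality $\sup_{w\in\cA}\E[\,G(W+\int_0^\cdot w_t\,dt)-\tfrac{\lambda}{2T}\int_0^T|w_t|^2\,dt\,]=\tfrac1T\log\E[e^{TG(W)}]$ is the Brownian variational representation of Theorem~\ref{thm-var-rep-BM-gen} applied to the limiting Brownian motion $W$, after adding the constant $\|G\|_\infty$ to the bounded functional $G$ to make it non-negative and, if needed, performing a linear change of the auxiliary variable $w$ to match the diffusion coefficient of $W$ with the one arising from the Poisson scaling. We note that the outer equality $\lim_n\tfrac1T\log\E[e^{TG(\widetilde N^n)}]=\tfrac1T\log\E[e^{TG(W)}]$ follows at once from the classical functional CLT $\widetilde N^n\Rightarrow W$ in $\frD_T$ together with boundedness and continuity of $e^{TG(\cdot)}$ on $\frD_T$; nevertheless, it is more instructive — and a prototype for the proofs of the bounds in Theorem~\ref{thm-AO} — to derive $\lim_n\tfrac1T\log\E[e^{TG(\widetilde N^n)}]=\sup_{w\in\cA}\E[\cdots]$ directly from the variational representation \eqref{eqn-var-rep-scaled-poisson} and the controlled martingale problem \eqref{eq-scaled-cont-poisson}, by a matching lower and upper bound.

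\emph{Lower bound.} By a density argument it suffices to bound $\liminf_n\tfrac1T\log\E[e^{TG(\widetilde N^n)}]$ from below by $\E[G(W+\int_0^\cdot w_t\,dt)-\tfrac{\lambda}{2T}\int_0^T|w_t|^2\,dt]$ for $w\in\cA$ ranging over a class rich enough to recover the supremum over $\cA$, e.g.\ $w$ bounded (or deterministic and piecewise constant). For such a $w$, I would use in \eqref{eqn-var-rep-scaled-poisson} the \emph{a priori suboptimal} control $\phi^n_t\doteq 1+n^{-1/2}c_t$, where $c$ is the fixed rescaling of $w$ for which the limiting drift in (i) below equals $w_t$. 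Three convergences then finish this direction: (i) $\widetilde N^{n,\phi^n}\Rightarrow W+\int_0^\cdot w_t\,dt$ in $\frD_T$ — tightness is immediate since $\phi^n$ is bounded, while a Taylor expansion of the compensator $n\lambda\phi^n_t[f(\cdot+n^{-1/2})-f(\cdot)]-\sqrt n\lambda f'(\cdot)$ in \eqref{eq-scaled-cont-poisson} produces in the limit the generator $\tfrac{\lambda}{2}f''+w_tf'$, whose martingale problem (Brownian motion with drift $w$) is well posed; (ii) $\tfrac{\lambda n}{T}\int_0^T\varkappa(\phi^n_t)\,dt\to\tfrac{\lambda}{2T}\int_0^T|w_t|^2\,dt$ a.s.\ and in $L^1$, using $\varkappa(1+\epsilon)=\tfrac12\epsilon^2+O(\epsilon^3)$ (Appendix~\ref{sec-varkappa}) and dominated convergence; (iii) $\E[G(\widetilde N^{n,\phi^n})]\to\E[G(W+\int_0^\cdot w_t\,dt)]$, since $G$ is bounded and continuous. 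Combining (i)--(iii) with \eqref{eqn-var-rep-scaled-poisson} and taking the supremum over the admissible $w$ yields the lower bound.

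\emph{Upper bound.} For each $n$ choose $\phi^n\in\widetilde\cE$ that is $n^{-1}$-optimal in \eqref{eqn-var-rep-scaled-poisson}; by \eqref{eq-var-rep-poisson-gen-bound} (the $\widetilde\cE_M$ version) we may take $\phi^n\in\widetilde\cE_M$, which, together with $\tfrac1T\log\E[e^{TG(\widetilde N^n)}]\ge-\|G\|_\infty$, gives $\sup_n\E[\tfrac{\lambda n}{T}\int_0^T\varkappa(\phi^n_t)\,dt]<\infty$. The superlinear growth of $\varkappa$ forces $\phi^n$ to be close to $1$ in an integrated sense, which I would exploit, along the lines of the weak-convergence (Budhiraja--Dupuis) method, to show that $\{\widetilde N^{n,\phi^n}\}_n$ is tight in $\frD_T$ jointly with a family of random occupation measures encoding $\sqrt n(\phi^n-1)$, and that every subsequential limit of $\widetilde N^{n,\phi^n}$ has the form $W+\int_0^\cdot w_t\,dt$ for some $w\in\cA$ with $\E\int_0^T|w_t|^2\,dt<\infty$. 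Convexity and lower semicontinuity of $\phi\mapsto\int_0^T\varkappa(\phi_t)\,dt$ yield $\liminf_n\E[\tfrac{\lambda n}{T}\int_0^T\varkappa(\phi^n_t)\,dt]\ge\tfrac{\lambda}{2T}\E\int_0^T|w_t|^2\,dt$ along the subsequence, while boundedness and continuity of $G$ give convergence of $\E[G(\widetilde N^{n,\phi^n})]$; hence $\limsup_n\tfrac1T\log\E[e^{TG(\widetilde N^n)}]\le\E[G(W+\int_0^\cdot w_t\,dt)-\tfrac{\lambda}{2T}\int_0^T|w_t|^2\,dt]\le\sup_{w\in\cA}\E[\cdots]$.

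\emph{Main obstacle.} The delicate step is the upper bound: establishing tightness of the controlled Poisson processes and identifying their weak limits when the control set $\widetilde\cE$ is non-compact and the near-optimal $\phi^n$ need not be bounded. The only leverage is the superlinearity of $\varkappa$, which makes large excursions of $\phi^n$ away from $1$ costly and therefore asymptotically negligible; converting this into tightness plus a lower-semicontinuity bound for the entropic cost — in effect a Laplace-principle argument — is the heart of the proof, and it is precisely the scalar, finite-horizon prototype of the far more involved tightness arguments needed for the extended diffusion-scaled processes in Theorem~\ref{thm-AO}.
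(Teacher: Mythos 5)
Your proposal is correct and follows essentially the same route as the paper's proof in Appendix C: the lower bound via the a priori suboptimal control $\phi^n=1\mp n^{-1/2}w$, and the upper bound via near-optimal controls in $\widetilde\cE_M$, the quadratic lower bound $\varkappa(1+r)\gtrsim r^2$ to get $L^2$-type tightness of $\sqrt n(1-\phi^n)$, martingale CLT plus random time change to identify the limit $W+\int_0^\cdot w_t\,dt$, and lower semicontinuity of the entropic cost. Your restriction to bounded $w$ in the lower bound (needed to keep $\phi^n>0$) is a small refinement the paper glosses over, but it does not change the argument.
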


	In the following, we discuss several key elements of the proof of the theorem while deferring the proofs to the appendix. 	
	To begin with,
	we write
		\begin{align}
\frac{1}{T}	\log \E\big[ e^{TG(\widetilde N^n)}\big]&= \sup_{\phi \in \widetilde \cE} \E\Bigg[ G\bigg(\frac{\widetilde N^{n,\phi}- \lambda n \int_0^\cdot \phi_tdt-\lambda n\fre(\cdot) + \lambda n\int_0^\cdot \phi_tdt}{\sqrt{n}}\bigg)- \frac{\lambda n}{T}\int_0^T\varkappa(\phi_t)dt\Bigg] \nonumber\\&= \sup_{\phi \in \widetilde \cE} \E\Bigg[ G\bigg(\frac{\widetilde N^{n,\phi}- \lambda n \int_0^\cdot \phi_tdt}{\sqrt n}-\lambda {\int_0^\cdot \sqrt n(1-\phi_t)dt}\bigg)- \frac{\lambda n}{T}\int_0^T\varkappa(\phi_t)dt\Bigg]\,, \label{eq-var-rep-poisson-fclt-scaled}
	\end{align}
	where $\fre(t)= t$ and $\widetilde \cE$ is as defined in the statement of Theorem~\ref{thm-var-rep-poisson-gen}.  
	 Clearly, it suffices to  show that 
	\begin {align}\nonumber
	\lim_{n\to\infty}\sup_{\phi \in \widetilde \cE} &\E\Bigg[ G\bigg(\frac{\widetilde N^{n,\phi}- \lambda n \int_0^\cdot \phi_tdt}{\sqrt n}-\lambda {\int_0^\cdot \sqrt n(1-\phi_t)dt}\bigg)- \frac{\lambda n}{T}\int_0^T\varkappa(\phi_t)dt\Bigg] \\\label{eq-opt-problems-equal}
	&= \sup_{w\in \cA}\E\Bigg[ G\Big(W+\int_0^\cdot w_s ds\Big) -\frac{\lambda}{2T}\int_0^T|w_t|^2 dt\Bigg].
	\end{align}
	Here, $\cA$ is as defined in~\eqref{def-A} for some one-dimensional Brownian motion $W$. We observe that
	$$ M^{n,\phi}\doteq \frac{\widetilde N^{n,\phi}- \lambda n \int_0^\cdot \phi_tdt}{\sqrt{n}}$$
	is a square integrable $\cF_t$--martingale.

	Comparing~\eqref{eq-var-rep-poisson-fclt-scaled} with the right hand side of~\eqref{eq-fclt}, it is evident that analyzing the behavior of $\{\int_0^\cdot \sqrt n (1-\phi^n_t) dt\}_{n\in\NN}$ is necessary. It will be shown using Lemma~\ref{lem-compact} that $\{\sqrt n (1-\phi^n)\}_{n\in \NN}$ converges to $w \in L^2([0,T],\RR)$ in an appropriate sense, along a subsequence. This in turn, ascertains the convergence of $\{\int_0^\cdot \sqrt n(1-\phi^n_t)dt\}_{n\in \NN}$ to $\int_0^\cdot w_tdt$, again in an appropriate sense, along the same subsequence. To see if the corresponding family $\{M^{n,\phi^n}\}_{n\in \NN}$ is tight in $\frD_T$, we use the tightness of $\{\sqrt{n}(1-\phi^n)\}_{n\in \NN}$ in $L^2([0,T],\RR)$ (in an appropriate sense) with $\phi^n$ being $\delta$--optimal control for every $n$. This in turn, implies that  $\phi^n$ converges to $1$ in $L^2([0,T],\RR)$. 
	We can then show that $\int_0^\cdot \phi^n_sds$ converges to $\fre(\cdot)$ in $\frC_T$. Using random time change lemma (\cite[Pg. 151]{billingsley1999}) and martingale central limit theorem, we can conclude that $M^{n,\phi^n}$ converges to a Brownian motion.
	
	To prove the upper bound, we choose  a family of $\delta$--optimal controls (denoted by $\phi^n$), while to prove the lower bound, we let
	 $\phi^n= 1-\frac{w^*}{\sqrt{n}}$, which is a priori suboptimal control corresponding to the left hand side of~\eqref{eq-opt-problems-equal}.

Inspired by this study on the diffusion-scaled Poisson process, 
in the proof of lower bound (Lemma~\ref{lem-AO-lower}), we choose $\uppsi^n=(\phi^n,\psi^n,\varphi^n)$ according to~\eqref{eq-choice-control} and in the proof of upper bound (Lemma~\ref{lem-AO-upper}), we choose $\uppsi^n= (\phi^n,\psi^n,\varphi^n) $ that is nearly optimal corresponding to~\eqref{eq-up-sup-1} and then analyze the sequences $\{\sqrt n (e-\phi^n)\}_{n\in \NN}$, $\{\sqrt{n}(e-\psi^n)\}_{n\in \NN}$ and $\{\sqrt{n}(e-\varphi^n)\}_{n\in \NN}$.  
\begin{remark} To avoid/clarify any confusion, we re-iterate that we have used $\widetilde \cE$ ($\cA$, respectively) to denote the set of controls in the Poisson case (Brownian case, respectively). We will use $\cE^n$ to denote the set of controls in the case of diffusion-scaled queueing processes.
\end{remark}

We now set up the notation to state the multi-variate version of Theorem~\ref{thm-var-rep-poisson-gen} in the context of the ERSC problem for the queueing network.

Define $$N^n=\Big(\{\widecheck A^n_i\}_{i=1}^d,\{\widecheck S^n_i\}_{i=1}^d,\{\widecheck R^n_i\}_{i=1}^d\Big)$$ as the $3d$--dimensional vector of independent Poisson processes with rates  $$\Big(\{\lambda^n_i\}_{i=1}^d,\{n\mu^n_i\}_{i=1}^d,\{n\gamma^n_i\}_{i=1}^d\Big).$$

The filtration of the process $N^n$ is denoted by $\bar \cG^n_t$, for $t\geq 0$ (such that $\bar \cG^n_0$ includes all $\PP$--null sets). In the following, using the processes $\{\widecheck A^n_i\}_{i=1}^d$, $\{\widecheck S^n_i\}_{i=1}^d$ and $\{\widecheck R^n_i\}_{i=1}^d$, we re-define the processes $\hat X^n$, $\hat Q^n$ and $\hat Z^n$. Since the re-defined processes have the same laws, we reserve the original notation to denote them. In the rest of the paper, we always consider the re-defined versions of these processes.
 In terms of $\{\widecheck A^n_i\}_{i=1}^d$, $\{\widecheck S^n_i\}_{i=1}^d$ and $\{\widecheck R^n_i\}_{i=1}^d$, $\hat X^n_t$ is the diffusion-scaled queueing process, with $  U^n_t$ being the corresponding control,  given by 
\begin{align*}
\hat{X}^n_{i,t} &= \hat{X}^n_{i,0} + \ell^n_i t - \mu^n_i \int_0^t \hat{Z}^n_{i,s} ds - \gamma^n_i \int_0^t \hat{Q}^n_{i,s} ds,\\
&\qquad\qquad + \frac{1}{\sqrt{n}}\Big( \widecheck A^n_i( t) - \lambda^n_i t \Big)- \frac{1}{\sqrt{n}}\bigg(\widecheck S^n_i\left( \int_0^t \frac{Z^n_{i,s}}{n} ds\right)  - \mu^n_i \int_0^t Z^n_{i,s} ds  \bigg) \\
&\qquad\qquad - \frac{1}{\sqrt{n}}\bigg(\widecheck R^n_i\left( \int_0^t \frac{Q^n_{i,s}}{n} ds\right)  - \gamma^n_i \int_0^t Q^n_{i,s}ds  \bigg) \,,
\end{align*} 
where, as given in \eqref{eqn-QZ-hat},
$$ \hat{Q}^n_t = (e\cdot \hat{X}^n_t)^+ U^n_t, \quad \hat{Z}^n_t = \hat{X}^n_t -  (e\cdot \hat{X}^n_t)^+ U^n_t\,.$$
Here, we assume that the control $U^n$ is admissible which implies that  $U^n_t= U^n(t,\widecheck A^n_{[0,t]}, \widecheck S^n_{[0,t]}, \widecheck R^n_{[0,t]})$.
We enforce this assumption in the rest of the section. 

For a triplet $$\uppsi\doteq \Big(\{\phi_i\}_{i=1}^d,\{\psi_i\}_{i=1}^d,\{\varphi_i\}_{i=1}^d\Big)$$ such that $\phi_i,\psi_i,\varphi_i$ are $\RR_+$--valued  functions, let $\hat X^{n,\uppsi}$ be the solution to the following equation:
\begin{align}\nonumber
\hat{X}^{n,\uppsi}_{i,t} &= \hat{X}^{n,\uppsi}_{i,0} + \ell^n_i t - \mu^n_i \int_0^t\hat{Z}^{n,\uppsi}_{i,s} ds - \gamma^n_i \int_0^t\hat{Q}^{n,\uppsi}_{i,s} ds\\\label{eq-prelimit-controlled}
&\qquad + \frac{1}{\sqrt{n}}\Big( \widecheck A^n_i\Big(\int_0^t\phi_{i,s}ds\Big) - \lambda^n_i t \Big) - \frac{1}{\sqrt{n}}\bigg(\widecheck S^n_i\left( \int_0^t\psi_{i,s}\frac{Z^{n,\uppsi}_{i,s}}{n} ds\right)  - \mu^n_i \int_0^t  {Z^{n,\uppsi}_{i,s}} ds  \bigg) \nonumber\\
&\qquad - \frac{1}{\sqrt{n}}\bigg(\widecheck R^n_i\left(\int_0^t\varphi_{i,s} \frac{Q^{n,\uppsi}_{i,s}}{n} ds\right)  - \gamma^n_i \int_0^t {Q^{n,\uppsi}_{i,s}}ds  \bigg)\,,
\end{align} 
where \[\hat X^{n,\uppsi}_0=\hat X^n_0\,,\quad \hat X^{n,\uppsi}_t= \hat Q^{n,\uppsi}_t+\hat Z^{n,\uppsi}_t\,, \quad \hat Z^{n,\uppsi}_t= \hat X^{n,\uppsi}_t-(e\cdot \hat X^{n,\uppsi}_t)^+U^{n,\uppsi}_t \,,\]
with $U^{n,\uppsi}= U^n(t,\widecheck A^{n,\phi}_{[0,t]}, \widecheck S^{n,\psi}_{[0,t]}, \widecheck R^{n,\varphi}_{[0,t]})$.  Here, $\widecheck A^{n,\phi}$, $\widecheck S^{n,\psi}$, $\widecheck R^{n,\varphi}$ are defined according to~\eqref{eq-scaled-cont-poisson}. 
\begin{remark}
	Observe that we choose $\{\widecheck S^n_i\}_{i=1}^d$ and $\{\widecheck R^n_i\}_{i=1}^d$ with rates $\{n\mu^n_i\}_{i=1}^d$ and $\{n\gamma^n_i\}_{i=1}^d$, respectively, instead of $\{\mu^n_i\}_{i=1}^d$ and $\{\gamma^n_i\}_{i=1}^d$.
	 This will be convenient in proving the appropriate stability in Lemma~\ref{lem-tightness-empirical}. 
\end{remark}

Define 
\begin{align*}
J^n_L(\hat X^n_0,U^n)\doteq \limsup_{T\to \infty}\frac{1}{T}\log \E\Big[e^{T\big(\frac{1}{T}\int_0^T r(\hat X^n_t, U^n_t)dt\big)\wedge LT}\Big],
\end{align*}  
\begin{equation} \label{eqn-frKn} 
\frK^n(\uppsi,T)\doteq  \frac{1}{T} \int_0^T \mathfrak{k}^n(\uppsi_s)ds\,, 
\end{equation} 
where, 
\begin{equation*} 
\mathfrak{k}^n(\uppsi_s)\doteq \sum_{i=1}^d \big(\lambda^n_i\varkappa(\phi_{i,s})+n\mu^n_i\varkappa(\psi_{i,s})+n\gamma^n_i\varkappa(\varphi_{i,s})\big) \,. 
\end{equation*} 

 Let $\cE^n$ be the set of all $\uppsi$ functions that are $\bar \cG^n_t$--progressively measurable 
 such that for every $T>0$, $\frK^n(\uppsi,T)<\infty$.  Also for every $M>0$, define $\cE^n_M$ as the set of all $\uppsi\in \cE^n$ such that $\frK^n(\uppsi,T)\leq M$, for every $T>0$. We are now in a position to state the crucial variational representation results. We again remark that these results are stated in the form that is convenient for us to work with.
 \begin{corollary}\label{cor-var-rep-unbounded-poisson}
 We have 
 $$\limsup_{T\to\infty} \frac{1}{T}\log \E[e^{\int_0^T r(\hat X^n_t,U^n_t)dt}]= \limsup_{T\to\infty}\sup_{\uppsi \in \cE^n} \E\Big[ \frac{1}{T} \int_0^Tr(\hat X^{n,\uppsi}_t, U^{n,\uppsi}_t)dt -\frK^n(\uppsi,T)\Big].$$
 \end{corollary}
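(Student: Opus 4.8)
The plan is to establish the identity by reducing it to the (multivariate extension of the) bounded‑functional representation in Theorem~\ref{thm-var-rep-poisson-gen} together with a truncation argument, and, for the reverse inequality, to the elementary half of the Donsker--Varadhan entropy formula \eqref{eq-entropy} via Girsanov's theorem for Poisson processes. First I would fix $n$ large enough that the cost $2r$ still meets the inf‑compactness requirement behind Proposition~\ref{prop-lyap-with-abandonment-prelimit} (cf. \eqref{eq-inf-comp-cost-prelim}), and view the exponent as a functional of the driving noise: writing the dynamics \eqref{eq-rep-Xn} through the $3d$‑dimensional vector $N^n=(\{\widecheck A^n_i\},\{\widecheck S^n_i\},\{\widecheck R^n_i\})$ of independent Poisson processes, the pathwise construction of the controlled queueing dynamics produces $\hat X^n$ and the associated $U^n$ as $\bar\cG^n$‑measurable functionals of $N^n$, so that $G_T(N^n)\df \tfrac1T\int_0^T r(\hat X^n_t,U^n_t)\,dt$ is measurable in $N^n$; feeding the controlled driving vector $N^{n,\uppsi}$ (coordinates $\widecheck A^{n,\phi}_i,\widecheck S^{n,\psi}_i,\widecheck R^{n,\varphi}_i$ as in \eqref{eq-prelimit-controlled}) into the same map yields $G_T(N^{n,\uppsi})=\tfrac1T\int_0^T r(\hat X^{n,\uppsi}_t,U^{n,\uppsi}_t)\,dt$ with $\hat X^{n,\uppsi}$ solving \eqref{eq-prelimit-controlled}. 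The identity to prove is then $\limsup_T\tfrac1T\log\E[e^{TG_T(N^n)}]=\limsup_T\sup_{\uppsi\in\cE^n}\E[G_T(N^{n,\uppsi})-\frK^n(\uppsi,T)]$, which I would establish by two inequalities.

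For ``$\ge$'', I would fix $\uppsi\in\cE^n$ (assuming $\E[\frK^n(\uppsi,T)]<\infty$, since otherwise the right‑hand bracket is $-\infty$). By Girsanov's theorem for Poisson processes \cite{budhiraja2011}, the relative entropy of the law of $N^{n,\uppsi}$ with respect to that of $N^n$ --- over the time horizon on which $G_T$ depends --- equals $\E[\int_0^T\mathfrak{k}^n(\uppsi_s)\,ds]=T\,\E[\frK^n(\uppsi,T)]$. Plugging this into the elementary inequality $\int f\,d\nu\le R(\nu\|\mu)+\log\int e^f\,d\mu$ from \eqref{eq-entropy}, applied with $f=TG_T$ (legitimate since $\E[e^{TG_T(N^n)}]<\infty$ by Lemma~\ref{lem-markov-finite-cost}), gives $\E[G_T(N^{n,\uppsi})-\frK^n(\uppsi,T)]\le\tfrac1T\log\E[e^{TG_T(N^n)}]$; taking $\sup_{\uppsi\in\cE^n}$ and then $\limsup_{T\to\infty}$ yields ``$\ge$''.

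For ``$\le$'', I would truncate: for $L>0$ the functional $G_T^L\df G_T\wedge L$ is bounded, so the multivariate extension of Theorem~\ref{thm-var-rep-poisson-gen} gives, for every $T>0$,
\[
\tfrac1T\log\E[e^{TG_T^L(N^n)}]=\sup_{\uppsi\in\cE^n}\E[G_T^L(N^{n,\uppsi})-\frK^n(\uppsi,T)]\;\le\;\sup_{\uppsi\in\cE^n}\E[G_T(N^{n,\uppsi})-\frK^n(\uppsi,T)],
\]
using $G_T^L\le G_T$. The left‑hand side, after $\limsup_{T\to\infty}$, is exactly the truncated cost $J^n_L(\hat X^n_0,U^n)$, so it remains to show $J^n_L(\hat X^n_0,U^n)\uparrow\limsup_T\tfrac1T\log\E[e^{TG_T(N^n)}]$ as $L\to\infty$. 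Since $r\ge0$ one has ``$\le$''; for the converse, on $\{G_T>L\}$ we have $e^{TG_T}\le e^{2TG_T-TL}$, hence $e^{TG_T}\le e^{TG_T^L}+e^{2TG_T-TL}$ and $\tfrac1T\log\E[e^{TG_T(N^n)}]\le\tfrac{\log 2}{T}+\max\{\tfrac1T\log\E[e^{TG_T^L(N^n)}],\,-L+\tfrac1T\log\E[e^{2TG_T(N^n)}]\}$. By the Foster--Lyapunov inequality of Proposition~\ref{prop-lyap-with-abandonment-prelimit} applied to the cost $2r$ --- precisely the computation in the proof of Lemma~\ref{lem-markov-finite-cost}, see also Corollary~\ref{cor-finite-cost-prelimit} --- one has $\limsup_{T\to\infty}\tfrac1T\log\E[e^{2TG_T(N^n)}]\le C_2<\infty$; taking $\limsup_{T\to\infty}$ and then any $L>C_2$ gives $\limsup_T\tfrac1T\log\E[e^{TG_T(N^n)}]\le J^n_L(\hat X^n_0,U^n)$. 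Letting $L\to\infty$ then gives ``$\le$'', and combining the two inequalities proves the corollary.

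I expect the main obstacle to be the unboundedness of $r$, which prevents a direct application of Theorem~\ref{thm-var-rep-poisson-gen}; the truncation‑plus‑limit device above handles it, and the crucial input is the uniform‑in‑$T$ exponential integrability of $\int_0^T r\,dt$ furnished by the Foster--Lyapunov drift inequality (the same estimate as in Lemma~\ref{lem-markov-finite-cost} and Corollary~\ref{cor-finite-cost-prelimit}). The residual technicalities --- that $G_T$ is genuinely a measurable functional of the driving Poisson paths whose controlled evaluation is the process of \eqref{eq-prelimit-controlled}, and that the Poisson Girsanov relative entropy equals $\E[\int_0^T\mathfrak{k}^n(\uppsi_s)\,ds]$ --- are routine from the pathwise construction in \eqref{eq-rep-Xn}--\eqref{eq-prelimit-controlled} and \cite{budhiraja2011}.
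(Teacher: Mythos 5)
Your proposal is correct and assembles exactly the ingredients the paper intends for this corollary (which it states without an explicit proof): the bounded-functional representation of Theorem~\ref{thm-var-rep-poisson-gen}/Proposition~\ref{prop-var-rep-poisson} applied to $G_T\wedge L$, the easy half of the entropy formula \eqref{eq-entropy} via Poisson Girsanov for the ``$\ge$'' direction, and the Chernoff-type removal of the truncation using the exponential integrability from Proposition~\ref{prop-lyap-with-abandonment-prelimit} --- the same device the paper deploys later as Lemma~\ref{lem-tail-est} and Corollary~\ref{cor-trunc-limit-L}, with your exponent $2$ in place of their $1+\eps$. The only caveats are cosmetic: the statement implicitly requires $n$ above the threshold of Lemma~\ref{lem-markov-finite-cost} (which you note), and the Girsanov relative-entropy identity should be quoted as an inequality $R(\nu\Vert\mu)\le T\,\E[\frK^n(\uppsi,T)]$ in general, which is the direction your argument actually uses.
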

\begin{proposition}\label{prop-var-rep-poisson}
 	 	\begin{align}\label{eqn-var-rep-poisson}
J^n_L(\hat X^n_0,U^n) &= \limsup_{T\to\infty}\sup_{\uppsi \in \cE^n} \E\Big[ \Big(\frac{1}{T} \int_0^Tr(\hat X^{n,\uppsi}_t, U^{n,\uppsi}_t)dt\Big)\wedge L -\frK^n(\uppsi,T)\Big].
 	\end{align}
 	Moreover, for every $\delta>0$, 
 	\begin{align}\label{eq-var-rep-poisson-bound}J^n_L(\hat X^n_0,U^n)\leq \limsup_{T\to\infty}\sup_{\uppsi \in \cE^n_M} \E\Big[ \Big(\frac{1}{T} \int_0^Tr(\hat X^{n,\uppsi}_t, U^{n,\uppsi}_t)dt\Big)\wedge L -\frK^n(\uppsi,T)\Big]+\delta.\end{align}
 	Here, $M$ only depends on $\delta$ and $L$.
 \end{proposition}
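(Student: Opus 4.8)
The plan is to obtain Proposition~\ref{prop-var-rep-poisson} as a direct application of the multivariate extension of Theorem~\ref{thm-var-rep-poisson-gen} (the one alluded to in the remark following that theorem) to a single, well-chosen \emph{bounded} functional of the driving Poisson processes; the $\wedge L$ truncation is precisely what makes this functional bounded, so that Theorem~\ref{thm-var-rep-poisson-gen} applies directly, without the extra approximation needed for Corollary~\ref{cor-var-rep-unbounded-poisson}. First I would fix $T>0$ and recast the diffusion-scaled dynamics as a measurable functional of the path of $N^n=\bigl(\{\widecheck A^n_i\},\{\widecheck S^n_i\},\{\widecheck R^n_i\}\bigr)$ on $[0,T]$. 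For an admissible $U^n$ (so that $U^n_t=U^n(t,\widecheck A^n_{[0,t]},\widecheck S^n_{[0,t]},\widecheck R^n_{[0,t]})$ by Definition~\ref{def-adm}), the pair $(\hat X^n,U^n)$ is the pathwise unique, $\bar\cG^n_t$-adapted solution of the Poisson-driven stochastic equation obtained from~\eqref{eq-prelimit-controlled} with $\uppsi\equiv(e,e,e)$ (equivalently~\eqref{eq-rep-Xn} in diffusion scaling); since the network has finitely many servers this solution is constructed recursively between successive jumps of $N^n$, so $\omega\mapsto(\hat X^n_{[0,T]},U^n_{[0,T]})(\omega)$ is a Borel functional of $N^n_{[0,T]}(\omega)$. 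Consequently $G(N^n)\df\bigl(\tfrac1T\int_0^T r(\hat X^n_t,U^n_t)\,dt\bigr)\wedge L$ is a Borel functional of the path of $N^n$ with $0\le G\le L$, because $r\ge 0$, and $e^{TG(N^n)}$ is exactly the integrand in the definition of $J^n_L(\hat X^n_0,U^n)$.

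Next I would apply the multivariate version of Theorem~\ref{thm-var-rep-poisson-gen} to $G$ and to the $3d$ \emph{independent} Poisson processes $N^n$ with rates $\bigl(\{\lambda^n_i\},\{n\mu^n_i\},\{n\gamma^n_i\}\bigr)$. Since relative entropy tensorizes over independent coordinates, the penalty attached to a control $\uppsi=\bigl(\{\phi_i\},\{\psi_i\},\{\varphi_i\}\bigr)$ is the sum of the per-coordinate penalties of Theorem~\ref{thm-var-rep-poisson-gen}, which by~\eqref{eqn-frKn} is precisely $\frK^n(\uppsi,T)$, and the admissible class is exactly $\cE^n$; substituting the controlled processes $\bigl(\widecheck A^{n,\phi},\widecheck S^{n,\psi},\widecheck R^{n,\varphi}\bigr)$ for $N^n$ in $G$ reproduces, through the same pathwise solution map and the admissible feedback, exactly $\bigl(\tfrac1T\int_0^T r(\hat X^{n,\uppsi}_t,U^{n,\uppsi}_t)\,dt\bigr)\wedge L$, with $\hat X^{n,\uppsi}$ the solution of~\eqref{eq-prelimit-controlled}. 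Theorem~\ref{thm-var-rep-poisson-gen} then gives, for each fixed $T$,
\[
\frac1T\log\E\Bigl[e^{T\left(\frac{1}{T}\int_0^T r(\hat X^n_t,U^n_t)\,dt\right)\wedge LT}\Bigr]=\sup_{\uppsi\in\cE^n}\E\Bigl[\Bigl(\tfrac1T{\textstyle\int_0^T} r(\hat X^{n,\uppsi}_t,U^{n,\uppsi}_t)\,dt\Bigr)\wedge L-\frK^n(\uppsi,T)\Bigr],
\]
and taking $\limsup_{T\to\infty}$ of both sides yields~\eqref{eqn-var-rep-poisson}. For the refinement, since $\|G\|_\infty\le L$ the multivariate analog of~\eqref{eq-var-rep-poisson-gen-bound} lets me restrict the supremum to controls with aggregate penalty $\frK^n(\uppsi,T)\le M$, i.e.\ to $\uppsi\in\cE^n_M$ with $M=M(\delta,L)$, at the cost of an additive $\delta$; nonnegativity of $\varkappa$ makes the bound on the \emph{sum} the natural multivariate counterpart of the bound defining $\widetilde\cE_M$ (it controls each coordinate simultaneously). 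Taking $\limsup_{T\to\infty}$ gives~\eqref{eq-var-rep-poisson-bound}.

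The main obstacle is the ``change of driving noise'' step: verifying rigorously that $G$ is a measurable functional of the Poisson path whose evaluation on the controlled input is exactly the extended running cost built from $\hat X^{n,\uppsi}$ and $U^{n,\uppsi}$. This rests on pathwise well-posedness of the diffusion-scaled, Poisson-driven queueing equation together with careful bookkeeping---via the martingale-problem characterization~\eqref{eq-tilde-N-defn} and the random time changes appearing in~\eqref{eq-prelimit-controlled}---of which Poisson process drives which transition and with which instantaneous rate multiplier; in particular one must check that $\widecheck S^{n,\psi}_i$ composed with the time change $\int_0^{\cdot}\tfrac{Z^{n,\uppsi}_{i,s}}{n}\,ds$ has compensator $\mu^n_i\int_0^{\cdot}\psi_{i,s}Z^{n,\uppsi}_{i,s}\,ds$, and analogously for the abandonment processes $\widecheck R^{n,\varphi}_i$. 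Everything else is a transcription of Theorem~\ref{thm-var-rep-poisson-gen}, and Corollary~\ref{cor-var-rep-unbounded-poisson} then follows by letting $L\uparrow\infty$ and invoking monotone convergence, with finiteness of the limit guaranteed by Lemma~\ref{lem-markov-finite-cost}.
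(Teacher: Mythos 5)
Your proposal is correct and takes essentially the same route the paper does: the paper treats Proposition~\ref{prop-var-rep-poisson} as an immediate consequence of the (unstated) multivariate extension of Theorem~\ref{thm-var-rep-poisson-gen} applied to the bounded functional $G(N^n)=\bigl(\tfrac1T\int_0^T r\,dt\bigr)\wedge L$ of the driving Poisson paths, tensorizing the relative-entropy penalty over the $3d$ independent coordinates to recover $\frK^n(\uppsi,T)$ and then taking $\limsup_{T\to\infty}$, with the $\cE^n_M$-restriction coming from~\eqref{eq-var-rep-poisson-gen-bound} because $\|G\|_\infty\le L$. The only caveat worth registering is that the ``change of driving noise'' step you flag is indeed where the bookkeeping is subtle (the control enters \emph{inside} the random time change $\int_0^t\psi_{i,s}Z^{n,\uppsi}_{i,s}/n\,ds$ rather than by precomposition with a controlled Poisson process, and the abandonment time change $\int_0^t Q^n_{i,s}/n\,ds$ is not a priori bounded by $t$), but the paper glosses over exactly this point as well, so your treatment is at the same level of rigor as the source.
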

 
\begin{remark}
	The content of the second statement of the above proposition is that for every $\delta>0$, there are $2\delta$--optimal controls $\uppsi\in \cE^n_M$. This in particular asserts that the hypothesis of Lemma~\ref{lem-compact} is satisfied which will in turn help us to use Lemma~\ref{lem-compact} in proving Lemma~\ref{lem-tightness-empirical}. 
\end{remark}

We next define the mean empirical (occupation) measures for the extended diffusion-scaled processes 
$\hat X^{n,\uppsi}_t$ in \eqref{eq-prelimit-controlled} as follows. For any $U^n\in \bU^n$ and $\uppsi \in \cE^n$,   let 
\begin{align}\nonumber
\mu_{U,\uppsi}^{n,T} (A\times B\times C)&\doteq \frac{1}{T}\int_0^T \Ind_{\{(\hat X^{n,\uppsi}_t,U^{n,\uppsi}_t,\uppsi_t)\in A\times B\times C\}} dt,\\
\label{def-adm-emp-m}\mu^{n,T,1}_{U,\uppsi} (A\times B)&\doteq \frac{1}{T} \int_0^T \Ind_{\{(\hat X^{n,\uppsi}_t,U^{n,\uppsi}_t)\in A\times B\}} dt,\\\nonumber
 \mu^{n,T,2}_{U,\uppsi}(A\times C)&\doteq \frac{1}{T}\int_0^T \Ind_{\{(\hat X^{n,\uppsi}_t,\uppsi_t)\in A\times C\}} dt,\\\label{def-markov-emp-m}\mu^{n,T,3}_{U,\uppsi}(A)&\doteq \frac{1}{T}\int_0^T \Ind_{\{\hat X^{n,\uppsi}_t\in A\}} dt,\
\end{align}
for any Borel set $A\subset \RR^d$, $B\subset \bU$ and $C\subset \calY$ with $\calY\doteq \RR^d_+\times \RR^d_+\times \RR^d_+$. We will use $\mu^{n,T,3}_{U,\uppsi}$ with only $\uppsi$ being Markov in the lower bound proof while with only $U^n$ being Markov in the upper bound proof below. Thus we do not define a corresponding $\widetilde{\mu}^{n,T}_{U,\uppsi}$ when both $U^n$ and $\uppsi$ being Markov as  $\widetilde \mu_{v,w}^{*,T}$ defined in~\eqref{def-mu*}. For $n\in \NN$, we represent their weak limits in $T$ (if they exist and the associated subsequence $T_k$ is irrelevant) by $\mu^{n}_{U,\uppsi}$, $\mu^{n,1}_{U,\uppsi}$, $\mu^{n,2}_{U,\uppsi}$, $ \mu^{n,3}_{U,\uppsi}$, respectively.

 It is evident from Corollary~\ref{cor-var-rep-unbounded-poisson} that the term $\frac{1}{T} \int_0^Tr(\hat X^{n,\uppsi}_t, U^{n,\uppsi}_t)dt -\frK^n(\uppsi,T)$  can also be written as the integral over the mean empirical measure of $(\hat X^{n,\uppsi},U^{n,\uppsi},\uppsi)$ as follows:
\begin{align*}
\frac{1}{T} \int_0^T\Big(r(\hat X^{n,\uppsi}_t, U^{n,\uppsi}_t)-\mathfrak{k}^n(\uppsi_t)\Big)dt= \int_{\RR^d\times \bU\times \calY} \big(r(x,u)-\mathfrak{k}^n(y)\big)d\mu_{U,\uppsi}^{n,T}(x,u,y)\,. 
\end{align*}
 In the proof of the lower bound (Lemma~\ref{lem-AO-lower}), we work with a general admissible control $U^n_t$ (whose corresponding extended process is $U^{n,\uppsi})$ and a specific control $\uppsi_t=f_n(\hat X^{n,\uppsi})$, for some appropriate function $f_n$. Because of this, we make use of the above mean empirical measures of $(\hat X^{n,\uppsi}, U^{n,\uppsi})$. However, in the proof of the upper bound (Lemma~\ref{lem-AO-upper}), we work with a Markov control $U^n_t=g_n(\hat X^{n}_t)$ for some appropriate function $g_n$ and a general control $\uppsi \in \cE^n_M$ (for some $M>0$).
  In this case,  the pair $(\hat X^{n,\uppsi}, U^{n,\uppsi})$ will be  of the form  $(\hat X^{n,\uppsi}, g_n(\hat X^{n,\uppsi}))$. Using this property, we can then write 
$$ \frac{1}{T} \int_0^Tr(\hat X^{n,\uppsi}_t, g_n(\hat X^{n,\uppsi}_t))dt= \int_{\RR^d} r(x,g_n(x))d\mu_{U,\uppsi}^{n,T,3}(x).$$

\subsection{A suitable topology and corresponding lemmas} 
\label{sec-top} 
In this section, we  describe a suitable topology and  prove  important results in connection with that topology (see Lemmas~\ref{lem-comp} and~\ref{lem-compact}). To that end, we fix a $\delta>0$ and choose $\delta$--optimal $\uppsi^n(T)$ for every $T$ (we will suppress the $\delta$-dependence throughout). From the choice of $\uppsi^n(T)$ and Proposition~\ref{prop-var-rep-poisson}, we can assume that  $\uppsi^n(T)\in \cE^n_{M}$ and we have
$$J^n_L(\hat X^n_0,U^n)\leq \limsup_{T\to\infty} \E\Big[ \Big(\frac{1}{T}\int_0^T r\big(\hat X^{n,\uppsi^n(T)}_t, U^{n,\uppsi^n(T)}_t\big)dt\Big)\wedge L -\frK^n(\uppsi^n(T),T)\Big]+\delta.$$
Since $\uppsi^n(T)\in \cE^n_{M}$,   we have
$$\frK^n(\uppsi^n(T), T)\leq M,\text{ for every $T>0$} .$$

We will use this bound extensively to prove some existence and convergence results. Therefore, it is helpful for us to work with a topology under which the above bound gives compactness. This topology is defined below. First, notice that since $\varkappa (\cdot)>0$, $\sqrt{\varkappa(\phi_{\cdot})}\in L^2_{\text{loc}}(\RR_+,\RR)$, whenever $$\frac{1}{T}\int_0^T\varkappa(\phi_s)ds<\infty, \text{ for every $T>0$}.$$  From this observation, we can see that $\cE^n$ can be regarded as the subset of $\cZ\doteq L^2_{\text{loc}}(\RR_+,\RR^d\times \RR^d\times \RR^d)$. Now we equip $\cZ$ (and thereby $\cE^n$) with a topology that is inherited from the weak$^*$ topology of $\cZ_T\doteq L^2([0,T],\RR^d\times \RR^d\times \RR^d)$ (which we denote by $L^{2,*}_{T}$) which will be useful for us.  $\cZ$ is equipped with the coarsest topology for which the mapping $\cZ\ni u\mapsto u_{\vert[0,T]}\in \cZ_T$ is continuous in $L^{2,*}_{T}$ for every $T>0$,  \emph{i.e.,} we say $u^n\to u$ for $u^n,u\in  \cZ$ under this  topology if and only if $u^n_{\vert[0,T]}$ converges to $u_{\vert[0,T]}$ in $L^{2,*}_{T}$, for every $T>0$. We denote the space $\cZ$ by $L^{2,*}_{\infty}$ when equipped with the aforementioned topology. 

\begin{lemma}\label{lem-comp}For every $M>0$, the set of all $w\in \cZ$ such that 
$$ \sup_{T>0}\frac{1}{T}\int_0^T \|w_t\|^2dt\leq M $$  
	is compact in $L^{2,*}_{\infty}$.
\end{lemma}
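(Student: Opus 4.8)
The plan is to show that the set
$$ \mathcal{K}_M \doteq \Bigl\{ w\in\mathcal{Z}:\ \sup_{T>0}\tfrac{1}{T}\int_0^T\|w_t\|^2\,dt\le M \Bigr\} $$
is compact in $L^{2,*}_\infty$ by exploiting the fact that the topology on $L^{2,*}_\infty$ is the projective limit of the weak$^*$ topologies on the spaces $\mathcal{Z}_T = L^2([0,T],\RR^d\times\RR^d\times\RR^d)$. First I would record the elementary but essential observation that a restriction bound is inherited: if $w\in\mathcal{K}_M$, then for \emph{every} fixed $T>0$ we have $\int_0^T\|w_t\|^2\,dt\le MT$, i.e. $\|w_{\vert[0,T]}\|_{\mathcal{Z}_T}\le \sqrt{MT}$, so the image of $\mathcal{K}_M$ under the restriction map $\pi_T\colon w\mapsto w_{\vert[0,T]}$ lies in the closed ball $B_T$ of radius $\sqrt{MT}$ in $\mathcal{Z}_T$. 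Since $\mathcal{Z}_T$ is a Hilbert space, $B_T$ is weak$^*$ (= weakly) compact by Banach--Alaoglu, and it is metrizable in the weak topology because $\mathcal{Z}_T$ is separable; hence $B_T$ is in fact sequentially compact.

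The core of the argument is then a diagonal extraction. Take an arbitrary sequence $\{w^{(k)}\}_{k\in\NN}\subset\mathcal{K}_M$. Along a sequence $T_m\uparrow\infty$ (say $T_m=m$), apply sequential weak$^*$-compactness of $B_{T_1}$ to extract a subsequence along which $\pi_{T_1}w^{(k)}$ converges weakly in $\mathcal{Z}_{T_1}$ to some $\xi^{(1)}$; then refine to a further subsequence along which $\pi_{T_2}w^{(k)}$ converges weakly in $\mathcal{Z}_{T_2}$ to some $\xi^{(2)}$, and so on; finally pass to the diagonal subsequence $\{w^{(k_j)}\}$. By construction, for every $m$, $\pi_{T_m}w^{(k_j)}\rightharpoonup \xi^{(m)}$ weakly in $\mathcal{Z}_{T_m}$ as $j\to\infty$, and the consistency $\xi^{(m+1)}_{\vert[0,T_m]} = \xi^{(m)}$ holds because weak convergence in $\mathcal{Z}_{T_{m+1}}$ implies weak convergence of restrictions in $\mathcal{Z}_{T_m}$ (testing against functions supported on $[0,T_m]$). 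These consistent pieces glue to a single $w^*\in\mathcal{Z} = L^2_{\mathrm{loc}}(\RR_+,\RR^d\times\RR^d\times\RR^d)$ with $w^*_{\vert[0,T_m]}=\xi^{(m)}$. For a general $T>0$, pick $T_m\ge T$; weak convergence on $[0,T_m]$ gives weak convergence on $[0,T]$, so $\pi_T w^{(k_j)}\rightharpoonup \pi_T w^*$ in $\mathcal{Z}_T$ for every $T>0$, which is exactly convergence $w^{(k_j)}\to w^*$ in $L^{2,*}_\infty$.

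It remains to check that the limit $w^*$ lies in $\mathcal{K}_M$, i.e. that the bound is preserved. For each fixed $T>0$, the norm $\|\cdot\|_{\mathcal{Z}_T}$ is weakly lower semicontinuous, so
$$ \int_0^T\|w^*_t\|^2\,dt \le \liminf_{j\to\infty}\int_0^T\|w^{(k_j)}_t\|^2\,dt \le \liminf_{j\to\infty}\,(MT) = MT, $$
hence $\tfrac{1}{T}\int_0^T\|w^*_t\|^2\,dt\le M$; taking the supremum over $T>0$ gives $w^*\in\mathcal{K}_M$. Thus $\mathcal{K}_M$ is sequentially compact in $L^{2,*}_\infty$. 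Finally, since $L^{2,*}_\infty$ is the countable projective limit (over $T=1,2,3,\dots$, which is cofinal) of the separable metrizable spaces $(\mathcal{Z}_m, \text{weak})$ restricted to the balls $B_m$, it is metrizable on $\mathcal{K}_M$ (e.g. $d(w,w')=\sum_{m\ge1}2^{-m}\bigl(\varrho_m(\pi_m w,\pi_m w')\wedge 1\bigr)$ where $\varrho_m$ metrizes the weak topology on $B_m\subset\mathcal{Z}_m$), so sequential compactness upgrades to compactness.

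I expect the main obstacle to be purely bookkeeping rather than conceptual: carefully justifying the metrizability claim on $\mathcal{K}_M$ (so that ``sequentially compact'' really does imply ``compact''), and making sure the gluing of the consistent family $\{\xi^{(m)}\}$ into a genuine element of $L^2_{\mathrm{loc}}$ is stated cleanly. Both points are standard once one notes that only countably many $T$'s (the integers) are needed because they are cofinal in $(0,\infty)$ and restriction maps are compatible. Everything else — Banach--Alaoglu, separability of $L^2$ on a finite interval, weak lower semicontinuity of the norm, and the diagonal argument — is routine.
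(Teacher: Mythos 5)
Your proof is correct, and it is in fact more careful than the paper's own terse argument. The paper argues by contradiction in a few lines: it supposes some sequence in the set ``is not convergent,'' asserts that this means there is a single $T_0>0$ for which the restrictions to $[0,T_0]$ are not convergent in $L^{2,*}_{T_0}$, and derives a contradiction from the bound $\int_0^{T_0}\|w^n_t\|^2\,dt\le MT_0$. As written this conflates ``non-compact'' with ``some sequence fails to converge'' and ``bounded'' with ``convergent'' (the correct facts being ``every sequence has a convergent subsequence'' and ``weak$^*$-precompact''); and even after those fixes, the middle step --- that failure of subsequential convergence in $L^{2,*}_\infty$ forces failure on some fixed $[0,T_0]$ --- is exactly what requires a diagonal extraction, since subsequential limits on different intervals could a priori come from incompatible subsequences. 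Your write-up supplies precisely the pieces the paper leaves implicit: Banach--Alaoglu plus separability give weak sequential compactness of closed balls in each $\cZ_T$; a diagonal argument over integer times produces one subsequence converging weakly on every $[0,m]$; the consistency of the restriction maps lets you glue the piecewise limits into an element of $\cZ$; weak lower semicontinuity of the $L^2$ norm keeps this limit inside the set; and metrizability of the induced topology on the set (using that the integers are cofinal in $(0,\infty)$) upgrades sequential compactness to compactness. Both arguments rest on the same core fact --- the uniform average bound gives weak$^*$-precompactness on each finite interval --- but yours is the one that actually closes the loop, and it would make a correct drop-in replacement for the proof in the paper.
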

\begin{proof}
For a fixed $M>0$, let the set in the hypothesis be denoted by $\mathscr{K}$.
	Now assume the contrary, that is, that there is a sequence $\{w^n\}_{n\in\NN}\subset\mathscr{K}$ which is not convergent. In other words, there is a $T_0>0$ for which $\{w^n_{\vert[0,T_0]}\}_{n\in \NN}\subset L^{2,*}_{T_0}$ is not convergent in $L^{2,*}_{T_0}$. Since $w^n\in \mathscr{K}$, we have $\sup_{T>0}\frac{1}{T}\int_0^T\|w^n_t\|^2 dt \leq  M$. In particular,
	$$ \int_0^{T_0}\|w^n_t\|^2 dt= \int_0^{T_0}\|w^n_{\vert[0,T_0],t}\|^2dt < MT_0$$ 
	which  implies that $\{w^n_{\vert[0,T_0]}\}_{n\in \NN}$ is convergent in $L^{2,*}_{T_0}$. This is a contradiction and also proves the result.
\end{proof}

Even though we have defined the topology above for $\cZ$, the crucial compactness result below is only proved for a simpler case of $L^2_{\text{loc}}(\RR^+,\RR)$ to keep the expressions in the proof concise. To this end, we equip $L^2_{\text{loc}}(\RR^+,\RR)$ with   the coarsest topology for which the mapping $L^2_{\text{loc}}(\RR^+,\RR)\ni u\mapsto u_{\vert[0,T]}\in L^2([0,T],\RR)$ is continuous in weak topology of $L^2([0,T],\RR)$ for every $T>0$. From now on, the space $L^2_{\text{loc}}(\RR^+,\RR)$ is always assumed to be equipped with this topology.

\begin{lemma}\label{lem-compact}
	Suppose $\phi^n>0$ for every $n$, and 
	\begin{align}\label{eq-bound}
	\sup_{n\in \NN}\sup_{T>0}\frac{n}{T}\int_0^T \varkappa(\phi^n_t)dt\leq M, \text{ for some $M>0$}.
	\end{align}
	Then, 
	$$ \limsup_{n\to\infty}\sup_{T>0} \frac{1}{T}\int_0^T |\sqrt{n}(1-\phi^n_t)|^2dt\leq 2M.$$
In particular,	$ \{\sqrt n (1-\phi^n)\}_{n\in \NN}$ is compact in $L^2_{\text{loc}}(\RR^+,\RR)$.  
	
\end{lemma}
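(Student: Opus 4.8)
The plan is to reduce everything to a pointwise comparison between $(1-r)^2$ and $\varkappa(r)$. Since $\varkappa(1)=\varkappa'(1)=0$ and $\varkappa''(s)=1/s$, Taylor's formula with integral remainder gives, for every $r>0$,
\[
\varkappa(r)=(1-r)^2\int_0^1\frac{1-\theta}{1+\theta(r-1)}\,d\theta .
\]
For $0<r\le 1$ the integrand is $\ge 1-\theta$, so $\varkappa(r)\ge\tfrac12(1-r)^2$; for $r\ge 1$ the integrand is $\ge(1-\theta)/r$, so $\varkappa(r)\ge\tfrac1{2r}(1-r)^2$. Hence $(1-r)^2\le 2\varkappa(r)$ on $(0,1]$ and $(1-r)^2\le 2r\,\varkappa(r)$ on $[1,\infty)$.

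Next I would fix $T>0$ and $\eta>0$ and split $[0,T]$ into $\{\phi^n_t\le 1\}$, $\{1<\phi^n_t\le 1+\eta\}$, and the supercritical set $S^n_{T,\eta}\df\{t\in[0,T]:\phi^n_t>1+\eta\}$. On the first set, the subcritical inequality and the hypothesis give $\tfrac1T\int_{\{\phi^n_t\le1\}}n(1-\phi^n_t)^2\,dt\le\tfrac2T\int_0^T n\varkappa(\phi^n_t)\,dt\le 2M$; on the second set, using $(1-r)^2\le 2r\varkappa(r)$ with $r\le 1+\eta$, one gets $\tfrac1T\int_{\{1<\phi^n_t\le1+\eta\}}n(1-\phi^n_t)^2\,dt\le 2(1+\eta)M$. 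For $S^n_{T,\eta}$, the monotonicity of $\varkappa$ on $[1,\infty)$ together with the hypothesis yields the Markov-type estimate $|S^n_{T,\eta}|/T\le M/\bigl(n\,\varkappa(1+\eta)\bigr)$, so this set has relative measure $O(1/n)$ uniformly in $T$; combining this with $(1-r)^2\le 2r\varkappa(r)$ and truncating $\phi^n$ at a large level (with the level sent to infinity only after $n\to\infty$), one shows $\limsup_{n\to\infty}\sup_{T>0}\tfrac1T\int_{S^n_{T,\eta}}n(1-\phi^n_t)^2\,dt=0$. Adding the three contributions, taking $\limsup_{n\to\infty}$, and finally letting $\eta\downarrow 0$ gives $\limsup_{n\to\infty}\sup_{T>0}\tfrac1T\int_0^T|\sqrt n(1-\phi^n_t)|^2\,dt\le 2M$.

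For the final assertion: given $\varepsilon>0$, the bound just proved shows that all but finitely many of the functions $\sqrt n(1-\phi^n)$ satisfy $\sup_{T>0}\tfrac1T\int_0^T|\sqrt n(1-\phi^n_t)|^2\,dt\le 2M+\varepsilon$, hence lie in the set appearing in the one-dimensional analogue of Lemma~\ref{lem-comp}, which is compact in $L^{2,*}_{\infty}$, i.e.\ in $L^2_{\mathrm{loc}}(\RR^+,\RR)$ with the stated topology. Adjoining the finitely many remaining terms does not affect sequential compactness, so $\{\sqrt n(1-\phi^n)\}_{n\in\NN}$ is compact in $L^2_{\mathrm{loc}}(\RR^+,\RR)$.

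The hard part will be the supercritical set $S^n_{T,\eta}$: there is no universal constant $C$ with $(1-r)^2\le C\varkappa(r)$ for all $r>0$ (the ratio grows like $r/\ln r$ as $r\to\infty$), so on $S^n_{T,\eta}$ one cannot dominate $(1-\phi^n_t)^2$ by a fixed multiple of $\varkappa(\phi^n_t)$. The argument must instead exploit that the supremum over $T$ in the hypothesis forces the set where $\phi^n$ appreciably exceeds $1$ to shrink at rate $1/n$, so that the large values of $\phi^n$ there are offset by a vanishingly small measure; making this quantitative, and uniform in $T$, is the crux of the proof.
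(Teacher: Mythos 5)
Your pointwise bounds $(1-r)^2\le 2\varkappa(r)$ on $(0,1]$ and $(1-r)^2\le 2r\,\varkappa(r)$ on $[1,\infty)$ are correct and do handle the subcritical and near-critical pieces. The gap is exactly where you flag it: the truncation you propose does not close the supercritical step. The Markov estimate $|S^n_{T,\eta}|/T\le M/\bigl(n\varkappa(1+\eta)\bigr)$ supplies a factor $1/n$, but the integrand is $n(1-\phi^n_t)^2$, so the two factors of $n$ cancel exactly; on $\{1+\eta<\phi^n_t\le K\}$ this yields at best $n(K-1)^2\,|S^n_{T,\eta}|/T\le (K-1)^2 M/\varkappa(1+\eta)$, which is bounded in $n$ and $T$ but does \emph{not} tend to $0$ as $n\to\infty$, and blows up as $K\to\infty$, so your order of limits (first $n$, then $K$, then $\eta$) cannot produce the claimed vanishing. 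The tail $\{\phi^n_t>K\}$ is worse, since $(1-r)^2\le 2r\varkappa(r)$ carries the unbounded factor $r$. The obstruction is intrinsic: take $\phi^n\equiv 1+n$ on an interval of length $s_n$ and $\equiv 1$ elsewhere, with $s_n$ chosen so that $ns_n\varkappa(1+n)/(1+s_n)=M$. Then \eqref{eq-bound} holds, yet $\sup_{T>0}T^{-1}\int_0^T|\sqrt n(1-\phi^n_t)|^2\,dt = n^2 M/\varkappa(1+n)\sim nM/\ln n\to\infty$; no argument built on a pointwise comparison of $(1-r)^2$ with $\varkappa(r)$ alone can deliver the stated quantitative conclusion.

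The paper's route is genuinely different in structure: it slices $\{\phi^n_t>1\}$ into bands of width $1/\sqrt n$, uses Lemma~\ref{lem-varkappa-above-0} with the band-dependent constant $\alpha\bigl((L+1)/\sqrt n\bigr)\to\frac{1}{2}$ as $n\to\infty$ for each fixed $L$, diagonalises to extract weak limits $v^L$ band by band, and then invokes weak lower semicontinuity of the $L^2$-norm to bound $\sup_T T^{-1}\int_0^T|v^*_t|^2\,dt$, where $v^*=\sum_L v^L$. The bands with $L$ of order $\sqrt n$ or larger — that is, $\phi^n$ an order-one amount above $1$ — are precisely the regime your supercritical set isolates, and the passage from the finite partial sums $h^{n,L}$ to the full series $\sqrt n(1-\phi^n)$ is the delicate interchange of limits in both arguments. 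What the downstream applications actually need is compactness together with the bound $\sup_T T^{-1}\int_0^T|v^*_t|^2\,dt\le 2M$ on the weak limit, which is weaker than the $\limsup_n\sup_T$ estimate displayed in the statement; that weaker assertion is the one you should aim to prove, and your threshold decomposition would have to be replaced by the $1/\sqrt n$-band decomposition (or something like it) to reach it.
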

The proof of this result is given in the Appendix. The following is an easy corollary (a multi-dimensional version) to the above lemma. For $\uppsi\in \cE^n$, define $h^n(\uppsi)=h^n(\phi,\psi,\varphi)\doteq \big( \sqrt{n} (e-\phi),\sqrt{n} (e-\psi),\sqrt{n} (e-\varphi)\big)$.  
Recall $e= (1,1,\ldots,1)\transp\in \RR^d$.
\begin{corollary}\label{cor-control-compact}
Suppose $\{\uppsi^n\}_{n\in \NN}\subset \cE^n_M$ and 
$$\limsup_{n\to\infty}\sup_{T>0} \sum_{i=1}^d\frac{1}{T}\int_0^T\Big( \lambda^n_i|\sqrt{n}(1-\phi^n_{i,s})|^2+{\mu^n_i} |\sqrt{n}(1-\psi^n_{i,s})|^2+{\gamma^n_i} |\sqrt{n}(1-\varphi^n_{i,s})|^2\Big)ds \leq M\,.
 $$
 Then, $\{h^n(\uppsi^n)\}_{ n\in \NN}$  is a tight family of $L^{2,*}_{\infty}$--valued random variables.
 \end{corollary}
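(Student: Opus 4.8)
The plan is to view $h^n(\uppsi^n)=\bigl(\sqrt n(e-\phi^n),\sqrt n(e-\psi^n),\sqrt n(e-\varphi^n)\bigr)$ as a $\cZ$-valued random variable and apply the compactness criterion of Lemma~\ref{lem-comp}; everything then reduces to producing a bound on $\sup_{T>0}\tfrac1T\int_0^T\|h^n(\uppsi^n)_s\|^2\,ds$ that is uniform in $n$. Here $\|h^n(\uppsi^n)_s\|^2=\sum_{i=1}^d\bigl(|\sqrt n(1-\phi^n_{i,s})|^2+|\sqrt n(1-\psi^n_{i,s})|^2+|\sqrt n(1-\varphi^n_{i,s})|^2\bigr)$, and by the Halfin--Whitt scaling \eqref{eqn-HW-parameter} there are $c>0$ and $n_0\in\NN$ with $\lambda^n_i\ge cn$, $\mu^n_i\ge c$ and $\gamma^n_i\ge c$ for all $1\le i\le d$ and $n\ge n_0$. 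Dividing the displayed hypothesis of the corollary through by these rates then gives, $\PP$-a.s.\ for all large enough $n$,
\[
\sup_{T>0}\frac1T\int_0^T\|h^n(\uppsi^n)_s\|^2\,ds\ \le\ \frac{M+1}{c}\ \doteq\ M_0;
\]
in particular $h^n(\uppsi^n)\in\cZ$ a.s.\ for such $n$, and by Lemma~\ref{lem-comp} the set $\sK_{M_0}\doteq\{w\in\cZ:\sup_{T>0}\tfrac1T\int_0^T\|w_t\|^2\,dt\le M_0\}$, which contains it, is compact in $L^{2,*}_\infty$.

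To conclude tightness, I would fix $\eps>0$. Since the bound above holds a.s.\ for every $n$ past a (possibly random) index, there is a deterministic $n_1$ with $\PP\bigl(h^n(\uppsi^n)\in\sK_{M_0}\text{ for all }n\ge n_1\bigr)\ge 1-\eps$, whence $\PP(h^n(\uppsi^n)\in\sK_{M_0})\ge 1-\eps$ for every $n\ge n_1$. For each of the finitely many $n<n_1$, $\uppsi^n\in\cE^n_M$ together with $\varkappa\ge0$ and the argument in the proof of Lemma~\ref{lem-compact} applied with $n$ fixed show that $h^n(\uppsi^n)$ is a well-defined $\cZ$-valued random variable, and a finite family of $L^{2,*}_\infty$-valued random variables is automatically tight, so one may pick compact $\sC_n\subset L^{2,*}_\infty$ with $\PP(h^n(\uppsi^n)\in\sC_n)\ge1-\eps$. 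Then $K\doteq\sK_{M_0}\cup\bigcup_{n<n_1}\sC_n$ is compact in $L^{2,*}_\infty$ and $\PP(h^n(\uppsi^n)\in K)\ge1-\eps$ for every $n\in\NN$, which is the asserted tightness.

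Finally, this is legitimately called ``an easy corollary of Lemma~\ref{lem-compact},'' because the displayed bound is itself furnished by that lemma: since $\varkappa\ge0$, membership $\uppsi^n\in\cE^n_M$ forces $\tfrac1T\int_0^T\lambda^n_i\varkappa(\phi^n_{i,s})\,ds\le M$, $\tfrac nT\int_0^T\mu^n_i\varkappa(\psi^n_{i,s})\,ds\le M$ and $\tfrac nT\int_0^T\gamma^n_i\varkappa(\varphi^n_{i,s})\,ds\le M$ for all $T>0$ and all $i$; dividing by the rates and using $\lambda^n_i\ge cn$, $\mu^n_i\ge c$, $\gamma^n_i\ge c$ leaves, for each scalar coordinate $\xi^n\in\{\phi^n_i,\psi^n_i,\varphi^n_i\}$, the bound $\tfrac nT\int_0^T\varkappa(\xi^n_s)\,ds\le M/c$ uniformly in $T$, whereupon Lemma~\ref{lem-compact} controls $\limsup_{n\to\infty}\sup_{T>0}\tfrac1T\int_0^T n(1-\xi^n_s)^2\,ds$; multiplying back by $\lambda^n_i,\mu^n_i,\gamma^n_i$ and summing over $i$ returns the displayed bound up to a constant multiple of $M$. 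The only point that needs care --- and it is not a real obstacle, since the substantive work sits inside Lemma~\ref{lem-compact} --- is that all these constants must be chosen uniformly in $n$, which is exactly what the Halfin--Whitt regime guarantees: it keeps the jump rates $\lambda^n_i$, $n\mu^n_i$, $n\gamma^n_i$ comparable to $n$, with multiplicative constants bounded away from $0$ and $\infty$.
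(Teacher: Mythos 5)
Your main argument is sound and is exactly the intended one: extract from the displayed hypothesis a uniform-in-$n$ bound $\sup_{T>0}\tfrac1T\int_0^T\|h^n(\uppsi^n)_s\|^2\,ds\le M_0$ (using the Halfin--Whitt scaling to keep the rates bounded below), identify the corresponding set as compact via Lemma~\ref{lem-comp}, and dispose of the finitely many small $n$ separately. That is a correct proof of the corollary as literally stated.

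The problem is the last paragraph, where you claim the displayed bound is itself ``furnished by Lemma~\ref{lem-compact}'' by ``multiplying back by $\lambda^n_i,\mu^n_i,\gamma^n_i$.'' For the $\phi$-coordinate this fails: Lemma~\ref{lem-compact} (together with $\lambda^n_i\ge cn$) gives $\limsup_{n}\sup_{T}\tfrac1T\int_0^T n|1-\phi^n_{i,s}|^2\,ds\le 2M/c$, uniformly in $n$, but multiplying back by $\lambda^n_i\sim n\lambda_i$ produces a bound of order $n$, not order $M$. So the displayed inequality, with $\lambda^n_i$ as written, does \emph{not} follow from $\uppsi^n\in\cE^n_M$ via Lemma~\ref{lem-compact} --- in fact, for the controls actually used in the lower bound (where $\sqrt n(1-\phi^n_i)=O(1)$) the quantity $\tfrac{\lambda^n_i}{T}\int n|1-\phi^n_i|^2\,ds$ diverges. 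This matters, because the paper invokes the corollary precisely as the implication ``$\frK^n(\uppsi^n,T)\le M\Rightarrow\limsup_n\sup_T\tfrac1T\int_0^T\|h^n\|^2\le 2M$'' (see \eqref{eq-h-comp} and the proof of Lemma~\ref{lem-tightness-empirical}), so the displayed condition is evidently meant to be a \emph{consequence} of $\uppsi^n\in\cE^n_M$, not a separate hypothesis. The coefficient should surely read $\lambda^n_i/n$ (so that all three coefficients are $O(1)$ and bounded away from $0$, making the display equivalent up to constants to the $L^2$ bound on $\|h^n\|$); had you tried to carry out the ``multiply back'' step coordinate by coordinate, the mismatch of orders between the $\lambda^n_i$ term and the $\mu^n_i,\gamma^n_i$ terms would have flagged the typo immediately. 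A smaller caveat: for $n<n_1$ you assert that $\uppsi^n\in\cE^n_M$ already makes $h^n(\uppsi^n)$ a well-defined $\cZ$-valued random variable by ``the argument of Lemma~\ref{lem-compact} with $n$ fixed,'' but that argument only places $\sqrt n(1-\phi^n)\in L^2_{\text{loc}}$ \emph{for large $n$} (since $\varkappa(1+r)\sim r\log r$, finiteness of $\int\varkappa(\phi^n)$ alone does not force $\int(1-\phi^n)^2<\infty$). This point is arguably also glossed in the paper's statement, so I would not count it as disqualifying, but it should be acknowledged rather than asserted.
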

An immediate consequence of the above corollary is that for a fixed $n$, $\{\uppsi^n(T)\}_{T>0}$ is compact in $L^{2,*}_{\infty}$ and therefore, has a weak limit point $\uppsi^n\in L^{2,*}_{\infty}$ along a subsequence (say $T_k$). Moreover, \begin{align}\label{eq-weak-lower-semicontinuity} \E\Big[\limsup_{T\to\infty}\frK^n(\uppsi^n(T),T)\Big]\geq\E\Big[ \limsup_{T\to\infty}\frK^n(\uppsi^n, T)\Big]\end{align}
which follows from the weak lower-semicontinuity of the norm.

\subsection{An auxiliary diffusion limit arising from the variational formulation} \label{sec-aux-diffusion}
Let $\frD^\bU_T$ be the set of $\bU$--valued c\`adl\`ag functions on $[0,T]$ equipped with the Skorohod topology.
\begin{theorem}\label{thm-diff}
	Suppose $\uppsi^n=(\phi^n,\psi^n,\varphi^n)\in \cE^n$ is such that 
 $$\sup_{n\in \NN}\sup_{T>0}\frK^n(\uppsi^n, T)<\infty.$$ 
 Then the family of processes $\{\big(\hat X^{n,\uppsi^n},U^{n,\uppsi^n},h^n(\uppsi^n)\big)\}_{n\in \NN}$ (defined by~\eqref{eq-prelimit-controlled} with $\uppsi=\uppsi^n$) is tight in $\frD^d_T\times \frD^\bU_T\times L^{2,*}_{\infty}$, for every $T>0$. Moreover, every limit point $(X,u,w)\doteq (X^{*,u,w},u,w)$ with $w=(w^1,w^2,w^3)$ satisfies 
 \begin{align}\label{eq-diff-limit-control} dX^{*,u,w}_t= b(X^{*,u,w}_t,u_t))dt + \Sigma \widetilde w _tdt + \Sigma dW_t, \end{align}
 for some $d$--dimensional Brownian motion $W$ and $\widetilde w$ defined as 
 \[\widetilde w_{i,t}\doteq  \frac{\lambda_iw^1_{i,t}+\mu_i \rho_i w^2_{i,t}}{\sqrt{2}}= \frac{\lambda_i}{\sqrt{2}} (w^1_{i,t}+w^2_{i,t}).
 \]
\end{theorem}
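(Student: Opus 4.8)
The plan is to run a controlled functional central limit theorem for equation~\eqref{eq-prelimit-controlled}, in the spirit of the Poisson FCLT of Theorem~\ref{thm-fclt-poisson}, but now keeping track simultaneously of the scheduling control $U^{n,\uppsi^n}$ and of the rescaled auxiliary controls $h^n(\uppsi^n)$. First I would rewrite each coordinate of~\eqref{eq-prelimit-controlled} as
\[
\hat X^{n,\uppsi^n}_{i,t}=\hat X^n_{i,0}+\underbrace{\ell^n_it-\mu^n_i\int_0^t\hat Z^{n,\uppsi^n}_{i,s}\,ds-\gamma^n_i\int_0^t\hat Q^{n,\uppsi^n}_{i,s}\,ds}_{\text{state drift}}+M^n_{i,t}+D^n_{i,t},
\]
where $M^n_i$ is the sum of the three compensated time--changed Poisson martingales coming from arrivals, service and abandonment, and $D^n_i$ collects the three ``control drifts'' $\tfrac{\lambda^n_i}{\sqrt n}\int_0^t(\phi^n_{i,s}-1)\,ds$, $-\tfrac{\mu^n_i}{\sqrt n}\int_0^t(\psi^n_{i,s}-1)Z^{n,\uppsi^n}_{i,s}\,ds$, $-\tfrac{\gamma^n_i}{\sqrt n}\int_0^t(\varphi^n_{i,s}-1)Q^{n,\uppsi^n}_{i,s}\,ds$. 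The proof then has three stages: a priori uniform-in-$n$ bounds, tightness of the triple, and identification of every subsequential limit.

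For the a priori bounds, the hypothesis $\sup_n\sup_{T>0}\frK^n(\uppsi^n,T)<\infty$, combined with Lemma~\ref{lem-compact} and Corollary~\ref{cor-control-compact}, gives a constant $C$ with $\sup_n\sup_{T>0}\tfrac1T\int_0^T\|h^n(\uppsi^n)_s\|^2\,ds\le C$; in particular $\phi^n,\psi^n,\varphi^n\to e$ in $L^2_{\mathrm{loc}}$. Writing each $\phi^n_i-1$, $\psi^n_i-1$, $\varphi^n_i-1$ as $-n^{-1/2}$ times a block of $h^n(\uppsi^n)$ and using Cauchy--Schwarz, the increment of $D^n_i$ over $[s,t]$ is at most $C^{1/2}(t-s)^{1/2}$ times a factor involving $\big(\int_0^t\|\hat Z^{n,\uppsi^n}_r\|^2dr\big)^{1/2}$ and $\big(\int_0^t\|\hat Q^{n,\uppsi^n}_r\|^2dr\big)^{1/2}$. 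Feeding this into It\^o's formula applied to a Lyapunov function as in~\cite{AHP21} (which exploits the stabilizing abandonment drift $-\gamma^n_iQ^{n,\uppsi^n}_i$) and running a Gronwall argument, one obtains $\sup_n\E\big[\sup_{t\le T}\|\hat X^{n,\uppsi^n}_t\|^2\big]<\infty$ for every $T$, together with a uniform bound on the predictable quadratic variations of $M^n$. Tightness of $h^n(\uppsi^n)$ in $L^{2,*}_\infty$ is then exactly Corollary~\ref{cor-control-compact}; tightness of $\hat X^{n,\uppsi^n}$ in $\frD^d_T$ follows from a standard semimartingale tightness criterion (the martingale increments are controlled through their quadratic variations, the state drift through the stochastic boundedness of $\hat X^{n,\uppsi^n}$, and $D^n$ is $\frC^d_T$--tight by the H\"older-$\tfrac12$ estimate just obtained); and the $\bU$--valued component $U^{n,\uppsi^n}$ is handled via the relaxed--control compactification (cf.~\cite[Section~2.4]{ari2011}), which is enough for what follows since $U^{n,\uppsi^n}$ enters~\eqref{eq-prelimit-controlled} only through the $\frC^d_T$--tight integrals $\int_0^\cdot\hat Z^{n,\uppsi^n}_s\,ds$ and $\int_0^\cdot\hat Q^{n,\uppsi^n}_s\,ds$.

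For the limit identification, fix a convergent subsequence $(\hat X^{n,\uppsi^n},U^{n,\uppsi^n},h^n(\uppsi^n))\to(X,u,w)$ with $w=(w^1,w^2,w^3)$, and invoke Skorohod's representation so the convergence is almost sure. The predictable quadratic variation of $M^n_i$ is $\tfrac{\lambda^n_i}{n}\int_0^t\phi^n_{i,s}ds+\mu^n_i\int_0^t\psi^n_{i,s}\tfrac{Z^{n,\uppsi^n}_{i,s}}{n}ds+\gamma^n_i\int_0^t\varphi^n_{i,s}\tfrac{Q^{n,\uppsi^n}_{i,s}}{n}ds$, which converges to $(\lambda_i+\mu_i\rho_i+0)t=2\lambda_it$ because $\phi^n,\psi^n,\varphi^n\to e$, $n^{-1}Z^{n,\uppsi^n}_i\to\rho_i$, $n^{-1}Q^{n,\uppsi^n}_i\to0$, $\lambda^n_i/n\to\lambda_i$ and $\mu^n_i\to\mu_i$; the cross--variations vanish by independence of the driving Poisson processes. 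By the martingale central limit theorem and the random time change lemma (\cite[p.~151]{billingsley1999}), exactly as in Theorem~\ref{thm-fclt-poisson}, $M^n\Rightarrow\Sigma W$ for a $d$--dimensional standard Brownian motion $W$. Continuity of $b$ in~\eqref{eqn-b}, the identities $\hat Q^{n,\uppsi^n}_t=(e\cdot\hat X^{n,\uppsi^n}_t)^+U^{n,\uppsi^n}_t$, $\hat Z^{n,\uppsi^n}_t=\hat X^{n,\uppsi^n}_t-(e\cdot\hat X^{n,\uppsi^n}_t)^+U^{n,\uppsi^n}_t$, and the parameter convergences $\ell^n\to\ell$, $\mu^n\to\mu$, $\gamma^n\to\gamma$ give that the state drift converges to $\int_0^tb(X_s,u_s)\,ds$. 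For $D^n$, rewrite it in terms of the three blocks of $h^n(\uppsi^n)$, using also $n^{-1}Z^{n,\uppsi^n}_i=\rho_i+n^{-1/2}\hat Z^{n,\uppsi^n}_i$ and $n^{-1}Q^{n,\uppsi^n}_i=n^{-1/2}\hat Q^{n,\uppsi^n}_i$: the leading contributions are handled by the weak $L^2$ convergence $h^n(\uppsi^n)\rightharpoonup w$ (which upgrades to locally uniform convergence of the primitives $\int_0^\cdot h^n(\uppsi^n)_s\,ds$), while the remaining contributions are $O(n^{-1/2})$ by Cauchy--Schwarz together with the bounds of the previous step; collecting constants ($\lambda^n_i/n\to\lambda_i$ for arrivals, $\mu^n_i\rho_i\to\mu_i\rho_i=\lambda_i$ for service, $0$ for abandonment) and matching against $\Sigma\Sigma\transp=\diag(2\lambda_1,\dots,2\lambda_d)$ identifies $\lim D^n_\cdot$ as $\int_0^\cdot\Sigma\widetilde w_s\,ds$ with $\widetilde w_{i,s}=(\lambda_iw^1_{i,s}+\mu_i\rho_iw^2_{i,s})/\sqrt2$. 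Passing to the limit in the decomposition above yields~\eqref{eq-diff-limit-control}; that $W$ is a Brownian motion for the natural filtration of $(X,u,w)$ and that $u,w$ are adapted follows from the usual martingale--problem argument (orthogonality of the limiting martingale to the control and to the noise).

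The hard part will be the a priori uniform-in-$n$ stochastic boundedness of $\hat X^{n,\uppsi^n}$ on a fixed $[0,T]$: since the auxiliary controls are constrained only in the time-averaged sense $\frK^n(\uppsi^n,T)\le M$ rather than pathwise, the control drift $D^n$ is not a priori small, so one must carefully interlock the Cauchy--Schwarz bound on the $\int|\phi^n-1|\,Z^{n,\uppsi^n}$-type terms with a Lyapunov function adapted to the abandonment dynamics (in the manner of~\cite{AHP21}) to close a Gronwall estimate uniform in $n$. A secondary point of care is that the single quantitative bound furnished by Lemma~\ref{lem-compact} must simultaneously deliver $\phi^n,\psi^n,\varphi^n\to e$ (needed for the Gaussian martingale limit) and weak-$L^2$ relative compactness of $h^n(\uppsi^n)$ (needed for the control-drift limit).
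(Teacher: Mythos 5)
Your proposal follows essentially the same route as the paper's sketch: the same martingale/control-drift decomposition of~\eqref{eq-prelimit-controlled}, tightness of $h^n(\uppsi^n)$ via Corollary~\ref{cor-control-compact}, fluid-scale convergence of $n^{-1}Z^{n,\uppsi^n}$ and $n^{-1}Q^{n,\uppsi^n}$, the martingale CLT with the random time change lemma for the Brownian limit, and identification of the control drift through the weak $L^2$ limit of $h^n(\uppsi^n)$ to match $\Sigma\widetilde w$. Where the paper simply cites \cite[Lemma 4]{AMR04} for tightness of $\hat X^{n,\uppsi^n}$ and declares $U^{n,\uppsi^n}$ ``trivially tight'' because $\bU$ is compact, you spell out a Lyapunov/Gronwall argument to control the extra $D^n$ drift (which the AMR04 argument does not directly cover) and correctly observe that tightness of $U^{n,\uppsi^n}$ should really be taken in the relaxed-control sense rather than in $\frD^{\bU}_T$; these are the same argument carried out at a higher level of rigor, not a different approach.
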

We only give the sketch of the proof below. To begin with, fix $T>0$ and re-write~\eqref{eq-prelimit-controlled} for $\uppsi=\uppsi^n$ as follows:
\begin{align}\nonumber
\hat{X}^{n,\uppsi^n}_{i,t} &= \hat{X}^{n,\uppsi^n}_{i,0} + \ell^n_i t - \mu^n_i \int_0^t\hat{Z}^{n,\uppsi^n}_{i,s} ds - \gamma^n_i \int_0^t\hat{Q}^{n,\uppsi^n}_{i,s} ds\\\nonumber
&\qquad + \frac{1}{\sqrt{n}}\Big( \widecheck A^n_i\big(\int_0^t\phi^n_{i,s}ds\big) - \lambda^n_i \int_0^t \phi^n_{i,s}ds\Big)\\\nonumber
&\qquad - \frac{1}{\sqrt{n}}\bigg(\widecheck S^n_i\left( \int_0^t\frac{\psi^n_{i,s}Z^{n,\uppsi^n}_{i,s}}{n} ds\right)  - n\mu^n_i \int_0^t  \frac{\psi^n_{i,s}Z^{n,\uppsi^n}_{i,s}}{n} ds  \bigg) \nonumber\\
&\qquad - \frac{1}{\sqrt{n}}\bigg(\widecheck R^n_i\left(\int_0^t \frac{\varphi^n_{i,s}Q^{n,\uppsi^n}_{i,s}}{n} ds\right)  - n\gamma^n_i \int_0^t \frac{\varphi_{i,s}^nQ^{n,\uppsi^n}_{i,s}}{n}ds  \bigg)\\\nonumber
&\qquad -\frac{\lambda^n_i}{\sqrt{n}}\int_0^t (1- \phi^n_{i,s})ds - \mu^n_i \sqrt{n} \int_0^t (1-\psi^n_{i,s}) \frac{Z^{n,\uppsi^n}_{i,s}}{n}ds \\\nonumber
&\qquad - \gamma^n_i \sqrt{n} \int_0^t (1-\varphi^n_{i,s}) \frac{Q^{n,\uppsi^n}_{i,s}}{n}ds. 
\end{align} 
Here, \[\hat X^{n,\uppsi^n}_0=\hat X^n_0,\quad \hat X^{n,\uppsi^n}_t= \hat Q^{n,\uppsi^n}_t+\hat Z^{n,\uppsi^n}_t, \quad \hat Z^{n,\uppsi^n}_t= \hat X^{n,\uppsi^n}_t-(e\cdot \hat X^{n,\uppsi^n}_t)^+U^{n,\uppsi^n}_t .\]
 We now observe that 
\begin{align*}
\hat M^{n, A,\uppsi^n}_{i,t}&\doteq \frac{1}{\sqrt{n}}\Big( \widecheck A^n_i\big(\int_0^t\phi^n_{i,s}ds\big) - \lambda^n_i \int_0^t \phi^n_{i,s}ds\Big),\\
\hat M^{n,S,\uppsi^n}_{i,t}&\doteq \frac{1}{\sqrt{n}}\bigg(\widecheck S^n_i\left( \int_0^t\frac{\psi^n_{i,s}Z^{n,\uppsi^n}_{i,s}}{n} ds\right)  - n\mu^n_i \int_0^t  \frac{\psi^n_{i,s}Z^{n,\uppsi^n}_{i,s}}{n} ds  \bigg),\\
\hat M^{n,R,\uppsi^n}_{i,t} &\doteq \frac{1}{\sqrt{n}}\bigg(\widecheck R^n_i\left(\int_0^t \frac{\varphi^n_{i,s}Q^{n,\uppsi^n}_{i,s}}{n} ds\right)  - n\gamma^n_i \int_0^t \frac{\varphi_{i,s}^nQ^{n,\uppsi^n}_{i,s}}{n}ds  \bigg)
\end{align*}
are square integrable $\bar\cG_t^n$--martingales. Let 
$$ \hat M^{n,\uppsi^n}\doteq \hat M^{n, A,\uppsi^n} +\hat M^{n, S,\uppsi^n}+\hat M^{n, R,\uppsi^n}$$
and
$$ \hat \Xi^n\doteq  \frac{\lambda^n_i}{\sqrt{n}}\int_0^t (1- \phi^n_{i,s})ds+  \mu^n_i \sqrt{n} \int_0^t (1-\psi^n_{i,s}) \frac{Z^{n,\uppsi^n}_{i,s}}{n}ds  +\gamma^n_i \sqrt{n} \int_0^t (1-\varphi^n_{i,s}) \frac{Q^{n,\uppsi^n}_{i,s}}{n}ds. $$
Below we state the tightness of a certain family of random variables (which are the key aspects of the proof) and briefly explain why such a tightness holds.
\begin{enumerate}
\item[(i)] $\{U^{n,\uppsi^n}\}_{n\in\NN}$: Since $U^{n,\uppsi^n}$ is a $\bU$--valued process which is compact, the family is trivially tight in $\frD^\bU_T$ with a limit point $u$.
\item[(ii)]\label{item-1} $\{h^n(\uppsi^n)\}_{n\in \NN}$: Using Corollary~\ref{cor-control-compact} and the hypothesis of the theorem, we can conclude that this family of random variables is tight in $L^{2,*}_{\infty}.$
\item[(iii)] $\{\uppsi^n\}_{n\in \NN}$: From the definition of $h^n$, it is clear that $\phi_i^n,\psi^n_i,\varphi^n_i\Rightarrow e$, for every $1\leq i\leq d$, as $n\to\infty$. This implies that  $\uppsi^n\Rightarrow (e,e,e)$, as $n\to\infty$. Recall that $e=(1,\ldots,1)\transp\in \RR^d$.

\item[(iv)] \label{item-3}$\big\{((n^{-1}{Z^{n,\uppsi^n}_i},n^{-1}{Q^{n,\uppsi^n}_{i}})\big\}_{n\in \NN}$: Following the arguments of the proof of \cite[Lemma 4(ii)]{AMR04}, we can conclude that 
$$ (n^{-1}Z^{n,\uppsi^n}_i,n^{-1}Q^{n,\uppsi^n}_{i})\Rightarrow (\rho_i,0) \,\,\, \text{ in $\frD_T$ \, as $n\to\infty$.}$$
\item[(v)] $\{\hat M^{n,\uppsi^n}\}_{n\in \NN}$: From the martingale central limit theorem, the random time change lemma, the fact that $\uppsi^n\Rightarrow (e,e,e)$ as $n\to\infty$ and the above display, we can conclude that 
$$ \hat M^{n,\uppsi^n}\Rightarrow \Sigma W \,\, \text{ in $\frD_T^d$ \, as $n\to\infty$. }$$Here, $W$ is a $d$--dimensional Brownian motion and $\Sigma=\sqrt 2 \diag (\sqrt \lambda_1,\sqrt\lambda_2,\ldots,\sqrt \lambda_d)$. 
\item[(vi)] $\{\hat \Xi^n\}_{n\in \NN} $: From the above tightness in (ii) and the following arguments used in showing tightness of $\{m^k\}_{k\in \NN}$ (in the proof of Theorem~\ref{thm-fclt-poisson}), we can conclude the tightness of $\{\int_0^\cdot h^n_tdt\}_{n\in \NN}$ in $\frC_T^d\times \frC_T^d\times \frC^d_T$. Combining this with tightness in (iv), gives us the following: If $h^n\Rightarrow w= (w^1,w^2,w^3)\in L^{2,*}_{\infty}$, then 
$$ \int_0^\cdot h^n_tdt \Rightarrow \Big(\int_0^\cdot w^1_tdt,\int_0^\cdot w^2_tdt,\int_0^\cdot w^3_tdt\Big)\,\, \text{ in $\frC_T^d\times \frC_T^d\times \frC^d_T$\,  as $n\to\infty$} $$
and 
$$ \hat \Xi^n_i\Rightarrow \lambda_i w^1+\mu_i\rho_iw^2 .$$
\end{enumerate}

Following the arguments of the proof of \cite[Lemma 4(iii)]{AMR04}, we can show that $\{\hat X^{n,\uppsi^n}\}_{n\in \NN}$ is tight in $\frD^d_T$. Therefore, along a subsequence (again denoted by $n$), 
$$ \big(\hat X^{n,\uppsi^n}, U^{n,\uppsi^n}, h^n(\uppsi^n)\big)\Rightarrow (X^{*,u,w},u,w) \text{ in $\frD^d_T\times L^{2,*}_{\infty}$,  as $n\to\infty$.}$$
Here, $X^{*,u,w}$ and $w$ are related according to~\eqref{eq-diff-limit-control}.

\begin{remark} 
If in the above theorem, $U^{n,\uppsi^n}= v(\hat X^{n,\uppsi^n})$, for some $v:\RR^d\rightarrow\bU$ that is continuous, then 
the limit point $\hat X^{n,\uppsi^n}$  is given as the solution to
$$ dX^{*,v,w}_t= b(X^{*,v,w}_t,v(X^{*,v,w}_t))dt + \Sigma \widetilde w_tdt + \Sigma dW_t.$$
This diffusion resembles very much a well studied process in relation  to the ERSC cost known as the {\it ground diffusion process} which is defined as the solution to the following equation
	\begin{align}\label{eq-x*}
dX^{*,v}_t= b(X^{*,v}_t,v(X^{*,v}_t))dt + \Sigma\Sigma\transp \grad \Phi^v (X^{*,v}_t)dt + \Sigma dW_t.
\end{align}
It turns out that $w^*= \Sigma\transp\grad \Phi^v(\cdot)$ is the optimal stationary Markov control for~\eqref{eqn-erg-cont-aug}. We do not go into further details and interested reader can refer \cite{AB18,ari2018strict} and the reference therein. The control in~\eqref{eq-diff-limit-control} corresponds to a sub-optimal control for~\eqref{eqn-erg-cont-aug}. 
\end{remark}

\section{Proof of the Lower bound for Theorem~\ref{thm-AO}}\label{sec-low-bound}

In this section we prove the following lower bound result. 

\begin{lemma} \label{lem-AO-lower}  The following holds:
\begin{align*}
\liminf_{n\to\infty} \hat{\Lambda}^n(\hat X^n_0) \ge  \Lambda. 
\end{align*}
\end{lemma}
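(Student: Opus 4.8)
The plan is to derive the lower bound from the variational representation of Corollary~\ref{cor-var-rep-unbounded-poisson} by evaluating it at a judiciously chosen, \emph{a priori} suboptimal, auxiliary control, and then to pass to the diffusion limit using the near-optimal compactly supported maximizer supplied by Lemma~\ref{lem-unif-int}. Fix $\delta>0$ and let $w^*\in\Wsm(l)$, with $l$ large, be the control of Lemma~\ref{lem-unif-int}, so that $\Lambda\le\Lambda_v\le\int_{\RR^d}\bigl(r^v(x)\wedge l-\tfrac12\|w^*(x)\|^2\bigr)\widetilde\mu^{*}_{v,w^*}(dx)+\delta$ for every $v\in\Usm$. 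For each $n\ge(2\hat C_2/\hat C_1)^2$ (so that $J^n(\hat X^n_0,\cdot)<\infty$ by Lemma~\ref{lem-markov-finite-cost}), choose an admissible $U^n$ with $J^n(\hat X^n_0,U^n)\le\hat\Lambda^n(\hat X^n_0)+n^{-1}$ and define the feedback auxiliary control $\uppsi^n_t\doteq\bigl(e-n^{-1/2}w^*(\hat X^{n,\uppsi^n}_t),\,e-n^{-1/2}w^*(\hat X^{n,\uppsi^n}_t),\,e\bigr)$ (with the constants calibrated as in Section~\ref{sec-AO}, cf.\ Theorem~\ref{thm-fclt-poisson}). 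Since $w^*$ is bounded, for $n$ large this is a well-defined $\RR^d_+$-valued control, the martingale problem for $\hat X^{n,\uppsi^n}$ is well posed, and, using $\varkappa(1-x)\le Cx^2$ for $|x|$ small together with the $n^{-1/2}$-scaling, $\sup_n\sup_{T>0}\frK^n(\uppsi^n,T)<\infty$; in particular $\uppsi^n\in\cE^n$. Because $\uppsi^n$ is only one competitor in the supremum of Corollary~\ref{cor-var-rep-unbounded-poisson},
\[
\hat\Lambda^n(\hat X^n_0)+n^{-1}\ \ge\ J^n(\hat X^n_0,U^n)\ \ge\ \limsup_{T\to\infty}\E\Bigl[\tfrac1T\int_0^T\bigl(r(\hat X^{n,\uppsi^n}_t,U^{n,\uppsi^n}_t)-\mathfrak{k}^n(\uppsi^n_t)\bigr)dt\Bigr],
\]
so it suffices to show the $\liminf_{n\to\infty}$ of the right-hand side is at least $\Lambda-\delta$.

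The crucial structural point is that $w^*\in\Wsm(l)$ vanishes on $B_l^c$, hence $\uppsi^n_t=(e,e,e)$ whenever $\hat X^{n,\uppsi^n}_t\in B_l^c$, so the generator of the extended process $\hat X^{n,\uppsi^n}$ coincides with that of $\hat X^n$ under $U^n$ outside the \emph{fixed} ball $B_l$. Combining this with Proposition~\ref{prop-lyap-with-abandonment-prelimit} --- whose drift inequality is strictly negative off a compact set once $n\ge(2\hat C_2/\hat C_1)^2$, and whose perturbation by the bounded drift $n^{-1/2}\Sigma w^*$ is negligible on $B_l$ --- yields positive recurrence of $\hat X^{n,\uppsi^n}$ together with a Foster--Lyapunov bound $\limsup_{T\to\infty}\tfrac1T\E[\int_0^T\|\hat X^{n,\uppsi^n}_t\|^2 dt]\le M$, \emph{both uniformly in $n$}. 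This gives tightness of the mean empirical measures $\{\mu^{n,T}_{U,\uppsi^n}\}_{T>0,\,n\in\NN}$ and uniform integrability of the linearly growing running cost $r$ against them. By a routine subsequence argument we may replace $\limsup_{T\to\infty}$ by a single sequence $T_n\uparrow\infty$; the sequence $\mu^{n,T_n}_{U,\uppsi^n}$ is then tight and, along a subsequence, converges weakly to some (possibly random) $\mu^{*}$ on $\RR^d\times\bU$, with $\E[\tfrac1{T_n}\int_0^{T_n}(r-\mathfrak{k}^n(\uppsi^n_\cdot))dt]\to\E[\int(r(x,u)-\tfrac12\|w^*(x)\|^2)\mu^*(dx,du)]$, the entropy penalty being passed to the limit via lower semicontinuity as in Section~\ref{sec-top}.

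It remains to identify $\mu^*$. Here $h^n(\uppsi^n)=\bigl(w^*(\hat X^{n,\uppsi^n}),w^*(\hat X^{n,\uppsi^n}),0\bigr)$ with $w^*$ bounded and continuous, so Theorem~\ref{thm-diff} applies: along a further subsequence $\bigl(\hat X^{n,\uppsi^n},U^{n,\uppsi^n},h^n(\uppsi^n)\bigr)\Rightarrow(X^{*,u,w},u,w)$ with $w=(w^*(X^{*,u,w}),w^*(X^{*,u,w}),0)$, and (with the calibration of $\uppsi^n$) $X^{*,u,w}$ solves the extended equation~\eqref{X-control} with auxiliary control $w^*$. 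Disintegrating $\mu^*(dx,du)=\eta(dx)\,v(du\mid x)$ yields a relaxed Markov control $v\in\Usm$; since the limiting drift agrees with that of $X^{*,v,w^*}$ outside $B_l$, Proposition~\ref{prop-lyap-with-abandonment} gives positive recurrence and a unique invariant measure, and a standard ergodic-identification argument forces $\mu^*=\widetilde\mu^{*}_{v,w^*}$ (and $\int r(x,u)v(du\mid x)=r^v(x)$). Using $r^v\ge r^v\wedge l$ and Lemma~\ref{lem-unif-int} realization-wise and then taking expectations,
\[
\liminf_{n\to\infty}\hat\Lambda^n(\hat X^n_0)\ \ge\ \int_{\RR^d}\Bigl(r^v(x)-\tfrac12\|w^*(x)\|^2\Bigr)\widetilde\mu^{*}_{v,w^*}(dx)\ \ge\ \int_{\RR^d}\Bigl(r^v(x)\wedge l-\tfrac12\|w^*(x)\|^2\Bigr)\widetilde\mu^{*}_{v,w^*}(dx)\ \ge\ \Lambda-\delta,
\]
and letting $\delta\downarrow0$ completes the proof.

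The main obstacle I expect is the uniform-in-$n$ stability of the extended processes $\hat X^{n,\uppsi^n}$: the variational representation furnishes only the extended dynamics, whose positive recurrence is not inherited for free from that of $\hat X^n$, and it is precisely this that legitimizes both the $T\to\infty$ and $n\to\infty$ passages. The device that makes it work is that near-optimal maximizing controls can be taken compactly supported (Lemma~\ref{lem-unif-int}), so the perturbed and unperturbed generators differ only on a fixed compact set and Proposition~\ref{prop-lyap-with-abandonment-prelimit} transfers. A secondary but non-trivial point is the calibration in Theorem~\ref{thm-diff} ensuring that the limiting drift perturbation is $\Sigma w^*$ and the limiting Poisson relative-entropy penalty $\mathfrak{k}^n(\uppsi^n_\cdot)$ is $\tfrac12\|w^*\|^2$, so that the $n\to\infty$ limit is exactly the Brownian-side functional controlled by Lemma~\ref{lem-unif-int}.
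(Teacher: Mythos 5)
Your proof is correct and follows essentially the same route as the paper's: pick a nearly optimal scheduling control $U^n$, plug the \emph{a priori} suboptimal feedback auxiliary control $\uppsi^n_t=(e-n^{-1/2}w^*(\hat X^{n,\uppsi^n}_t),\,e-n^{-1/2}w^*(\hat X^{n,\uppsi^n}_t),\,e)$ into Corollary~\ref{cor-var-rep-unbounded-poisson}, use compact support of $w^*$ to transfer the Lyapunov bound from Proposition~\ref{prop-lyap-with-abandonment-prelimit} to the extended process, identify the weak limit of the mean empirical measures with an ergodic occupation measure $\widetilde\mu^*_{v,w^*}$ via the diffusion limit, and finish with the lower bound for $\Lambda$ in terms of that occupation measure.

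One citation caveat worth fixing: you appeal to Lemma~\ref{lem-unif-int} both for the $v$-uniform inequality $\Lambda\le\int(r^v\wedge l-\tfrac12\|w^*\|^2)\,d\widetilde\mu^*_{v,w^*}+\delta$ and, implicitly, for continuity of $w^*$ (needed for the weak-convergence identification). As written, the proof of Lemma~\ref{lem-unif-int} \emph{fixes} $v$ at the outset and produces a $v$-dependent $\widehat w\in\Wsm(l)$ with no continuity claim. The paper obtains a single continuous $w^*\in\Wsm(l)$ that works for every $v$ by combining Theorem~\ref{thm-sup-inf}\textup{(}\eqref{eq-lim-sup-inf}\textup{)}, which gives $\Lambda\le\liminf_l\rho_l$, with Theorem~\ref{thm-2p-game}(v), which supplies a continuous Markov maximizer $w^*$ satisfying $\inf_v J^{*,l}_{v,w^*}=\rho_l$; this is the correct source for both uniformity in $v$ and continuity, and you should cite it rather than Lemma~\ref{lem-unif-int}.
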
 

\begin{proof} 
	Let  $n_k$ be a sequence along which $\liminf_{k\to\infty} \hat \Lambda ^{n_k}(\hat X^{n_k}_0)$ is attained.  We simply denote this sequence as $n$. For a fixed $\delta>0$, choose an admissible control $U^n=U^n(t,\widecheck A^n_{[0,t]},\widecheck S^n_{[0,t]},\widecheck R^n_{[0,t]}) $ (with the associated SCP $\hat Z^n=\hat Z^n (t,\widecheck A^n_{[0,t]},\widecheck S^n_{[0,t]},\widecheck R^n_{[0,t]}) $ defined according to~\eqref{eqn-QZ-hat})
	 	such that
	\begin{align}\label{eq-lb-eq1}
	\hat \Lambda^n(\hat X^n_0)+\delta &\geq J^n(\hat X^n_0, U^n).
	\end{align}
		In the following, we will show that 
	\begin{align}\label{eq-interest} \liminf_{n\to\infty} J(\hat X^n_0, U^n)\geq \Lambda -\delta.\end{align}
We will use the notation from Section~\ref{sec-sub-var-PP}.	Applying Corollary~\ref{cor-var-rep-unbounded-poisson}, we have 
	\begin{align*}
	J^n(\hat X^n_0, U^n) &= \limsup_{T\to\infty}\sup_{{\uppsi \in \cE^n}}\E\Bigg[ \frac{1}{T} \int_0^T  r(\hat X^{n,\uppsi}_t,  U^{n,\uppsi}_t)dt  - \frK^n(\uppsi, T) \Bigg] \\
	&\geq \limsup_{T\to\infty}  \E\Bigg[ \frac{1}{T} \int_0^T  r(\hat X^{n,\uppsi}_t,  U^{n,\uppsi}_t)dt - \frK^n(\uppsi, T)\Bigg],
	\end{align*}
	where the inequality holds for any $\uppsi=(\phi,\psi,\varphi)\in \cE^n$ and $U^{n,\uppsi}_t\doteq  U^n(t, \widecheck A^{n,\phi}_{[0,t]}, \widecheck  S^{n,\psi}_{[0,t]}, \widecheck R^{n,\varphi}_{[0,t]})$ which is a $\bU$--valued process, and $\frK^n(\uppsi, T)$ is defined as in \eqref{eqn-frKn}.  Also, $\widecheck A^{n,\phi}$, $\widecheck S^{n,\psi}$, $\widecheck R^{n,\varphi}$ are defined according to~\eqref{eq-scaled-cont-poisson}.
	
	For every $n$, we now make a particular choice for $\uppsi^n= (\phi^n,\psi^n,\varphi^n)$.
	 Fix a $\delta>0$. Then using Theorem~\ref{thm-sup-inf} (in particular,~\eqref{eq-lim-sup-inf}) and Theorem~\ref{thm-2p-game}(v), there exists a $l>0$ large enough and $w^*\in \Wsm(l)$ such that 
	\begin{align}\label{eq-erg-occu} \Lambda\leq \int_{\RR^d\times \bU} \big(r^v(x)-\frac{1}{2}\|w^*(x)\|^2 \big) \widetilde \mu^{*}_{v,w^*}(dx) +\delta,\end{align}
	for every $v\in \Usm$  and $w^*(\cdot)$ is continuous.
	Now we define	
		\begin{align}\label{eq-choice-control}
	\widetilde \phi^n_{i}(x) \doteq 1- \frac{\big(w^* (x)\big)_i}{\sqrt{n}}, \text{ and } \widetilde \psi^n_{i}(x)\doteq 1- \frac{\big(w^* (x)\big)_i}{\sqrt{n}}, \text{ for $x\in \RR^d$}.
	\end{align}
	Finally, we set
	$$\phi^n_{i,t}=\widetilde \phi^n_i(\hat X^{n,\uppsi^n}_t),\;\;\psi^n_{i,t}= \widetilde \psi_i^n(\hat X^{n,\uppsi^n}_t) \text{ and } \varphi^n_{i,t}= 1, \text{ for $t\geq 0$.}$$
	This particular choice is motivated by the arguments used in Theorem~\ref{thm-fclt-poisson}. From the above choice, we can ensure that $\phi^n_i>0$ and $\psi^n_{i}>0$, for large enough $n$.  From now on, we simply write $\hat X^{n,\uppsi^n}$ as $\hat X^{n,\uppsi}$. With the above choice, we have 
	\begin{align*}
	J^n(\hat X^n_0,U^n)
	\geq \limsup_{T\to\infty}\frac{1}{T} \E\Bigg[ \int_0^T\Big( r(\hat X^{n,\uppsi}_t, U^{n,\uppsi}_t) - \sum_{i=1}^d \lambda^n_i \varkappa(\widetilde \phi^n_{i}(\hat X^{n,\uppsi}_t))-\sum_{i=1}^d n\mu^n_i \varkappa(\widetilde \psi^n_{i}(\hat X^{n,\uppsi}_t))\Big)dt\Bigg].
	\end{align*}
From the definitions of $\hat X^{n,\uppsi}$, $\widetilde \phi^n$, $\widetilde \psi^n$ and the fact that $w^*$ is compactly supported, the infinitesimal generators of $\hat X^n$ and $\hat X^{n,\uppsi}$ coincide outside the compact set $\overline B_l$, $\hat X^{n,\uppsi}$ will inherit the Lyapunov function corresponding to $\hat X^n$ (albeit with different coefficients and compact sets in the Foster-Lyapunov equation). This implies that the family of mean empirical (occupation) measures \\$\{\mu_{U,\uppsi}^{n,T,1}\}_{T>0,n\in\NN}$ defined in~\eqref{def-adm-emp-m} is tight in $\cP(\RR^d\times\bU)$. Therefore, along a subsequence $T_k$, $\{\mu_{U,\uppsi}^{n,T_k,1}\}_{k\in \NN}$ converges weakly to some $\mu_{U,\uppsi}^{n,1} \in \cP(\RR^d\times \bU)$ and we have
\begin{align*}
J^n(\hat X^n_0, U^n)
&\geq \limsup_{k\to\infty}\frac{1}{T_k} \E\Bigg[ \int_0^{T_k} r(\hat X^{n,\uppsi}_t,  U^{n,\uppsi}_t) dt- \sum_{i=1}^d \lambda^n_i\int_0^{T_k} \varkappa(\widetilde \phi^n_{i}(\hat X^{n,\uppsi}_t))dt\\
&\qquad\qquad\qquad\qquad-\sum_{i=1}^d n\mu^n_i\int_0^{T_k} \varkappa(\widetilde \psi^n_{i}(\hat X^{n,\uppsi}_t))dt\Bigg]\,\\
 &\geq  \int_{\RR^d\times\bU}\Big( r(x,  u) - \sum_{i=1}^d \lambda^n_i \varkappa(\widetilde \phi^n_{i}(x))-\sum_{i=1}^d n\mu^n_i \varkappa(\widetilde \psi^n_{i}(x))\Big)d\mu_{U,\uppsi}^{n,1}(x,u).
\end{align*}
From the choice of $\widetilde \phi^n$ and $\widetilde \psi^n$, it is clear that $\widetilde \phi^n(\cdot)$ and $\widetilde \psi^n(\cdot)$ converge to $e=(1,\dots,1)\transp$  uniformly on  $\RR^d$ and using the fact that $n^{-1}\lambda^n_i\to \lambda_i$ and $\mu^n_i\to\mu_i$, we obtain 
\begin{align*}
\Big|\lambda^n_i\varkappa (\widetilde \phi^n_i(x))-\frac{1}{2}\lambda_i|\big(w^*(x)\big)_i|^2 \Big|\leq C\frac{\sup_{y\in B_l} \|w^*(y)\|^3}{\sqrt{n}}\to  0
\end{align*}
and 
\begin{align*}
\Big| n\mu^n_i\varkappa (\widetilde \psi^n_i(x))-\frac{1}{2}\mu_i|\big(w^{*}(x)\big)_i|^2 \Big|\leq C\frac{\sup_{y\in B_l} \|w^*(y)\|^3}{\sqrt{n}}\to  0,
\end{align*}
as $n\to\infty$, uniformly in $x\in\RR^d$, for some $C>0$.

Following the computation similar to the one in \cite[Pg. 3559-3560]{ABP15}, we can conclude that $\mu_{U,\uppsi}^{n,1}$ converges weakly along a subsequence (still denoted by $n$) to an ergodic occupation measure $\widetilde\mu_{v,w^*}^*$, for some $v\in \Usm$. From here and using~\eqref{eq-erg-occu}, we have
\begin{align*}
\liminf_{n\to\infty}\hat \Lambda^n(\hat X^n_0)\geq \liminf_{n\to\infty} J^n(\hat X^n_0, U^n)-\delta&\geq  \int_{\RR^d\times \bU}\Big( r(x,u) - \frac{1}{2}\|w^{*}(x)\|^2\Big)d\widetilde\mu_{v,w^*}^*(x,u)\geq \Lambda-2\delta.
\end{align*} From the arbitrariness of $\delta$, we have the result. 
\end{proof}

\section{Proof of the upper bound for Theorem~\ref{thm-AO}} \label{sec-upp-bound}
In this section we prove the upper bound for asymptotic optimality. This is much more involved than that of the proof of Lemma~\ref{lem-AO-lower} {which involved the already existing techniques (in the context of CEC)}. 
 In contrast, as will be seen below, results similar to Lemma~\ref{lem-limit-truc} and Theorem~\ref{thm-sup-inf} (in the case where the driving noise is a Poisson process) can simplify the proof. Unfortunately, such results cannot be proved in a straightforward way.  The lack of results similar to Lemma~\ref{lem-limit-truc} and Theorem~\ref{thm-sup-inf} manifests into the following difficulties: For an SCP $v^n$ (obtained from an appropriate nearly optimal stationary Markov control with respect to $\Lambda$), even though the associated $\hat X^n$ is stable,  it is not at all clear if 1.) there are  nearly optimal auxiliary controls associated with $J^n(\hat X^n_0,v^n)$ that are Markov and that their mean empirical measures  are tight, and 2.) the extended diffusion-scaled process is stable. Therefore, we develop a novel approach to overcome these difficulties.  This approach involves using the `exponential' analog of uniform integrability (Lemma~\ref{lem-tail-est}) to conclude that $J^n(\hat X^n_0,v^n)$  can be approximated arbitrarily well by an ERSC cost associated with a certain truncated running cost (Corollary~\ref{cor-trunc-limit-L}). In Lemma~\ref{lem-fin}, we then represent that ERSC cost (associated with that truncated running cost; we refer to this for now as the truncated cost) using the variational formulation. Due to the truncated nature of the running cost, there are  associated nearly optimal auxiliary controls that are tight (from Proposition~\ref{prop-var-rep-poisson}). Since the truncated cost approximates $J^n(\hat X^n_0,v^n)$ arbitrarily well, the nearly optimal auxiliary controls (associated with the truncated cost) are also the nearly optimal auxiliary controls (associated with $J^n(\hat X^n_0,v^n)$). Moreover, we can choose these controls such that their mean empirical measures are tight (again, from Proposition~\ref{prop-var-rep-poisson}). In the final part of the approach, we use the tightness of these controls and the explicit form of a Lyapunov function (from \cite{AHP21}) and show that the extended diffusion-scaled process is stable.

\begin{lemma} \label{lem-AO-upper}
	The following holds:\begin{align*}
\limsup_{n\to\infty} \hat{\Lambda}^n(\hat{X}^n_0) \le  \Lambda. 
\end{align*}
\end{lemma}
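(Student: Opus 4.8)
The plan is to combine the variational representation of the diffusion-scaled ERSC cost (Corollary~\ref{cor-var-rep-unbounded-poisson} and Proposition~\ref{prop-var-rep-poisson}) with a near-optimal \emph{continuous} feedback for the limiting diffusion, reduce the problem to a classical ergodic comparison for the extended processes, and pass to the limit through a double truncation---one on the running cost and one on the auxiliary controls. First I would fix $\delta>0$ and, using Lemma~\ref{lem-cont-control}, pick $v\in\Usm$ that is continuous and $\delta$-optimal, so that $\Lambda_v\le\Lambda+\delta$ (recall $\Lambda_{\text{SM}}=\Lambda$ by Theorem~\ref{thm-diffusion}). From $v$ I would construct, via the scheme of \cite{ABP15}, a work-conserving SCP $\hat Z^n\in\Act^n$ whose state feedback $v_n$ converges to $v$; this is the only point at which the explicit queueing construction enters, and it guarantees that along the extended dynamics \eqref{eq-prelimit-controlled} the control satisfies $U^{n,\uppsi}_t=v_n(\hat X^{n,\uppsi}_t)$. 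The first reduction then replaces $J^n(\hat X^n_0,U^n)$ by the truncated cost $J^n_L(\hat X^n_0,U^n)$: by Corollary~\ref{cor-finite-cost-prelimit} and the exponential analogue of uniform integrability (Lemma~\ref{lem-tail-est}), the contribution of large values of $\frac1T\int_0^T r(\hat X^n_t,U^n_t)\,dt$ is negligible uniformly in $n$ and $T$, and Corollary~\ref{cor-trunc-limit-L} gives $\limsup_{n\to\infty}J^n(\hat X^n_0,U^n)=\lim_{L\to\infty}\limsup_{n\to\infty}J^n_L(\hat X^n_0,U^n)$. Hence it suffices to bound $\limsup_{n\to\infty}J^n_L(\hat X^n_0,U^n)$ for each fixed $L$ and then send $L\to\infty$.

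For the truncated cost I would invoke Proposition~\ref{prop-var-rep-poisson}: for every $n$ and $T$ choose a $\delta$-optimal auxiliary control $\uppsi^n(T)=(\phi^n,\psi^n,\varphi^n)\in\cE^n_M$ with $\frK^n(\uppsi^n(T),T)\le M=M(\delta,L)$, so that, writing $\uppsi=\uppsi^n(T)$,
\[
J^n_L(\hat X^n_0,U^n)\le\limsup_{T\to\infty}\E\Big[\Big(\tfrac1T\int_0^T r\big(\hat X^{n,\uppsi}_t,U^{n,\uppsi}_t\big)\,dt\Big)\wedge L-\frK^n(\uppsi,T)\Big]+\delta,
\]
and since $U^{n,\uppsi}_t=v_n(\hat X^{n,\uppsi}_t)$ the right-hand side is an integral against the joint mean empirical measure $\mu^{n,T,2}_{U,\uppsi}$ of the pair $\big(\hat X^{n,\uppsi},h^n(\uppsi)\big)$ from \eqref{def-adm-emp-m}. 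The heart of the argument is two tightness statements. First, the uniform bound $\frK^n(\uppsi^n(T),T)\le M$ together with Corollary~\ref{cor-control-compact} shows that $\{h^n(\uppsi^n(T))\}_{T>0,\,n\in\NN}$ is tight in the topology $L^{2,*}_{\infty}$ of Section~\ref{sec-top}. Second, feeding this into Lemma~\ref{lem-tightness-empirical}---which uses the explicit Lyapunov function of \cite{AHP21} and a Young-type estimate absorbing the quadratic term generated by $h^n(\uppsi^n)$---yields that $\{\mu^{n,T,2}_{U,\uppsi^n(T)}\}_{T>0,\,n\in\NN}$ is tight, \emph{uniformly in both $n$ and $T$}. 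I expect this second tightness to be the main obstacle: it is exactly where the non-compactness of the auxiliary controls must be tamed, and it cannot be read off from the stability of $\hat X^n$ alone.

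Granted these properties, I would let $T\to\infty$ along a subsequence to extract weak limits of the joint mean empirical measures and of the auxiliary controls, then let $n\to\infty$. Here Theorem~\ref{thm-diff} identifies any weak limit of $\big(\hat X^{n,\uppsi^n},U^{n,\uppsi^n},h^n(\uppsi^n)\big)$ as a triple $(X^{*,v,w},v,w)$ solving \eqref{eq-diff-limit-control}, so the limiting joint measure is an ergodic occupation measure $\widetilde{\mu}^{*}_{v,w}$ of the extended limiting diffusion under a (possibly relaxed) stationary Markov auxiliary control $w\in\Wsm$. Using the convergence $\mathfrak{k}^n(\uppsi^n_t)\to\tfrac12\|w\|^2$ in the sense of the topology of Section~\ref{sec-top} (as in the proof of Theorem~\ref{thm-fclt-poisson}), the weak lower-semicontinuity of $\frK^n$ (cf.\ \eqref{eq-weak-lower-semicontinuity}), and large-deviation-type tail estimates to interchange the truncation with the limits (Lemma~\ref{lem-fin}), I obtain
\[
\limsup_{n\to\infty}J^n_L(\hat X^n_0,U^n)\le\int_{\RR^d}\big(r^v(x)\wedge L-\tfrac12\|w(x)\|^2\big)\,\widetilde{\mu}^{*}_{v,w}(dx)+2\delta.
\]
By Lemma~\ref{lem-trivial-bound} the right-hand integral---including its $\tfrac12\|w\|^2$ contribution---is at most $\Lambda_v$, so $\limsup_{n\to\infty}J^n_L(\hat X^n_0,U^n)\le\Lambda_v+2\delta\le\Lambda+3\delta$. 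Sending $\delta\downarrow0$ for each fixed $L$, and then $L\to\infty$ via Corollary~\ref{cor-trunc-limit-L}, gives $\limsup_{n\to\infty}\hat{\Lambda}^n(\hat{X}^n_0)\le\limsup_{n\to\infty}J^n(\hat X^n_0,U^n)\le\Lambda$, which is the assertion of the lemma.
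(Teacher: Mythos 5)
Your plan follows the paper's proof architecture---Lemma~\ref{lem-cont-control} to get a continuous $\delta$-optimal $v^\delta$, the ABP15 construction of an SCP, truncation of the cost via Lemma~\ref{lem-tail-est}/Corollary~\ref{cor-trunc-limit-L}, the variational representation of Proposition~\ref{prop-var-rep-poisson}, tightness via Lemma~\ref{lem-tightness-empirical}, passage to the diffusion limit via Theorem~\ref{thm-diff}, and the final comparison via Lemma~\ref{lem-trivial-bound}. So at the level of the main ideas you are on the same route. However there are two concrete gaps in the way you chain these ingredients together.

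First, you assert that the constructed SCP satisfies $U^{n,\uppsi}_t=v_n(\hat X^{n,\uppsi}_t)$ ``along the extended dynamics,'' and then treat $r(\hat X^{n,\uppsi}_t,U^{n,\uppsi}_t)$ as a function of $\hat X^{n,\uppsi}_t$ that converges nicely to $r^{v^\delta}(x)$. But the SCP built from $v^\delta$ via the ABP15 recipe only gives $\hat Q^n_t=\tfrac{1}{\sqrt n}\hat q^n(\hat X^n_t)$ on the set $S_n=\{x:\sum_{i<d}\hat x^n_i\le\rho_d\sqrt n\}$; outside $S_n$ it falls back to a priority-type rule. The paper handles this by writing $\int_0^T\widetilde r(\hat Q^n_t)dt=\int_0^T\widetilde r(\rQ^n_t)dt+\Delta^n(T)$ and proving, via an exponential Chebyshev/Corollary~\ref{cor-finite-cost-prelimit} argument separate from Lemma~\ref{lem-tail-est}, that $\limsup_n\limsup_T T^{-1}\log\E[e^{\Delta^n_i(T)}]=-\infty$. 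Without this decomposition and estimate, the object you feed into Proposition~\ref{prop-var-rep-poisson} and later into the weak-convergence limit is not the one whose limit you can identify.

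Second, and more importantly, you invoke Lemma~\ref{lem-tightness-empirical} as though the hypotheses are supplied by ``$\frK^n(\uppsi^n(T),T)\le M$ plus Corollary~\ref{cor-control-compact}.'' But the mean-square compactness in $L^{2,*}_\infty$ does \emph{not} give the pointwise bound $\|h^n(\uppsi^n)\|\le K$ that the proof of Lemma~\ref{lem-tightness-empirical} actually uses---specifically in the step where $\tfrac{n^{-1/2}}{T}\E[\int_0^T\mu^n_i|\sqrt n(1-\psi^n_{i,t})|\,\|\hat X^{n,\uppsi^n}_t\|\,dt]$ is bounded by $\tfrac{K\mu^n_i n^{-1/2}}{T}\E[\int_0^T\|\hat X^{n,\uppsi^n}_t\|\,dt]$. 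An $L^2$-in-time bound alone would force you to Cauchy--Schwarz and land on $\E[\int\|\hat X\|^2\,dt]$, which is precisely what you are trying to control. The paper's proof of Lemma~\ref{lem-fin} therefore replaces the $\delta$-optimal $\widetilde\uppsi^{n,T}$ by a \emph{stopped} control $\uppsi^{n,T}$ that kills the auxiliary drift after $\tau^n_K=\inf\{t:\|h^n(\widetilde\uppsi^{n,T})\|>K\}$, and then proves this stopped control is still $2\delta$-optimal by using the boundedness of the truncated cost $\widetilde R_L$ and the bound~\eqref{eq-h-comp} on the tail $\PP(\tau^n_K<t)$. That construction---and the verification of its near-optimality---is the missing idea that makes the tightness step go through; your proposal leaves it implicit and the argument does not close without it.
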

\begin{proof}
	Let $v^*\in\Usm $ be an optimal control corresponding to $\Lambda$. This control, as we know from Theorem~\ref{thm-diffusion},  satisfies~\eqref{eqn-optimality1}. This creates an issue in invoking weak convergence of measures later on because of the following: $v^*=v^*(\cdot)$ is in general, merely a Borel measurable function from $\RR^d$ to $\bU$ and the family of integrals of a Borel measurable function with respect to a weakly convergent family of measures does not necessarily converge to the corresponding integral with respect to the limiting measure.
	 To overcome this, we construct a $\delta$--optimal control $v^\delta$ that is a continuous map from $\RR^d\to \bU$ and is $e_d\doteq (0,0,\ldots,1)\transp$ outside a sufficiently large ball (say, $K$ is the radius of the ball).

	The construction of the aforementioned $\delta$--optimal control is carried out in Lemma~\ref{lem-cont-control}. Therefore, we just invoke this lemma and ascertain the existence of such a control which we from now on denote by $v^\delta=v^\delta(\cdot)$.

 From the construction of $v^\delta$ in the proof of Lemma~\ref{lem-cont-control}, we know that $v^\delta:\RR^d\rightarrow \bU$ is a continuous function such that 
\begin{align*}
v^\delta(x)=\begin{cases} v^\delta, \text{ whenever $\|x\|\leq K-\frac{1}{l}$,}\\
 e_d, \text{ whenever $\|x\|>K$,}
\end{cases}
\end{align*}
for sufficiently large $l$ and $K$. We focus our attention on the case when $\|x\|\leq K$ and $\|x\|>K$ as there is a distinct change in behavior of $v^\delta$ at $\|x\|=K$. To that end, we will define the set $R_n$ below which captures this behavior. In the following, using $v^\delta$, we define a SCP $Z^n$ and the corresponding pre-limit process $X^n$, for large enough $n$. Subsequently, we will define diffusion-scaled versions $\hat Z^n$ and $\hat X^n$. We follow the construction in \cite[Pg. 3561]{ABP15} and \cite[Section 2.6]{AMR04}. We first note that $v^\delta_i$, for $i=1,\ldots,d$ plays the role of the fraction of class-$i$ customers in queue, when total queue size is positive. 
Since $\hat X^n$ is the  argument of $v^\delta $,  we write $\hat X^n_t= \hat x^n(X^n_t)$ (recall $\hat x^n$ in \eqref{eq-hatx}). 
It is important to note that we cannot simply define 
$$ Z^n= X^n- (e\cdot X^n-n)^+v^\delta(\hat X^n),$$
because $X^n-Z^n$ will not necessarily  lie in $\ZZ^d_+$ if defined as above. To overcome this, define  a measurable map $\vartheta: \{z\in \RR^d_+: (e\cdot z)\in \ZZ\}\rightarrow \ZZ^d_+$ as 
\begin{align}
\vartheta(z)\doteq \Big( \lfloor z_1\rfloor, \lfloor z_2\rfloor, \ldots, \lfloor z_d\rfloor + \sum_{i=1}^{d}(z_i-\lfloor z_i\rfloor)\Big)\,. 
\end{align}
Observe that $\|\vartheta(z)-z\|\leq 2d$ and $e\cdot \vartheta(z)=1$.  Define the set $$R_n\doteq \Big\{x\in \RR^d_+: \max_{1\leq i\leq d} |x_i-\rho_in|\leq K\sqrt{n}\Big\}.$$
For $1\leq i\leq d$,
\begin{align}\label{def-scp} Z^n_i=Z^n_i[X^n]=\begin{cases}
X^n_i- q^n_i(X^n), \text{ whenever  $X^n\in R_n$},\\
X^n_i\wedge \Big( n-\sum_{j=1}^{i-1} X^n_j\Big)^+, \text{ otherwise}
\end{cases}\end{align}
with  $q^n(x)\doteq \vartheta\big((e\cdot x-n)^+ v^\delta(\hat x^n)\big).$ 

From the discussion in \cite[Pg. 3561]{ABP15}, this is a well-defined work conserving SCP, for large enough $n$. From now on we  restrict ourselves to such large enough $n$. To proceed further, we need to understand the conditions  under which  we have 
$$ Q^n_t= X^n_t-Z^n_t= q^n(X^n_t).$$
It is easy to check that a sufficiency condition is $\sum_{i=1}^{d-1}X^n_{i,t}\leq n$,  which can also be re-written as $$ \sum_{i=1}^{d-1} \hat X^n_{i,t}\leq \rho_d \sqrt{n}.$$
Define $S_n\doteq \{x: \sum_{i=1}^{d-1}\hat x^n_i(x)\leq \rho_d\sqrt{n}\}$ with $\hat x^n(x)$ as defined in~\eqref{eq-hatx}. We will suppress $x$ and just write $\hat x^n$ to keep  the expression concise.

Now that we have defined the  processes $Z^n$ and $X^n$ completely (see \cite[Proposition 1]{AMR04}), we move on to define the diffusion-scaled versions  $\hat X^n$, $\hat Z^n$ and $\hat Q^n= \hat X^n-\hat Z^n$. To do that, we define 
$$ \hat q^n(\hat x^n)\doteq \vartheta\big(\sqrt n (e\cdot \hat x^n)v^{\delta}(\hat x^n) \big).$$
From the above discussion, $$\hat Q^n_t= \hat X^n_t-\hat Z^n_t= \frac{1}{\sqrt{n}}\hat q^n(\hat X^n_t) \quad \text{ whenever $X^n\in S_n$}.$$

Therefore, recalling that $\widetilde r(q)=\kappa\cdot q$, we have
\begin{align*} \int_0^T \widetilde r(\hat Q^n_t)dt&= \int_0^T \widetilde r\big(\frac{1}{\sqrt{n}}\hat q^n(\hat X^n_t)\big)\Ind_{\{ \hat X^n_t\in S_n\}} dt + \int_0^T \widetilde r\big(\hat X^n_t-\hat Z^n_t\big)\Ind_{\{ \hat X^n_t\notin S_n\}} dt \\
&=\int_0^T \widetilde r\big(\frac{1}{\sqrt{n}}\hat q^n(\hat X^n_t)\big)\Ind_{\{\hat X^n_t\in S_n\}} dt + \int_0^T \widetilde r\big(\hat X^n_t-\hat Z^n_t\big)\Ind_{\{\hat  X^n_t\notin S_n\}} dt\\
&\qquad\qquad - \int_0^T \widetilde r(\frac{1}{\sqrt{n}}\hat q^n(\hat X^n_t))\Ind_{\{ \hat X^n_t\notin S_n\}} dt+ \int_0^T\widetilde  r\big(\frac{1}{\sqrt{n}}\hat q^n(\hat X^n_t)\big)\Ind_{\{\hat X^n_t\notin S_n\}} dt \\
&= \int_0^T \widetilde r\big(\frac{1}{\sqrt{n}}\hat q^n(\hat X^n_t)\big) dt + \int_0^T \widetilde r\big(\hat X^n_t-\hat Z^n_t\big)\Ind_{\{ \hat X^n_t\notin S_n\}} dt\\
&\qquad\qquad - \int_0^T \widetilde r\big(\frac{1}{\sqrt{n}}\hat q^n(\hat X^n_t)\big)\Ind_{\{\hat X^n_t\notin S_n\}} dt\\
&= \int_0^T \widetilde r\big(\frac{1}{\sqrt{n}}\hat q^n(\hat X^n_t)\big) dt + \Delta^n(T).
\end{align*}
Here,
$$ \Delta^n(T)=\Delta^n_1(T)- \Delta^n_2(T) $$
with 
\begin{align} 
\Delta^n_1(T) &\doteq \int_0^T \widetilde r\big(\hat X^n_t-\hat Z^n_t\big)\Ind_{\{ \hat X^n_t\notin S_n\}} dt,\label{eqn-Delta1-def} \\
 \Delta^n_2(T) &\doteq \int_0^T \widetilde r\big(\frac{1}{\sqrt{n}}\hat q^n(\hat X^n_t)\big)\Ind_{\{\hat X^n_t\notin S_n\}} dt.\label{eqn-Delta2-def}
 \end{align}
The rest of the proof is a consequence of several lemmas that follow.
\end{proof} 

To keep the expressions concise, we define $\rQ^n_t\doteq  \frac{1}{\sqrt{n}}\hat q^n(\hat X^n_t)$. Observe that it suffices for us to show that 
\begin{align*}    \limsup_{n\to\infty}\limsup_{T\to\infty} \frac{1}{T} \log \E\Big[e^{\int_0^T \widetilde r (\rQ^n_t)dt +\Delta^n(T)}\Big]\leq \Lambda +\delta
 \end{align*} 
 as this will then imply 
 \begin{align*}
 \limsup_{n\to\infty}\hat \Lambda^n&\leq  \limsup_{n\to\infty}\limsup_{T\to\infty} \frac{1}{T} \log \E\Big[e^{\int_0^T \widetilde r (\hat Q^n_t)dt}\Big]\\
&= \limsup_{n\to\infty}\limsup_{T\to\infty} \frac{1}{T} \log \E\Big[e^{\int_0^T \widetilde r (\rQ^n_t)dt +\Delta^n(T)}\Big] \leq \Lambda.
 \end{align*}
 For $i=1,2$, we first note that since $\hat X^n_t-\hat Z^n_t=(e\cdot \hat X^n_t)^+\tilde U^n_t $, for some process $0\leq \tilde U^n\leq 1$, we have
\begin{align} \label{eq-delta-bound}	\Delta ^n_i(T) &\leq \sqrt d\max_{1\leq j\leq d} \kappa_j \int_0^T  \|\hat X^n_t\| \Ind_{\{\hat X^n_t\notin S_n\}} dt \nonumber \\
&\leq \sqrt d\max_{1\leq j\leq d} \kappa_j \int_0^T  \|\hat X^n_t\| \Ind_{\{\|\hat X^n_t\|>\bar \rho _d \sqrt n\}} dt,\end{align}
where $\bar \rho_d\doteq \frac{\rho_d}{\sqrt{d}}.$
\begin{lemma}\label{lem-tail-est}
	The following holds:
	\begin{align}
	\limsup_{L\to\infty}\limsup_{n\to \infty}\limsup_{T\to\infty}\frac{1}{T}\log\E\Big[e^{\int_0^T \widetilde r(\rQ^n_t)dt} \Ind_{\{\int_0^T \widetilde r(\rQ^n_t)dt \geq LT\}}\Big]=-\infty.
	\end{align}
\end{lemma}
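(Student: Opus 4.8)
The plan is to reduce this ``exponential uniform integrability'' estimate to the exponential moment bound already available from the Foster--Lyapunov inequality, namely Corollary~\ref{cor-finite-cost-prelimit}. Recall that $\rQ^n_t = \tfrac{1}{\sqrt n}\hat q^n(\hat X^n_t)$, and from the construction of $\hat q^n$ and the fact that $\|\vartheta(z)-z\|\le 2d$, one has $\|\rQ^n_t\|\le (e\cdot\hat X^n_t)^+ + \tfrac{2d}{\sqrt n}\le \|\hat X^n_t\| + 2d$ for $n\ge1$, so that $\widetilde r(\rQ^n_t)=\kappa\cdot\rQ^n_t \le c_\kappa(\|\hat X^n_t\|+2d)$ with $c_\kappa=\sqrt d\max_j\kappa_j$. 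Hence $\int_0^T\widetilde r(\rQ^n_t)\,dt \le c_\kappa\int_0^T\|\hat X^n_t\|\,dt + 2dc_\kappa T$, and in particular on the event $\{\int_0^T\widetilde r(\rQ^n_t)\,dt\ge LT\}$ we get $\int_0^T\|\hat X^n_t\|\,dt \ge (L/c_\kappa - 2d)T =: L'T$ with $L'\to\infty$ as $L\to\infty$.

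First I would apply the elementary truncation inequality: for any $\theta\in(0,1)$,
\begin{align*}
\E\Big[e^{\int_0^T\widetilde r(\rQ^n_t)dt}\,\Ind_{\{\int_0^T\widetilde r(\rQ^n_t)dt\ge LT\}}\Big]
&\le e^{-(1-\theta)LT}\,\E\Big[e^{\frac{1}{\theta}\int_0^T\widetilde r(\rQ^n_t)dt}\Big],
\end{align*}
using $\mathbf 1_{\{A\ge LT\}}e^{A}\le e^{-(1-\theta)LT}e^{A/\theta}$ whenever $A\ge LT$, after raising to appropriate powers; more precisely write $e^{A}\Ind_{\{A\ge LT\}}=e^{A/\theta}e^{-(1-\theta)A/\theta}\Ind_{\{A\ge LT\}}\le e^{A/\theta}e^{-(1-\theta)LT/\theta}$. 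Then with $A=\int_0^T\widetilde r(\rQ^n_t)dt\le c_\kappa\int_0^T\|\hat X^n_t\|dt+2dc_\kappa T$, we bound $\E[e^{A/\theta}]\le e^{2dc_\kappa T/\theta}\E[e^{(c_\kappa/\theta)\int_0^T\|\hat X^n_t\|dt}]$. Taking $\tfrac1T\log$, letting $T\to\infty$ and then $n\to\infty$, Corollary~\ref{cor-finite-cost-prelimit} (applied with $\hat C_3=c_\kappa/\theta$ and the admissible control $U^n$ corresponding to the SCP $Z^n$ of Lemma~\ref{lem-AO-upper}) gives
\begin{align*}
\limsup_{n\to\infty}\limsup_{T\to\infty}\frac1T\log\E\Big[e^{\int_0^T\widetilde r(\rQ^n_t)dt}\Ind_{\{\cdots\ge LT\}}\Big]
\le -\frac{1-\theta}{\theta}L + \frac{2dc_\kappa}{\theta} + C(\theta),
\end{align*}
where $C(\theta)\df \limsup_n\limsup_T \tfrac1T\log\E[e^{(c_\kappa/\theta)\int_0^T\|\hat X^n_t\|dt}]<\infty$ is finite and \emph{independent of $L$}. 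Finally, letting $L\to\infty$ with $\theta$ fixed (say $\theta=\tfrac12$) drives the right-hand side to $-\infty$, which is exactly the claim.

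The only genuine subtlety --- and the step I would treat most carefully --- is the applicability of Corollary~\ref{cor-finite-cost-prelimit} to the specific (Markov, spatially truncated) SCP $Z^n$ constructed in the proof of Lemma~\ref{lem-AO-upper}: one must check that this $Z^n$ is indeed admissible in the sense of Definition~\ref{def-adm} so that the corollary applies uniformly in $n$ large. This is already guaranteed by the construction in \cite[Pg.~3561]{ABP15} for $n$ large enough, which is the regime we have restricted to. A secondary point is ensuring the constants $c_\kappa$, $C(\theta)$ do not secretly depend on $L$ --- they do not, since the truncation only enters through the indicator, not through the integrand being exponentiated after the split. With these observations the iterated limit collapses to $-\infty$ as required.
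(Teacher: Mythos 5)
Your proof is correct and follows essentially the same route as the paper's: you split off the tail via the Chernoff-type bound $e^{A}\Ind_{\{A\ge LT\}}\le e^{A/\theta}e^{-(1-\theta)LT/\theta}$ (the paper does exactly this with $\rho=1/\theta>1$, i.e.\ $Z^n_T\Ind_{\{Z^n_T\ge1\}}\le (Z^n_T)^\rho$), control the remaining exponential moment by $\widetilde r(\rQ^n_t)\lesssim\|\hat X^n_t\|+\text{const}$ and Corollary~\ref{cor-finite-cost-prelimit}, then send $L\to\infty$. The extra care you take about the admissibility of the constructed SCP and the $L$-independence of the constants is a correct reading of what makes the argument go through, but it does not constitute a different method.
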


\begin{proof}
Define a random variable $$Z^n_T\doteq e^{ \int_0^T \widetilde r(\rQ^n_t)dt - LT}.$$ 
Using this, we clearly have 
\begin{align*}
e^{-LT} \E\Big[ e^{\int_0^T \widetilde r(\rQ^n_t)dt}\Ind_{\{\int_0^T \widetilde r(\rQ^n_t)dt \geq LT\}}\Big]&= \E\Big[Z^n_T\Ind_{\{Z^n_T\geq 1\}}\Big] \\
&\leq  \E\Big[ (Z^n_T)^{\rho}\Big]=e^{-\rho LT} \E\Big[ e^{\rho \int_0^T \widetilde r(\rQ^n_t)dt}\Big],
\end{align*}
for $\rho>1$. This gives us
\begin{align*}
\limsup_{n\to\infty}\limsup_{T\to\infty}\frac{1}{T}\log\E\Big[e^{\int_0^T \widetilde r(\rQ^n_t)dt}&\Ind_{\{\int_0^T \widetilde r(\rQ^n_t)dt \geq LT\}}	\Big]\\
&\leq -(\rho-1)L + \limsup_{n\to\infty}\limsup_{T\to\infty}\frac{1}{T}\log\E\Big[	e^{\rho\int_0^T{\hat C_1}\|\hat X^n_t\|dt }\Big].
\end{align*}
 Using Corollary~\ref{cor-finite-cost-prelimit}, it is clear that for $\rho=1+\eps$ (for  $\eps>0)$, we can  
 ensure that the second term on the right hand side is finite. Now we take $L\to\infty$ to get the desired result.
	\end{proof}

	\begin{lemma} The following holds: for $i=1,2$,
	\begin{align} 
	\limsup_{n\to\infty} \limsup_{T\to\infty} \frac{1}{T} \log \E\Big[ e^{\Delta^n_i(T)}\Big] =-\infty, 
	\end{align}
		where $\Delta^n_1(T)$ and $\Delta^n_2(T)$ are defined in \eqref{eqn-Delta1-def} and \eqref{eqn-Delta2-def}, respectively.
	\end{lemma}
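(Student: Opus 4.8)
The plan is to reduce the claim, via the deterministic pointwise bound~\eqref{eq-delta-bound}, to an exponential functional of $\|\hat X^n\|$ whose weight is supported on a ball of radius $\sim\sqrt n$, and then to run the Chernoff/Hölder truncation already used in the proof of Lemma~\ref{lem-tail-est}, the only genuinely new input being a uniform-in-$n$ exponential-moment estimate for $\int_0^T\|\hat X^n_t\|\,dt$. Concretely, by~\eqref{eq-delta-bound}, for $i=1,2$ and all large $n$,
\begin{equation*}
\Delta^n_i(T)\ \le\ c\,\mathscr I^n(T)\,,\qquad \mathscr I^n(T)\doteq\int_0^T\|\hat X^n_t\|\,\Ind_{\{\|\hat X^n_t\|>\bar\rho_d\sqrt n\}}\,dt,\qquad c\doteq\sqrt d\,\max_{1\le j\le d}\kappa_j,
\end{equation*}
so it suffices to control $\limsup_{n\to\infty}\limsup_{T\to\infty}\tfrac1T\log\E[e^{c\,\mathscr I^n(T)}]$ (and its truncated versions). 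The structural feature that drives everything is that the integrand vanishes on $\{\|x\|\le\bar\rho_d\sqrt n\}$, a ball whose radius tends to infinity with $n$.

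Next I would establish the uniform exponential-moment bound: since $\mathscr I^n(T)\le\int_0^T\|\hat X^n_t\|\,dt$, Corollary~\ref{cor-finite-cost-prelimit} already gives, for every fixed $\rho>0$,
\begin{equation*}
C(\rho)\ \doteq\ \limsup_{n\to\infty}\limsup_{T\to\infty}\frac1T\log\E\!\big[e^{\rho c\,\mathscr I^n(T)}\big]\ <\ \infty\,.
\end{equation*}
Alternatively one obtains this directly by applying It\^o's formula to $e^{\rho c\int_0^t\|\hat X^n_s\|\Ind_{\{\|\hat X^n_s\|>\bar\rho_d\sqrt n\}}ds}\,\cV^n_\varrho(\hat X^n_t)$ with $\varrho=\varrho(\rho)>4\rho c/\hat C_1$ and $n\ge(2\hat C_2/\hat C_1)^2$: the added potential is $0$ on $\{\|x\|\le\bar\rho_d\sqrt n\}$ and on the complement is absorbed by the negative drift $-\tfrac{\varrho\hat C_1}{2}\|x\|$ coming from~\eqref{eq-lyap-foster-prelimit}, so that localization together with $\inf_x\cV^n_\varrho(x)>0$ and Gr\"onwall's inequality yield $\limsup_{T\to\infty}\tfrac1T\log\E[e^{\rho c\,\mathscr I^n(T)}]\le\varrho(\rho)\hat C_0$ for all large $n$ (as in the proof of Lemma~\ref{lem-markov-finite-cost}); in particular $C(\rho)$ grows at most linearly in $\rho$.

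Finally, arguing exactly as in the proof of Lemma~\ref{lem-tail-est}, put $Z^n_T\doteq e^{c\,\mathscr I^n(T)-LT}$ for $L>0$; then for $\rho>1$,
\begin{equation*}
\E\!\big[e^{c\,\mathscr I^n(T)}\Ind_{\{c\,\mathscr I^n(T)\ge LT\}}\big]=e^{LT}\E\!\big[Z^n_T\Ind_{\{Z^n_T\ge1\}}\big]\le e^{LT}\E\!\big[(Z^n_T)^{\rho}\big]=e^{-(\rho-1)LT}\E\!\big[e^{\rho c\,\mathscr I^n(T)}\big],
\end{equation*}
so that $\limsup_{n\to\infty}\limsup_{T\to\infty}\tfrac1T\log\E[e^{c\,\mathscr I^n(T)}\Ind_{\{c\,\mathscr I^n(T)\ge LT\}}]\le-(\rho-1)L+C(\rho)$, and, since $\Delta^n_i(T)\le c\,\mathscr I^n(T)$ and hence $\{\Delta^n_i(T)\ge LT\}\subset\{c\,\mathscr I^n(T)\ge LT\}$, the same bound holds with $\Delta^n_i(T)$ in place of $c\,\mathscr I^n(T)$. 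Choosing $\rho$ large (using the at-most-linear growth of $C(\rho)$) and then $L$ large forces the right-hand side to $-\infty$; combined with the elementary observation that on the complementary event $\{\Delta^n_i(T)<LT\}$ one has $e^{\Delta^n_i(T)}<e^{LT}$, this is the asserted negligibility of the exponential functionals of $\Delta^n_i$, in the same form as Lemma~\ref{lem-tail-est}.

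The step I expect to be the main obstacle is the uniform-in-$n$ exponential-moment bound for $\int_0^T\|\hat X^n_t\|\,dt$ at arbitrarily large exponents. This is exactly what the refined Foster--Lyapunov inequality of Proposition~\ref{prop-lyap-with-abandonment-prelimit} is designed to provide: the additional $\tfrac{\hat C_2}{\sqrt n}\|\hat x^n\|$ term and the control of all constants once $n\ge(2\hat C_2/\hat C_1)^2$ are precisely what let the two $\limsup$'s and the truncation limit be taken together; everything else is the routine Chernoff/Hölder splitting already appearing in Lemmas~\ref{lem-markov-finite-cost} and~\ref{lem-tail-est}.
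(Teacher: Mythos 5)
Your argument only controls the tail and therefore does not reach the stated conclusion. The Chernoff/H\"older step gives
\begin{equation*}
\limsup_{n\to\infty}\limsup_{T\to\infty}\frac{1}{T}\log\E\big[e^{\Delta^n_i(T)}\Ind_{\{\Delta^n_i(T)\ge LT\}}\big]\le -(\rho-1)L+C(\rho),
\end{equation*}
which, exactly as in Lemma~\ref{lem-tail-est}, tends to $-\infty$ as $L\to\infty$. But the lemma to be proved has no $L$--truncation: it asserts that the \emph{double} limsup of the \emph{full} expectation is $-\infty$. Combining your tail estimate with the only bound available on the head, $\E[e^{\Delta^n_i(T)}\Ind_{\{\Delta^n_i(T)<LT\}}]\le e^{LT}$, yields $\limsup_{n}\limsup_{T}\tfrac1T\log\E[e^{\Delta^n_i(T)}]\le L$, a finite number; you cannot push $L\downarrow0$ because you need $(\rho-1)L>C(\rho)$ and $C(\rho)$ grows linearly. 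Your closing phrase ``in the same form as Lemma~\ref{lem-tail-est}'' is an accurate self-diagnosis: a triple-limit tail estimate is what you have, and it is strictly weaker than the lemma.

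The paper's mechanism is different and it is precisely the structural feature you flag in your opening paragraph but never actually exploit. Instead of $Z^n_T=e^{c\,\mathscr I^n(T)-LT}$, the paper recenters by the $n$--dependent quantity $\bar\rho_d\sqrt n\,T$, forming $\bar Z^n_T=\exp\big(c\int_0^T(\|\hat X^n_t\|\Ind_{\{\|\hat X^n_t\|>\bar\rho_d\sqrt n\}}-\bar\rho_d\sqrt n)\,dt\big)$, and then applies the Young/H\"older comparison $\E[\bar Z^n_T]\le\E[(\bar Z^n_T)^\eta]$. This puts $-(\eta-1)\bar\rho_d\sqrt n$ on the right-hand side, with the remaining term controlled uniformly in $n$ by Corollary~\ref{cor-finite-cost-prelimit}, so the divergence to $-\infty$ is driven by $n\to\infty$, not by a truncation level $L\to\infty$. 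The moment you invoke $\mathscr I^n(T)\le\int_0^T\|\hat X^n_t\|\,dt$ to apply Corollary~\ref{cor-finite-cost-prelimit}, you erase the $\sqrt n$ threshold from the argument, and no subsequent Chernoff split can recover it. Your Lyapunov alternative has the same problem: the bound $\varrho(\rho)\hat C_0$ is $n$--free, since near $x=0$ the helpful drift vanishes and the absorbing effect of the $\sqrt n$--ball disappears from the final constant. To follow the paper you must keep the $-\bar\rho_d\sqrt n$ inside the exponent \emph{before} taking any $n$--uniform moment bound.
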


	\begin{proof} 
	The proof follows exactly along the same lines as the proof of Lemma~\ref{lem-tail-est}. 
	Here, we define random variable 
		 $$\bar Z^n_T\doteq \exp\left( \sqrt d\max_{1\leq j\leq d} \kappa_j\int_0^T \Big(\|\hat X^n_t\| \Ind_{\{\|\hat X^n_t\|>\bar \rho_d\sqrt n\}}- \bar \rho_d\sqrt n\Big)dt \right).$$ 
	Using the fact that $\bar Z^n_T\geq 1$, we have
\begin{align*}
\E\Big[\bar Z^n_T\Big]
&\leq  \E\Big[ (\bar Z^n_T)^{\eta}\Big]=\E\left[ \exp\left( \eta \sqrt d\max_{1\leq j\leq d} \kappa_j\int_0^T \Big(\|\hat X^n_t\| \Ind_{\{\|\hat X^n_t\|>\bar \rho_d\sqrt n\}}- \bar \rho_d\sqrt n\Big)dt \right)\right],
\end{align*}
for $\eta>1$.
This gives us
\begin{align*}
&\limsup_{T\to\infty}\frac{1}{T}\log\E\left[\exp\left( \sqrt d\max_{1\leq j\leq d} \kappa_j\int_0^T \Big(\|\hat X^n_t\| \Ind_{\{\|\hat X^n_t\|>\bar \rho_d\sqrt n\}}\Big)dt \right)	\right]\\
&\leq -(\eta-1)\bar \rho_d\sqrt n + \limsup_{T\to\infty}\frac{1}{T}\log\E\left[	\exp\left(\eta \sqrt d\max_{1\leq j\leq d}\kappa_j\int_0^T\|\hat X^n_t\|dt \right)\right].
\end{align*}
 Using Corollary~\ref{cor-finite-cost-prelimit}, it is clear that for $\eta=1+\eps$ (for $\eps>0)$, we can  again ensure that the second term on the right hand side is uniformly bounded in $n$. Now taking $n\to\infty$ and using~\eqref{eq-delta-bound}, we get the desired result.
	\end{proof}

A couple of immediate consequences of the previous two lemmas are the following: 
$$\limsup_{n\to\infty}\limsup_{T\to\infty} \frac{1}{T} \log \E\Big[e^{\int_0^T \widetilde r (\rQ^n_t)dt +\Delta^n(T)}\Big] \leq  \limsup_{n\to\infty}\limsup_{T\to\infty} \frac{1}{T} \log \E\Big[e^{\int_0^T \widetilde r (\rQ^n_t)dt}\Big] \doteq \ring \Lambda$$
and the corollary below.
\begin{corollary}\label{cor-trunc-limit-L} The following holds: 
	\begin{align*}
	\ring \Lambda= \lim_{L\to\infty}\limsup_{n\to\infty} \limsup_{T\to\infty}\frac{1}{T}\log\E\Big[e^{\int_0^T \widetilde r(\rQ^n_t)dt} \Ind_{\{\int_0^T \widetilde r(\rQ^n_t)dt \leq LT\}}\Big].
	\end{align*}
\end{corollary}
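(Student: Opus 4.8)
The plan is to use \cref{lem-tail-est} to discard the part of the exponential moment where the exponent is atypically large, and to keep the rest. Fix $L>0$ and split
\[
\E\Big[e^{\int_0^T \widetilde r(\rQ^n_t)dt}\Big]=A^n_{L,T}+B^n_{L,T},\qquad
A^n_{L,T}\doteq \E\Big[e^{\int_0^T \widetilde r(\rQ^n_t)dt}\Ind_{\{\int_0^T \widetilde r(\rQ^n_t)dt\le LT\}}\Big],
\]
and $B^n_{L,T}\doteq \E\big[e^{\int_0^T \widetilde r(\rQ^n_t)dt}\Ind_{\{\int_0^T \widetilde r(\rQ^n_t)dt> LT\}}\big]$. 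Since $a+b\le 2(a\vee b)$ for $a,b\ge0$, this gives the pointwise bound $\tfrac1T\log\E\big[e^{\int_0^T \widetilde r(\rQ^n_t)dt}\big]\le\big(\tfrac1T\log A^n_{L,T}\big)\vee\big(\tfrac1T\log B^n_{L,T}\big)+\tfrac{\log2}{T}$.

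First I would let $T\to\infty$ and then $n\to\infty$ in this inequality. Writing $\ring\Lambda_L$ and $\ring\Lambda'_L$ for the iterated limit superiors $\limsup_{n\to\infty}\limsup_{T\to\infty}\tfrac1T\log A^n_{L,T}$ and $\limsup_{n\to\infty}\limsup_{T\to\infty}\tfrac1T\log B^n_{L,T}$, and using that $\limsup$ commutes with a finite maximum (valid for real sequences even if one of the two tends to $-\infty$), we obtain $\ring\Lambda\le\ring\Lambda_L\vee\ring\Lambda'_L$ for every $L>0$. The opposite bound $\ring\Lambda_L\le\ring\Lambda$ is immediate from $A^n_{L,T}\le\E\big[e^{\int_0^T \widetilde r(\rQ^n_t)dt}\big]$, and $\ring\Lambda_L$ is nondecreasing in $L$. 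Next I would record that $\ring\Lambda$ is finite: $\widetilde r\ge0$ gives $\ring\Lambda\ge0$, while the crude bound $\|\hat q^n(\hat x^n)\|\le\sqrt n\,\sqrt d\,\|\hat x^n\|+2d$ (using $\|\vartheta(z)-z\|\le2d$ and $\|v^\delta(\cdot)\|\le1$) yields $\widetilde r(\rQ^n_t)\le C\big(1+\|\hat X^n_t\|\big)$, so that $\ring\Lambda\le C+\limsup_{n}\limsup_{T}\tfrac1T\log\E\big[e^{C\int_0^T\|\hat X^n_t\|dt}\big]<\infty$ by \cref{cor-finite-cost-prelimit}. Finally, \cref{lem-tail-est} says precisely $\lim_{L\to\infty}\ring\Lambda'_L=-\infty$, so there is $L_0$ with $\ring\Lambda'_L<\ring\Lambda$ for all $L\ge L_0$; for such $L$ the inequality $\ring\Lambda\le\ring\Lambda_L\vee\ring\Lambda'_L$ forces $\ring\Lambda\le\ring\Lambda_L\le\ring\Lambda$, hence $\ring\Lambda_L=\ring\Lambda$. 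Therefore $\lim_{L\to\infty}\ring\Lambda_L=\ring\Lambda$, which is the claim.

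I do not expect a genuine obstacle here: once \cref{lem-tail-est} is in hand, the corollary is a bookkeeping argument about ``$\log$ of a sum of exponentials.'' The two points that require a little attention are the interchange of the finite maximum with the two successive limit superiors in $T$ and then $n$, and the verification that $0\le\ring\Lambda<\infty$ (so that $\ring\Lambda'_L<\ring\Lambda$ can actually be achieved for large $L$), the latter being where the pre-limit Foster--Lyapunov estimate enters through \cref{cor-finite-cost-prelimit}.
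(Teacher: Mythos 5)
Your proof is correct and follows essentially the same route as the paper's: split the exponential moment into the truncated piece $A^n_{L,T}$ and the tail piece $B^n_{L,T}$, observe $\ring\Lambda_L\le\ring\Lambda\le\ring\Lambda_L\vee\ring\Lambda'_L$, and invoke Lemma~\ref{lem-tail-est} to kill the tail as $L\to\infty$. The extra care you take in verifying that $\limsup$ commutes with the finite maximum and that $0\le\ring\Lambda<\infty$ (via Corollary~\ref{cor-finite-cost-prelimit}) just fills in details that the paper leaves implicit; in fact only $\ring\Lambda>-\infty$ (which is immediate from $\widetilde r\ge0$) is needed for the squeeze to close, since even $\ring\Lambda=+\infty$ would propagate to $\ring\Lambda_L$.
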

\begin{proof} It is clear that for $L>0$, 
	\begin{align} \label{eq-trunc-low-bound}
\limsup_{n\to\infty}	\limsup_{T\to\infty}\frac{1}{T}\log\E\Big[e^{\int_0^T \widetilde r(\rQ^n_t)dt} \Ind_{\{\int_0^T \widetilde r(\rQ^n_t)dt \leq LT\}}\Big]\leq \ring \Lambda.
	\end{align}
	It is also easy to see that 
	\begin{align}\nonumber 
	\ring \Lambda&\leq \max \Bigg\{ 	\limsup_{n\to\infty}\limsup_{T\to\infty}\frac{1}{T}\log\E\Big[e^{\int_0^T \widetilde r(\rQ^n_t)dt} \Ind_{\{\int_0^T \widetilde r(\rQ^n_t)dt \leq LT\}}\Big],\\\label{eq-trunc-up-bound}
	&\qquad \limsup_{n\to\infty}\limsup_{T\to\infty}\frac{1}{T}\log\E\Big[e^{\int_0^T \widetilde r(\rQ^n_t)dt} \Ind_{\{\int_0^T \widetilde r(\rQ^n_t)dt \geq LT\}}\Big]\Bigg\}.
	\end{align}
	Taking $L\to \infty$ and using Lemma~\ref{lem-tail-est}, we have the desired result by combining~\eqref{eq-trunc-low-bound} and~\eqref{eq-trunc-up-bound}.
	\end{proof} 
\begin{remark}Corollary~\ref{cor-trunc-limit-L} implies that  Lemma~\ref{lem-AO-upper} follows once we prove that
\begin{align}\label{eq-upper-bound-AO}
\limsup_{L\to\infty}\limsup_{n\to\infty} \limsup_{T\to\infty}\frac{1}{T}\log\E\Big[e^{\int_0^T \widetilde r(\rQ^n_t)dt} \Ind_{\{\int_0^T \widetilde r(\rQ^n_t)dt \leq LT\}}\Big]\leq \Lambda +\delta.\end{align} This is what we do next.	
	
\end{remark}

For $ L>0$, define 
$$\widetilde R_{ L}(\rQ^n,T)\doteq \frac{1}{T}\int_0^T \widetilde r(\rQ^n_t)\wedge Ldt\quad \text{ and } \quad \ring J^{n}_{L}\doteq \limsup_{T\to\infty}\frac{1}{T}\log \E\Big[e^{\big(\int_0^T \widetilde r(\rQ^n_t)dt\big)\wedge LT}\Big]\,.$$ 
 We remark that $\ring J^n_L$  is defined for a fixed underlying SCP defined in~\eqref{def-scp}. In what follows, we adopt the notation of Section~\ref{sec-var}.
\begin{lemma} \label{lem-fin}
	$$ \limsup_{L\to\infty}\limsup_{n\to\infty}\ring J^n_{L}\leq \Lambda.$$
\end{lemma}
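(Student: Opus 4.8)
The plan is to apply the variational representation of Proposition~\ref{prop-var-rep-poisson} to the ERSC cost $\ring J^n_L$ associated with the specific SCP built from $v^\delta$, then identify the limit in $T$ and $n$ by exhibiting a suitable ergodic occupation measure. Concretely, since $\rQ^n_t$ is governed by a Markov control $g_n$ (coming from $\hat q^n$), Proposition~\ref{prop-var-rep-poisson} gives
\begin{align*}
\ring J^n_L = \limsup_{T\to\infty}\sup_{\uppsi\in\cE^n}\E\Big[\Big(\tfrac1T\int_0^T \widetilde r(\rQ^{n,\uppsi}_t)\,dt\Big)\wedge L - \frK^n(\uppsi,T)\Big],
\end{align*}
and moreover, by~\eqref{eq-var-rep-poisson-bound}, for any $\eta>0$ there is $M=M(\eta,L)$ so that the supremum may be restricted to $\uppsi\in\cE^n_M$ at the cost of $+\eta$. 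First I would fix such a nearly optimal $\uppsi^n=\uppsi^n(T)\in\cE^n_M$ for each $n,T$, so that $\frK^n(\uppsi^n(T),T)\le M$ uniformly in $T$.

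Next I would invoke the tightness machinery of Section~\ref{sec-top} and Section~\ref{sec-aux-diffusion}. The uniform bound $\frK^n(\uppsi^n,T)\le M$ together with Corollary~\ref{cor-control-compact} makes $\{h^n(\uppsi^n)\}_n$ tight in $L^{2,*}_\infty$; combined with Theorem~\ref{thm-diff} (applied with $U^{n,\uppsi^n}=v^\delta(\hat X^{n,\uppsi^n})$, which is admissible and, since $v^\delta$ is continuous, passes to the limit) we obtain that $\big(\hat X^{n,\uppsi^n},v^\delta(\hat X^{n,\uppsi^n}),h^n(\uppsi^n)\big)$ is tight, with every limit point solving
\begin{align*}
dX^{*,v^\delta,w}_t = b\big(X^{*,v^\delta,w}_t,v^\delta(X^{*,v^\delta,w}_t)\big)dt + \Sigma\widetilde w_t\,dt + \Sigma\,dW_t,
\end{align*}
where $\widetilde w$ is the weighted combination of the components of $w$ as in Theorem~\ref{thm-diff}. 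The key additional input is that the mean empirical measures $\mu^{n,T,3}_{U,\uppsi^n}$ of $\hat X^{n,\uppsi^n}$ are tight \emph{both in $n$ and $T$}: this is exactly the content of Lemma~\ref{lem-tightness-empirical}, which uses the explicit Lyapunov function from \cite{AHP21} applied to $\hat X^{n,\uppsi^n}$ — note its drift only differs from that of $\hat X^n$ through the bounded-in-energy perturbation $\hat\Xi^n$ controlled by $h^n(\uppsi^n)$ on the compact set of controls. With all these in hand, pass $T\to\infty$ along a subsequence, then $n\to\infty$ along a further subsequence, to get that the mean empirical measures $\mu^{n,T}_{U,\uppsi^n}$ converge to a measure which disintegrates as an ergodic occupation measure $\widetilde\mu^*_{v^\delta,w}$ for the limiting diffusion $X^{*,v^\delta,w}$ under some $w\in\Wsm$ (here the lower semicontinuity of $\|w\|^2$ and the weak lower semicontinuity of $\frK^n$, i.e.~\eqref{eq-weak-lower-semicontinuity}, are used to push the relative-entropy term down to $\tfrac12\|\widetilde w\|^2$ — really $\tfrac12\|w\|^2$ after rescaling, consistent with $\varkappa(1+h/\sqrt n)\cdot(\text{rate})\to \tfrac12(\text{weight})h^2$, exactly as in the estimates in the proof of Lemma~\ref{lem-AO-lower}). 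This yields
\begin{align*}
\limsup_{n\to\infty}\ring J^n_L \le \int_{\RR^d}\Big(\widetilde r(x)\wedge L - \tfrac12\|w(x)\|^2\Big)\,d\widetilde\mu^*_{v^\delta,w}(x) + \eta \le J^*_{v^\delta,w} + \eta \le \sup_{w'\in\Wsm}J^*_{v^\delta,w'} + \eta.
\end{align*}

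Finally I would let $L\to\infty$ and $\eta\to0$ to get $\limsup_{L\to\infty}\limsup_{n\to\infty}\ring J^n_L \le \sup_{w\in\Wsm}J^*_{v^\delta,w}$. By Corollary~\ref{cor-var-rep-risk-B} and Lemma~\ref{lem-limit-truc} the right-hand side equals $\Lambda_{v^\delta}= J(x,v^\delta(\cdot))$ (the variational representation of the ERSC cost of the limiting diffusion under $v^\delta$), and since $v^\delta$ was chosen $\delta$-optimal for $\Lambda$ this is at most $\Lambda+\delta$. Thus $\limsup_{L\to\infty}\limsup_{n\to\infty}\ring J^n_L\le\Lambda$, which (with $\delta$ now also free, or already absorbed) gives the claim. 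The main obstacle I anticipate is the uniform-in-$(n,T)$ tightness of $\mu^{n,T,3}_{U,\uppsi^n}$: one cannot simply borrow stability of $\hat X^n$ because the auxiliary controls $\uppsi^n$ perturb the dynamics and are only compact in the weak topology of Section~\ref{sec-top}, not pointwise bounded; this is precisely why the argument must route through Lemma~\ref{lem-tightness-empirical} (stability uniform in $n$ over compact sets of auxiliary controls) rather than a direct Foster–Lyapunov argument, and why the carefully chosen weak topology on the controls is essential for the compactness that feeds that lemma. A secondary technical point is the interchange of the truncation limit $L\to\infty$ with the limits in $n$ and $T$, which is handled by Lemma~\ref{lem-tail-est} and Corollary~\ref{cor-trunc-limit-L} already established above.
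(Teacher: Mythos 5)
Your overall strategy matches the paper's, but there is a genuine gap at the most delicate step: how Lemma~\ref{lem-tightness-empirical} is fed.

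You take a nearly optimal control $\uppsi^n(T)\in\cE^n_M$, note that $\frK^n(\uppsi^n,T)\le M$ forces weak $L^{2,*}_\infty$--compactness of $\{h^n(\uppsi^n)\}_n$ via Corollary~\ref{cor-control-compact}, and then invoke Lemma~\ref{lem-tightness-empirical} directly, describing the perturbation as ``controlled by $h^n(\uppsi^n)$ on the compact set of controls.'' That does not work, and the paper's proof of Lemma~\ref{lem-tightness-empirical} makes this plain: the Foster--Lyapunov estimate has to absorb terms of the form $\frac{1}{T}\E\big[\int_0^T \mu^n_i\,|1-\psi^n_{i,t}|\,\|X^{n,\uppsi^n}_t\|\,n^{-1/2}\,dt\big]$, which couple the auxiliary control with the state. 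A weak (time-averaged $L^2$) bound on $\sqrt n(1-\psi^n)$ cannot control such a product without already knowing a second-moment bound on $\|\hat X^{n,\uppsi^n}\|$, which is exactly what the lemma is trying to establish --- the argument would be circular. The proof of Lemma~\ref{lem-tightness-empirical} explicitly uses the \emph{pointwise} bound $\|h^n_t(\uppsi^n)\|\le K$ (``Recall that $\|h^n(\uppsi^n)\|\leq K$, from the construction of $\uppsi^n$''). That bound is not automatic for a nearly optimal $\uppsi^n(T)\in\cE^n_M$; membership in $\cE^n_M$ only gives the integral bound $\frK^n\le M$.

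The missing idea is the truncation at the stopping time $\tau^n_K=\inf\{t>0:\|h^n(\widetilde\uppsi^{n,T}_t)\|>K\}\wedge T$: starting from a $\delta$-optimal $\widetilde\uppsi^{n,T}\in\cE^n_M$, one defines a modified control $\uppsi^{n,T}$ that agrees with $\widetilde\uppsi^{n,T}$ on $[0,\tau^n_K]$ and equals $(e,e,e)$ afterward, then verifies (i) the modified control still lies in $\cE^n_M$ (since $\varkappa\ge0$), and (ii) it remains $2\delta$-optimal, using that $\PP(\tau^n_K<T)$ can be made small uniformly in $T$ via~\eqref{eq-h-comp} because the running cost difference is bounded by $2L$ per unit time off $[0,\tau^n_K]$. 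Only this construction yields the pointwise bound $\|h^n(\uppsi^n)\|\le K$ that Lemma~\ref{lem-tightness-empirical} genuinely requires, and hence the statement of that lemma reads ``$\uppsi^n$ is as in Lemma~\ref{lem-fin}'' rather than ``$\uppsi^n\in\cE^n_M$.'' A secondary, smaller issue: you claim the weak limit $w$ of $h^n(\uppsi^n)$ is a stationary Markov control in $\Wsm$ and then bound by $\sup_{w'\in\Wsm}J^*_{v^\delta,w'}$; the paper avoids asserting Markovianity of the limit by routing through Lemma~\ref{lem-trivial-bound}, which applies to general $w\in\cA$ given tightness of the $x$-marginal empirical measures, a cleaner and safer step.
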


\begin{proof}  
	Applying Proposition~\ref{prop-var-rep-poisson}, we write 
	\begin{align*}
	\ring J^n_{ L}	&= \limsup_{T\to\infty}\sup_{{\uppsi\in \cE^n}}\E\Big[\Big(\frac{1}{T}\int_0^T \widetilde r(\rQ^n_t)dt\Big)\wedge L -\frK^n(\uppsi,T)\Big]\\
	&\leq  \limsup_{T\to\infty}\sup_{{\uppsi\in \cE^n}}\E\Big[ \widetilde R_L(\rQ^{n,\uppsi},T) -\frK^n(\uppsi,T)\Big]\,.
	\end{align*}	
	where $ \rQ^{n,\Uppsi}_t\doteq \frac{1}{\sqrt{n}}\hat q^n(\hat X^{n,\uppsi}_t)$ and $\uppsi=(\phi,\psi,\varphi)$. Recall that $\frK^n(\uppsi,T)$ is defined in \eqref{eqn-frKn}.
	Now fix $\delta>0$. From Proposition~\ref{prop-var-rep-poisson}, it is also clear that 
	\begin{align}\label{eq-up-sup-1}
	\ring J^n_{ L}	\leq  \limsup_{T\to\infty}\sup_{{\uppsi\in \cE^n_M}}\E\Big[ \widetilde R_L(\rQ^{n,\uppsi},T) -\frK^n(\uppsi,T)\Big]+\delta,
	\end{align}
	for $M>0$ depending only on $L$ and $\delta$.   Let $\widetilde \uppsi^{n,T}(\cdot)= (\widetilde \phi^{n,T}(\cdot),\widetilde \psi^{n,T}(\cdot),\widetilde \varphi^{n,T}(\cdot))$ be the $\delta$--optimal control in the above supremum (it is clear that this depends on $n$ and $T$). From this choice  and the definition of $\cE^n_M$, we have 
	\begin{align}\label{eq-up-b-3}
	\ring J^n_{ L}	&\leq  \limsup_{T\to\infty}\E\Big[ \widetilde R_L(\rQ^{n,\widetilde \uppsi^{n,T}},T) -\frK^n(\widetilde \uppsi^{n,T},T)\Big]+2\delta
\end{align}	and 
	$\frK^n(\widetilde \uppsi^{n,T},T)\leq M$.
	From Corollary~\ref{cor-control-compact}, we know that  $\frK^n(\widetilde \uppsi^{n,T},T)\leq M$ implies 
	\begin{align}\label{eq-h-comp} \limsup_{n\to\infty}\sup_{T>0}\widecheck\frK^n(\widetilde \uppsi^{n,T},T)\doteq \limsup_{n\to\infty}\sup_{T>0}\frac{1}{T}\int_0^T \|h^n_t(\widetilde \uppsi^{n,T})\|^2dt \leq 2M\,. \end{align}
	Recall that $h^n(\widetilde \uppsi^{n,T})= \sqrt{n}\big( (e-\widetilde \phi^{n,T}), (e -\widetilde \psi^{n,T}), (e-\widetilde \varphi^{n,T})\big)$.
	 To proceed further, we require the stability of $\hat X^{n, \widetilde \uppsi^{n,T}}$, which we are unable to prove for $\widetilde \uppsi^{n,T}$. Hence, we  construct $\uppsi^{n,T}(\cdot)= ( \phi^{n,T}(\cdot), \psi^{n,T}(\cdot), \varphi^{n,T}(\cdot))$ that is $2\delta$--optimal to the supremum in~\eqref{eq-up-sup-1}, for which we can prove the stability of $\hat X^{n,\uppsi^{n,T}}$.  We now proceed with the construction of $\uppsi^{n,T}$ and prove that the constructed $\uppsi^{n,T}$ is indeed, $2\delta$--optimal. For $K>0$, define $\tau^n_K\doteq \inf\{ t>0: \|h^n(\widetilde\uppsi^{n,T})\|>K\}\wedge T$.
		 Then, $\uppsi^{n,T}(\cdot)= ( \phi^{n,T}(\cdot), \psi^{n,T}(\cdot), \varphi^{n,T}(\cdot))$  is defined as  
	 \begin{align*}
	 \phi^{n,T}(\cdot) \doteq \Big(e- \big(e-\phi^{n,T}(\cdot)\big) &\Ind_{[0,\tau^n_K]}(\cdot)\Big)\,, \quad 
	  \psi^{n,T}(\cdot)\doteq \Big(e-\big(e -\psi^{n,T}(\cdot)\big)\Ind_{[0,\tau^n_K]}(\cdot)\Big)\,,\\
	  \varphi^{n,T}(\cdot)&\doteq \Big(e- \big(e -\varphi^{n,T}(\cdot)\big) \Ind_{[0,\tau^n_K]}(\cdot)\Big) \,, \end{align*}
	 where $K$ is chosen later.   We simply drop the dependence on $T$ and write $\widetilde \uppsi^{n,T}(\cdot)$ as $\widetilde \uppsi^n=(\widetilde \phi^n,\widetilde \psi^n,\widetilde \varphi^n)$ and $ \uppsi^{n,T}(\cdot)$ as $ \uppsi^n=( \phi^n, \psi^n, \varphi^n)$.
	 From the definition of $\frK^n(\cdot,T)$, it immediately follows that 
	 \begin{align}\label{eq-newcontrol} \frK^n(\uppsi^n,T)\leq \frK^n(\widetilde \uppsi^n,T)\leq M\,.\end{align}

For $t\leq \tau^n_R$,	 since $\uppsi^n_t=\widetilde \uppsi^n_t$, it is trivial to see that $\hat X^{n,\uppsi^n}_t=\hat X^{n,\widetilde \uppsi^n}_t$. Using~\eqref{eq-newcontrol}, we then have
\begin{align*}
\E&\Big[ \widetilde R_L(\rQ^{n, \uppsi^n_T},T) -\frK^n( \uppsi^n,T)\Big]-\E\Big[ \widetilde R_L(\rQ^{n, \widetilde\uppsi^n_T},T) -\frK^n( \widetilde\uppsi^n,T)\Big]\\
&\geq\E\Big[ \widetilde R_L(\rQ^{n, \uppsi^n},T)\Big]-\E\Big[ \widetilde R_L(\rQ^{n, \widetilde\uppsi^n},T) \Big]\\
&\geq \E\Big[\frac{1}{T}\int_0^T \widetilde r(\rQ^{n, \uppsi^n}_t)\wedge Ldt\Big]-\E\Big[\frac{1}{T}\int_0^T \widetilde r(\rQ^{n,\widetilde\uppsi^n}_t)\wedge Ldt\Big]\\
&\geq \E\Big[\frac{1}{T}\int_0^T \Big(\widetilde r(\rQ^{n, \uppsi^n}_t)\wedge L\Big) \Ind_{[\tau^n_K,\infty)}(t)dt\Big]-\E\Big[\frac{1}{T}\int_0^T \Big( \widetilde r(\rQ^{n,\widetilde\uppsi^n}_t)\wedge L\Big)\Ind_{[\tau^n_K,\infty)}(t) dt\Big]\\
&\geq   -\frac{2L}{T}\int_0^T\PP(\tau^n_K<t)d t= -\frac{2L}{T} \E\Big[\int_0^T \Ind_{[\tau^n_K,\infty)}(t)d t\Big]\,.
\end{align*}
From~\eqref{eq-h-comp}, we can now choose $K=K(L,\delta,M)$ large enough such that $2LT^{-1}\E\Big[\int_0^T \Ind_{[\tau^n_K,\infty)}(t)d t\Big]<\delta$, uniformly in $T$. This proves the desired $2\delta$--optimality of $\uppsi^n$.

	 Therefore, from the above,~\eqref{eq-up-b-3} becomes
	 \begin{align*}
	 \ring J^n_L&\leq \limsup_{T\to\infty}\Bigg(\E\Big[ \widetilde R_L(\rQ^{n,\uppsi^n},T)\Big] -\E\Big[ \frK^n(\uppsi^n,T)\Big]\Bigg)+3\delta\,.
	 \end{align*}
	 Our next goal is to analyze 
	 $$\limsup_{T\to\infty}\Bigg(\E\Big[ \widetilde R_L(\rQ^{n,\uppsi^n},T)\Big] -\E\Big[ \frK^n(\uppsi^n,T)\Big]\Bigg). $$
	 From the definition of $\uppsi^n $ constructed above, it is clear that  $\|h^n_t(\uppsi^n)\|\leq K$, for every $t\geq 0$. Since we already know that the mean empirical measures of $\{h^n(\uppsi^n)\}_{n\in \NN}$ are tight (from~\eqref{eq-newcontrol}), we will focus on showing the tightness of the mean empirical measures of $\hat X^{n,\uppsi^n}$. Moreover, we will show that  	
	\begin{align}\label{eq-tight-emp} \limsup_{n\to\infty}\limsup_{T\to\infty} \frac{1}{T}\E\Big[\int_0^T \|\hat X^{n,\uppsi^n}_t\|dt\Big]\leq R,\end{align}
	for some $R>0$.  We assume this for now. It is rigorously stated and proved in Lemma~\ref{lem-tightness-empirical}. 
	Using~\eqref{eq-tight-emp}, we now proceed  to analyze  
 \begin{align*}
 &\limsup_{T\to\infty}\Bigg(\E\Big[ \widetilde R_L(\rQ^{n,\uppsi^n},T)\Big] -\E\Big[ \frK^n(\uppsi^n,T)\Big]\Bigg)\\
 &=\limsup_{T\to\infty}\Bigg(\E\Big[\frac{1}{T}\int_0^T \widetilde r(\rQ^{n,\uppsi^n}_t)\wedge Ldt\Big] -\E\Big[ \frK^n(\uppsi^n,T)\Big]\Bigg).
   \end{align*}
 Since the  first term above on the right hand side can be expressed as an integral over the mean empirical measures of the processes $\hat X^{n,\uppsi^n}$ (denoted by $ \mu^{n,T,3}_{v^\delta,\uppsi}$; see~\eqref{def-markov-emp-m} for its definition),    it is clear that $\{\mu^{n,T,3}_{v^\delta,\uppsi}\}_{n,T}\subset \calP(\RR^d)$ is tight in $n$ and $T$,  from~\eqref{eq-tight-emp}. Therefore, along a subsequence $T_k$, there are measures $ \mu^{n,3}_{v^\delta,\uppsi} \in \calP(\RR^{d})$ which are the weak limits of $\{ \mu^{n,T_k,3}_{v^\delta,\uppsi}\}_{n,k}$. This gives us  
   \begin{align} \label{eq-tight-emp2}
  &\limsup_{k\to\infty}\Bigg(\E\Big[ \frac{1}{T_k}\int_0^{T_k} \widetilde r(\rQ^{n,\uppsi^n}_t)\wedge Ldt\Big] -\E\Big[ \frK^n(\uppsi^n,T_k)\Big]\Bigg) \nonumber\\
  &=  \int_{\RR^{d}}\widetilde r\Big(\vartheta\big(\sqrt n (e\cdot \hat x^n)v^{\delta}(\hat x^n) \big)\Big)\wedge Ld\mu^{n,3}_{v^\delta,\uppsi}(x)- \limsup_{k\to\infty}\E\Big[ \frK^n(\uppsi^n,T_k)\Big]\,. 
   \end{align}

   It now only remains to take the limit as $n \to \infty$. From Theorem~\ref{thm-diff}, we know that for $T>0$, $\hat X^{n,\uppsi^n}$ converges weakly to $X^{*,v^\delta,w^*}$ on $\frD_T^d$, for some $ L^2([0,T],\RR^d)$--valued random variable $w^*$. Here, $X^{*,v^\delta,w^*}$ is given as the solution to~\eqref{eq-diff-limit-control} with $u_t= v^\delta(X_t^{*,v^\delta,w^*})$ and $w_t=w^*_t$.
   
   Again, from~\eqref{eq-tight-emp}, we can ensure that there exist a subsequence $n_k$ of $n$ and a measure $\widetilde \mu^{*}_{v^\delta,w^*}$ of process $X^{*,v^\delta,w^*}$ such that 
 
   \begin{align*}
   \lim_{k\to\infty} &\bigg\{ \int_{\RR^{d}}\widetilde r\Big(\vartheta\big(\sqrt n_k (e\cdot \hat x^{n_k})v^{\delta}(\hat x^{n_k}) \big)\Big)\wedge Ld \mu^{n_k,3}_{v^\delta,\uppsi}(x)-\limsup_{m\to\infty}\E\Big[ \frK^{n_k}(\uppsi^{n_k},T_m)\Big] \bigg\}\\
   &= \int_{\RR^{d}}\widetilde r\big((e\cdot x)v^\delta (x)\big)\wedge Ld\widetilde \mu^{*}_{v^\delta,w^*}(x)-\limsup_{k\to\infty}\limsup_{m\to\infty}\E\Big[ \frK^{n_k}(\uppsi^{n_k},T_m)\Big].
   \end{align*}
   In the above, we have used the fact that 
   $$\Big\| \frac{1}{\sqrt n}\vartheta\big(\sqrt n (e\cdot \hat x^n)v^{\delta}(\hat x^n) \big) - (e\cdot x)^+ v^\delta (x)\Big\|\leq \frac{2d}{\sqrt n}\,.
   $$ 
  Thus, from \eqref{eq-tight-emp2} and the above, we obtain  
 \begin{align}\label{eq-last-1}
 &\limsup_{n\to\infty}\limsup_{T\to\infty}\Bigg(\E\Big[\frac{1}{T}\int_0^{T} \widetilde r(\rQ^{n,\uppsi^n}_t)\wedge Ldt\Big] -\E\Big[ \frK^n(\uppsi^n,T)\Big]\Bigg) \nonumber\\
 &\leq  \int_{\RR^{d}}\widetilde r\big((e\cdot x)v^\delta (x)\big)\wedge Ld\widetilde \mu^{*}_{v^\delta,w^*}(x)-\limsup_{k\to\infty}\limsup_{m\to\infty}\E\Big[ \frK^{n_k}(\uppsi^{n_k},T_m)\Big]. 
 \end{align}
 Now let us compute the second term on the right hand side of the above display. From Lemma~\ref{lem-compact} and Theorem~\ref{thm-diff}, we have
 \begin{align}\label{eq-last-2}
 \limsup_{k\to\infty}\limsup_{m\to\infty}\E\Big[ \frK^{n_k}(\uppsi^{n_k},T_m)\Big]\geq \limsup_{T\to\infty} \frac{1}{2T}\int_0^T \|w^*_t\|^2 dt\,. 
 \end{align}

 For large enough $L$, using Corollary~\ref{cor-trunc-limit-L}, and equations~\eqref{eq-last-1} and~\eqref{eq-last-2}, we can  ensure that 
 \begin{align}\label{eq-weak-conv}
 \limsup_{n\to\infty} \hat \Lambda^n\leq  \ring \Lambda\leq  \int_{\RR^{d}}\widetilde r\big((e\cdot x)v^\delta (x)\big)\wedge Ld\widetilde \mu^{*}_{v^\delta,w^*}(x)-\limsup_{T\to\infty} \frac{1}{2T}\int_0^T \|w^*_t\|^2 dt +\delta.
 \end{align}
Now using Lemma~\ref{lem-trivial-bound}, we have
$$\limsup_{n\to\infty} \hat \Lambda^n\leq \Lambda_{v^\delta}+\delta\leq \Lambda+2\delta. $$
Arbitrariness of $\delta>0$ gives us the result.
\end{proof}

Now all that remains to be shown is~\eqref{eq-tight-emp}. To that end, we recall and set up a few useful definitions. 
	Recall that with SCP $Z^n$ as defined in~\eqref{def-scp},  $\hat X^{n,\uppsi}$ is given as the solution to 
	\begin{align}\nonumber
	\hat{X}^{n,\uppsi}_{i,t} &= \hat{X}^{n,\uppsi}_{i,0} + \ell^n_i t - \mu^n_i \int_0^t\hat{Z}^{n,\uppsi}_{i,s} ds - \gamma^n_i \int_0^t\hat{Q}^{n,\uppsi}_{i,s} ds\\\nonumber 
	&\qquad + \frac{1}{\sqrt{n}}\Big( \widecheck A^n_i(\int_0^t\phi_{i,s}ds) - \lambda^n_i t \Big) - \frac{1}{\sqrt{n}}\bigg(\widecheck S^n_i\left(\int_0^t \psi_{i,s}\frac{Z^{n,\uppsi}_{i,s}}{n} ds\right)  - \mu^n_i \int_0^t  Z^{n,\uppsi}_{i,s} ds  \bigg)\\\nonumber
	&\qquad - \frac{1}{\sqrt{n}}\bigg(\widecheck R^n_i\left( \int_0^t\varphi_{i,s} \frac{Q^{n,\uppsi}_{i,s}}{n} ds\right)  - \gamma^n_i \int_0^t Q^{n,\uppsi}_{i,s}ds  \bigg)\,,
	\end{align}
	where  $\hat X^{n,\uppsi}_t= \hat Q^{n,\uppsi}_t+\hat Z^{n,\uppsi}_t$.
	
	To understand the tightness of the empirical measures of  $\hat X^{n,\uppsi}_t$, we consider the operator $\frL^{n,w}$ with $$w=\Big(\{w^1_i\}_{i=1}^d,\{w^2_i\}_{i=1}^d,\{w^3_i\}_{i=1}^d\Big)$$ defined as
	\begin{align*}
	\frL^{n,w}f(x)&\doteq \sum_{i=1}^d\Big(\lambda^n_i w^1_i \diff f(x;e_i) + \big(\mu^n_i z_iw^2_i +\gamma^n_i q_i(x,z)w^3_i\big) \diff f(x,-e_i)\Big) \,\\
	&=\Lg^{n,u}f(x) + \sum_{i=1}^d\Big(\lambda^n_i(1- w^1_i) \diff f(x;e_i) + \big(\mu^n_i z_i(1-w^2_i) +\gamma^n_i q_i(x,z)(1-w^3_i)\big) \diff f(x,-e_i)\Big).
	\end{align*} 
	Note that $\Lg^{n,u}$ is the infinitesimal generator of the process $X^n$ (under a constant SCP $u$) given in \eqref{eq-Lg-hatXn}.  We suppressed the dependence of $\frL^{n,w}$ on $u$.	Here, $q_i(x,z)=x_i-z_i$ and $e_i$ is a $\RR^d$ vector with $1$ as $i$th coordinate and $0$, everywhere else. Motivated from \cite[Theorem 3.4]{AHP21},	let $\eta: \RR\rightarrow\RR$ be a  convex function in $\cC^2 (\RR)$ defined as follows: 
	\begin{align}\label{def-eta}
	\eta(t)\doteq \begin{cases}
	-\frac{1}{2}, \text{ for $t\leq -1$},\\
	(t+1)^3-\frac{1}{2}(t+1)^4-\frac{1}{2}, \text{ for $-1\leq t\leq 0$},\\
	t, \text{ for $t\geq 0$}\,,
	\end{cases}
	\end{align}
	and, let $\xi:\RR^d\rightarrow \RR$ be defined as $ \xi(x)\doteq \sum_{i=1}^d \frac{\eta(x_i)}{\mu_i}$ 
	 and
	$$ \srZ(x)\doteq \eps_0\eps_1 \xi(-x)+\eps_0 \xi(x).$$
	 Here, $\eps_0$ and $\eps_1$ are some positive constants whose values  are not relevant for us. 
	
	\begin{lemma}\label{lem-tightness-empirical}
		Suppose that $\uppsi^n= (\phi^n,\psi^n,\varphi^n)$ is as in Lemma~\ref{lem-fin}.
		Then, 	$$ \limsup_{n\to\infty}\limsup_{T\to\infty} \frac{1}{T}\E\Big[\int_0^T \|\hat X^{n,\uppsi^n}_t\|dt\Big]\leq R,$$
		for some $R>0$.	
	\end{lemma}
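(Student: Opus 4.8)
The plan is to establish a Foster--Lyapunov drift inequality for the extended process $\hat X^{n,\uppsi^n}$ by borrowing the Lyapunov function $\srZ$ from \cite{AHP21}, but now for the perturbed generator $\frL^{n,w}$ instead of $\Lg^{n,u}$. First I would recall from the construction in \cite[Theorem 3.4]{AHP21} that there exist constants $\eps_0,\eps_1>0$ and positive constants $\kappa_0,\kappa_1$ such that for every admissible constant SCP $u$,
\begin{align*}
\Lg^{n,u}\srZ(x)\leq \kappa_0 - \kappa_1\|\hat x^n\| + \frac{\kappa_2}{\sqrt n}\|\hat x^n\|,
\end{align*}
where I track the $n$-dependence exactly as in Proposition~\ref{prop-lyap-with-abandonment-prelimit} (the same careful bookkeeping there yields the extra $\kappa_2/\sqrt n$ term). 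Then, writing $w^n\doteq (e - h^n(\uppsi^n)/\sqrt n)$ componentwise, i.e. $w^{n,1}_i = \phi^n_i$ etc., I would compute
\begin{align*}
\frL^{n,w^n}\srZ(x) = \Lg^{n,u}\srZ(x) + \sum_{i=1}^d\Big(\lambda^n_i(1-\phi^n_i)\diff\srZ(x;e_i) + \big(\mu^n_i z_i(1-\psi^n_i) + \gamma^n_i q_i(x,z)(1-\varphi^n_i)\big)\diff\srZ(x;-e_i)\Big).
\end{align*}
Because $\eta\in\cC^2(\RR)$ with $\eta'$ bounded (by $1$) and $\eta''$ bounded, $\|\diff\srZ(x;\pm e_i)\|$ is bounded by a constant $c_0$ uniformly in $x$, so the perturbation term is at most $c_0\sum_i\big(\lambda^n_i|1-\phi^n_i| + \mu^n_i z_i|1-\psi^n_i| + \gamma^n_i q_i|1-\varphi^n_i|\big)$. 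Using $\lambda^n_i = O(n)$, $z_i \le n$, $q_i \le e\cdot x$, and $|1-\phi^n_i| = |h^n_i(\uppsi^n)|/\sqrt n$, this is bounded by $c_1\sqrt n\,\|h^n(\uppsi^n)_t\| + c_1\|h^n(\uppsi^n)_t\|\,\|\hat x^n\|$ for suitable $c_1$ (after rescaling $z_i, q_i$ by $n$ and noting $\hat Q^n, \hat Z^n$ are linear in $\hat X^n$). Since $\uppsi^n$ from Lemma~\ref{lem-fin} satisfies $\|h^n_t(\uppsi^n)\|\leq K$ for all $t$, this gives a pointwise bound $\frL^{n,w^n}\srZ(\hat X^{n,\uppsi^n}_t)\leq \kappa_0 - \kappa_1\|\hat X^{n,\uppsi^n}_t\| + (\tfrac{\kappa_2}{\sqrt n} + c_1 K)\|\hat X^{n,\uppsi^n}_t\| + c_1 K\sqrt n/\sqrt n \cdot(\dots)$ --- the key point being that the bad coefficient of $\|\hat X^{n,\uppsi^n}_t\|$ is $\kappa_2/\sqrt n + c_1 K\|h^n_t\|/\sqrt n$ times $\sqrt n$... here I need to be careful: the $\sqrt n \|h^n\|$ term is problematic, so I would instead keep the term $c_1\|h^n_t(\uppsi^n)\|\|\hat x^n\|$ (no extra $\sqrt n$) by a more careful accounting, using that $1-\phi^n_i$ multiplies $\lambda^n_i \sim n\lambda_i$ and $\diff\srZ(x;e_i) = O(1)$, giving $n\cdot\|h^n\|/\sqrt n = \sqrt n\|h^n\|$; to absorb this I would use that $\diff\srZ(\hat x^n(x);e_i)$ is the increment \emph{at the diffusion scale}, so actually $\lambda^n_i(1-\phi^n_i)\diff\srZ(\hat x^n;e_i)$ involves $\diff\srZ$ evaluated at scaled points with increment $1/\sqrt n$, contributing $\sqrt n\lambda_i(1-\phi^n_i)\grad\srZ\cdot e_i/\sqrt n + O(1) = \lambda_i h^n_i \partial_i\srZ + O(1)$, which is bounded by $c_1 K$.

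With the drift inequality $\frL^{n,w^n}\srZ(\hat X^{n,\uppsi^n}_t)\leq \kappa_0 + c_1 K - (\kappa_1 - \kappa_2/\sqrt n)\|\hat X^{n,\uppsi^n}_t\|$ in hand, valid for $n$ large enough that $\kappa_1 - \kappa_2/\sqrt n \geq \kappa_1/2$, I would apply It\^o's formula (Dynkin's formula) to $\srZ(\hat X^{n,\uppsi^n}_t)$ between $0$ and $T$, take expectations to kill the martingale part, divide by $T$, and rearrange:
\begin{align*}
\frac{\kappa_1}{2T}\E\Big[\int_0^T\|\hat X^{n,\uppsi^n}_t\|dt\Big] \leq \frac{\srZ(\hat X^{n,\uppsi^n}_0) - \E[\srZ(\hat X^{n,\uppsi^n}_T)]}{T} + \kappa_0 + c_1 K.
\end{align*}
Since $\srZ$ is bounded below (by $-\tfrac12\eps_0(1+\eps_1)\sum_i 1/\mu_i$) and $\srZ(\hat X^{n,\uppsi^n}_0) = \srZ(\hat X^n_0)$ is a fixed finite number (the initial condition is deterministic and converges), the first term vanishes as $T\to\infty$, yielding
\begin{align*}
\limsup_{T\to\infty}\frac{1}{T}\E\Big[\int_0^T\|\hat X^{n,\uppsi^n}_t\|dt\Big] \leq \frac{2(\kappa_0 + c_1 K)}{\kappa_1},
\end{align*}
and this bound is uniform in $n$ (for $n$ large), so taking $\limsup_{n\to\infty}$ gives the result with $R = 2(\kappa_0 + c_1 K)/\kappa_1$.

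The main obstacle I anticipate is the bookkeeping in the perturbation term of $\frL^{n,w^n}\srZ$: one must verify that the factors of $\sqrt n$ coming from $\lambda^n_i\sim n\lambda_i$ and $n\mu^n_i, n\gamma^n_i$ cancel correctly against the $1/\sqrt n$ from $|1-\phi^n_i| = |h^n_i|/\sqrt n$ and against the $1/\sqrt n$ scale of the differences $\diff\srZ(\hat x^n;\pm e_i)$ (which are increments of $\srZ$ evaluated at diffusion-scaled arguments), leaving only bounded-in-$n$ contributions proportional to $\|h^n_t(\uppsi^n)\|\leq K$ and to $\|\hat X^{n,\uppsi^n}_t\|$ with a small coefficient that can be absorbed into $\kappa_1$ --- this is essentially redoing the $n$-dependent version of the Foster--Lyapunov estimate of \cite[Theorem 3.4]{AHP21} for the extended generator, and is where the specific piecewise-cubic/quartic form of $\eta$ (ensuring $\eta'$ and $\eta''$ are bounded and $\eta$ grows linearly) is used. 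A secondary subtlety is that the constant $K = K(L,\delta,M)$ depends on $L$ (hence on the truncation level) but not on $n$, so $R$ depends on $L$; this is harmless since the outer $\limsup_{L\to\infty}$ in \eqref{eq-upper-bound-AO} is taken after $n\to\infty$ but the tightness of $\{\mu^{n,T,3}_{v^\delta,\uppsi}\}$ only requires the bound to hold for each fixed $L$, which it does.
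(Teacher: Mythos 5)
Your proposal is correct and follows essentially the same route as the paper: a Foster--Lyapunov estimate for the perturbed generator $\frL^{n,w}$ using the function $\srZ$ from \cite{AHP21}, with the key observation (which you correctly self-correct to) that the increments $\diff\srZ(\hat x^n;\pm n^{-1/2}e_i)$ are $O(n^{-1/2})$, so the factors of $n$ from the rates cancel against $|1-\phi^n_i|=|h^n_i|/\sqrt n$, leaving contributions bounded by $c_1K$ plus an $O(n^{-1/2})\|\hat X^{n,\uppsi^n}\|$ term absorbable into the negative drift. The only cosmetic difference is that the paper splits $X^{n,\uppsi^n}=(X^{n,\uppsi^n}-\rho n)+\rho n$ and handles the constant-order pieces via Cauchy--Schwarz and Corollary~\ref{cor-control-compact}, whereas you invoke the pointwise bound $\|h^n_t(\uppsi^n)\|\le K$ throughout; both are valid given the construction of $\uppsi^n$ in Lemma~\ref{lem-fin}.
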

	\begin{proof}Following the similar calculations as done in the proof of \cite[Theorem 3.4]{AHP21}, we have
		\begin{align}\label{eq-lyap-fost}
		\frL^{n,w} \srZ(\hat x^n(x))\leq &\hat C_0-\hat C_1\|\hat x^n(x)\| + \sum_{i=1}^d\Big(\lambda^n_i(1- w^1_i) \diff \srZ(\hat x^n (x);n^{-\frac{1}{2}}e_i) \nonumber\\
		&\qquad\qquad + \big(\mu^n_i z_i(1-w^2_i) +\gamma^n_i q_i(x,z)(1-w^3_i)\big) \diff \srZ(\hat x^n (x),-n^{-\frac{1}{2}}e_i)\Big)\,.
		\end{align}
		Note that $|\diff \srZ(\hat x^n (x), \pm n^{-\frac{1}{2}} e_i)|\leq \hat C_4n^{-\frac{1}{2}}$, for some $\hat C_4>0$ independent of $n$.
		We now apply It{\^o}'s formula to $\srZ(\hat X^{n,\uppsi^n}_t)$, to get
		\begin{align*}
		\E[\srZ(\hat X^{n,\uppsi^n}_T)]&= \srZ(\hat X^{n,\uppsi^n}_0)+ \E\Big[\int_0^T \frL^{n,w} \srZ(\hat X^{n,\uppsi^n}_t) dt\Big] \\
		&\leq \srZ(\hat X^{n,\uppsi^n}_0) +\hat C_0 T- \hat C_1 \E\Big[\int_0^T \|\hat X^{n,\uppsi^n}_t\|dt\Big]\\
		& + \sum_{i=1}^d\E\Bigg[\int_0^T\Big( \hat C_4\lambda^n_i |1-\phi^n_{i,t}|n^{-\frac{1}{2}} + \hat C_4\hat C_5\mu^n_i |1-\psi^n_{i,t}|\| X^{n,\uppsi^n}_t\|n^{-\frac{1}{2}} \\
		& \qquad \qquad \qquad \qquad \qquad  + \hat C_4\hat C_5\gamma^n_i|1-\varphi^n_{i,t}| \| X^{n,\uppsi^n}_t\|  n^{-\frac{1}{2}}\Big)dt\Bigg].  
		\end{align*}
		In the above, to get the second term in the integral, we have used the fact that $\max\{\| Z^{n,\uppsi^n}_t\|,\|Q^{n,\uppsi^n}_t\|\}\leq \hat C_5 \| X^{n,\uppsi^n}_t\|$ for some $\hat C_5>0$ which is independent of $t$ and $n$.
		
		From the above equation, it is clear that 
		\begin{align}\label{eq-lyap-fost-1}
		&\frac{\hat C_1}{T}\E\Big[\int_0^T \|\hat X^{n,\uppsi^n}_t\|dt\Big]\nonumber \\
		& \leq \hat C_0 + \sum_{i=1}^d\frac{1}{T}\E\Bigg[\int_0^T\Big( \hat C_4\lambda^n_i |1-\phi^n_{i,t}|n^{-\frac{1}{2}} + \hat C_4\hat C_5\mu^n_i |1-\psi^n_{i,t}|\| X^{n,\uppsi^n}_t\|n^{-\frac{1}{2}} \nonumber \\
		& \qquad \qquad \qquad \qquad \qquad \qquad  + \hat C_4\hat C_5\gamma^n_i|1-\varphi^n_{i,t}| \| X^{n,\uppsi^n}_t\|  n^{-\frac{1}{2}}\Big)dt\Bigg].
		\end{align}
		
		We will show that the following hold: 
		\begin{align}\nonumber
		\max\Big\{& \frac{1}{T}\E\Big[\int_0^T \mu^n_i |1-\psi^n_{i,t}|\| X^{n,\uppsi^n}_t\|n^{-\frac{1}{2}}dt\Big],\\\label{eq-aug-cont-est-1}
		&\qquad \frac{1}{T}\E\Big[\int_0^T\gamma^n_i|1-\varphi^n_{i,t}| \|X^{n,\uppsi^n}_t\| n^{-\frac{1}{2}}dt\Big]\Big\}= \hat C_6+ O(n^{-\frac{1}{2}}) \frac{1}{T}\E\Big[\int_0^T \|\hat X^{n,\uppsi^n}_t\|dt\Big],
		\end{align}
for some $\hat C_6>0$		and 
		\begin{align}\label{eq-aug-cont-est-2}
		\limsup_{n\to\infty}\limsup_{T\to\infty} \frac{1}{T}\int_0^T\lambda^n_i |1-\phi^n_{i,t}|n^{-\frac{1}{2}}dt<\infty.
		\end{align}
		We will first show~\eqref{eq-aug-cont-est-2}. Since $n^{-1}\lambda^n_i\to \lambda_i$, we have
		\begin{align*}
		\limsup_{n\to\infty}\limsup_{T\to\infty} \frac{1}{T}\int_0^T\lambda^n_i |1-\phi^n_{i,t}|n^{-\frac{1}{2}}dt\leq \lambda_i \limsup_{n\to\infty}\limsup_{T\to\infty} \frac{1}{T}\int_0^T \sqrt{n}|1-\phi^n_{i,t}|dt. 
		\end{align*}
		From Corollary~\ref{cor-control-compact} and Cauchy-Schwartz inequality, we have
		$$ \limsup_{n\to\infty}\limsup_{T\to\infty} \frac{1}{T}\int_0^T \sqrt{n}|1-\phi^n_{i,t}|dt\leq\limsup_{n\to\infty}\limsup_{T\to\infty} \Big(\frac{1}{T}\int_0^T n|1-\phi^n_{i,t}|^2dt\Big)^{\frac{1}{2}} <\infty.$$
		Combining the previous two displays gives us~\eqref{eq-aug-cont-est-2}.	 We now proceed to show~\eqref{eq-aug-cont-est-1}. We only show for one of the terms of~\eqref{eq-aug-cont-est-1} \emph{viz.,} $$ \frac{1}{T}\E\Big[\int_0^T \mu^n_i |1-\psi^n_{i,t}|\|X^{n,\uppsi^n}_t\|n^{-\frac{1}{2}}dt\Big]= O(n^{-\frac{1}{2}}) \frac{1}{T}\E\Big[\int_0^T \|\hat X^{n,\uppsi^n}_t\|dt\Big]$$
		as the proof for the other term in~\eqref{eq-aug-cont-est-1} follows along similar lines. To that end,
		\begin{align*}
		&\frac{1}{T}\E\Big[\int_0^T \mu^n_i |1-\psi^n_{i,t}|\| X^{n,\uppsi^n}_t\|n^{-\frac{1}{2}}dt\Big]\\
		&=	\frac{1}{T}\E\Big[\int_0^T \mu^n_i |1-\psi^n_{i,t}|\| X^{n,\uppsi^n}_t-\rho n +\rho n\|n^{-\frac{1}{2}}dt\Big] \\ 
		&\leq \frac{1}{T}\E\Big[\int_0^T \mu^n_i |1-\psi^n_{i,t}|\Big(\| X^{n,\uppsi^n}_t-\rho n\| +\rho n\Big)n^{-\frac{1}{2}}dt\Big]\\
		&\leq  \frac{1}{T}\E\Big[\int_0^T \mu^n_i |\sqrt{n}(1-\psi^n_{i,t})|\|n^{-\frac{1}{2}} (X^{n,\uppsi^n}_t-\rho n)\| n^{-\frac{1}{2}}dt\Big]+\frac{1}{T}\E\Big[\int_0^T \mu^n_i |1-\psi^n_{i,t}|\rho n^{\frac{1}{2}}dt\Big]\\
			&\leq  \frac{n^{-\frac{1}{2}}}{T}\E\Big[\int_0^T \mu^n_i |\sqrt{n}(1-\psi^n_{i,t})|\|\hat X^{n,\uppsi^n}_t\| dt\Big]+\frac{1}{T}\E\Big[\int_0^T \mu^n_i |1-\psi^n_{i,t}|\rho n^{\frac{1}{2}}dt\Big].
		\end{align*}
		Using  a similar argument in proving~\eqref{eq-aug-cont-est-2}, we can show that the second term above is bounded uniformly in $T$ and $n$.  Recall that $\|h^n(\uppsi^n)\|\leq K$, from the construction of $\uppsi^n$. This means that 
		\begin{align*}
		\frac{n^{-\frac{1}{2}}}{T}\E\Big[\int_0^T \mu^n_i |\sqrt{n}(1-\psi^n_{i,t})|\|\hat X^{n,\uppsi^n}_t\| dt\Big]\leq \frac{K\mu^n_i n^{-\frac{1}{2}}}{T}\E\Big[\int_0^T  \|\hat X^{n,\uppsi^n}_t\| dt\Big].
		\end{align*}
		This proves~\eqref{eq-aug-cont-est-1} as $\mu^n_i\to\mu_i$. Now substituting~\eqref{eq-aug-cont-est-1} and~\eqref{eq-aug-cont-est-2} in~\eqref{eq-lyap-fost-1} for sufficiently large $n$ gives us
	$$ \limsup_{T\to\infty} \frac{1}{T}\E\Big[\int_0^T \|\hat X^{n,\Uppsi^n}_t\|dt\Big]\leq R,$$
	for some $R>0$.  Now taking $n\to\infty$ gives us the desired result.
	\end{proof}

\bigskip

\appendix
\section*{Appendix: Auxiliary results}

\section{A result on ERSC problem for limiting  diffusion}

\begin{lemma}\label{lem-cont-control}
	For every $\delta>0$ and $u_0\in \bU$, there exists a control $v\in \Usm$ such that $\Lambda_v\leq \Lambda+2\delta$
	such that $v(\cdot)=u_0\in \bU$ outside a compact set $K=K(\delta)$ and $v(\cdot)$ is continuous.
\end{lemma}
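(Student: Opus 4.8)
The plan is to construct the continuous $\delta$--optimal control in two stages: first smooth the optimal Markov control $v^*$ on a large ball, and then cut it off to equal $u_0$ outside an even larger ball, controlling the cost increment at each stage via the exponential ergodicity of $X$ (Proposition~\ref{prop-lyap-with-abandonment}) together with the characterization of $\Lambda_v$ as the principal eigenvalue in~\eqref{eq-groundstate}. Throughout I would work with the characterization $\Lambda_v = \Lambda^v$, where $(\Psi^v,\Lambda^v)$ solves $\Lg^v\Psi^v + r^v\Psi^v = \Lambda^v\Psi^v$; the key fact I would exploit is that $\Lambda^v$ depends continuously (in an appropriate sense) on the coefficient $v$ when the perturbations are confined to a fixed compact set, which follows from standard elliptic PDE estimates and the continuity of principal eigenvalues under $L^p_{\text{loc}}$ perturbations of the zeroth-order and drift coefficients.

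First I would fix $\delta>0$ and pick $v^* \in \Usm$ optimal, so $\Lambda_{v^*} = \Lambda$. Since $v^*$ is merely Borel measurable into the compact simplex $\bU$, I would mollify it: choose a standard mollifier and set $v^{\veps}(x) = \int \varrho_\veps(x-y)\, v^*(y)\, dy$, which is smooth and $\bU$--valued (as $\bU$ is convex). By Lusin's theorem combined with continuity of $v\mapsto \Lambda_v$ under perturbations supported in a fixed large ball $B_{K_0}$, for $\veps$ small enough the control $v^{\veps}$ agrees with a control whose cost differs from $\Lambda$ by at most $\delta$; more precisely, I would first truncate $v^*$ outside $B_{K_0}$ to equal $u_0$ and show (using the argument below) that this changes the cost by at most $\delta/2$ for $K_0$ large, then mollify this truncated control to obtain something continuous that still differs by at most $\delta/2$. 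The second ingredient — showing that replacing $v$ by $u_0$ outside $B_{K}$ changes $\Lambda_v$ by an arbitrarily small amount for $K$ large — is where the exponential ergodicity enters: by Proposition~\ref{prop-lyap-with-abandonment} the invariant measure $\mu_v$ has exponentially decaying tails uniformly in $v$, and the ground-state $\Psi^v$ together with $\widehat\cV = \log\cV$ gives a two-sided Lyapunov control; the difference of the two eigenvalue problems differs only on $B_K^c$, and a perturbation argument (testing the eigenfunction equation against $\mu_v$, or using the variational formula $\Lambda_v = \sup_w \inf \ldots$ from Theorem~\ref{thm-sup-inf}) shows the eigenvalue shift is controlled by $\sup_{B_K^c}|r(x,v(x)) - r(x,u_0)| \cdot \mu_v(B_K^c) + (\text{drift perturbation terms})$, which tends to $0$ as $K\to\infty$ uniformly. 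To make the final control continuous at the interface $\partial B_{K}$, I would interpolate linearly in a thin annulus $B_K \setminus B_{K - 1/l}$ between $v^{\veps}$ and $u_0$ (this is where the ``$\|x\|\le K - 1/l$'' versus ``$\|x\|>K$'' dichotomy in the statement comes from); the annulus has fixed width, so this interpolation again only perturbs coefficients on a compact set and contributes at most another small error absorbed into $2\delta$.

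The main obstacle I anticipate is the continuity/stability of $\Lambda_v$ under the perturbations — specifically, establishing quantitatively that $|\Lambda_{v_1} - \Lambda_{v_2}|$ is small when $v_1, v_2$ are close in, say, $L^p(B_{K_0})$ and agree outside $B_{K_0}$. This is not completely routine because the running cost $r$ is unbounded and the state space is non-compact, so one cannot directly cite a compactness-based continuity result. I would handle it by working with the logarithmic transform: with $\Phi^v = \log\Psi^v$ solving $\Lg^v\Phi^v + \tfrac12\|\Sigma^{\mathsf T}\nabla\Phi^v\|^2 + r^v = \Lambda_v$, the uniform Lyapunov bound of Proposition~\ref{prop-lyap-with-abandonment} gives $\Phi^v \le \widehat\cV + C$ uniformly in $v$, hence local-in-$W^{2,p}$ bounds on $\Phi^v$ uniform over the family; then a subsequential-limit argument (elliptic regularity, exactly as in the proof of Lemma~\ref{lem-limit-truc}) shows that if $v_k \to v$ a.e.\ with $v_k = v$ off $B_{K_0}$ then $\Lambda_{v_k}\to\Lambda_v$ and $\Phi^{v_k}\to\Phi^v$ in $W^{2,p}_{\text{loc}}$, which is enough. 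The rest — convexity of $\bU$ making mollification and linear interpolation stay admissible, and the tail estimate $\mu_v(B_K^c)\to 0$ uniformly — is standard given the tools already developed in the paper.
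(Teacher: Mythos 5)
Your route is genuinely different from the paper's, and it has a real gap. The paper does not start from the optimal $v^*$ and modify it; instead it constructs the candidate control from scratch by solving a \emph{truncated HJB}: for the drift/cost $b_l(x,u), r_l(x,u)$ defined to equal $b(x,u_0), r(x,u_0)$ outside $B_l$, it invokes \cite[Proposition 1.3]{AB18} to get a solution pair $(V_l,\Lambda_{l,u_0})$, whose measurable selector $v^*_l$ then equals $u_0$ outside $B_l$ \emph{by construction}. Near-optimality ($\Lambda_{l,u_0}\to\Lambda$) is obtained by a subsequential $W^{2,p}_{\text{loc}}$ limit of the $(V_l,\Lambda_{l,u_0})$ to a solution $(\widecheck V,\widecheck\Lambda)$ of the full HJB, followed by an It\^o/verification argument showing $\widecheck\Lambda\le J(x,U)$ for every $U$, hence $\widecheck\Lambda=\Lambda$. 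Crucially, continuity of $v^*_l$ in the interior of $B_l$ is not achieved by mollification: it comes from \cite[Proposition 3]{AMR04}, which uses the specific structure of $b$ and $r$ in this model (the $u$--minimizer in the HJB is a.e.\ unique, hence the selector is already continuous). Only a thin-annulus cutoff near $\partial B_l$ is needed to glue $v^*_l$ to $u_0$ continuously, with the resulting cost shift controlled by \cite[Theorem 4.3]{ari2018strict}; this last patching step is the only place where your proposal and the paper coincide.

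The concrete gap in your proposal is the first-stage truncation estimate. You claim the eigenvalue shift from replacing $v^*$ by $u_0$ outside $B_K$ is controlled by $\sup_{B_K^c}|r(x,v(x))-r(x,u_0)|\cdot\mu_v(B_K^c)$, but $r(x,u)=\kappa\cdot[(e\cdot x)^+u]$ grows linearly in $\|x\|$, so that supremum is $+\infty$ for every $K$; the estimate is vacuous as written. More fundamentally, the ERSC value $\Lambda_v$ is the principal eigenvalue of the twisted operator $\Lg^v+r^v$, and its sensitivity to a tail perturbation is governed by the tilted (ground-state) measure $\propto (\Psi^v)^2\,d\mu_v$ (or its analogue in the non--self-adjoint setting), not by $\mu_v$ itself. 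Establishing uniform tail decay of these tilted objects is not something the paper provides, and the proposed variational-formula shortcut would need this to close. The paper avoids this entirely: it never needs a quantitative eigenvalue-perturbation estimate, because the subsequential $W^{2,p}_{\text{loc}}$ limit of the truncated HJB solutions is identified with $\Lambda$ by the It\^o verification argument. Your mollification step also inherits this difficulty, since continuity of $v\mapsto\Lambda_v$ under $L^p_{\text{loc}}$ perturbations of the control is not a citable textbook fact here; while your proposed elliptic-regularity/subsequential-limit route mirrors Lemma~\ref{lem-limit-truc} and is plausible, it is an extra layer of work that the paper's structural observation (\cite[Proposition 3]{AMR04}) makes unnecessary. (The mention of Lusin's theorem is a red herring: mollification already gives global continuity, and Lusin would only give continuity on a set of nearly full measure, which is not what is needed.)
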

\begin{remark}
	This result is the risk-sensitive analog to \cite[Theorem 4.1]{ABP15} and we adapt the proof of  \cite[Proposition 1.3]{AB18}. The main difference in the proof of \cite[Proposition 1.3]{AB18} and the proof below is that, instead of being a nearly optimal control in \cite[Theorem 1.3]{AB18}, $u_0$ (in~\eqref{def-trunc-drift}) is replaced by any control in $ \bU$. 	
	 This helps us in constructing nearly optimal Markov controls that are continuous. This fact is used in invoking the continuity of the integrand in~\eqref{eq-weak-conv} and weak convergence of measures to conclude the equality in~\eqref{eq-weak-conv}.

\end{remark}
\begin{proof}
For $l>0$, define 
\begin{align}\label{def-trunc-drift}
b_l(x,u)= \begin{cases}
b (x,u), \text{ if $x\in B_l$}\,,\\
b(x,u_0), \text{ if $x\notin B_l$}\,.
\end{cases}
\end{align}
$r_l(x,u)$ is similarly defined. The corresponding generator is denoted by $\Lg^u_l$. Following the similar argument as that of \cite[Proposition 1.3]{AB18}, we can conclude that there exists a pair $(V_l,\Lambda_{l,u_0})\in W^{2,p}_{\text{loc}}(\RR^d)\times \RR$ such that 
\begin{align}\label{eq-trunc-hjb}
\min_{u\in \bU}\Big[ \Lg^u_l V_l +r_l(x,u)V_l\Big]= \Lambda_{l,u_0} V_l,
\end{align}
where $V_l$ restricted to $B_l$ is in $\cC^2(\RR^d)$ and $\Lambda_{u_0}\geq \Lambda_{l,u_0}\geq \Lambda.$ Moreover, 
$$ \inf_{l>0} \inf_{x\in \RR^d}V_l(x)>0.$$
 Let $v^*_l$ be the measurable selector of~\eqref{eq-trunc-hjb}. Since $v^*_l=u_0$ on $B_l^c$, we can infer that $v^*_l\in \Ussm$. 
 
 Again following along the same lines as in \cite[Proposition 1.3]{AB18}, we can conclude that there exists a subsequence $l_n$ along which $(V_{l_n},\Lambda_{l_n,u_0})$ converge to $(\widecheck V,\widecheck \Lambda)$ in $W^{2,p}_{\text{loc}}(\RR^d)\times \RR$ such that
 \begin{align}\label{eq-trunc-limit-hjb}
 \min_{u\in \bU}\Big[ \Lg^u \widecheck V+ r(x,u)\widecheck V\Big]= \widecheck \Lambda \widecheck V
 \end{align} 
 with $\widecheck V\in \cC^2(\RR^d)$ and $\Lambda\leq \widecheck \Lambda \leq \Lambda_{u_0}$. Fix $u\in \Uadm$. From here, the usual application of It{\^o}'s formula to $e^{\int_0^T r(X_t,u_t)dt}V(X_t)$ (then followed by Fatou's lemma) gives us the following:
 \begin{align*}
 \widecheck \Lambda\leq \limsup_{T\to\infty} \frac{1}{T}\log \E\Big[e^{\int_0^T r(X_t,u_t)dt}\Big].
 \end{align*}
 This together with the fact that $\widecheck \Lambda \geq \Lambda$ implies that $\widecheck \Lambda=\Lambda.$
 
 A trivial consequence of this fact is that for every $\delta>0$, we can choose a large enough $l$ such at that $v^*_l$ (which is constant $u_0$ outside $B_l$) is the $\delta$--optimal control for our ERSC problem.
 
 To deduce the continuity of the control $v^*_l$ on $B_l$, we invoke \cite[Proposition 3]{AMR04}. From here, we cannot yet conclude the result because the $v^*_l$ can still be discontinuous on $\partial B_l$. To resolve this, we follow the approach of \cite[Pg. 3566]{ABP15},  \emph{viz.,} consider a family $\{\rho^k\}_{k\in \NN}$ of cut-off functions such that $\rho^k=	0$ on $B^c_{l-1/k}$ and $\rho^k=1$ on $B_{l-2/k}$. Now consider a new control $v^k_l(x)\doteq \rho^k(x) v^*_l(x)+ (1-\rho^k(x))u_0$. Clearly, $v^k_l\to v^*_l$, as $k\to\infty$, uniformly on compliment of any neighborhood of $\partial B_l$. Using \cite[Theorem 4.3]{ari2018strict}, we can conclude that $\Lambda_{v^k_l}\to\Lambda_{v^*_l}$ as $k\to\infty.$ Therefore, we can choose $k$ large enough such that $\Lambda_{v^k_l}\leq \Lambda_{v^*_l}+\delta\leq \Lambda+2\delta$. This finishes the proof.
\end{proof}

\section{Some properties on the function $\varkappa$ in \eqref{eqn-varkappa}}\label{sec-varkappa}
\begin{lemma}\label{lem-varkappa-below-0}
	For $-1<r\leq 0$, we have the following:
	$$ \varkappa(1+r)\geq \frac{1}{2}|r|^2.$$
	
\end{lemma}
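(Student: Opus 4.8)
The plan is to reduce the inequality to a single-variable statement by substituting $s=1+r$, so that the claim becomes $\varkappa(s)\ge \tfrac12(1-s)^2$ for $s\in(0,1]$, and then prove this by a convexity/monotonicity argument on the auxiliary function $g(s)\doteq \varkappa(s)-\tfrac12(1-s)^2$. The key observation is that $g$ vanishes to second order at $s=1$ and has a nonnegative second derivative on $(0,1]$, which forces $g\ge 0$ there.

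\textbf{Steps.} First I would record that $\varkappa(s)=s\ln s-s+1$ is $\cC^\infty$ on $(0,\infty)$, and set $g(s)=s\ln s-s+1-\tfrac12(1-s)^2$. Second, I would compute the boundary data at $s=1$: directly $g(1)=0$, and $g'(s)=\ln s+1-1+(1-s)=\ln s+(1-s)$, so $g'(1)=0$. Third, I would compute $g''(s)=\tfrac1s-1=\tfrac{1-s}{s}$, which is $\ge 0$ for all $s\in(0,1]$. Consequently $g'$ is nondecreasing on $(0,1]$, and since $g'(1)=0$ this gives $g'(s)\le 0$ on $(0,1]$; hence $g$ is nonincreasing on $(0,1]$, and combined with $g(1)=0$ we get $g(s)\ge 0$ for all $s\in(0,1]$. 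Translating back via $s=1+r$ with $r\in(-1,0]$ (so $s\in(0,1]$) yields $\varkappa(1+r)\ge\tfrac12(1-s)^2=\tfrac12 r^2=\tfrac12|r|^2$, as claimed. One could equivalently phrase the last step through the integral remainder $g(s)=\int_s^1\int_\sigma^1 g''(\tau)\,d\tau\,d\sigma\ge 0$, but the monotonicity phrasing is cleanest.

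\textbf{Main obstacle.} There is essentially no serious obstacle here: the only points that require a line of care are (i) checking that no boundary term is lost as $s\to 0^+$ (one notes $s\ln s\to 0$, so $g$ extends continuously with $g(0^+)=\tfrac12>0$, consistent with the bound), and (ii) making sure the differentiations of the $(1-s)^2$ term carry the correct sign. Both are routine, so the write-up is short.
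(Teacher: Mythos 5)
Your proof is correct. It takes a slightly different, though equally elementary, route from the paper's: the paper expands $\varkappa(1+r)$ to third order around $r=0$ via Taylor's theorem with Lagrange remainder and observes that the cubic remainder term is nonnegative when $r\le 0$, whereas you work with the difference $g(s)=\varkappa(s)-\tfrac12(1-s)^2$ and integrate the sign of $g''$ twice (via monotonicity of $g'$ and then of $g$) down from the double zero at $s=1$. The two arguments encode the same information — the convexity excess of $\varkappa$ relative to its quadratic approximation at $1$ — but yours only needs the second derivative and avoids handling the explicit remainder term, which makes it marginally cleaner; the paper's version, by contrast, immediately exhibits the exact deficit $-r^3/(6(1+r_0)^2)$, which could be useful if one wanted a quantitative lower bound on the gap. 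Your auxiliary remark about the behavior as $s\to 0^+$ is not needed for the inequality but does no harm.
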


\begin{proof} Using Taylor's theorem around $r=0,$ we have
	$$\varkappa(1+ r)= \frac{1}{2}|r|^2-\frac{r^3}{6r_0^2}, \text{ for some $r_0\in (r,0)$}\,.  $$
	In the above, we have used the fact that $\varkappa (1)=0$, $\frac{d}{dr}\varkappa (1+ r)_{|r=0}=0$ and $\frac{d^2}{dr^2}\varkappa (1+r)_{|r=0}=\frac{1}{2}$ and $\frac{d^3}{dr^3}\varkappa (1+ r)_{|r=0}=-1$. It is now clear that the desired result holds.	
\end{proof}
\begin{lemma}\label{lem-varkappa-above-0}
	For $l>0$ and $\alpha(l)\doteq\frac{1}{2(1+l)}$, 
	$$ \varkappa(1+r)\geq \alpha(l)|r|^2, \quad \text{whenever $0\leq r<l.$}$$
\end{lemma}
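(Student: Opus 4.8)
\textbf{Proof proposal for Lemma~\ref{lem-varkappa-above-0}.}

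The plan is to reduce the claim to a pointwise inequality for a single-variable function and verify it by elementary calculus. Recall $\varkappa(s) = s\ln s - s + 1$ for $s>0$, so with $s = 1+r$ and $0\le r < l$ we want
\[
\varkappa(1+r) = (1+r)\ln(1+r) - r \;\ge\; \alpha(l)\,r^2, \qquad \alpha(l) = \frac{1}{2(1+l)}.
\]
First I would dispose of the trivial case $r=0$ (both sides vanish). For $r\in(0,l)$ define $g(r) \df (1+r)\ln(1+r) - r - \alpha(l) r^2$; the goal is $g(r)\ge 0$ on $[0,l]$.

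The key steps, in order: (1) Compute $g(0)=0$. (2) Differentiate: $g'(r) = \ln(1+r) + 1 - 1 - 2\alpha(l) r = \ln(1+r) - 2\alpha(l) r$, so $g'(0)=0$. (3) Differentiate once more: $g''(r) = \frac{1}{1+r} - 2\alpha(l)$. Since $r<l$ we have $\frac{1}{1+r} > \frac{1}{1+l} = 2\alpha(l)$, hence $g''(r) > 0$ on $[0,l)$. (4) Conclude that $g'$ is strictly increasing on $[0,l)$, so $g'(r) > g'(0) = 0$ for $r\in(0,l)$; therefore $g$ is strictly increasing on $[0,l)$, so $g(r) > g(0) = 0$ for $r\in(0,l)$. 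Combining with the $r=0$ case gives $\varkappa(1+r)\ge\alpha(l)|r|^2$ for all $0\le r < l$, which is the assertion (noting $|r|=r$ on this range).

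There is no real obstacle here — the only thing to be a little careful about is the direction of the inequality $\frac{1}{1+r} \ge 2\alpha(l)$: it is strict for $r<l$ and becomes equality exactly at $r=l$, which is why the statement uses the half-open interval $[0,l)$ and why the constant $\alpha(l)=\frac{1}{2(1+l)}$ is the best possible for this Taylor-type bound. An alternative, essentially equivalent route would be to write $\varkappa(1+r) = \int_0^r \frac{r-t}{1+t}\,dt$ (integral form of the Taylor remainder, using $\varkappa'(1+t) = \ln(1+t)$ and $\varkappa''(1+t) = \frac{1}{1+t}$) and bound the integrand below by $\frac{r-t}{1+l} \ge \frac{r-t}{1+l}$ for $t\in[0,r]\subset[0,l]$, giving $\varkappa(1+r)\ge \frac{1}{1+l}\int_0^r (r-t)\,dt = \frac{r^2}{2(1+l)} = \alpha(l) r^2$ directly; I would likely present this integral version as it is the most transparent.
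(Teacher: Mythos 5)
Your proposal is correct and matches the paper's proof essentially step for step: both define the difference $g(r)=\varkappa(1+r)-\alpha(l)r^2$ (the paper calls it $\beta$), observe $g(0)=g'(0)=0$, compute $g''(r)=\frac{1}{1+r}-2\alpha(l)>0$ on $[0,l)$, and conclude by monotonicity. The integral-remainder variant you sketch at the end is an equivalent repackaging of the same second-derivative bound, not a different route.
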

\begin{proof}  Fix $l>0$.
	From the definition of $\varkappa,$ 
	$$ \varkappa(1+r)= (1+r)\ln (1+r) - (1+r)+1= (1+r)\ln (1+r)-r.$$
	Clearly at $r=0$, $\varkappa(1+r)-\alpha(l)|r|^2=0$. Therefore, to prove the lemma, it suffices to show that $\beta (r)\doteq \varkappa(1+r)-\alpha(l) |r|^2$ is non-decreasing in $(0,l)$. Indeed, if $\beta(r)$ is non-decreasing, then for any $r\in (0,l)$, we will have $\beta(r)\geq 0$.
	
	  We will next show that 
	\begin{align*} \gamma(r)\doteq \frac{d}{dr}\beta(r)= \ln(1+r)-2\alpha(l) r\geq 0, \quad \text{whenever $0<r<l$.}\end{align*}
	 First note that $\gamma(r)=0$ at $r=0$.  Consider $\frac{d}{dr}\gamma(r)=\frac{1}{1+r} -2\alpha(l) \geq 0$ in $(0,l)$. We have 
	\begin{align*}
	\frac{d}{dr}\gamma(r)\geq 0\implies r<\frac{1}{2\alpha(l)}-1.
	\end{align*}
	From the definition of $\alpha(l)$, we have the result.
\end{proof}

\begin{proof}[Proof of Lemmas~\ref{lem-compact}] 
Fix a subsequence of $n$ which we again denote by $n$.
By Lemmas~\ref{lem-varkappa-below-0} and~\ref{lem-varkappa-above-0}, we obtain 
	\begin{align}\nonumber
	\frac{1}{T}\int_0^T \varkappa(\phi^n_t)dt &=\frac{1}{T}\int_0^T \varkappa (\phi^n_t)\Ind_{\{0<\phi^n_t\leq 1\}}dt+   \frac{1}{T}\int_0^T\sum_{L=0}^\infty \varkappa (\phi^n_t)\Ind_{\{1+ \frac{L}{\sqrt{n}}<\phi^n_t\leq 1+\frac{L+1}{\sqrt{n}}\}}dt\\\nonumber
	&\geq \frac{1}{2T}\int_0^T |1-\phi^n_t|^2\Ind_{\{0<\phi^n_t<1\}}dt \\\label{eq-comp-lem-1}
	&\qquad\qquad + \sum_{L=0}^\infty\alpha \big(1+ \frac{L+1}{\sqrt{n}}\big)\frac{1}{T}\int_0^T |1-\phi^n_t|^2\Ind_{\{1+ \frac{L}{\sqrt{n}}<\phi^n_t\leq 1+\frac{L+1}{\sqrt{n}}\}}dt.
	\end{align} 
	Using monotone convergence theorem, we can justify the interchange of the integral and summation in the above the equation. In the last equation, we have used Lemmas~\ref{lem-varkappa-below-0} and~\ref{lem-varkappa-above-0} for the first and second terms, respectively.
Define $\mathscr{N}\doteq \{-1,0,1,2,\ldots\}$ and  a family of functions $\{f^{n,L}\}_{n,L}$ for $n\in \NN$ and $L\in \mathscr{N}$ as 
$$ f^{n,L}_t=\begin{cases} &(1-\phi^n_t)\Ind_{\{0<\phi^n_t\leq 1\}}, \text{ for $L=-1$,}\\
& (1-\phi^n_t)\Ind_{\{1+ \frac{L}{\sqrt{n}}<\phi^n_t\leq 1+\frac{L+1}{\sqrt{n}}\}}, \text{ for $L\neq -1$}.\end{cases}
	$$
	It is clear that from~\eqref{eq-bound}, we have
	\begin{align*}
	\frac{1}{2T}\int_0^T |\sqrt{n}f^{n,-1}_t|^2dt + \sum_{L=0}^\infty\frac{\alpha \big(1+ \frac{L+1}{\sqrt{n}}\big)}{T}\int_0^T |\sqrt{n}f^{n,L}_t|^2dt\leq  \frac{n}{T}\int_0^T \varkappa(\phi^n_t)dt\leq  M<\infty.
	\end{align*}
	 From here, it is clear that $$ \big\{g^{n,L}\doteq \sqrt{n}f^{n,L}\big\}_{n\in \NN} \text{ is compact in $L^2_{\text{loc}}(\RR^+,\RR)$, for every $L\in \mathscr{N}$.} $$
	Therefore, there exists $v^L\in L^2_{\text{loc}}(\RR^+,\RR)$, for every $L\in \mathscr{N}$ such that 
	$$ g^{n^L_{k},L}\to v^L\, \, \text{ in $L^2_{\text{loc}}(\RR^+,\RR)$ as $k\to\infty$.}$$
	Clearly, the subsequence $n^L_{k}$ depends on $L$. Since $L$ varies over a countable set, we can choose a single subsequence along which above convergence occurs for every $L\in \mathscr{N}$ which from now on is again denoted by $n$ for simplicity. 
	
	We will now study $\sqrt{n}(1-\phi^n_t)$. First note that for any  $L\in\mathscr{N}$, 
	\begin{align*}
	\big\{h^{n, L}_t	\doteq g^{n,-1}_t+\sum_{l=0}^{ L} g^{n,l}_t\big\}_{n\in \NN} \text{ is compact in $L^2_{\text{loc}}(\RR^+,\RR)$.}
	\end{align*}
	Indeed, to see this observe that \begin{align*}\frac{1}{T}\int_0^T |h^{n, L}_t|^2dt &=  \frac{1}{T}\int_0^T |g^{n,-1}_t|^2 dt+\sum_{l=0}^{ L} \frac{1}{T}\int_0^T |g^{n,l}_t|^2 dt\leq \frac{M}{c_n}.
	\end{align*}
	Here, $c_n\doteq \min\{\frac{1}{2}, \alpha(1+\frac{ L}{\sqrt n}\}$. To arrive at the inequality above, we used~\eqref{eq-comp-lem-1} and the fact that $\alpha(\cdot)$ is a strictly decreasing function on $\RR^+$. This concludes the compactness of $\{h^{n, L}\}_{n\in \NN}$ for every $ L\in \mathscr{N}$ as $\inf_{n} c_n=\frac{1}{2}$. Moreover, we have
	$$ \sup_{L\in \mathscr{N}}\limsup_{n\to\infty} \sup_{T>0} \frac{1}{T}\int_0^T |h_t^{n,L}|^2 dt\leq 2M$$ and 
	$h^{n,L}$ converges in $L^2_{\text{loc}}(\RR^+,\RR)$ as $n\to\infty$, to $v^{-1}+\sum_{l=0}^L v^l$, for every $L\in \mathscr{N}$.
	 From the weak lower semi-continuity of the norm, we have
	$$ \sup_{L\in \mathscr{N}} \sup_{T>0}\left\{\frac{1}{T}\int_0^T|v^{-1}_t|^2 dt + \sum_{l=0}^L\frac{1}{T}\int_0^T |v^l_t|^2dt\right\}\leq \sup_{L\in \mathscr{N}}\liminf_{n\to\infty} \sup_{T>0} \frac{1}{T}\int_0^T |h^{n,L}_t|^2dt \leq 2M . $$
	From here, it is clear that 
	\begin{align*}  \sup_{T>0}\frac{1}{T}\int_0^T|v^{*}_t|^2 dt \leq \liminf_{n\to\infty} \sup_{T>0}\left\{\frac{1}{T}\int_0^T|g^{n,-1}_t|^2 dt + \sum_{l=0}^\infty\frac{1}{T}\int_0^T |g^{n,l}_t|^2dt\right\} \leq 2M . \end{align*}
		Here, $v^*\doteq v^{-1}+\sum_{L=0}^\infty v^L.$  From the above, observe that 
		$$\left\{ \sqrt n (1-\phi^n_t)=g^{n,-1}_t+ \sum_{l=0}^\infty g^{n,l}_t\right \}_{n\in \NN} \text{ lies in $L^2_{\text{loc}}(\RR^+,\RR)$, for large $n$}.  $$ 
		To be concise, we have shown that 
		\begin{align} \label{eq-low-semi-compact} \sup_{T>0}\frac{1}{T}\int_0^T|v^{*}_t|^2 dt \leq \liminf_{n\to\infty} \sup_{T>0}\frac{1}{T}\int_0^T|\sqrt{n}(1-\phi^n_t)|^2 dt  \leq 2M . \end{align}
	
	Recalling that we had started with an arbitrary subsequence of $n$, we can replace  $\liminf$	with $\limsup$ in the above display to get $$\limsup_{n\to\infty} \sup_{T>0}\frac{1}{T}\int_0^T|\sqrt{n}(1-\phi^n_t)|^2 dt  \leq 2M. $$
	This finishes the proof.
	\end{proof}

	 \section{ Proof of~Theorem \ref{thm-fclt-poisson}}
	 
	 In the following, we show that 
	 \begin{align}\label{eq-fclt-low} 
	 \limsup_{n\to\infty}\frac{1}{T} \log\E[e^{TG(\widetilde N^n)}]
	 \leq  \sup_{w\in \cA}\E\Big[ G\Big(W+\int_0^\cdot w_t dt\Big) -\frac{\lambda}{2T}\int_0^T|w_t|^2 dt\Big]= \frac{1}{T}\log \E[e^{TG(W)}]
	 \end{align}
	and 
	\begin{align}\label{eq-fclt-up} 
	\liminf_{n\to\infty}\frac{1}{T} \log\E[e^{TG(\widetilde N^n)}]\geq  \sup_{w\in \cA}\E\Big[ G\Big(W+\int_0^\cdot w_t dt\Big) -\frac{\lambda}{2T}\int_0^T|w_t|^2 dt\Big]= \frac{1}{T}\log \E[e^{TG(W)}].
	\end{align}
Recall that $W$ is a one-dimensional Brownian motion and the associated set $\cA$ is as defined in Theorem~\ref{thm-var-rep-BM-gen}, but for $1$--dimensional case. In the rest of the proof, whenever a Brownian motion is involved, we denote it by $W$.

	\noindent {\bf Proof of~\eqref{eq-fclt-low}}: 
	Fix $\delta>0$. Then from~\eqref{eq-var-rep-poisson-gen-bound}, we have
	\begin{align}\label{eq-app-up-fclt}
	\frac{1}{T}\log \E\big[ e^{TG(\widetilde N^n)}\big]\leq \sup_{\phi \in \widetilde \cE_{M}} \E\Big[ G\Big( M^{n,\phi}-\lambda \int_0^\cdot\sqrt{n} (1-\phi_t)dt\Big)- \frac{\lambda n}{T}\int_0^T\varkappa(\phi_t)dt\Big]+\delta\,. 
	\end{align} 
We remark that $\phi$ in the above equation is a $\widetilde \cE_M$--valued random variable.	Since $\phi\in \widetilde \cE_M$, it satisfies 
	$$\sup_{n\in \NN} \frac{\lambda n}{T}\int_0^T \varkappa(\phi_{t})dt \leq M.$$
	Now choose a $\delta$--optimal (corresponding to~\eqref{eq-app-up-fclt}) $\phi^n\in \widetilde \cE_M$, for every $n$. Then using Lemma~\ref{lem-compact}, we can conclude that $$ \limsup_{n\to\infty} \frac{1}{T}\int_0^T|\sqrt{n} (1-\phi^n_t)|^2 dt \leq 2M.$$ 
	This clearly implies that  there exists a subsequence (denoted by $n_k$) such that  the family of random variables $\{\sqrt{n_k} (1-\phi^{n_k})\}_{k\in \NN}$ is tight in $L^2([0,T],\RR)$ when equipped with weak$^*$ topology (denoted by $L^{2,*}_T$ from now on).
	From the tightness of $\{\sqrt{n_k} (1-\phi^{n_k})\}_{k\in \NN}$ in $L^{2,*}_T$, we know that there exists  a $L^{2,*}_T$--valued random variable $w$ 	such that $\{\sqrt{n_k} (1-\phi^{n_k})\}_{k\in \NN}$ converges weakly to $w$, along that subsequence. 
		
		Define the following family of $\frC_T$--valued random variables: $m_t^{n_k}\doteq \int_0^t \sqrt{n_k} (1-\phi^{n_k}_s)ds$. We show that  $\{m^{n_k}\}_{k\in \NN}$ is a pre-compact set of  $\frC_T$. To that end, first observe that \begin{align*}\sup_{k\in \NN}\sup_{t\in [0,T]}|m^{n_k}_t|&\leq \sup_{k\in \NN}\sup_{t\in [0,T]} t \sqrt{\frac{T}{tT}\int_0^t|\sqrt{n_k}(1-\phi^{n_k}_s)|^2ds}\\
		&\leq \sup_{k\in \NN}\sup_{t\in [0,T]} T\sqrt{\frac{1}{T}\int_0^t|\sqrt{n_k}(1-\phi^{n_k}_s)|^2ds}\leq T\sqrt M.\end{align*}
		To show equicontinuity, for $0\leq s<t\leq T,$ we consider
		\begin{align*} m^{n_k}_t-m^{n_k}_s= \int_s^t \sqrt{n_k} (1-\phi^{n_k}_u)du.\end{align*} 
		Using Cauchy-Schwartz inequality, we have
		\begin{align*}
		|m^{n_k}_t-m^{n_k}_s|\leq \sqrt T \sqrt{t-s}  \sqrt{\frac{1}{T}\int_0^T |\sqrt{n_k}(1-\phi^{n_k}_u)|^2du}\leq \sqrt{MT} (t-s)^{\frac{1}{2}}.
		\end{align*}
		This proves that  $\{m^{n_k}\}_{k\in \NN}$ is a pre-compact set of $\frC_T$ and hence, is tight in $\frC_T$. 	
		
		We now analyze $\{M^{n,\phi^n}\}_{n\in \NN}$.  To do this, we first show that $\int_0^\cdot \phi^n_t dt \to \fre(\cdot)$ in $\frC_T$. We begin by observing that $\phi^n\in L^{2,*}_T$ and $\{\phi^n\}_{n\in \NN}$ is convergent in $L^{2,*}_T$ with $\phi^*\equiv 1$ being the limit. Therefore arguing similarly as above will imply boundedness (uniformly in $n$) and equicontinuity of $\{j_n(\cdot)\doteq \int_0^\cdot \phi^n_tdt\}_{n\in \NN}$ (hence, tightness of $\{j_n\}_{n\in \NN}$ in $\frC_T$).  It is clear that the limit point of $\{j_n\}_{n\in \NN}$ is $\fre(\cdot)$.	Therefore, using martingale central limit theorem and random change of time lemma (\cite[Pg. 151]{billingsley1999}), we can conclude that $M^{n,\phi^n}$ converges weakly to a Brownian motion $W$ in $\frD_T$, along a subsequence. 
				\begin{align*}
		\limsup_{k\to\infty}	\frac{1}{T}\log \E\big[ e^{TG(\widetilde N^{n_k})}\big]&\leq \limsup_{k\to\infty} \E\Big[ G\Big( M^{n_k,\phi^{n_k}}+ \lambda\int_0^\cdot \sqrt{n_k} (\phi^{n_k}_t-1)dt \Big)- \frac{\lambda n_k}{T}\int_0^T\varkappa(\phi^{n_k}_t)dt\Big]+2\delta\\
	&	\leq \E\Big[G\Big(W+\int_0^\cdot wdt\Big)\Big]-\liminf_{k\to\infty}\E\Big[\frac{\lambda n_k}{T}\int_0^T\varkappa(\phi^{n_k}_t)dt\Big] + 2\delta\,.
		\end{align*}
		In the above, we have used the fact that $\{\big(M^{n_k,\phi^{n_k}}, \int_0^\cdot \sqrt n_k (1-\phi^{n_k}_t)dt\big)\}_{k\in \NN}$ converges weakly in $\frD_T\times  \frC_T$ to $(W, \int_0^\cdot w_tdt)$ and the continuous mapping theorem. To simplify $$\liminf_{k\to\infty}\E\Big[\frac{\lambda n_k}{T}\int_0^T\varkappa(\phi^{n_k}_t)dt\Big],$$ we use~\eqref{eq-low-semi-compact} to obtain 
		\begin{align*}
		\liminf_{k\to\infty}\E\Big[\frac{\lambda n_k}{T}\int_0^T\varkappa(\phi^{n_k}_t)dt\Big]&\geq \liminf_{k\to\infty} \frac{1}{2T}\int_0^T |\sqrt{n_k}(1-\phi^{n_k}_t)|^2dt,\\
		&\geq \frac{1}{2T}\int_0^T |w_t|^2dt.
		\end{align*}
		Lower semicontinuity of the functional $f(w)\doteq \frac{1}{T} \int_0^T |w_t|^2dt$ in $L^{2,*}_T$ is used to get the last inequality above. Therefore, from arbitrariness of $\delta>0$, we have shown that 
			\begin{align*}
	\limsup_{k\to\infty}	\frac{1}{T}\log \E\big[ e^{TG(\widetilde N^{n_k})}\big]\leq \E\Big[G\Big(W+\int_0^\cdot w_tdt\Big)-\frac{1}{2T}\int_0^T |w_t|^2dt\Big]\leq \frac{1}{T}\E\big[e^{TG(W)}\big]\,. 
		\end{align*}

		\noindent {\bf Proof of~\eqref{eq-fclt-up}}:
		Fix $\delta>0$ and a one-dimensional Brownian motion $W$. Then choose a $w^*\in \cA$ such that 
		$$ \sup_{w\in \cA}\E\Big[ G\Big(W+\int_0^\cdot w_t dt\Big) -\frac{\lambda}{2T}\int_0^T|w_t|^2 dt\Big]\leq\E\Big[ G\Big(W+\int_0^\cdot w^*_t dt\Big) -\frac{\lambda}{2T}\int_0^T|w^*_t|^2 dt\Big]+\delta$$
		and define $\phi^n_t\doteq 1-\frac{w^*_t}{\sqrt{n}}$. Clearly, $\{\sqrt{n}(1-\phi^n)\}_{n\in\NN}$ is weakly convergent family of random variables in $L^{2,*}_T$ with limit being $w^*$. Moreover, it is also clear that $M^{n,\phi^n}$ converges weakly to $W$   in $\frD_T$. Now we obtain
		\begin{align*}
			\liminf_{n\to\infty}\frac{1}{T} \log\E\big[e^{TG(\widetilde N^n)}\big]&\geq 	\liminf_{n\to\infty} \E\Bigg[ G\Big( M^{n,\phi^n}+\frac{ \lambda n\int_0^\cdot \phi^n_tdt-\lambda n\fre(\cdot) }{\sqrt{n}}\Big)- \frac{\lambda n}{T}\int_0^T\varkappa(\phi^n_t)dt\Bigg],\\
			&\geq \E\Big[ G\Big( W+\int_0^\cdot w^*_tdt\Big)- \frac{\lambda }{2T}\int_0^T|w^*_t|^2dt\Big],\\
			&\geq \frac{1}{T} \log\E[e^{TG(W)}] -\delta\,. 
		\end{align*}
		From arbitrariness of $\delta>0$, we have the result.

\section*{Acknowledgement}
We thank the referees for their helpful comments and suggestions which have helped us improve the quality and presentation of the paper. This work is funded by  the NSF Grant DMS 2216765. 

\bibliographystyle{abbrv}
\bibliography{MulticlassV-RS-control}

\end{document}